\documentclass[11pt]{amsart}
\usepackage{amsfonts,latexsym,amsthm,amssymb,amsmath,amscd,euscript,tikz, tikz-cd}
\usepackage[alphabetic, msc-links, bibtex-style, nobysame]{amsrefs}
\usepackage{stackengine}
\usepackage{framed}
\usepackage{xfrac}
\usepackage[makeroom]{cancel}
\usepackage{faktor}
\usepackage{braket}
\usepackage{pgf,tikz,pgfplots}
\pgfplotsset{compat=1.17}
\usepgfplotslibrary{fillbetween} 
\usepackage{mathrsfs}
\usetikzlibrary{arrows}
\definecolor{wrwrwr}{rgb}{0.3803921568627451,0.3803921568627451,0.3803921568627451}
\definecolor{rvwvcq}{rgb}{0.08235294117647059,0.396078431372549,0.7529411764705882}
\definecolor{mblue}{rgb}{0.2, 0.3, 0.8}
\definecolor{morange}{rgb}{1, 0.5, 0}
\definecolor{mgreen}{rgb}{0.1, 0.4, 0.2}
\definecolor{mred}{rgb}{0.5, 0, 0}
\definecolor{ForestGreen}{RGB}{34,139,34}
\usepackage{hyperref}
\hypersetup{colorlinks=true,citecolor=ForestGreen,linkcolor = blue,urlcolor =black,linkbordercolor={1 0 0}}
\usepackage{float}

\usepackage{lmodern}
\usepackage{supertabular}
\usepackage{verbatim}
\usepackage{amssymb}
\usepackage{enumerate}
\usepackage{stmaryrd}
\usepackage{bbm}
\usepackage{mathtools}
\usepackage[nopatch=footnote]{microtype}
\usepackage{setspace}
\usepackage{tikz,tikz-cd}
\usetikzlibrary{matrix,calc,positioning,arrows,decorations.pathreplacing,patterns,knots}

\newcommand{\la}{\langle}
\newcommand{\rg}{\rangle}

\newcommand{\mres}{\mathbin{\vrule height 1.6ex depth 0pt width
0.13ex\vrule height 0.13ex depth 0pt width 1.3ex}}

\newtheorem{theorem}{{Theorem}}[section]
\newtheorem*{theorem*}{Theorem}
\newtheorem{lemma}[theorem]{Lemma}
\newtheorem{proposition}[theorem]{Proposition}

\newtheorem{corollary}[theorem]{Corollary}
\newtheorem*{corollary*}{Corollary}

\theoremstyle{definition}

\usepackage{lmodern,url,enumerate,mathtools,microtype}
\usepackage[hmargin = 1in,vmargin=1in]{geometry}
\usepackage{graphicx}
\usepackage{subcaption}

\newcommand{\ve}{\varepsilon}

\newcommand{\mr}[1]{{\rm #1}}
\newcommand{\cA}{\mathcal{A}}\newcommand{\cB}{\mathcal{B}}

\newcommand{\cH}{\mathcal{H}}

\newcommand{\cN}{\mathcal{N}}

\newcommand{\cT}{\mathcal{T}}

\newcommand{\bC}{\mathbb{C}}

\newcommand{\bR}{\mathbb{R}}
\newcommand{\bS}{\mathbb{S}}

\newcommand{\bZ}{\mathbb{Z}}

\newcommand{\fq}{\mathfrak{q}}

\newcommand{\nc}{\newcommand}

\nc{\on}{\operatorname}
\nc{\p}{\partial}
\nc{\ol}{\overline}
\nc{\ul}{\underline}
\nc{\pa}{\partial}

\nc{\pb}{\partial_b}
\nc{\pc}{\partial_c}
\nc{\pd}{\partial_d}
\nc{\pe}{\partial_e}
\nc{\pf}{\partial_f}
\nc{\pg}{\partial_g}
\nc{\ph}{\partial_h}
\nc{\pari}{\partial_i}
\nc{\pj}{\partial_j}
\nc{\pk}{\partial_k}
\nc{\pl}{\partial_l}
\nc{\pell}{\partial_\ell}
\nc{\parm}{\partial_m}
\nc{\pn}{\partial_n}
\nc{\po}{\partial_o}
\nc{\pp}{\partial_p}
\nc{\pq}{\partial_q}
\nc{\pr}{\partial_r}
\nc{\ps}{\partial_s}
\nc{\pt}{\partial_t}
\nc{\pu}{\partial_u}
\nc{\pv}{\partial_v}
\nc{\pw}{\partial_w}
\nc{\px}{\partial_x}
\nc{\py}{\partial_x}
\nc{\pz}{\partial_z}

\nc{\Spec}{\on{Spec}}

\nc{\sn}{\mr{sn}}
\nc{\cn}{\mr{cn}}
\nc{\dn}{\mr{dn}}

\allowdisplaybreaks

\newcommand{\@dotsep}{4.5}

\def\@tocline#1#2#3#4#5#6#7{%
  \relax
  \ifnum #1>\c@tocdepth
  \else
    \par \addpenalty\@secpenalty \addvspace{#2}%
    \begingroup
      \hyphenpenalty\@M
      \@ifempty{#4}{%
        \@tempdima\csname r@tocindent\number#1\endcsname\relax
      }{%
        \@tempdima#4\relax
      }%
      \parindent\z@
      \leftskip#3\relax
      \advance\leftskip\@tempdima\relax
      \rightskip\@pnumwidth plus4em
      \parfillskip-\@pnumwidth
      #5\leavevmode\hskip-\@tempdima #6\nobreak
      \leaders\hbox{$\m@th
        \mkern \@dotsep mu.\mkern \@dotsep mu$}\hfill
      \nobreak\hbox to\@pnumwidth{\@tocpagenum{#7}}\par
    \endgroup
  \fi
}

\renewcommand{\l@section}{\@tocline{1}{2pt}{0pt}{}{\bfseries}}
\renewcommand{\l@subsection}{\@tocline{2}{0pt}{1.5em}{3.2em}{}}
\renewcommand{\l@subsubsection}{\@tocline{3}{0pt}{4.7em}{3.8em}{}}

\makeatother

\title[Stable $s$-minimal cones in $\mathbb{R}^3$ are flat for $s \sim 0$]{Stable $s$-minimal cones in $\mathbb{R}^3$ are flat for $s$ close to zero}
\date{\today}

\author[Raphael Tsiamis]{Raphael Tsiamis}
\address{
{\href{mailto:r.tsiamis@columbia.edu}{r.tsiamis@columbia.edu}}\hfill Department of Mathematics, Columbia University}

\begin{document}

\begin{abstract}
We prove that stable nonlocal minimal cones in three dimensions are flat, for $s$ sufficiently close to zero. Our approach develops new structural properties of stable $s$-minimal surfaces in every dimension as $s \downarrow 0$, notably a compactness theory and pinching theorems near a half-space. 
\end{abstract}

\vspace*{-0.2in}
\maketitle

%\vspace{-0.2in}

\section{Introduction}

Nonlocal minimal surfaces arise as minimizers of the fractional perimeter, a geometric energy that accounts for long-range interactions across an interface. 
They provide a natural nonlocal analogue of classical minimal surfaces and appear in models involving phase transitions, anomalous diffusion, and interacting particle systems. 
The theory of nonlocal minimal surfaces was initiated by Caffarelli, Roquejoffre, and Savin in~\cite{caffarelli-roquejoffre-savin}, who established the foundational existence and regularity theory, including an improvement of flatness theorem, a monotonicity formula, blow-up analysis, and a Federer-type dimension reduction argument.
As in the classical theory, the fundamental problem for the nonlocal perimeter is to understand the regularity of minimizers by studying their blow-up models at singularities.
These objects are known as $s$-minimal cones, where $s \in (0,1)$ encodes the strength of the long-range interaction.
When the parameter $s$ approaches zero, the nonlocal interaction becomes stronger, and the regularity of $s$-minimal surfaces is largely undeveloped.
We resolve this question in three dimensions.

\begin{theorem}\label{thm:main}
There exists an $s_* > 0$ such that for all $s \in (0,s_*)$, every locally $s$-perimeter minimizing cone in $\mathbb{R}^3$ is a half-space.
The same property holds for every stable $s$-minimal cone in $\mathbb{R}^3$ with $C^2$ boundary away from the origin.
\end{theorem}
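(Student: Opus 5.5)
The plan is to argue by contradiction through a compactness argument as $s\downarrow 0$, combined with a pinching (rigidity) statement near a half-space. First, minimizing cones are stable and, by the regularity theory of \cite{caffarelli-roquejoffre-savin}, have smooth boundary away from the origin in $\mathbb{R}^3$: the singular set of a minimizer has dimension at most $n-3=0$ after dimension reduction, since $s$-minimal cones in $\mathbb{R}^2$ are flat. So it suffices to treat stable cones $E_k$ with $C^2$ boundary off the origin, with parameters $s_k\downarrow 0$, and none of them a half-space. Each trace $\Sigma_k=\partial E_k\cap\mathbb{S}^2$ is a smooth embedded curve system on the sphere.

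\textbf{Step 1 (normalization and compactness).} As $s\to 0$ the $s$-perimeter, renormalized by the factor $s$, concentrates on the "volume at infinity" functional. For a cone, the rescaled nonlocal mean curvature at a point $x\in\partial E\cap \mathbb{S}^2$ is governed by $\int_{\mathbb{S}^2}(\chi_{E^c}-\chi_E)\,d\sigma$ plus an $O(s)$ correction that depends on the local geometry. I will prove a compactness theorem: the sets $E_k\cap\mathbb{S}^2$ subconverge in $L^1(\mathbb{S}^2)$ to a limit $E_\infty$, and the Euler--Lagrange equation $H_s[E_k]=0$ forces $|E_\infty\cap \mathbb{S}^2|=|E_\infty^c\cap\mathbb{S}^2|$ at leading order. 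At the next order, after subtracting the leading term and dividing by $s$, a limiting equation emerges. Stability, used through the weighted (nonlocal Jacobi) inequality with test functions depending only on the angle, should give uniform curvature bounds on $\Sigma_k$ in the spirit of the known $C^2$ estimates for stable $s$-minimal surfaces. These bounds need to be kept uniform as $s\to0$, which requires tracking the constants $\sim s$ in the kernel $|x-y|^{-3-s}$.

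\textbf{Step 2 (the limit is flat).} I then show that the limiting object must be a half-space. The heuristic is that the second-order term in $s$ of the nonlocal curvature at a point of a cone is a logarithmic-weighted average that, combined with stability, yields a sign condition on a convexity-type quantity of $\Sigma_\infty$. On $\mathbb{S}^2$, a balanced (equal-area) curve satisfying the limit equation and the limiting stability must be a great circle. I expect to test stability with the coordinate functions $\nu\cdot e$, which correspond to rotations and give zero-energy modes, and with $1$, which gives a sign; this is the analogue of Simons' argument on the sphere.

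\textbf{Step 3 (pinching near a half-space).} Knowing $E_k$ is $L^1$- and then, by improvement of flatness with $s$-uniform constants, $C^{2}$-close to a half-space on $\mathbb{S}^2$, I need a pinching theorem: a stable $s$-minimal cone sufficiently close to a half-space, with closeness independent of $s$ small, is a half-space. I would linearize. Writing $\Sigma_k$ as a normal graph $u$ over a great circle, the linearized operator is the cone-restricted fractional Jacobi operator. Its kernel on functions on the circle consists of the modes $\cos\theta,\sin\theta$, which are rotations. I would show a spectral gap uniform as $s\to0$ for the remaining modes, using explicit Fourier multipliers of the kernel on the circle, whose symbol behaves like $s^{-1}$-normalized $\log$-type growth. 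After rotating to kill the first modes, the equation then forces $u=0$. This is the main obstacle: the spectral gap and the nonlinear error estimates must be uniform as $s\downarrow0$, where the operator degenerates toward a zeroth-order one. The strategy is to check the Fourier symbols $\lambda_m(s)$ directly and show $(\lambda_m(s)-\lambda_1(s))/s\gtrsim \log m$ uniformly, which absorbs quadratic errors controlled by the uniform $C^2$ bounds. Combining the steps contradicts the assumption that the $E_k$ are not half-spaces, giving $s_*$.
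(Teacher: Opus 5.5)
Your outer architecture is close to the paper's: you reduce minimizers to stable cones with smooth link, extract an $s\downarrow0$ limit with half volume, show the limit is flat, then pinch. However, Steps 1 and 3 rest on regularity that is not available, and Step 2 is only a heuristic.

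\textbf{Steps 1 and 3: regularity that is not available.} The uniform curvature bounds you want from stability, and the improvement of flatness ``with $s$-uniform constants'', are not known as $s\downarrow0$. In the nonlocal theory, curvature estimates for stable surfaces are obtained by blow-up from the flatness of stable cones, which is the statement you are proving, so invoking them is circular.

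What can be proved uniformly is a perimeter bound $\cH^1(\Sigma_k\cap B_r(x))\le Cr$. Even this needs care, because the Cinti--Serra--Valdinoci constant degenerates as $s\downarrow0$ through a far-field term. The paper handles it with a divergence-free flow equal to a translation near $B_1$, so that the second variation is controlled by $|z|^{-N-s}\min\{1,|z|^{-1}\}$.

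Consequently the limit is only a half-volume Caccioppoli set and the links converge in strict BV. $L^1$- or BV-closeness to a hemisphere gives neither Hausdorff nor $C^2$ closeness, so your linearization over a great circle cannot start. The spectral heuristic is also miscalibrated: on the two-dimensional cone the kernel $|X-Y|^{-3-s}$ gives an operator of order $1+s\to1$, which does not degenerate to zeroth order.

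The missing ingredient is a pinching theorem that needs only $\inf_a\int_\Sigma|\nu-a|^2\,d\mu<\ve$, which strict BV convergence to a hemisphere does provide. The paper proves it without linearizing:
\begin{enumerate}[(1)]
\item Test stability with $h_-=\max\{-\la a,\nu\rg,0\}$.
\item Compare with the translation Jacobi-field identity $\textup{PV}\int_\Sigma B_s(\la x,y\rg)(\nu_x-\nu_y)\,d\mu_y=p_s(x)\nu_x$.
\item Play the global lower bound $B_s\ge\frac{1}{N(N-1)}$ against small-set bounds for $W_s$ to force $h_-\equiv0$.
\item Then $\int_\Sigma p_s\la a,\nu\rg\,d\mu=0$ produces a point with $p_s(x_0)=0$, so $\nu$ is constant.
\end{enumerate}

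\textbf{Step 2: the classification is not established.} This is where the theorem actually lives, and as written it does not go through. With no regular limiting curve there is no next-order limit equation or convexity quantity. A Simons-type argument with $\nu\cdot e$ and $1$ does not by itself give flatness.

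What survives in the limit is a stability inequality on the Caccioppoli set. Testing with $1-\la a,\nu_k\rg$ (your two test functions combined via the translation Jacobi field) gives
$\iint B_0(1-\la\nu_x,\nu_y\rg)\le\ell(\bS^2)\le\iint W_0(1-\la a,\nu_x\rg)(1-\la a,\nu_y\rg)$,
with $B_0(z)=(1-z)^{-1}$ and $W_0(z)=\int_0^1(1-\sqrt{\rho})^2(1+\rho^2-2\rho z)^{-3/2}\,d\rho$.

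Extracting a hemisphere from this takes a delicate quantitative argument in the paper:
\begin{enumerate}[(1)]
\item A spherical-harmonic expansion of $\mathbf{1}_\Omega-\frac12$ with explicit multipliers gives $\iint W_0\la\nu_x,\nu_y\rg<3\pi$, hence $\iint W_0\le 12$ and $L=\cH^1(\partial^*\Omega)<41/5$.
\item The isoperimetric bound $L\ge2\pi$ then forces a large flux $V=\int\nu\,d\mu$ and a density bound $\mu(B_r)\le 12r$.
\item A bathtub rearrangement with $W_0\le\frac14\textup{arcsinh}(2/|x-y|)$ rules out $1-|V|/L>0$.
\end{enumerate}
Some argument of this kind is needed to close your Step 2.
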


For every $s \in (0,1)$, Savin and Valdinoci~\cites{savin-valdinoci , savin-valdinoci-2 } proved that minimizing cones in $\bR^2$ are flat, and Caffarelli-Valdinoci~\cite{caffarelli-valdinoci-advances} established the flatness of minimizing cones in $\bR^N$, for every $2 \leq N \leq 7$ and $s$ sufficiently close to $1$, using a compactness argument.
By the work of Figalli-Valdinoci, the flatness of $s$-minimizing cones in $\bR^N$ implies the nonlocal Bernstein theorem in $\bR^{N+1}$~\cite{figalli-valdinoci}. 
For stable cones, such compactness arguments are generally unavailable, and the classification becomes significantly more challenging in all regimes of $s$; indeed, the cross $\{ x_1 x_2 >0\} \subset \bR^2$ is stable for $s$ close to $1$. 
Working with a general class of nonlocal energies that includes the $s$-perimeter for all $s \in (0,1)$, Cinti-Serra-Valdinoci~\cite{cinti-serra-valdinoci} obtained BV estimates and proved the flatness of cones in $\bR^2$ satisfying the stronger condition of stability under rearrangements.
More recently, Caselli~\cite{caselli} established that stable cones in $\bR^2$ are flat for $s$ close to zero.

In the near-local regime $s \sim 1$, significant breakthroughs due to Cabr\'e-Cinti-Serra~\cites{cabre-cinti-serra , s-allen-cahn} proved the flatness of stable $s$-minimal cones in $\bR^3$ and used this result to obtain fractional versions of the de Giorgi conjecture and the stable Bernstein theorem.
Chan-Dipierro-Serra-Valdinoci~\cite{nonlocal-approximation} introduced further ideas that extended this flatness result to $\bR^4$ and $s \sim 1$.
Finally, Caselli-Florit--Simon-Serra~\cite{caselli-fs-serra} further generalized the flatness property to solutions with finite Morse index and proved a striking nonlocal version of Yau's conjecture for $s$-minimal surfaces.

Recent work of Dipierro-Fern\'andez-Real-Valdinoci~\cite{generically-unique} further relates the critical regularity dimension for $s$-perimeter minimizers to their generic regularity.
Together with the existing theory, notably~\cites{figalli-valdinoci , cabre-cinti-serra , caselli-fs-serra }, Theorem~\ref{thm:main} yields the following additional consequences.

\begin{corollary}\label{cor:other-conjectures}
    There exists a $s_*>0$ such that, for every $s \in (0,s_*)$, the following properties hold.
    \begin{enumerate}[(i)]
        \item Every fractional Allen-Cahn solution in $\bR^3$ with finite Morse index is one-dimensional.
        \item Every entire $C^2$ nonlocal minimal surface in $\bR^3$ with finite Morse index is a plane. 
        \item Every closed $3$-manifold contains infinitely many smooth $s$-minimal surfaces.
        \item Every entire globally minimizing $s$-minimal graph in $\bR^4$ is a hyperplane.
        \item For $N \geq 4$, let $G \subseteq \bR^N \setminus B_1$ be an exterior datum such that $\partial (G \cup B_1)$ has locally finite $(N-1)$-Minkowski content.
Then, there are $L^1_{\textup{loc}}$-small perturbations $G'$ of $G$ for which there exists a unique $s$-perimeter minimizer $E'$ in $B_1$ with $E' = G'$ in $\bR^N \setminus B_1$, which is smooth when $N = 4$ and satisfies $\dim_{\cH} \textup{sing} (\partial E') \cap B_1 \leq N-5$ for $N \geq 5$.
    \end{enumerate}
\end{corollary}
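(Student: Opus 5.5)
The corollary is a matter of feeding Theorem~\ref{thm:main} into the existing machinery, which in each cited work reduces the statement to flatness of the relevant $s$-minimal cones in $\bR^3$ (stable or minimizing). I would fix $s_*$ as in Theorem~\ref{thm:main} and check, item by item, that the cited reduction uses no property of $s$ beyond that flatness, plus estimates valid for every $s\in(0,1)$.

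For (ii), I would follow Caselli--Florit-Simon--Serra~\cite{caselli-fs-serra}. Their argument runs as follows: an entire $C^2$ $s$-minimal surface with finite Morse index is stable outside a compact set, and their curvature estimates for finite-index surfaces reduce the classification to blow-downs. Blow-downs are stable cones in $\bR^3$ that are $C^2$ away from the origin, and by the second part of Theorem~\ref{thm:main} these are half-spaces. Their density/monotonicity argument then forces the surface to be a plane. For (i), I would use the same paper together with the Allen--Cahn framework of Cabr\'e--Cinti--Serra~\cite{cabre-cinti-serra}. The blow-down of the level sets of a finite-index fractional Allen--Cahn solution is a stable $s$-minimal cone (in the regime where the fractional Allen--Cahn energy $\Gamma$-converges to the $s$-perimeter), and flatness of that cone yields one-dimensional symmetry. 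For (iii), the Yau-type statement of~\cite{caselli-fs-serra} is derived conditionally on the classification of finite-index / stable cones in dimension $3$. That hypothesis, which they verified for $s\sim1$, is now supplied by Theorem~\ref{thm:main}, and their min-max construction otherwise works for all $s$.

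For (iv), I would invoke Figalli--Valdinoci~\cite{figalli-valdinoci}: if all $s$-minimizing cones in $\bR^3$ are flat, then every entire minimizing $s$-minimal graph in $\bR^4$ is a hyperplane. Their argument is a blow-down to a cylindrical cone $C\times\bR$ and a dimension reduction, and the first part of Theorem~\ref{thm:main} supplies the hypothesis. For (v), I would apply Dipierro--Fern\'andez-Real--Valdinoci~\cite{generically-unique}. Their generic uniqueness/regularity theorem gives, for $L^1_{\mathrm{loc}}$-small perturbations of generic exterior data, a unique minimizer with $\dim_{\cH}\mathrm{sing} \le N-1-\dots$ in terms of the critical dimension $N^*(s)$, the first dimension carrying a singular minimizing cone. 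Theorem~\ref{thm:main} plus Savin--Valdinoci in $\bR^2$ and Federer dimension reduction~\cite{caffarelli-roquejoffre-savin} give $N^*(s)\ge4$. Their generic-regularity gain of dimensions then yields smoothness for $N=4$ and the bound $N-5$ for $N\ge5$.

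The main obstacle is bookkeeping rather than new mathematics: each cited result must be stated with a black-box hypothesis ("stable cones in $\bR^3$ are flat") and not use $s\sim1$ elsewhere. I expect the delicate point to be (i) and (iii). The finite-Morse-index curvature estimates and the Allen--Cahn-to-perimeter passage of~\cite{caselli-fs-serra,cabre-cinti-serra} must hold uniformly for small $s$, and that should be verified there. If they are only stated for $s\sim1$, I would need to re-derive them, presumably via the compactness theory for $s\downarrow0$ developed in this paper.
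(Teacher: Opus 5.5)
Your proposal matches the paper, which gives no argument beyond inserting Theorem~\ref{thm:main} as the cone-classification input into the existing theory: \cite{caselli-fs-serra} and \cite{cabre-cinti-serra} for (i)--(iii), \cite{figalli-valdinoci} for (iv), and \cite{generically-unique} for (v), where the critical dimension is at least $4$ by the planar classification and dimension reduction. Your closing worry is unnecessary: each cited result is applied at a fixed $s\in(0,s_*)$ with $s$-dependent constants, so no estimate needs to be uniform as $s\downarrow 0$, and the compactness theory of Section~\ref{sec:compactness} plays no role at this stage.
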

On the other hand, the D\'avila-Del Pino-Wei $s$-minimal cones, constructed in~\cite{davila-delpino-wei}, are stable in dimension $N=7$ for small $s$.
We expect that the techniques developed in this paper, notably the compactness result of Theorem~\ref{thm:compactness-thm}, will be useful in determining the regularity of stable and minimizing nonlocal cones in the remaining dimensions $N \in \{ 4,5,6\}$ when $s$ is close to zero.

\subsection{Strategy of the proof}

The proof of Theorem~\ref{thm:main} is motivated by the Mellin multiplier approach in the beautiful work of Fern\'andez-Real and Ros-Oton on the fractional one-phase problem~\cite{stable-thin-one-phase }, as well as the work of Caffarelli, Jerison, and Kenig~\cite{caffarelli-jerison-kenig} on the one-phase problem.
It is also influenced by the results of Cabr\'e-Cinti-Serra~\cite{cabre-cinti-serra} in the regime $s \sim 1$.
The first step is to recast the stability inequality for an $s$-minimal cone entirely in terms of the spherical link, namely the trace of its boundary on $\bS^{N-1}$.
This reduction is performed in Section~\ref{section:stability-inequality}.

To analyze the stability inequality for $s$ close to $0$, we then extract and classify the limits of $s$-minimal cones as $s \downarrow 0$.
Since the $s$-perimeter approaches the density at infinity of the enclosed domain for $s \downarrow 0$, by~\cites{asymptotics-s-perimeter , valdinoci-bucur-lombardini }, these limits are expected to be half-volume Caccioppoli sets in $\bS^{N-1}$.
To extract such limits, Proposition~\ref{prop:bound-s} establishes uniform perimeter estimates for stable $s$-minimal surfaces that not degenerate as $s \downarrow 0$, improving the fundamental theory developed in~\cite{cinti-serra-valdinoci}.
These uniform estimates are of independent interest and have important applications to the behavior of nonlocal minimal surfaces as $s \sim 0$, which differs substantially from the regime $s \sim 1$.
In particular, Theorem~\ref{thm:compactness-thm} describes the limit of links of stable $s$-minimal cones and shows that they satisfy a limiting stability inequality in addition to several current- and measure-theoretic properties.
Our compactness theory also requires integral estimates for certain kernels related to the fractional $s$-perimeter; these are established in Appendix~\ref{app:kernel-estimates}.
At $s=0$, the limiting stability inequality reduces to explicit integral computations, carried out in Lemma~\ref{lemma:domain-expansion} and Proposition~\ref{prop:stable-links-s=0}.

The approach developed in this paper applies in every dimension and may play a role in the study of stable and minimizing nonlocal cones for $s \sim 0$ in the remaining dimensions $N \in \{4,5,6\}$.
Crucially, we obtain two pinching results in every dimension, Theorem~\ref{thm:uniform-pniching} and Corollary~\ref{cor:near-isoperimetric}, which show that stable $s$-minimal cones are flat if their link has almost constant normal vector or is almost isoperimetric.
Combined with our compactness theory, this leads to a dimension reduction scheme, formulated in Proposition~\ref{prop:dimension-descent}, which reduces the classification of stable cones to proving that the only stable $s=0$ limit link is an equator.
Proposition~\ref{prop:stable-links-s=0} establishes this classification in $\bR^3$.
%In $\bR^3$, we obtain a very precise description of the curves in $\bS^2$ arising from this limiting procedure, as given in Section~\ref{section:limits-3-D}, notably Proposition~\ref{prop:N=3-limiting-configuration}.
%In particular, we deduce that the only stable link is an equator, proving Theorem~\ref{thm:main}.

\subsection{Connections to other problems}
The stability reduction of Section~\ref{section:stability-inequality} is connected to a much more general principle, related to Hardy's inequality.
This technique finds applications to a large number of local and nonlocal variational problems that we develop in upcoming work~\cite{critical-hardy}.
A key novelty of our result is the compactness theory for stable nonlocal minimal surfaces as $s \downarrow 0$, established through uniform perimeter estimates for stable $s$-minimal surfaces in Section~\ref{sec:compactness}.
For variational problems depending continuously on a parameter and satisfying such compactness properties, the regularity of solutions and properties of singularity models can be quantitatively studied.
This technique has also found significant applications to the capillary, one-phase Bernoulli, and Alt-Phillips problems~\cites{chodosh-edelen-li , FTW-1 , desilva-savin }.

We expect that the techniques developed in this paper could be useful in establishing the flatness of stable $s$-minimal cones in $\bR^4$ for $s \sim 0$.
Thanks to Proposition~\ref{prop:dimension-descent}, this result is equivalent to proving that every stable $s=0$ limit link in $\bS^3$ is an equator.
We highlight a number of remarkable techniques introduced to rule out non-flat minimizing cones in low dimensions, including Almgren's topological classification of minimizing links in $\bS^3$~\cite{almgren} (see also~\cite{chodosh-edelen-li}) and the Jerison-Savin technique for the one-phase problem~\cite{jerison-savin}, as well as further advances in the capillary and Alt-Phillips problems~\cites{singular-capillary , fernandez-real }.
Unlike the four-dimensional classification of~\cite{nonlocal-approximation} in the regime $s \sim 1$, the limiting Caccioppoli sets as $s \downarrow 0$ exhibit strong far-field effects and may behave very differently from minimal surfaces, so they require further new ideas.
It would be interesting to analyze the properties of such domains, including potential analogies with the remarkable results of Ros~\cite{two-piece-property} and Brendle~\cites{brendle-acta , brendle }, and extend our regularity results to four dimensions.

\smallskip \noindent 
\textbf{Acknowledgments.}
I am thankful to Riccardo Villa for many fruitful discussions on nonlocal minimal surfaces.
I also thank Ovidiu Savin, Francisco Mart\'in, and Mariel S\'aez for helpful conversations and comments on a preliminary version of this manuscript.
I am grateful to my advisor, Simon Brendle, for his invaluable guidance and continued support.
This work was supported in part by the A.G. Leventis Foundation Scholarship and the Onassis Foundation Scholarship.

\section{Preliminaries}\label{sec:preliminaries}

For a measurable set $E \subseteq \mathbb{R}^N$ and a bounded open set $U \subseteq \mathbb{R}^N$, we define
\[
\textup{Per}_s(E;U)=\iint_{(E\cap U)\times E^c}\frac{dx\,dy}{|x-y|^{N+s}}
+\iint_{(E\setminus U)\times(E^c\cap U)}\frac{dx\,dy}{|x-y|^{N+s}}.
\]
At a smooth point of $\partial E$, we define the $s$-nonlocal mean curvature as
\[
H_s[E](x)= \frac{1}{2} \, \textup{PV}\int_{\mathbb{R}^N} \frac{\mathbf{1}_{E^c}(y)-\mathbf{1}_E(y)}{|x-y|^{N+s}}\,dy.
\]
We say that $E$ is $s$-\textbf{minimal} when $H_s[E] = 0$ everywhere on the regular part of $E$.
Sets satisfying this condition are also referred to as $s$-\textbf{stationary}, while the notion of minimality is sometimes reserved in the literature for minimizers of the $s$-perimeter.

If $M = \partial E$ is $C^2$ in $\bR^N \setminus \{ 0 \}$, we define the \textbf{nonlocal Jacobi operator}
\[
\mathcal{J}_{s,M} h(X) = \textup{PV} \int_M \frac{h(Y) - h(X)}{|X-Y|^{N+s}} \, d \sigma_Y + h(x) \int_M \frac{1 - \langle \nu_X, \nu_Y \rangle}{|X-Y|^{N+s}} \, d \sigma_Y.
\]
With these conventions, the Jacobi operator has the significance that the linearization of $H_s$ under the normal graph $X \mapsto X + \varepsilon h(X) \nu_X$ is given by $- \mathcal{J}_{s,M}h$.

If $M = \partial E$ is $s$-minimal and smooth on the support of the variation, its second variation form is
\[
Q_{s,M}[\phi] := \frac{1}{2} \iint_{M \times M} \frac{( \phi(X) - \phi(Y))^2}{|X-Y|^{N+s}} \, d \sigma_X \, d \sigma_Y - \int_{M} \phi(X)^2\int_M\frac{1-\langle \nu_X , \nu_Y \rangle}{|X-Y|^{N+s}}\, d\sigma_Y \, d\sigma_X.
\]
We say that $M$ is \textbf{stable} when $Q_{s,M}[\phi] \geq 0$ for every $\phi \in C_c^{\infty} (M \setminus \{ 0 \})$.

Throughout, we will consider conical Caccioppoli sets $E = C(\Omega) \subset \mathbb{R}^N$, where $\Omega \subset \mathbb{S}^{N-1}$ and $N \geq 3$, and write $\Gamma = \partial \Omega$ with $M = \partial E= C(\Sigma)$.
Thus, every $X \in M \setminus \{ 0 \}$ can be written uniquely as $X = e^t x$ with $t \in \mathbb{R}$ and $x \in \Sigma$.
The normal is homogeneous of degree zero, so $\nu_{e^t x} = \nu_x$, while the surface measure is $d \sigma_X = e^{(N-1) t} \, dt \, d \mu_x$.
In coordinates $(X,Y) = (e^t x, e^u y)$, we therefore obtain
\[
\phi(e^t x) = \exp ( - \tfrac{(N-2-s) t}{2} ) \psi(t,x), \qquad |e^t x - e^u y|^2 = 2 e^{t+u} ( \cosh(t-u) - \langle x,y \rangle ).
\]
We introduce the spherical kernels
\begin{equation}\label{eqn:spherical-kernels}
    \begin{split}
A_s(z) &:= \int_0^{\infty} \frac{\rho^{\frac{N-2+s}{2}}}{(1 + \rho^2 - 2 \rho z)^{\frac{N+s}{2}}} \, d \rho, \qquad B_s(z) := \int_0^{\infty} \frac{\rho^{N-2}}{(1+\rho^2 - 2 \rho z)^{\frac{N+s}{2}}} \, d \rho, \\
W_s(z) &:= B_s(z) - A_s(z) = \int_0^1 \frac{( \rho^{\frac{N-2}{2}} - \rho^{\frac{s}{2}})^2}{(1 + \rho^2 - 2 \rho z)^{\frac{N+s}{2}}} \, d \rho \geq 0.
    \end{split}
\end{equation}
The expression for $W_s(z)$ is obtained by splitting the integral expressions $A_s, B_s$ at $\rho=1$ and making the change of variables $\rho \mapsto \rho^{-1}$ on $(1,\infty)$.
\begin{lemma}\label{lemma:ws-useful-bound}
    There exists a dimensional constant $C_N$ such that, for every $0 < s<1$, we have
    \[
    W_s (\la x,y \rg) \leq C_N \, (1 + |\log |x-y||) \, |x-y|^{-(N-3+s)}.
    \]
\end{lemma}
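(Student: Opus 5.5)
We estimate $W_s(z)$ directly from its integral representation, with $z = \langle x,y\rangle$ and $x,y \in \mathbb{S}^{N-1}$. Write $\delta := |x-y|$, so that $1-z = \delta^2/2$ and
\[
1+\rho^2-2\rho z = (1-\rho)^2 + \rho\,\delta^2 .
\]
The only singularity of the integrand on $[0,1]$ is at $\rho = 1$, where the denominator becomes $\delta^{N+s}$. The numerator $(\rho^{\frac{N-2}{2}}-\rho^{\frac s2})^2$ vanishes to second order at $\rho=1$, so it is bounded by $C_N (1-\rho)^2$ on $[1/2,1]$, uniformly in $s\in(0,1)$. This uniformity holds because the derivative of $\rho^{a}-\rho^{b}$ at $\rho = 1$ is $a-b$, with $|a-b|\le \frac{N-2}{2}$, and the second derivatives are uniformly bounded on $[1/2,1]$. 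On $[0,1/2]$ the numerator is bounded by $1$ and the denominator by $\ge 1/4$, so that part contributes at most $C_N$. Since $\delta \le 2$, this constant is absorbed into $C_N\,\delta^{-(N-3+s)}$ (note $N-3+s>0$ for $N \geq 3$).

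The main step is the integral over $[1/2,1]$. With $r = 1-\rho$, and using $\rho\ge 1/2$, we have $(1-\rho)^2+\rho\delta^2 \ge \tfrac12 (r^2+\delta^2)$. Hence
\[
\int_{1/2}^1 \ldots \, d\rho \;\le\; C_N \int_0^{1/2} \frac{r^2}{(r^2+\delta^2)^{\frac{N+s}{2}}}\, dr .
\]
Substituting $r = \delta t$ gives
\[
\delta^{-(N-3+s)} \int_0^{1/(2\delta)} \frac{t^2}{(1+t^2)^{\frac{N+s}{2}}}\, dt .
\]
For large $t$ the integrand behaves like $t^{-(N-2+s)}$. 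When $N\ge 4$, the exponent satisfies $N-2+s\ge 2>1$, so the integral is bounded uniformly in $s$, which is even better than the claim. When $N=3$ the integrand decays like $t^{-(1+s)}$, so we must avoid bounds like $1/s$ that degenerate as $s \downarrow 0$. We therefore simply bound $t^{-(1+s)}\le t^{-1}$ for $t\ge 1$, obtaining
\[
\int_1^{\max(1,1/(2\delta))} t^{-1}\,dt \le 1+|\log \delta| .
\]
This is exactly the logarithmic factor in the statement.

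I expect the only real obstacle is keeping every constant uniform in $s\in(0,1)$. This concerns both the Taylor bound on the numerator near $\rho = 1$ and the $N=3$ tail, where the naive bound $\int_1^\infty t^{-1-s}\,dt = 1/s$ must be replaced by the truncated logarithmic bound. Combining the two ranges of $\rho$ yields
\[
W_s(\langle x,y\rangle)\le C_N (1+|\log|x-y||)\,|x-y|^{-(N-3+s)} .
\]
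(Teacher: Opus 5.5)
Your proof is correct and follows essentially the same route as the paper. You split at $\rho=\tfrac12$, bound the squared numerator by $C_N(1-\rho)^2$ near $\rho=1$ uniformly in $s$, rescale $1-\rho=|x-y|\,t$, and treat $N\ge 4$ and $N=3$ separately. The only cosmetic difference is in the $N=3$ tail: you bound $t^{-1-s}\le t^{-1}$ directly, whereas the paper computes $s^{-1}(r^{-s}-1)$ exactly and then bounds it by $|\log r|\,r^{-s}$.
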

\begin{proof}
    More precisely, we can prove that
    \begin{equation}\label{eqn:ws-bounds-N}
        \begin{aligned}
    W_s(\la x,y \rg) &\leq C_N |x-y|^{-(N-3+s)} \quad &\text{for } \; N &\geq 4, \\
    W_s(\la x,y \rg) &\leq C \, [ 1 + s^{-1}( |x-y|^{-s}-1) ] &\quad \text{for } \; N&=3.
        \end{aligned}
    \end{equation}
    Let $r := |x-y|$, so $r^2 = 2(1-\la x,y \rg)$ and $1 + \rho^2 - 2 \rho \la x,y \rg = (1-\rho)^2 + \rho r^2$.
    We split the defining integral for $W_s$ into $\rho\in(0, \frac{1}{2})$ and $\rho \in ( \frac{1}{2}, 1)$.
    The first contribution is bounded uniformly, since $(1-\rho)^2+\rho r^2\geq\frac{1}{4}$ there.
    On $\rho \in [\frac{1}{2},1]$, the functions $\rho \mapsto \rho^{\gamma}$ have uniformly bounded derivatives for $\gamma \in [ 0 , \frac{N}{2}]$, hence $|\rho^{\frac{N-2}{2}} - \rho^{\frac{s}{2}}| \leq C_N(1-\rho)$.
    Since $\rho \geq \frac{1}{2}$, we can set $1- \rho = ru$ to bound
    \[
    W_s(\la x,y \rg) \leq C_N + C_N r^{-(N-3+s)} \int_0^{\frac{1}{2r}} \frac{u^2}{(1+u^2)^{\frac{N+s}{2}}} \, du.
    \]
    If $N \geq 4$, the last integral is bounded uniformly in $s \in (0,1)$, hence proving~\eqref{eqn:ws-bounds-N}.
    If $N=3$, then
    \[
    \int_0^{\frac{1}{2r}} \frac{u^2}{(1+u^2)^{\frac{3+s}{2}}} \, du \leq C + C \int_1^{\frac{1}{2r}} u^{-1-s} \, du \leq C \bigl[ 1 + s^{-1}( r^{-s}-1) \bigr]
    \]
    for $r \leq \frac{1}{2}$.
    The second bound of~\eqref{eqn:ws-bounds-N} follows from absorbing $r \in [ \frac{1}{2}, 1]$ into the constant.
    Now,
    \[
    s^{-1} (r^{-s}-1) = |\log r| \int_0^1 r^{- \theta s} \, d \theta \leq |\log r| \, r^{-s}
    \]
    shows that $W_s( \la x,y \rg) \leq C (1 + |\log r|) r^{-s}$.
    Combining the two cases and using the boundedendess of $W_s$ away from the diagonal completes the proof.
\end{proof}

\begin{lemma}\label{eqn:ws-properties}
For every $0 < s < 1$, the integral kernel $W_s ( \langle x , y \rangle)$ is locally integrable on $\mathbb{S}^{N-1} \times \mathbb{S}^{N-1}$ and on $\Sigma \times \Sigma$, and is positive-definite on test functions in the sense that 
\begin{equation}\label{eqn:on-test-functions}
    \iint_{\Sigma \times \Sigma} W_s ( \langle x,y \rangle) f(x) f(y) \, d \mu_x \, d \mu_y \geq 0, \qquad \text{for every } \; f \in C^{\infty}(\Sigma).
\end{equation}
\end{lemma}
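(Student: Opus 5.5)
Local integrability follows directly from Lemma~\ref{lemma:ws-useful-bound}. On $\mathbb{S}^{N-1}\times\mathbb{S}^{N-1}$, which has dimension $N-1$ in each factor, the bound $(1+|\log r|)\,r^{-(N-3+s)}$ has exponent $N-3+s<N-1$, so it is integrable near the diagonal. On $\Sigma\times\Sigma$, with $\Sigma$ a $C^2$ hypersurface of dimension $N-2$ away from the origin (compact), we use that geodesic balls on $\Sigma$ satisfy $\mu(B_r(x)\cap\Sigma)\le Cr^{N-2}$. Integrating $r^{-(N-3+s)}(1+|\log r|)$ against $r^{N-3}\,dr$ gives $\int_0 r^{-s}(1+|\log r|)\,dr<\infty$. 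Away from the diagonal, $W_s$ is bounded because $1+\rho^2-2\rho z$ is bounded below when $z\le 1-\delta$.

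For positive-definiteness, the plan is to write $W_s(\langle x,y\rangle)$ as a superposition of positive-definite kernels. Fix $\rho\in(0,1)$. The kernel
\[
K_\rho(x,y):=(1+\rho^2-2\rho\langle x,y\rangle)^{-\frac{N+s}{2}}=|x-\rho y|^{-(N+s)}
\]
is a power series $\sum_k c_k(\rho)\langle x,y\rangle^k$ with $c_k\ge0$. This holds because $(1-w)^{-\alpha}$ has nonnegative Taylor coefficients, with $w=\frac{2\rho z}{1+\rho^2}$ and $|w|<1$ uniformly in $z\in[-1,1]$ for fixed $\rho<1$. Each $\langle x,y\rangle^k=\sum_{|\beta|=k}\binom{k}{\beta}x^\beta y^\beta$ is a sum of products $g(x)g(y)$, so it is positive semidefinite. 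Hence
\[
\iint f(x)f(y)K_\rho(x,y)\,d\mu_x\,d\mu_y=\sum_k c_k(\rho)\sum_\beta\tbinom{k}{\beta}\Big(\int_\Sigma f(x)x^\beta\,d\mu_x\Big)^2\ge0,
\]
where absolute convergence for fixed $\rho<1$ justifies exchanging sum and integral. Alternatively, one can use that $K_\rho$ is a Gegenbauer expansion with positive coefficients by Schoenberg's theorem, or that $|x-\rho y|^{-(N+s)}$ is a positive combination of Gaussians $e^{-t|x-\rho y|^2}$, but the power series route is cleanest. Multiplying by the nonnegative weight $(\rho^{\frac{N-2}{2}}-\rho^{\frac s2})^2$ and integrating over $\rho\in(0,1)$ then gives~\eqref{eqn:on-test-functions}, provided we may apply Fubini.

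The main obstacle is this interchange of the $\rho$-integral with the double integral over $\Sigma\times\Sigma$, since $K_\rho$ blows up on the diagonal as $\rho\to1$. Fubini–Tonelli resolves it. The integrand $|f(x)f(y)|\,(\rho^{\frac{N-2}{2}}-\rho^{\frac s2})^2K_\rho$ is nonnegative, and its total integral equals $\iint|f(x)||f(y)|\,W_s\,d\mu\,d\mu$. This is finite by the local integrability just established, because $f$ is bounded and $\Sigma$ is compact. Therefore
\[
\iint W_s ff=\int_0^1(\rho^{\frac{N-2}{2}}-\rho^{\frac s2})^2\Big(\iint K_\rho ff\Big)\,d\rho\ge0.
\]
The same argument works verbatim on $\mathbb{S}^{N-1}$.
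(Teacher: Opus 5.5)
Your proposal is correct and follows essentially the same route as the paper: it proves local integrability from a near-diagonal kernel bound, and it proves positivity by expanding $(1+\rho^2-2\rho\langle x,y\rangle)^{-\frac{N+s}{2}}$ for each fixed $\rho<1$ as a power series with nonnegative coefficients and using that each $\langle x,y\rangle^k$ is positive semidefinite. The only difference is cosmetic. You justify integrating over $\rho$ directly by Fubini--Tonelli, whereas the paper truncates to $\rho\in(0,1-\delta)$ and lets $\delta\downarrow0$ by dominated convergence; both rely on the same integrability of $W_s$ on $\Sigma\times\Sigma$.
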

\begin{proof}
As $z \uparrow 1$, we can write $1 + \rho^2 - 2 \rho z = (1 - \rho)^2 + 2 \rho(1-z)$ near $\rho=1$ to bound
\[
A_s(z) + B_s(z) \leq C(1-z)^{- \frac{N+s-1}{2}}, \qquad W_s(z) \leq C (1-z)^{- \frac{N+s-3}{2}},
\]
whereas the numerator in the expression for $W_s$ vanishes quadratically at $\rho=1$.
Since $1 - \langle x , y \rangle \asymp d_{\mathbb{S}^{N-1}}(x,y)^2$, we find $W_s( \langle x,y \rangle) \leq C\,  d_{\mathbb{S}^{N-1}}(x,y)^{-(N+s-3)}$. 
Thus, applying Corollary~\ref{cor:local-integrability} with $m=N-2$ shows that $W_s$ is locally integrable on $\Sigma \times \Sigma$.
Similarly, $A_s( \langle x,y \rangle) (g(x) - g(y))^2$ and $B_s ( \langle x,y \rangle) (1 - \langle x,y \rangle)$ are locally integrable for every smooth $g$, since both squared differences vanish quadratically on the diagonal and $1 - \langle \nu_x, \nu_y \rangle = \frac{1}{2} |\nu_x - \nu_y|^2 = O ( d_{\Sigma}(x,y)^2)$.

Next, to prove the positive-definiteness~\eqref{eqn:on-test-functions}, we consider $\delta \in (0,1)$ and let $W_{s,\delta}(z)$ be the truncation
\[
W_{s,\delta}(z) = \int_0^{1-\delta} \frac{(\rho^{\frac{N-2}{2}} - \rho^{\frac{s}{2}})^2}{(1 + \rho^2 - 2 \rho z)^{\frac{N+s}{2}}} \, d \rho.
\]
For every $0 \leq \rho \leq 1-\delta$, we have the expansion
\[
\frac{1}{(1 + \rho^2 - 2 \rho z)^{ \frac{N+s}{2}}} = \frac{1}{(1 + \rho^2)^{\frac{N+s}{2}}} \sum_{k=0}^{\infty} \frac{1}{k!} \Bigl( \frac{N+s}{2} \Bigr)_k \Bigl( \frac{2 \rho}{1 + \rho^2} \Bigr)^k z^k, \qquad z = \langle x, y \rangle
\]
which converges absolutely and uniformly for $z \in [-1,1]$.
This expression has non-negative coefficients, with $(a)_k = a(a+1) \cdots (a+k-1) \geq 0$ the Pocchammer symbol.
Moreover, each kernel $\langle x,y \rangle^k$ is positive semidefinite because for any $\{ x_i \}_{i=1}^m \in \mathbb{R}^N$ and $\{ c_i \}_{i=1}^m \in \mathbb{R}$, we have
\[
\textstyle {\sum_{i,j=1}^m c_i c_j \langle x_i, x_j \rangle^k = \bigl\| \sum_{i=1}^m c_i x_i^{\otimes k} \bigr\|^2 \geq 0.}
\]
Thus, $(1 + \rho^2 - 2 \rho z)^{- \frac{N+s}{2}}$ is positive semidefinite for each $\rho \in (0,1)$, and so is $W_{s,\delta}( \langle x,y \rangle)$, as its superposition with the non-negative measure $(\rho^{\frac{N-2}{2}} - \rho^{\frac{s}{2}})^2 \, d \rho$.
Hence,
\[
\iint_{\Sigma \times \Sigma} W_{s,\delta} (\langle x,y \rangle) f(x) f(y) \, d \mu_x \, d \mu_y \geq 0.
\]
Since $0 \leq W_{s,\delta}(z) \leq W_s(z)$ and $W_s( \langle x,y \rangle)$ is integrable on $\Sigma \times \Sigma$, the dominated convergence theorem yields the assertion~\eqref{eqn:on-test-functions} upon sending $\delta \downarrow 0$. 
This completes the proof.
\end{proof}

Substituting $\rho = e^{\tau}$ and writing $P_s(\tau,z) = [2 ( \cosh \tau - z)]^{- \frac{N+s}{2}}$, we also obtain the expressions
\begin{equation}\label{eqn:A-B-expansions}
    A_s(z) = \int_{\mathbb{R}} P_s(\tau,z) \, d \tau, \qquad B_s(z) = \int_{\mathbb{R}} e^{\frac{N-2-s}{2} \tau} P_s(\tau,z) \, d \tau.
\end{equation}
\begin{lemma}\label{lemma:kernel-k-s-tau,z-bounds}
    There exists a constant $C$, depending only on the submanifold $\Sigma \subset \bS^{N-1}$, such that
\begin{equation}\label{eqn:Ks-tau-x,y-bounds}
    \begin{aligned}
\int_{\Sigma} P_s ( \tau, \langle x, y \rangle) \, d \mu_y &\leq C |\tau|^{-(2+s)} \qquad & \text{for } \; 0 &\leq |\tau| \leq 1, \\
\int_{\Sigma} P_s(\tau, \langle x, y \rangle) \, d \mu_y &\leq C \exp ( - \tfrac{N+s}{2} |\tau|) \qquad & \text{for } \; & |\tau| \geq 1.
    \end{aligned}
\end{equation}
\end{lemma}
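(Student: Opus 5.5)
We would argue directly from the identity $2(\cosh\tau - \langle x,y\rangle) = 2(\cosh\tau - 1) + |x-y|^2$, valid for $x,y \in \bS^{N-1}$, which follows from $|x-y|^2 = 2(1-\langle x,y\rangle)$. This gives
\[
P_s(\tau,\langle x,y\rangle) = \bigl[\,2(\cosh\tau-1) + |x-y|^2\,\bigr]^{-\frac{N+s}{2}} .
\]
The two regimes of~\eqref{eqn:Ks-tau-x,y-bounds} are then handled separately. Throughout we use that $\Sigma \subset \bS^{N-1}$ is a compact $C^2$ hypersurface of dimension $m=N-2$. It therefore satisfies a uniform area-ratio bound $\mu(\Sigma\cap B_r(x)) \leq C r^{N-2}$ for all $x$ and all $r>0$, with $C$ depending only on $\Sigma$. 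This is the same geometric input underlying Corollary~\ref{cor:local-integrability}.

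For $|\tau|\geq 1$, we bound crudely $2(\cosh\tau - \langle x,y\rangle) \geq 2(\cosh\tau-1) \geq c\,e^{|\tau|}$ with $c>0$ universal, for instance $c = 1-e^{-1}$ up to a factor. Hence $P_s \leq C e^{-\frac{N+s}{2}|\tau|}$ uniformly in $y$, and integrating over $\Sigma$ gives the second bound with constant $C\,\mu(\Sigma)$.

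For $0<|\tau|\leq 1$, we use $2(\cosh\tau-1)\geq \tau^2$ to get
\[
P_s \leq (\tau^2 + |x-y|^2)^{-\frac{N+s}{2}} .
\]
We then decompose $\Sigma$ into dyadic shells around $x$: the ball $\{|x-y|<|\tau|\}$ and the annuli $\{2^{k}|\tau| \leq |x-y| < 2^{k+1}|\tau|\}$ for $k\geq 0$. On the ball the integrand is at most $|\tau|^{-(N+s)}$ and the measure is at most $C|\tau|^{N-2}$, contributing $C|\tau|^{-(2+s)}$. On the $k$-th annulus the integrand is at most $(2^k|\tau|)^{-(N+s)}$ and the measure is at most $C(2^{k+1}|\tau|)^{N-2}$, contributing $C\,2^{-k(2+s)}|\tau|^{-(2+s)}$. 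Summing the geometric series, whose ratio is at most $1/4$ uniformly in $s$, yields $\int_\Sigma P_s\,d\mu_y \leq C|\tau|^{-(2+s)}$ with $C$ independent of $s\in(0,1)$. Equivalently, one can write the integral via the layer-cake formula against $r\mapsto\mu(\Sigma\cap B_r(x))$. At $\tau=0$ the right-hand side is $+\infty$, so the inequality is trivial there.

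The only step requiring care is the uniform density estimate $\mu(\Sigma\cap B_r(x))\leq Cr^{N-2}$ for \emph{all} $x\in\bS^{N-1}$ and not just $x\in\Sigma$; in the application $x\in\Sigma$, which suffices anyway. For small $r$ this follows from writing $\Sigma$ locally as a $C^2$ graph over its tangent plane with uniformly controlled second fundamental form and a positive injectivity/graph radius $r_0$. For $r\geq r_0$ it is trivial, since $\mu(\Sigma)\leq C r_0^{-(N-2)}r^{N-2}$. This is the step I would expect to rely on Corollary~\ref{cor:local-integrability} or its proof, and it is the only place where the dependence of $C$ on $\Sigma$ enters.
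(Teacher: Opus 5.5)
Your proposal is correct and follows essentially the paper's route. For $|\tau|\le 1$, both arguments bound $P_s \lesssim (\tau^2+\mathrm{dist}(x,y)^2)^{-\frac{N+s}{2}}$ pointwise and integrate this against the $(N-2)$-dimensional area growth of $\Sigma$; for $|\tau|\ge1$, both use $\cosh\tau-\langle x,y\rangle\ge c\,e^{|\tau|}$. Your exact chordal identity and dyadic sum, in place of the paper's near-diagonal comparison with $d_\Sigma$ and geodesic polar coordinates, are a cosmetic variant, slightly cleaner since no separate off-diagonal constant is needed.
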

\begin{proof}
To obtain the first bound, we observe that for $0 < |\tau| \leq 1$, we have
\[
\cosh \tau - \langle x,y \rangle = (\cosh \tau - 1) + ( 1 - \langle x,y \rangle) = c ( \tau^2 + d_{\Sigma}(x,y)^2)
\]
near the diagonal $\Sigma \times \Sigma$.
Away from the diagonal, the kernel is uniformly bounded.
Therefore,
\[
P_s ( \tau, \langle x,y \rangle) \leq C( \tau^2 + d_{\Sigma}(x,y)^2 )^{- \frac{N+s}{2}} \quad \text{for } \; 0 \leq |\tau| \leq 1.
\]
Using geodesic polar coordinates on $\Sigma$, we therefore obtain 
\[
\int_{\Sigma} P_s (\tau , \langle x, y \rangle) \, d \mu_y \leq C + \int_0^1 r^{N-3} (\tau^2 + r^2)^{- \frac{N+s}{2}} \, d r \leq C +  |\tau|^{-(2+s)} \int_0^{|\tau|^{-1}} \frac{\rho^{N-3}}{(1+\rho^2)^{\frac{N+s}{2}}} \, d \rho.
\]
For $0 < s < 1$, the last integral is bounded independently of $\tau$, proving the first estimate of~\eqref{eqn:Ks-tau-x,y-bounds}.

For the second bound, we consider $|\tau| \geq 1$ and $x,y \in \bS^{N-1}$, so $\la x,y \rg \leq 1$ and
\begin{align*}
& \cosh \tau - \langle x,y \rangle \geq \cosh |\tau| - 1 = \tfrac{1}{2} ( e^{|\tau|} + e^{- |\tau|} - 2) = \tfrac{1}{2} e^{|\tau|} (1 - e^{- |\tau|})^2 \geq c e^{|\tau|} \\
& \implies P_s(\tau, \langle x, y \rangle) = 2^{- \frac{N+s}{2}} ( \cosh \tau - \langle x, y \rangle)^{- \frac{N+s}{2}} \leq C e^{- \frac{N+s}{2} |\tau|} \qquad \text{for } \; |\tau| \geq 1,
\end{align*}
uniformly in $x,y \in \Sigma$.
This completes the proof of our assertion.
\end{proof}

\begin{lemma}\label{lemma:ps-identity}
On an $s$-minimal cone $E = C(\Omega) \subset \mathbb{R}^N$ with $\Sigma = \partial \Omega \subset \bS^{N-1}$, we form the expression
\begin{equation}\label{eqn:ps(x)}
    p_s(x) := \int_{\Sigma} B_s( \langle x, y \rangle ) (1 - \langle \nu_x , \nu_y \rangle) \, d \mu_y,
\end{equation}
which defines an absolutely convergent integral.
Then, it holds that
    \begin{equation}\label{eqn:ps(x)-nux}
        \textup{PV} \int_{\Sigma} B_s ( \langle x,y \rangle) (\nu_x - \nu_y) \, d \mu_y = p_s(x) \nu_x
    \end{equation}
    for every $x \in \Sigma$.
    If $\Sigma$ is $C^2$, then the left-hand side exists pointwise.
\end{lemma}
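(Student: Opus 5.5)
The identity should come from the $s$-minimality of $E$ tested against translations of the cone. Equivalently, it says that the tangential part of $\textup{PV}\int_\Sigma B_s(\langle x,y\rangle)(\nu_x-\nu_y)\,d\mu_y$ vanishes. First, the normal component is immediate. Taking the inner product with $\nu_x$ gives $\langle \nu_x-\nu_y,\nu_x\rangle = 1-\langle\nu_x,\nu_y\rangle$, which vanishes quadratically on the diagonal. Lemma~\ref{eqn:ws-properties} shows that $B_s(\langle x,y\rangle)(1-\langle x,y\rangle)$-type expressions are locally integrable, and the bound $B_s(z)\le C(1-z)^{-\frac{N+s-1}{2}}$ together with $1-\langle\nu_x,\nu_y\rangle = O(d_\Sigma(x,y)^2)$ makes the integrand of~\eqref{eqn:ps(x)} bounded by $C\,d_\Sigma^{-(N+s-3)}$. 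This is integrable on the $(N-2)$-dimensional $\Sigma$, so $p_s$ is absolutely convergent, and the $\nu_x$-component of the left side equals $p_s(x)$.

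For the remaining components, I would use that $\nu_x$ is normal to $\Sigma$ within $T_x\bS^{N-1}$ and also orthogonal to $x$. So it suffices to show that, for each fixed vector $e\in\bR^N$, $\textup{PV}\int_\Sigma B_s(\langle x,y\rangle)\langle \nu_x-\nu_y,e\rangle\,d\mu_y = p_s(x)\langle\nu_x,e\rangle$. The natural source is the variation of $E$ by translation $E+\varepsilon e$. Translations preserve $H_s\equiv 0$, so the Jacobi field $h(X)=\langle \nu_X,e\rangle$ satisfies $\mathcal J_{s,M}h=0$ on $M\setminus\{0\}$. Writing this at $X=x\in\Sigma$ gives
\[
\textup{PV}\int_M \frac{\langle \nu_Y-\nu_x,e\rangle}{|x-Y|^{N+s}}\,d\sigma_Y + \langle\nu_x,e\rangle\int_M\frac{1-\langle\nu_x,\nu_Y\rangle}{|x-Y|^{N+s}}\,d\sigma_Y = 0 .
\]
Now integrate along rays, with $Y=\rho y$ and $d\sigma_Y=\rho^{N-2}\,d\rho\,d\mu_y$. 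Since $\nu_Y=\nu_y$ is $0$-homogeneous, the radial integral is exactly $\int_0^\infty \rho^{N-2}(1+\rho^2-2\rho\langle x,y\rangle)^{-\frac{N+s}{2}}\,d\rho = B_s(\langle x,y\rangle)$. Both terms therefore reduce to spherical integrals against $B_s$, giving $-\textup{PV}\int_\Sigma B_s(\nu_x-\nu_y)\cdot e + p_s(x)\langle\nu_x,e\rangle\cdot$(sign) $=0$, which is~\eqref{eqn:ps(x)-nux}.

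The main obstacle is justifying this computation. One issue is interchanging the principal value on $M$ with the radial integration, which requires showing that a symmetric truncation $|x-Y|>\varepsilon$ on $M$ and truncation $d_\Sigma(x,y)>\varepsilon$ on $\Sigma$ agree in the limit. Another is that $\mathcal J_{s,M}$ applied to translation fields is legitimate, given that $h$ is not compactly supported and $M$ is singular at $0$. To avoid relying on the Jacobi operator, I would instead differentiate $H_s[E+\varepsilon e](x+\varepsilon e)=0$ directly. Alternatively, I would derive the identity from the vanishing of $H_s$ written as a boundary integral: by the divergence theorem, $\nabla_x$ of $\textup{PV}\int (\mathbf 1_{E^c}-\mathbf 1_E)|x-y|^{-N-s}\,dy$ along $\Sigma$ equals $-2\,\textup{PV}\int_M \nu_Y|x-Y|^{-N-s}\,d\sigma_Y$ up to constants, and this tangential derivative vanishes because $H_s\equiv0$ on $M$. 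Subtracting the constant $\nu_x$ term, which is harmless in principal value since $\nu_x-\nu_Y$ is odd to first order near $x$, yields the claim. For $C^2$ $\Sigma$, the pointwise existence follows because $\nu_y-\nu_x = -D\nu_x(y-x)+O(|y-x|^2)$. The linear term cancels by symmetry in the principal value, and the quadratic remainder is integrable against $B_s\lesssim d^{-(N+s-1)}$ on the $(N-2)$-dimensional $\Sigma$. For a general $s$-minimal cone I would work in the regular part and interpret the left side as the principal value.
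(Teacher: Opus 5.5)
Your main argument is essentially the paper's proof: the translation Jacobi field $h=\langle \nu,e\rangle$ with $\mathcal J_{s,M}h=0$, radial integration along rays producing the kernel $B_s$, and cancellation of the odd first-order term to get pointwise existence when $\Sigma$ is $C^2$. Your separate computation of the $\nu_x$-component is redundant but harmless, and the ``(sign)'' placeholder resolves to $+1$, since $\langle\nu_Y-\nu_x,e\rangle=-\langle\nu_x-\nu_Y,e\rangle$; the justification issues you flag (comparing the two truncations, and using the Jacobi equation for a non-compactly supported translation field) are handled in the paper only at the same brief level.
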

\begin{proof}
The integral defining $p_s(x)$ is absolutely convergent because
\[
B_s ( \langle x, y \rangle) \leq C \, d_{\Sigma} (x,y) ^{- (N+s-1)}, \qquad 1 - \langle \nu_x , \nu_y \rangle = O ( d_{\Sigma}(x,y)^2),
\]
so the integrability of~\eqref{eqn:ps(x)} follows from Corollary~\ref{cor:local-integrability}.
To prove the identity for $p_s(x)$, we fix $a \in \mathbb{R}^N$ and set $h_a(X) = \langle a, \nu_X \rangle$.
The translated hypersurfaces $M_{\varepsilon} = M + \varepsilon a$ are again $s$-minimal, and $H_s[M_{\varepsilon}](X + \varepsilon a) = H_s[M](X)= 0$.
Moreover, up to tangential reparametrization, the family of diffeomorphisms $\varepsilon \mapsto M_{\varepsilon}$ has variation field the constant vector $a$, whose normal component along $M$ is $h_a \nu$.
Since $H_s[M_{\varepsilon}] = 0$, differentiating at $\varepsilon = 0$ gives $\mathcal{J}_{s,M} h_a = 0$, so $h_a$ is a Jacobi field.
Thus,
\[
\textup{PV} \int_M \frac{\langle a, \nu_Y - \nu_X \rangle}{|X-Y|^{N+s}} \, d \sigma_Y + \langle a, \nu_X \rangle \int_M \frac{1 - \langle \nu_X, \nu_Y \rangle}{|X-Y|^{N+s}} \, d \sigma_Y =0
\]
for every $a \in \mathbb{R}^N$.
Because this property holds for every $a$, we equivalently find
\[
\textup{PV} \int_M \frac{\nu_X - \nu_Y}{|X-Y|^{N+s}} \, d \sigma_Y = \nu_X \int_M \frac{1 - \langle \nu_X, \nu_Y \rangle}{|X-Y|^{N+s}} \, d \sigma_Y.
\]
We now take $(X,Y) = (x,\rho y)$ with $x,y \in \Sigma$ and $\rho>0$.
Since $\nu_{e^u y} = \nu_y$ and $d \sigma_Y = \rho^{N-2} \, d \rho \, d \mu_y$, radial integration gives $\int_0^{\infty} \rho^{N-2} |x- \rho y|^{-(N+s)} \, d \rho = B_s ( \langle x, y \rangle)$.
Thus, the right-hand side equals $p_s(x) \nu_x$, while the left-hand side becomes the asserted principal value integral of~\eqref{eqn:ps(x)-nux}.

When the link $\Sigma$ is $C^2$, then the above integral exists pointwise.
Indeed, since $C(\Omega)$ is $s$-stationary with $C^2$ link, the bootstrap regularity arguments of~\cites{barrios-figalli-valdinoci , figalli-valdinoci } imply that $\Sigma$ is smooth.
We fix a point $x$ and take a geodesic coordinate $v$ centered at $x$.
Then, the first-order term in $\nu_x - \nu_Y$ is odd in $v$, so it cancels by the symmetry of the principal value integral.
Concretely, we can expand
\[
\nu_{y(v)} = \nu_x + D \nu_x[v] + O(|v|^2), \qquad 1 - \langle x, y(v) \rangle = \tfrac{1}{2} |v|^2 + O (|v|^3), \qquad d \mu_{y(v)} = (1 + O (|v|^2)) \, dv.
\]
The leading part of $B_s ( \langle x, y(v) \rangle) ( \nu_x - \nu_{y(v)})$ is an even radial kernel multiplied by the odd function $- D \nu_x[v]$, so it vanishes under the symmetric principal value integration.
After removing this odd term, the remainder is bounded by $C |v|^{- (N+s-3)}$, with $v \in \bR^{N-2}$, so the corresponding radial integral is bounded by $C \int_0^{\ve} r^{-s} \, dr < \infty$.
The truncated integral in the Euclidean region $|x-\rho y| > \ve$, in a neighborhood of $(\rho,y) = (1,x)$, is comparable with the integral over the region $d_{\Sigma}(x,y) > \ve$, with difference tending to zero as $\ve \to 0$ by the above expansion.
This proves our result.
\end{proof}
In what follows, we also record the weak formulation of the identity~\eqref{eqn:ps(x)-nux}, namely
\begin{equation}\label{eqn:double-integral-B-eta}
\frac{1}{2} \iint_{\Sigma \times \Sigma} B_s ( \langle x,y \rangle) \, \langle \eta(x) - \eta(y), \nu_x - \nu_y \rangle \, d \mu_x \, d \mu_y = \int_{\Sigma} p_s(x) \langle \eta(x), \nu_x \rangle \, d \mu_x
\end{equation}
for every smooth vector field $\eta : \Sigma \to \mathbb{R}^N$.
This property follows from the pointwise identity upon multiplying by $\eta(x)$, integrating in $x$, and symmetrizing in $x$ and $y$.
The double integral in this formulation is absolutely convergent because both differences vanish to first order on the diagonal.
This equality remains valid for links $\Sigma$ with Lipschitz normal vector $x \mapsto \nu_x$.

\begin{lemma}\label{lemma:pv-spherical}
    Let $E = C(\Omega)$ be an $s$-minimal cone in $\bR^N$ whose link has $C^2$ boundary $\Sigma = \partial \Omega \subset \bS^{N-1}$.
    We define the kernel $\hat{\cB}_s(z)$ by
    \[
\hat{\cB}_s(z) := \int_0^{\infty} \rho^{N-1} (1+\rho^2 - 2 \rho z)^{- \frac{N+s}{2}} \, d \rho
\]
Then, for every $x \in \Sigma$, it holds that
\[
\textup{PV}_{\bS^{N-1}} \int_{\bS^{N-1}} ( \mathbf{1}_{\Omega^c}(y) - \mathbf{1}_{\Omega}(y)) \, \hat{\cB}_s( \langle x,y \rangle ) \, d S_y = 0.
\]
Here, $\textup{PV}_{\bS^{N-1}}$ denotes the principal value integral computed by truncations on the sphere.
\end{lemma}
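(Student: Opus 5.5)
The plan is to start from the Euclidean stationarity condition $H_s[E](x)=0$ at a point $x\in\Sigma$, where $\partial E$ is $C^2$ near $x$ because $\Sigma$ is $C^2$, and to rewrite it in polar coordinates $Y=\rho y$ with $\rho>0$ and $y\in\bS^{N-1}$. Since $E=C(\Omega)$ is a cone, $\mathbf{1}_{E^c}(\rho y)-\mathbf{1}_E(\rho y)=\mathbf{1}_{\Omega^c}(y)-\mathbf{1}_\Omega(y)$, and $dY=\rho^{N-1}\,d\rho\,dS_y$ with $|x-\rho y|^2=1+\rho^2-2\rho\langle x,y\rangle$. Integrating radially first produces exactly $\hat{\cB}_s(\langle x,y\rangle)$. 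This is formally the claimed identity. The content of the lemma is to justify exchanging the Euclidean principal value (truncation $|x-Y|>\ve$) with the spherical one (truncation $d_{\bS^{N-1}}(x,y)>\ve$).

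First I will record the behaviour of the kernel. For $z\le 1-\delta$ the integral defining $\hat{\cB}_s$ converges absolutely: at $\rho\to\infty$ the integrand decays like $\rho^{-1-s}$, which is integrable. Near $z=1$, writing $1+\rho^2-2\rho z=(1-\rho)^2+\rho r^2$ with $r=|x-y|$ gives $\hat{\cB}_s(\langle x,y\rangle)\asymp r^{-(N-1+s)}$ to leading order, plus a bounded remainder. So away from the diagonal everything is absolutely integrable, and I only need to work in a small neighbourhood of $x$.

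Next I will split the Euclidean truncated integral over $\{|x-Y|>\ve\}$ into the cone region $\{Y=\rho y:\ d(x,y)>\ve'\}$ and a remainder. By Fubini, the cone region gives exactly the spherical truncation with kernel $\hat{\cB}_s$. The remainder consists of two pieces: the part of the narrow cone $\{d(x,y)<\ve'\}$ lying outside $B_\ve(x)$, and the correction from the ball piece lying inside the wide cone. For this I will use a local graph description. Near $x$, $\Omega$ is the subgraph of a $C^2$ function over $T_x\Sigma\times\mathbb{R}$ in the sphere, and $E$ near $x$ is the corresponding cone, a $C^2$ graph over $T_xM$. For both the Euclidean and the spherical truncations I will use the standard cancellation: the half-space part of $\mathbf{1}_{E^c}-\mathbf{1}_E$ is odd under reflection through $T_xM$, respectively through $T_x\Sigma$ within the sphere, and integrates to zero against a kernel that is even with respect to that reflection. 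The defect set between $E$ and the half-space $\{\langle Y-x,\nu_x\rangle<0\}$ has thickness $O(|Y-x|^2)$, so its contribution is bounded by $\int |Y-x|^{2}\,|Y-x|^{-N-s}\,|Y-x|^{N-2}\,d|Y-x|$, which converges.

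Concretely, I will take $\ve=\ve'$ after comparing the Euclidean ball with the cone-times-annulus region $\{d(x,y)<\ve,\ |\rho-1|<\ve\}$. The symmetric difference of these two regions lies where $|Y-x|\sim\ve$, and there the reflection-odd part cancels while the defect part is $O(\ve^{1-s})\to 0$. The remaining radial tails $|\rho-1|>\ve$ inside the narrow cone also cancel to leading order by the same oddness in the $\nu_x$-direction, since the cone through $T_x\Sigma$ is a half-space containing the radial direction. Their defect is again small.

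The main obstacle is exactly this bookkeeping of the symmetric differences between truncation regions. One has to check that the even-kernel/odd-indicator cancellation holds for each region used, in particular that the spherical reflection is compatible with the radial integration defining $\hat{\cB}_s$ (the reflection fixes $x$, preserves $\langle x,y\rangle$, and commutes with dilations). The $O(\ve^{1-s})$ error estimates also have to be uniform, and I expect this to follow from the second-fundamental-form bound of $\Sigma$, as in the final paragraph of the proof of Lemma~\ref{lemma:ps-identity}.
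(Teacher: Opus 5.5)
Your proposal is correct and follows essentially the paper's argument. In both, the difference between the Euclidean and spherical truncations is the integral of $\mathbf{1}_{\Omega^c}-\mathbf{1}_{\Omega}$ over the signed region (narrow cone minus $B_\ve(x)$). The tangent-hemisphere part cancels by reflection across $\nu_x^\perp$, and the $C^2$ defect contributes $O(\ve^{1-s})$.

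The paper avoids your ball/cone-annulus/tail bookkeeping by writing this difference as $\int_{\bS^{N-1}}\sigma(y)\,\mathfrak{B}_\ve(d_{\bS^{N-1}}(x,y))\,dS_y$, where $\mathfrak{B}_\ve$ is a radial kernel that vanishes for $r\ge\arcsin\ve$ and is bounded by $\hat{\cB}_s(\cos r)$, so a single estimate suffices. Also, the Euclidean and spherical reflections you worry about are the same linear map, since $T_xM=\nu_x^\perp$ passes through the origin.
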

\begin{proof}
The cone $E = C(\Omega)$ has $\mathbf{1}_{E^c}(\rho y) - \mathbf{1}_E(\rho y) = \mathbf{1}_{\Omega^c}(y) - \mathbf{1}_{\Omega}(y)$, and letting
\[
\hat{\cB}_{s,\ve}(z) := \int_0^{\infty} \mathbf{1}_{ \{ 1 + \rho^2 - 2 \rho z > \ve^2 \} } \rho^{N-1} (1 + \rho^2 - 2 \rho z)^{- \frac{N+s}{2}} \, d \rho 
\]
shows that the ambient Euclidean truncation $H_{s,\ve}[E]$ of the $s$-mean curvature satisfies
\[
H_{s,\ve}[E](x) = \int_{\bR^N \setminus B_{\ve}(x)} \frac{\mathbf{1}_{E^c}(X) - \mathbf{1}_E(X)}{|X-x|^{N+s}} \, dX = \int_{\bS^{N-1}} ( \mathbf{1}_{\Omega^c}(y) - \mathbf{1}_{\Omega}(y)) \hat{\cB}_{s,\ve}( \langle x,y \rangle) \, dS_y.
\]
Here, we expanded $|x- \rho y|^2 = 1 + \rho^2 - 2 \rho \langle x,y \rangle$ and $d X = \rho^{N-1} \, d \rho \, dS_y$ for $x,y \in \bS^{N-1}$.
Let $r := d_{\bS^{N-1}}(x,y)$, so using $1 + \rho^2 - 2 \rho \cos r = ( \rho - \cos r)^2 + \sin^2 r$ and $t = \frac{\rho - \cos r}{\sin r}$ shows that
\begin{align*}
    \hat{\cB}_s(\cos r) &= ( \sin r)^{-(N-1+s)} \int_{- \cot r}^{\infty} ( \cos r + t \sin r)^{N-1}(1+t^2)^{- \frac{N+s}{2}} \, dt, \\
    \hat{\cB}_s(\cos r) &\sim c_{N,s} r^{-(N-1+s)} \quad \text{as } \; r \downarrow 0, \qquad c_{N,s} = \int_{\bR} (1+t^2)^{- \frac{N+s}{2}} \,dt = \sqrt{\pi} \frac{\Gamma( \frac{N+s-1}{2})}{\Gamma( \frac{N+s}{2})}.
\end{align*}
as in~\cite{davila-delpino-wei}*{\S 11}.
Hence, $\hat{\cB}_s(\cos r) \leq C_{N,s} r^{-(N-1+s)}$ for sufficiently small $r$.

We study the radial difference kernel between the spherical and Euclidean truncations, namely
\[
\mathfrak{B}_{\ve}(r) := \hat{\cB}_{s,\ve} ( \cos r) - \mathbf{1}_ { \{ r > \ve \} } \hat{\cB}_s(\cos r).
\]
Moreover, let $\sigma(y) := \mathbf{1}_{\Omega^c}(y) - \mathbf{1}_{\Omega}(y)$.
For small $r$, we have
\[
\inf_{\rho \geq 0} |x - \rho y|^2 =\inf_{ \rho \geq 0} (1 + \rho^2 - 2 \rho \cos r) = \sin^2 r,
\]
which shows that $\mathfrak{B}_{\ve}(r)=0$ whenever $r \geq \arcsin \ve$, for $\ve < \frac{1}{10}$.
Also, $|\mathfrak{B}_{\ve}(r)| \leq \hat{\cB}_s(\cos r)$.
Let $\nu \in T_x \bS^{N-1}$ be the unit normal to $\Sigma$ at $x$, with the orientation chosen consistently with $\Omega$, and let $\sigma_0( \exp_x(r\omega)) := \textup{sgn} \la \omega, \nu \rg$ in geodesic polar coordinates $y = \exp_x( r \omega)$ centered at $x$.
The measure satisfies $d S_y = (\sin r)^{N-2} \, dr \, d S_{\omega}$, and because the kernel $\mathfrak{B}_{\ve}(r)$ depends only on $r$, we have
\begin{equation}\label{eqn:sigma-0-1}
\int_{\bS^{N-1}} \sigma_0(y) \mathfrak{B}_{\ve} (d_{\bS^{N-1}}(x,y)) \, dS_y = 0,
\end{equation}
because the tangent half-space at $x$ divides $\bS^{N-2}$ into two equal hemispheres. 
Moreover, the $C^2$ regularity of $\Sigma$ implies that, in a geodesic ball of sufficiently small radius $r$,
\begin{equation}\label{eqn:sigma-2}
\cH^{N-2} \bigl( \{ \omega \in \bS^{N-2} : \sigma(\exp_x (r \omega)) \neq \sigma_0 (\exp_x(r \omega)) \} \bigr) \leq C(\Sigma) \, r
\end{equation}
Indeed, in $B^{\bS}_r(x)$ the $C^2$ hypersurface $\Sigma$ is the graph of a $C^2$ function $u$ over $T_x \Sigma$ satisfying $u(0) = 0$ and $Du(0)= 0$, with $|u(\xi)| \leq C |\xi|^2$, and $d S_y = (\sin r)^{N-2} \, d r \, d S_{\omega}$.
We therefore obtain
\begin{align*}
    \left| \int_{\bS^{N-1}} \sigma(y) \, \mathfrak{B}_{\ve}(r) \, d S_y \right| &= \left| \int_{\bS^{N-1}} (\sigma(y) - \sigma_0(y)) \, \mathfrak{B}_{\ve}(r) \, d S_y \right| \leq C(\Sigma) \int_0^{\arcsin \ve} r \hat{\cB}_s(\cos r) (\sin r)^{N-2} \, d r \\
    &\leq C(N,\Sigma,s) \int_0^{\arcsin \ve} r^{-s} \, dr \leq \tilde{C}(N,\Sigma,s) \, \ve^{1-s}.
\end{align*}
In the last step, we used the bound $\hat{\cB}_s (\cos r) \leq C_{N,s} r^{-(N-1+s)}$ for small $r$.
Consequently,
\[
\lim_{\ve \downarrow 0} \int_{\bS^{N-1}} ( \mathbf{1}_{\Omega^c} (y) - \mathbf{1}_{\Omega}(y)) \, \hat{\cB}_{s,\ve}( \cos r) \, dS_y = \lim_{\ve \downarrow 0} \int_{\bS^{N-1}} ( \mathbf{1}_{\Omega^c}(y) - \mathbf{1}_{\Omega}(y)) \, \mathbf{1}_{ \{ d_{\bS}(x,y) > \ve\} } \hat{\cB}_s(\cos r) \, dS_y.
\]
We may therefore express $H_s[E](x)$ as
\begin{align*}
    &\lim_{\ve \downarrow 0} \int_{\bS^{N-1}} ( \mathbf{1}_{\Omega^c}(y) - \mathbf{1}_{\Omega}(y)) \int_0^{\infty} \mathbf{1}_{ \{ |x - \rho y| > \ve \} } \rho^{N-1} |x- \rho y|^{-(N+s)} \, d \rho \, d S_y \\
    &= \lim_{\ve \downarrow 0} \int_{ \{ d_{\bS}(x,y) > \ve \} } ( \mathbf{1}_{\Omega^c}(y) - \mathbf{1}_{\Omega}(y)) \hat{\cB}_s ( \la x,y \rg) \, dS_y = \textup{PV}_{\bS^{N-1}} \int_{\bS^{N-1}} ( \mathbf{1}_{\Omega^c}(y) - \mathbf{1}_{\Omega}(y) ) \hat{\cB}_s (\langle x, y \rangle) \, dS_y.
\end{align*}
Since $E$ is $s$-minimal, we have $H_s[E](x) = 0$, so the above expression equals zero, as claimed.
\end{proof}

\begin{proposition}\label{prop:angular-}
For every $\bar{s} \in (0,1)$ and $\Lambda \geq 1$, there is a constant $C(N, \bar{s}, \Lambda)$ such that the following property holds.
For $s \in ( 0 , \bar{s})$, let $E = C(\Omega)$ be an $s$-minimal cone in $\bR^N$ whose link has $C^2$ boundary $\Sigma = \partial \Omega \subset \bS^{N-1}$.
Moreover, suppose that $\cH^{N-2}(\Sigma \cap B_r(x)) \leq \Lambda r^{N-2}$ for every $x \in \Sigma$ and $r>0$.
Then, we have the bound
\[
\bigl| 2 |\Omega| - |\bS^{N-1}| \bigr| \leq C(N,\bar{s}, \Lambda) \, \Bigl( \frac{\Lambda}{\cH^{N-2}(\Sigma)} \Bigr)^{\frac{1}{2}} \, s \, \Bigl( \iint_{\Sigma \times \Sigma} B_s( \la x,y \rg) \, |\nu_x - \nu_y|^2 \, d \mu_x \, d \mu_y \Bigr)^{\frac{1}{2}}.
\]
\end{proposition}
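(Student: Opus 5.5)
The plan is to read off the volume imbalance from the Euler--Lagrange equation of Lemma~\ref{lemma:pv-spherical}, isolating the part of the kernel $\hat{\cB}_s$ that blows up like $s^{-1}$. For $\rho\to\infty$ the integrand of $\hat{\cB}_s(z)$ behaves like $\rho^{-1-s}$, so I would write
\[
\hat{\cB}_s(z)=\frac{1}{s}+\mathcal{K}_s(z),
\]
for instance by subtracting $\rho^{N-1}(1+\rho^2)^{-\frac{N+s}{2}}$ (whose integral is $s^{-1}+O(1)$ uniformly for $s<\bar s$) or simply $\mathbf{1}_{\{\rho>1\}}\rho^{-1-s}$. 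With this choice, $\mathcal{K}_s$ keeps the singularity $r^{-(N-1+s)}$ at the diagonal and is otherwise bounded uniformly in $s\in(0,\bar s)$. Since $\int_{\bS^{N-1}}\sigma\,dS=|\bS^{N-1}|-2|\Omega|$, Lemma~\ref{lemma:pv-spherical} gives, for every $x\in\Sigma$,
\[
|\bS^{N-1}|-2|\Omega| = -\,s\;\textup{PV}_{\bS^{N-1}}\!\int_{\bS^{N-1}}\sigma(y)\,\mathcal{K}_s(\langle x,y\rangle)\,dS_y .
\]
I would average this identity over $x\in\Sigma$ with respect to $\mu$, which produces a factor $\cH^{N-2}(\Sigma)^{-1}$ in front of a double integral. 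It then suffices to bound that double integral by
\[
C\,\bigl(\Lambda\,\cH^{N-2}(\Sigma)\bigr)^{1/2}\Bigl(\iint_{\Sigma\times\Sigma}B_s\,|\nu_x-\nu_y|^2\Bigr)^{1/2}.
\]

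The key step is to convert the solid integral over $\Omega$ and $\Omega^c$ into a boundary integral on $\Sigma$. On the sphere I would write $\mathcal{K}_s(\langle x,y\rangle)$, as a function of $y$, as a tangential divergence $\textup{div}_y V_x(y)$ of a field of the form $V_x(y)=G_s(\langle x,y\rangle)\,(x-\langle x,y\rangle y)$, with $G_s$ a primitive-type kernel. Differentiating $(1+\rho^2-2\rho z)^{-\frac{N+s-2}{2}}$ in $z$ produces $\rho(1+\rho^2-2\rho z)^{-\frac{N+s}{2}}$, so I expect $G_s$ to be expressible through $B_s$ (or through a kernel comparable to it, up to the same uniform bounds). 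Since $\sigma=\pm1$ on $\Omega^c$ and $\Omega$, the divergence theorem with principal values gives
\[
\textup{PV}\!\int\sigma\,\mathcal{K}_s = -2\,\textup{PV}\!\int_\Sigma G_s(\langle x,y\rangle)\,\langle x,\nu_y\rangle\,d\mu_y,
\]
up to a sign convention and up to the boundary terms coming from small geodesic balls, which are controlled by the $C^2$ regularity as in the proof of Lemma~\ref{lemma:pv-spherical}. Using $\langle x,\nu_x\rangle=0$, I would write $\langle x,\nu_y\rangle=\langle x,\nu_y-\nu_x\rangle=\langle x-y,\nu_y-\nu_x\rangle$, since $\langle y,\nu_y\rangle=0$. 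The averaged double integral is then
\[
\iint_{\Sigma\times\Sigma}G_s(\langle x,y\rangle)\,\langle x-y,\nu_y-\nu_x\rangle\,d\mu_x\,d\mu_y,
\]
which is absolutely convergent, and symmetric after relabeling.

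The final step is Cauchy--Schwarz, assuming $G_s\lesssim B_s$:
\[
\Bigl|\iint G_s\,\langle x-y,\nu_y-\nu_x\rangle\Bigr|\le C\Bigl(\iint B_s|\nu_x-\nu_y|^2\Bigr)^{1/2}\Bigl(\iint B_s|x-y|^2\Bigr)^{1/2}.
\]
Since $B_s(\langle x,y\rangle)\le C|x-y|^{-(N-1+s)}$, the last factor involves $\int_\Sigma|x-y|^{-(N-3+s)}d\mu_y$. I would bound this by a dyadic decomposition using $\cH^{N-2}(\Sigma\cap B_r(x))\le\Lambda r^{N-2}$, which gives $C(N,\bar s)\Lambda$ because $s<\bar s<1$. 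Integrating in $x$ gives $C\Lambda\,\cH^{N-2}(\Sigma)$, and dividing by $\cH^{N-2}(\Sigma)$ yields exactly the factor $(\Lambda/\cH^{N-2}(\Sigma))^{1/2}$.

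I expect the main obstacle to be the divergence step: finding the exact potential $G_s$ and checking it is dominated by $B_s$ uniformly in $s$. If an exact potential is awkward, my first fallback is to integrate by parts in $\rho$ within the radial representation instead, namely to pass from $\hat{\cB}_s$ (weight $\rho^{N-1}$) to $B_s$ (weight $\rho^{N-2}$) via the ambient divergence theorem on the cone $E$ applied to the field $(X-x)|X-x|^{-(N+s)}$. This should produce surface integrals over $M=C(\Sigma)$ of $\langle X-x,\nu_X\rangle|X-x|^{-(N+s)}$, and these reduce radially to $B_s$-type kernels times $\langle x,\nu_y\rangle$. The uniform-in-$s$ control of the leftover bounded pieces, and the bookkeeping of principal-value boundary terms, are routine by comparison.
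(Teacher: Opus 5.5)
Your route is the paper's, with its two divergence identities merged into one. The paper uses $Y_x(y)=B_s(\la x,y\rg)(x-\la x,y\rg y)$, whose spherical divergence is $(1-z^2)B_s'-(N-1)zB_s=s\hat{\cB}_s$, together with the $s=0$ identity $(1-z^2)B_0'-(N-1)zB_0=1$. By linearity, the potential you are after for $\mathcal{K}_s=\hat{\cB}_s-\frac1s$ is exactly $G_s=(B_s-B_0)/s$. This is the unique radial choice regular at $y=-x$, and other normalizations of $\mathcal{K}_s$ only add $O(1)\,B_0$. So your identity is the paper's $|\bS^{N-1}|-2|\Omega|=2\int_\Sigma(B_s-B_0)(\la x,y\rg)\la x,\nu_y\rg\,d\mu_y$.

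\textbf{The gap.} The step that fails is the one you flagged: $G_s\lesssim B_s$ is false uniformly in $s$, and your final Cauchy--Schwarz rests on it. Inverting the ODE gives $G_s(z)=(1-z^2)^{-\frac{N-1}{2}}\int_{-1}^{z}\mathcal{K}_s(w)(1-w^2)^{\frac{N-3}{2}}\,dw$. Near $w=1$ the integrand is of order $(1-w)^{-1-\frac{s}{2}}$, which is borderline integrable. Concretely, $B_s\approx c_{N,s}\,r^{-(N-1+s)}$ near the diagonal, with $r=|x-y|$ and $c_{N,s}=\int_{\bR}(1+t^2)^{-\frac{N+s}{2}}\,dt$. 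Hence $G_s/B_s=(1-B_0/B_s)/s$ with $B_0/B_s\approx(c_{N,0}/c_{N,s})\,r^{s}$. So the ratio is of size $\min\{|\log r|,1/s\}$ there and tends to $1/s$ as $y\to x$.

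\textbf{The repair.} What is true (Step 2 of the paper) is $|B_s-B_0|\le Cs(1+|\log r|)\,r^{-(N-1+s)}$, i.e.\ $|G_s|\le C(1+|\log r|)B_s$. You should weight Cauchy--Schwarz by $B_s$ rather than dominate $G_s$ by it:
\[
\Bigl|\iint G_s\,\la x-y,\nu_x-\nu_y\rg\Bigr|\le\Bigl(\iint B_s\,|\nu_x-\nu_y|^2\Bigr)^{\frac12}\Bigl(\iint\frac{G_s^2\,|x-y|^2}{B_s}\Bigr)^{\frac12},\qquad \frac{G_s^2\,|x-y|^2}{B_s}\le C(1+|\log r|)^2r^{-(N-3+s)}.
\]
Since $1-s\ge1-\bar s>0$, your dyadic sum absorbs the logarithm and still gives $\sup_x\int_\Sigma(\cdots)\,d\mu_y\le C(N,\bar s)\Lambda$. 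The conclusion therefore survives.

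\textbf{Two smaller points.} First, $\la x,\nu_y\rg=\la x-y,\nu_y-\nu_x\rg$ is not a pointwise identity; only $\la x,\nu_y\rg+\la y,\nu_x\rg=\la x-y,\nu_y-\nu_x\rg$ holds. So symmetrization gives your double integral with a factor $\frac12$. This is harmless, but it is this symmetrization that supplies the indispensable factor $|x-y|$. Second, your fallback with the ambient field $(X-x)|X-x|^{-(N+s)}$ only reproduces the paper's Step 1 identity $\int_\Sigma B_s\la x,\nu_y\rg\,d\mu_y=0$, in which $|\Omega|$ does not appear. You would still need the separate $s=0$ identity with $B_0$ (equivalently, your subtraction of $\frac1s$) to see the volume.
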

\begin{proof}
We proceed in four steps, expressing the left-hand side in terms of appropriate kernels.

\smallskip \noindent \textbf{Step 1:}
Let us write $\mu = \cH^{N-2} \mres \Sigma$.
First, we show that every $x\in \Sigma$ satisfies
\[
    \int_{\Sigma} B_s ( \langle x, y \rangle)  \langle x, \nu_y \rangle \, d \mu_y = 0
\]
and the integral converges absolutely.
Since $\la y, \nu_y \rangle = 0$, we have $\la x, \nu_y \rg = \la x-y, \nu_y \rg$.
Because $\Sigma$ is $C^2$, for $d_{\bS^{N-1}}(x,y)$ small, the distance from $x$ to the tangent space $T_y \Sigma$ is quadratic in $d_{\Sigma}(x,y)$, so
\[
|\la x, \nu_y \rg| = |\la x-y , \nu_y \rg| \leq C(\Sigma) \, d_{\Sigma}(x,y)^2.
\]
On the other hand, $B_s ( \langle x, y \rangle) \leq C(\Sigma)\,  d_{\Sigma}(x,y)^{-(N-1+s)}$ as proved above.
Thus, applying Corollary~\ref{cor:local-integrability} with $m=N-2$ shows that this expression is integrable on $\Sigma$, for $s \in [0,1)$.

Next, we observe that the kernel $\hat{\cB}_s(z)$ introduced in Lemma~\ref{lemma:pv-spherical} satisfies the differential identity
\begin{equation}\label{eqn:bs-diff-eq}
(1-z^2) B'_s(z) - (N-1) z B_s(z) = s \hat{\cB}_s(z).
\end{equation}
Indeed, we have
\begin{align*}
    & \frac{d}{d \rho} \bigl[ \rho^{N-1}(\rho-z) (1+\rho^2 - 2 \rho z)^{- \frac{N+s}{2}} \bigr] \\
    &= (N+s)(1-z^2) \rho^{N-1}(1+\rho^2 - 2 \rho z)^{- \frac{N+s}{2}-1} - (1+\rho^2 - 2 \rho z)^{- \frac{N+s}{2}} \rho^{N-2} [ (N-1) z + s \rho ].
\end{align*}
For $s>0$, we observe that
\[
\rho^{N-1} (\rho-z) (1 + \rho^2  - 2 \rho z)^{- \frac{N+s}{2}} \to 0 \quad \text{as } \; \rho \downarrow 0 \quad \text{and as } \; \rho \to \infty. 
\]
Hence, integrating this identity in $\rho$ and using 
\[
B'_s(z) = (N+s) \int_0^{\infty} \rho^{N-1} ( 1+ \rho^2 - 2 \rho z)^{- \frac{N+s+2}{2}} \, d \rho
\]
proves the differential identity~\eqref{eqn:bs-diff-eq}.
We now fix $x \in \Sigma$ and consider the vector field $Y_x(y) = B_s ( \la x, y \rg) (x - \la x,y \rg y)$.
Because $\nabla_{\bS^{N-1}} \la x, y \rg = x - \la x,y \rg y$, we obtain
\begin{align*}
    |\nabla_{\bS^{N-1}} \la x, y \rg|^2 &= 1 - \la x, y \rg^2, \qquad \Delta_{\bS^{N-1}} \la x, y \rg = - (N-1) \la x,y \rg, \\
    \textup{div}_{\bS^{N-1}} Y_x &= (1-z^2) B'_s(z) - (N-1) z B_s(z) = s \hat{\cB}_s(z). 
\end{align*}
For $\nu$ the outward unit conormal of $\Omega$, the function $\mathbf{1}_{\Omega^c}(y) - \mathbf{1}_{\Omega}(y)$ has distributional derivative
\[
D_{\bS^{N-1}} ( \mathbf{1}_{\Omega^c} - \mathbf{1}_{\Omega}) = 2 \, \nu_{\Omega} \, \cH^{N-2} \mres \Sigma,
\]
so applying Green's theorem~\cite{maggi}*{Proposition~19.22 and Lemma~22.11} to the vector field $( \mathbf{1}_{\Omega^c} - \mathbf{1}_{\Omega}) Y_x(y)$ on the domain $G_{\ve} := \bS^{N-1} \setminus \bar{B}^{\bS}_{\ve}(x)$, with outward conormal $n_{\ve}$, gives
\begin{equation}\label{eqn:greens-theorem}
    \begin{split}
    & s \int_{G_{\ve}} (\mathbf{1}_{\Omega^c} - \mathbf{1}_{\Omega})(y) \hat{\cB}_s( \la x, y \rg ) \, dS_y = \int_{G_{\ve}} ( \mathbf{1}_{\Omega^c} - \mathbf{1}_{\Omega}) \textup{div}_{\bS^{N-1}} Y_x \, d S \\
    &= - 2 \int_{\Sigma \cap G_{\ve}} \la Y_x(y), \nu_y \rg \, d \mu_y + \int_{\partial B^{\bS}_{\ve}(x)} ( \mathbf{1}_{\Omega^c}(y) - \mathbf{1}_{\Omega}(y)) \la Y_x(y), n_{\ve}(y) \rg \, d \cH^{N-2}(y).     \end{split}
\end{equation}
Because $\la y, \nu_y \rg = 0$, we have $\la Y_x(y), \nu_y \rg = B_s( \la x,y \rg) \, \la x, \nu_y \rg$.
Writing $r = d_{\bS^{N-1}}(x,y)$, we find
\begin{equation}\label{eqn:boundary-term-G-eps}
\nabla_{\bS^{N-1}} \la x,y \rg = - \sin r \, \partial_r, \qquad \la Y_x, n_{\ve} \rg = B_s(\cos r) \sin r \quad \text{on } \; \partial B^{\bS}_{\ve}(x),
\end{equation}
because the outward conormal $n_{\ve}$ of $G_{\ve}$ is $- \partial_r$ there.
Moreover, we observe that
\begin{equation}\label{eqn:spherical-separation}
\left| \int_{\partial B^{\bS}_{\ve}(x)} (\mathbf{1}_{\Omega^c}(y) - \mathbf{1}_{\Omega}(y)) \, d \cH^{N-2}(y) \right| \leq C(\Sigma) \, \ve^{N-1}.
\end{equation}
This property follows as in Lemma~\ref{lemma:pv-spherical}, by comparing the function $\sigma(y) := \mathbf{1}_{\Omega^c}(y) - \mathbf{1}_{\Omega}(y)$ with the tangent hemisphere function $\sigma_0( \exp_x(r \omega)) := \textup{sgn} \la \omega, \nu \rg$ in geodesic polar coordinates centered at $x$, which satisfies the bounds~\eqref{eqn:sigma-0-1} and~\eqref{eqn:sigma-2}.
Using $\int_{\partial B^{\bS}_{\ve}(x)} \sigma_0(y) \, d \cH^{N-2} = 0$, we therefore obtain
\[
\left| \int_{\partial B^{\bS}_{\ve}(x)} \sigma(y) \, d \cH^{N-2}  \right| = \left| \int_{\partial B^{\bS}_{\ve}(x)} (\sigma(y) - \sigma_0(y)) \, d \cH^{N-2} \right| \leq C(\Sigma) \ve \cdot \cH^{N-2}(\partial B^{\bS}_{\ve}(x)) \leq C \ve^{N-1}
\]
for $\ve>0$ sufficiently small.
This proves~\eqref{eqn:spherical-separation}.
Moreover, $B_s(\cos r) \leq C (1 - \cos r)^{- \frac{N+s-1}{2}} \leq C r^{-(N+s-1)}$ for $r$ small.
Since $r = \ve$ on $\partial B^{\bS}_{\ve}(x)$, we can estimate the boundary term of~\eqref{eqn:greens-theorem} as
\begin{align*}
    & \left| s \int_{G_{\ve}} ( \mathbf{1}_{\Omega^c} - \mathbf{1}_{\Omega})(y) \, \hat{\cB}_s(\la x,y \rg) \, d S_y + 2 \int_{\Sigma \cap G_{\ve}} \la Y_x(y), \nu_y \rg \, d \mu_y \right| \\
    &\leq B_s(\cos \ve) \sin \ve \; \Bigl| \int_{\partial B^{\bS}_{\ve}(x)} (\mathbf{1}_{\Omega^c} - \mathbf{1}_{\Omega}) \, d \cH^{N-2} \Bigr| \leq C(\Sigma) \, \ve^{-(N+s-1)} \cdot \ve \cdot \ve^{N-1} = C \ve^{1-s}
\end{align*}
by using the computations of~\eqref{eqn:boundary-term-G-eps} and~\eqref{eqn:spherical-separation}.
We now send $\ve \downarrow 0$ and use the principal value formulation of the $s$-stationary equation from Lemma~\ref{lemma:pv-spherical} to see that the first term approaches zero as $\ve \downarrow 0$.
We recall that $\la Y_x(y) , \nu_y \rg = B_s( \la x,y \rg) \la x, \nu_y \rg$ and pass to the limit in Green's theorem,  using the absolute convergence established here; this proves the desired integral identity for $0<s<1$.

\smallskip \noindent \textbf{Step 2:}
Next, we extend the kernel $B_s(z)$ to $s=0$ by
\[
B_0(z) := \int_0^{\infty} \rho^{N-2} (1 + \rho^2 - 2 \rho z)^{-\frac{N}{2}} \, d \rho.
\]
We claim that there is a constant $C(N,\bar{s})$ such that
\begin{equation}\label{eqn:C_n-s-Bs-B0}
    |B_s ( \la x,y \rg) - B_0( \la x,y \rg )| \leq C s \, ( 1 + |\log |x-y||) \, |x-y|^{-(N-1+s)}
\end{equation}
for $x \neq y \in \bS^{N-1}$.
Let $r = \sqrt{2(1-z)} = |x-y|$, so $1 + \rho^2 - 2 \rho z = (\rho-1)^2 + \rho r^2$, and will produce constants $c,C>0$, depending only on $N, \bar{s}$, such that for every $(s,z) \in [0,\bar{s}] \times (0,2]$, it holds that
\begin{equation}\label{eqn:two-kernel-bounds}
    c r^{-(N-1+s)} \leq B_s(z) \leq C r^{-(N-1+s)}, \qquad |\partial_s B_s(z)| \leq C ( 1 + |\log r|) \, r^{-(N-1+s)}.
\end{equation}
Indeed, we have
\[
\partial_s B_s(z) = - \frac{1}{2} \int_0^{\infty} \rho^{N-2} \log (1 + \rho^2 - 2 \rho z) \, (1 + \rho^2 - 2 \rho z)^{- \frac{N+s}{2}} \, d \rho
\]
Suppose first that $0 < r \leq 1$. 
We split the $\rho$-integral into the regions $\{ |\rho-1| \leq \frac{1}{2} \} \cup \{ |\rho-1| > \frac{1}{2} \}$.
On the first region, $\rho \asymp 1$ and $1 + \rho^2 - 2 \rho z \asymp (\rho-1)^2 + r^2$, so for $\rho-1 = rt$, we obtain
\begin{align*}
\int_{|\rho-1| < \frac{1}{2}} (1 + \rho^2 - 2 \rho z)^{- \frac{N+s}{2}} \, d \rho &\leq C r^{1 - N-s} \int_{\bR}(1+t^2)^{- \frac{N+s}{2}} \, dt \leq C r^{-(N-1+s)}, \\
| \log (1 + \rho^2 - 2 \rho z) | &\leq C(N, \bar{s}) \, ( |\log r| + \log (2+t^2) ).
\end{align*}
We therefore obtain
\begin{align*}
    \int_{|\rho-1| \leq \frac{1}{2}} |\log (1+\rho^2 - 2 \rho z)| \, (1+\rho^2 - 2 \rho z)^{- \frac{N+s}{2}} \, d \rho &\leq C r^{-(N-1+s)} \int_{\bR} \frac{1 + |\log r| + \log (2+t^2)}{(1+t^2)^{\frac{N+s}{2}}} \\
    &\leq C(1 + |\log r|) r^{-(N-1+s)}.
\end{align*}
On the region $\frac{1}{2} < |\rho-1| < 10$, the denominator is uniformly bounded below.
For $\rho \to \infty$, we find
\[
\rho^{N-2}(1+\rho^2 - 2 \rho z)^{- \frac{N+s}{2}} \asymp \rho^{-2-s}, \qquad \rho^{N-2} |\log (1 + \rho^2 - 2 \rho z)| \, (1+\rho^2 - 2 \rho z)^{- \frac{N+s}{2}} \leq C (1 + \log \rho) \rho^{-2-s}.
\]
Thus, the complementary contributions to the integral representation of $\partial_s B_s(z)$ are bounded by a constant, and $1 \leq Cr^{-(N-1+s)}$ for $r \leq 2$.
This proves the second bound~\eqref{eqn:two-kernel-bounds}.

For the first bound, on $B_s(z)$, we restrict the defining integral for $B_s(z)$ from~\eqref{eqn:spherical-kernels} to the interval $|\rho-1| \leq \frac{r}{2} \leq \frac{1}{2}$, where $\rho^{N-2} \geq c$ and $1+\rho^2 - 2 \rho z \leq Cr^2$.
Because this interval has length $2r$, we obtain $B_s(z) \geq c r (r^2)^{- \frac{N+s}{2}} = c r^{-(N-1+s)}$ for $r \leq 1$.
When $r \in [1,2]$, equivalently $z \in [ - 1, \frac{1}{2}]$, there is no kernel singularity, so $B_s(z)$ and $\partial_s B_s(z)$ are continuous functions on the compact set $[0, \bar{s}] \times [ - 1, \frac{1}{2}]$, and $B_s(z)> 0$ there.
Therefore, $0 < c \leq B_s(z) \leq C$ and $|\partial_s B_s(z)| \leq C$, while the quantities $r^{-(N-1+s)}$ and $1 + |\log r|$ are bounded above and below by uniform positive constants.

Finally, we integrate the second bound of~\eqref{eqn:two-kernel-bounds} in $s$ to obtain, for $r \in (0,2]$,
\[
|B_s(z) - B_0(z)| = \left| \int_0^s \partial_s B_s(z) \, ds \right| \leq Cs (1 + |\log r|) \, r^{-(N-1+s)}.
\]
The bound is obtained as in our previous argument, for $r \in (0,1]$, and extended to $[1,2]$ by compactness and $|B_s(z) - B_0(z)| \leq Cs$, after enlarging the uniform constants.
Finally, recalling that $z = \la x, y \rg$, with $x \neq y \in \bS^{N-1}$ and $r = \sqrt{2(1-z)} = |x-y|$, we obtain the desired inequality~\eqref{eqn:C_n-s-Bs-B0}.

\smallskip \noindent \textbf{Step 3:} We now suppose that $\Omega \subset \bS^{N-1}$ is $C^2$ and write $\Sigma = \partial \Omega$.
For $s=0$, integrating the above identity shows that the primitive appearing in the computation leading to~\eqref{eqn:bs-diff-eq} satisfies
\[
\lim_{\rho \to \infty} \rho^{N-1} (\rho-z) (1+\rho^2 - 2 \rho z)^{- \frac{N}{2}} = 1, \qquad \implies \qquad (1-z^2) B'_0(z) - (N-1) z B_0(z) = 1.
\]
Hence, the vector field $Y_x(y) = B_0( \langle x,y \rangle ) (x - \langle x,y \rangle y)$ satisfies $\textup{div}_{\bS^{N-1}} Y_x = 1$.
Applying Green's theorem as above, we see that the contribution from $\partial B_{\ve}(x)$ goes to zero as $\ve \downarrow 0$.
Thus,
\begin{equation}\label{eqn:difference-and-B-kernel}
|\bS^{N-1}| - 2 \, |\Omega| = - 2 \int_{\Sigma} B_0 ( \langle x,y \rangle ) \, \langle x, \nu_y \rangle \, d \mu_y , \qquad \text{for every } \; x \in \Sigma.
\end{equation}
The absolute convergence of the integral is obtained by the same argument as above.

\smallskip \noindent \textbf{Step 4:} We now combine the above steps to complete the proof.
First, observe that
\begin{equation}\label{eqn:Hn-2Sigma}
    \cH^{N-2}(\Sigma) \, \bigl( 2\, |\Omega| - |\bS^{N-1}| \Bigr) =  \iint_{\Sigma \times \Sigma} (B_s - B_0) ( \la x, y \rg ) \, \la x-y , \nu_x - \nu_y \rg \, d \mu_x \, d \mu_y.
\end{equation}
For this, we subtract the $s>0$ and $s=0$ integral identities from Steps 1 and 3 to obtain the relation
\begin{align*}
|\Omega| - \tfrac{1}{2} |\bS^{N-1}| &= \int_{\Sigma} (B_0 - B_s) (\la x,y \rg) \, \la x, \nu_y \rg \, d \mu_y \\
\implies \cH^{N-2}(\Sigma) \, \bigl( |\Omega| - \tfrac{1}{2} |\bS^{N-1}| \bigr) &= \iint_{\Sigma \times \Sigma} (B_0 - B_s) (\la x, y\rg) \, \la x, \nu_y \rg \, d \mu_x \, d \mu_y 
\end{align*}
by integrating in $x$.
Interchanging $x$ and $y$ in the second equality, averaging the two resulting expressions, and using $\la x, \nu_x \rg = \la y, \nu_y \rg =0$, we arrive at integration against the term
\[
\la x, \nu_y \rg + \la y, \nu_x \rg = \la x- y , \nu_y - \nu_x \rg = - \la x - y, \nu_x - \nu_y \rg.
\]
This produces the equality~\eqref{eqn:Hn-2Sigma}.
Using the estimates~\eqref{eqn:C_n-s-Bs-B0} and~\eqref{eqn:two-kernel-bounds}  from Step 2, including the kernel lower bound $B_s (\la x,y \rg ) \geq c(N,s) |x-y|^{-(N-1+s)}$, we obtain, for every $x \neq y$, 
\begin{align*}
B_s^{-1}  |B_s - B_0|^2 |x-y|^2 
%&\leq C(N, \bar{s}) \, s^2 \, (1 + |\log |x-y||)^2 |x-y|^{-2(N-1+s)} |x-y|^2 |x-y|^{N-1+s} \\
&\leq C(N, \bar{s}) \, s^2 \, ( 1 + |\log |x-y||)^2 |x-y|^{-(N-3+s)}.
\end{align*}
Thus, we can apply Lemma~\ref{lemma:potential-upper-growth} with $(m,\alpha, \beta_0,q) = (N-2, N-3+s, 1-\bar{s},2)$ to obtain
\[
\sup_{x \in \Sigma} \int_{\Sigma \cap B_1(x)} \frac{|B_s - B_0|^2  |x-y|^2}{B_s} \, d\mu_y \leq C(N,\bar{s}) \, s^2 \Lambda.
\]
We also observe that the bound $\mu(B_r(x)) \leq \Lambda r^{N-2}$ implies that $\mu(\Sigma) \leq C(N) \Lambda$ because $\mu( \{ y\}) = 0$ has no atoms, so $\mu(\Sigma) = \mu(B_2(x)) \leq 2^{N-2} \Lambda$.
Therefore, $\Lambda \leq \Lambda + \cH^{N-2}(\Sigma) \leq C(N) \Lambda$.
On $\{ |x-y| \geq 1 \}$, the integrand is bounded by $C(N,\bar{s}) s^2$, so its contribution to the integral gives
\begin{equation}\label{eqn:Bs-B0-double-bound}
    \begin{split}
    & \sup_{x \in \Sigma} \int_{\Sigma} \frac{|B_s - B_0|^2 |x-y|^2}{B_s} \, d\mu_y \leq C(N,\bar{s}) s^2 (\Lambda + \cH^{N-2}(\Sigma)) \\
    & \implies \iint_{\Sigma \times \Sigma} \frac{|B_s - B_0|^2 |x-y|^2}{B_s} \, d\mu_x \, d\mu_y \leq C(N,\bar{s}) s^2 \Lambda \, \cH^{N-2}(\Sigma)
    \end{split}
\end{equation}
upon bounding $\Lambda + \cH^{N-2}(\Sigma) \leq C(N) \Lambda$.
Finally, we integrate this inequality over $x \in \Sigma$ and use
\begin{align*}
    & \left| \iint (B_s - B_0) ( \la x, y \rg ) \, \la x-y , \nu_x - \nu_y \rg \, d \mu_x \, d \mu_y \right| \\
    & \leq \Bigl( \iint B_s( \la x,y \rg) \, |\nu_x - \nu_y|^2 \, d \mu_x \, d \mu_y \Bigr)^{\frac{1}{2}} \Bigl( \iint \frac{| (B_s- B_0) ( \la x, y \rg) |^2 \, |x-y|^2}{B_s ( \la x,y \rg)} \, d \mu_x \, d \mu_y \Bigr)^{\frac{1}{2}}
\end{align*}
by the Cauchy-Schwarz inequality.
Combined with the relation~\eqref{eqn:Hn-2Sigma}, this gives
\[
\cH^{N-2}(\Sigma) \, \bigl| 2 |\Omega| - |\bS^{N-1}| \, \bigr| \leq C s \, \cH^{N-2}(\Sigma)^{\frac{1}{2}} \Lambda^{\frac{1}{2}}  \Bigl( \iint_{\Sigma \times \Sigma} B_s( \la x,y \rg) \, |\nu_x - \nu_y|^2 \, d \mu_x \, d \mu_y \Bigr)^{\frac{1}{2}}.
\]
The desired inequality follows after cancelling the factor $\cH^{N-2}(\Sigma)>0$.
\end{proof}
The kernel $B_0(z)$ introduced above will be important for studying limits of stable $s$-minimal surfaces as $s \downarrow 0$.
We observe that the substitution $(z,\rho) = (\cos \alpha, \frac{\sin \theta}{\sin (\theta + \alpha)})$ for $\alpha \in (0,\pi)$ and $\theta \in (0,\pi-a)$ makes $1+ \rho^2 - 2 \rho \cos \alpha = \tfrac{\sin^2 \alpha}{\sin^2(\theta+\alpha)}$ and $d \rho =\tfrac{\sin \alpha}{\sin^2(\theta+\alpha)} \, d \theta$.
Consequently,
\begin{equation}\label{eqn:B0-N(z)}
    B_0^{(N)}(z) = (1 - z^2)^{- \frac{N-1}{2}} \int_0^{\arccos(-z)} \sin^{N-2} \theta \, d \theta.
\end{equation}
In the salient dimensions $3 \leq N \leq 6$, we can compute $B_0(z)$ explicitly by a recursive formula:
\begin{align*}
    B_0^{(3)}(z) &= \frac{1}{1-z}, \qquad & B_0^{(4)}(z) &= \frac{\arccos(-z) + z \sqrt{1-z^2}}{2 (1-z^2)^{\frac{3}{2}}}, \\
    B^{(5)}_0(z) &= \frac{2-z}{3(1-z)^2}, \qquad & B^{(6)}_0(z) &= \frac{3 \arccos(-z) + z(5-2z^2) \sqrt{1-z^2}}{8 (1-z^2)^{\frac{5}{2}}}.
\end{align*}

\subsection{The stability inequality on the link}\label{section:stability-inequality}

We now obtain a reduction of the stability inequality for $s$-minimal surfaces that is valid in every dimension and for every $0 < s < 1$.

\begin{lemma}\label{lemma:quadratic-transformation}
We express $\phi(e^t x) = e^{- \frac{N-s-2}{2} t} \psi(t,x)$ with $\psi \in C_c^{\infty} ( \mathbb{R} \times \Sigma)$.
Then, the stability form $Q_{s,M}[\phi]$ of the cone $M = \partial E$ satisfies
\begin{align*}
    Q_{s,M}[\phi] &= \frac{1}{2} \int_{\mathbb{R}} \int_{\mathbb{R}} \iint_{\Sigma \times \Sigma} P_s(\tau, \langle x, y \rangle) [ \psi(t,x) - \psi(t+\tau,y) ]^2 \, d \mu_x \, d \mu_y \, d \tau \, dt \\
    & \quad + \int_{\mathbb{R}} \int_{\Sigma} \psi(t,x)^2 \int_{\Sigma} [ W_s( \langle x,y \rangle) - B_s ( \langle x,y \rangle) (1 - \langle \nu_x, \nu_y \rangle) ] \, d \mu_y \, d \mu_x \, dt.
\end{align*}
Moreover, all the integrals in this representation are finite and
\[
V_s(x) := \int_{\Sigma} [ W_s( \langle x,y \rangle) - B_s( \langle x,y \rangle) (1- \langle \nu_x, \nu_y \rangle )] \, d \mu_y \in L^{\infty}(\Sigma).
\]
\end{lemma}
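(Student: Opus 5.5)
The plan is to change variables $(X,Y)=(e^t x, e^u y)$ in both terms of $Q_{s,M}[\phi]$ and to track the exponential weights, which should balance exactly. Set $a:=\frac{N-2-s}{2}$ and $z=\langle x,y\rangle$. From $|e^tx-e^uy|^2=2e^{t+u}(\cosh(t-u)-z)$ we get $|X-Y|^{-(N+s)}=e^{-\frac{N+s}{2}(t+u)}P_s(t-u,z)$. Together with $d\sigma_X\,d\sigma_Y=e^{(N-1)(t+u)}\,dt\,du\,d\mu_x\,d\mu_y$, each variable $t,u$ then carries the net weight $e^{at}$, $e^{au}$, since $(N-1)-\frac{N+s}{2}=a$.

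For the Dirichlet term, I will expand
\[
(\phi(X)-\phi(Y))^2=e^{-2at}\psi(t,x)^2+e^{-2au}\psi(u,y)^2-2e^{-a(t+u)}\psi(t,x)\psi(u,y).
\]
\begin{itemize}
\item The cross term has total weight $e^{a(t+u)}e^{-a(t+u)}=1$.
\item The square terms have weights $e^{a(u-t)}$ and $e^{a(t-u)}$ respectively.
\end{itemize}
Hence the integrand equals $(\psi(t,x)-\psi(u,y))^2$ plus the correction $\psi(t,x)^2(e^{a(u-t)}-1)+\psi(u,y)^2(e^{a(t-u)}-1)$. Putting $\tau=u-t$, symmetrizing in $(t,x)\leftrightarrow(u,y)$ and using that $P_s$ is even in $\tau$, the correction becomes
\[
\int_{\mathbb R}\int_\Sigma \psi(t,x)^2\int_\Sigma\int_{\mathbb R} P_s(\tau,z)\,(e^{a\tau}-1)\,d\tau\,d\mu_y\,d\mu_x\,dt .
\]
By \eqref{eqn:A-B-expansions}, the inner $\tau$-integral is exactly $B_s(z)-A_s(z)=W_s(z)$.

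For the potential term, $\int_M \frac{1-\langle\nu_X,\nu_Y\rangle}{|X-Y|^{N+s}}\,d\sigma_Y$ is homogeneous of degree $-(1+s)$ in $X$. Radial integration, exactly as in Lemma~\ref{lemma:ps-identity}, turns it into $e^{-(1+s)t}\int_\Sigma B_s(z)(1-\langle\nu_x,\nu_y\rangle)\,d\mu_y$. Multiplying by $\phi^2\,d\sigma_X$ gives total weight $e^{(-2a+N-1-(1+s))t}=1$, which produces the $-B_s(1-\langle\nu_x,\nu_y\rangle)$ part of $V_s$.

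The main obstacle is justifying these rearrangements. The individual pieces, for instance $\iint P_s\,(e^{a\tau}-1)$ against $\psi^2$, are not absolutely convergent near $\tau=0$, $x=y$. I would first carry out the computation with the kernel truncated to $|\tau|>\delta$, or equivalently with the Euclidean truncation $|X-Y|>\delta$, where everything is finite, and only afterwards symmetrize. Symmetrizing replaces $e^{a\tau}-1$ by $\cosh(a\tau)-1=O(\tau^2)$. Lemma~\ref{lemma:kernel-k-s-tau,z-bounds} then bounds $\int_\Sigma P_s(\tau,z)\,d\mu_y\,(\cosh a\tau-1)$ by $C|\tau|^{-s}$ for $|\tau|\le1$, and by $Ce^{(a-\frac{N+s}{2})|\tau|}=Ce^{-(1+s)|\tau|}$ for $|\tau|\ge1$. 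Both are integrable, so dominated convergence lets $\delta\downarrow0$. The remaining term $\frac12\int\!\!\int\!\!\iint P_s(\psi-\psi)^2$ is finite because $\psi$ is smooth and compactly supported, so the squared difference is $O(\tau^2+d_\Sigma(x,y)^2)$ near the diagonal, and the same kernel bounds apply.

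Finally, for $V_s\in L^\infty(\Sigma)$:
\begin{itemize}
\item Lemma~\ref{lemma:ws-useful-bound} gives $W_s\le C(1+|\log|x-y||)|x-y|^{-(N-3+s)}$.
\item The bounds $B_s\le C\,d_\Sigma^{-(N-1+s)}$ and $1-\langle\nu_x,\nu_y\rangle=O(d_\Sigma^2)$ give the same power for the second piece.
\end{itemize}
Since $\Sigma$ is $(N-2)$-dimensional and $N-3+s<N-2$, Corollary~\ref{cor:local-integrability} with $m=N-2$ yields $\sup_x\int_\Sigma(\cdots)\,d\mu_y<\infty$, with constants depending on the $C^2$ geometry of $\Sigma$.
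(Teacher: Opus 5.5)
Your proposal is correct and follows the paper's proof. Both use the change of variables $(X,Y)=(e^tx,e^uy)$, expand the square, symmetrize so that $\int_{\bR}P_s(\tau,z)(e^{a\tau}-1)\,d\tau=B_s(z)-A_s(z)=W_s(z)$, handle the potential term by radial integration, and bound $V_s$ via Lemma~\ref{lemma:ws-useful-bound} and Corollary~\ref{cor:local-integrability}. Your truncation to $|\tau|>\delta$ is a real improvement in rigor: the paper equates the two diagonal terms when each is individually infinite, since $\int_\Sigma B_s(\langle x,y\rangle)\,d\mu_y=\infty$. Use only the $|\tau|>\delta$ cut, though. The Euclidean cut $|X-Y|>\delta$ is not invariant under $\tau\mapsto-\tau$ at fixed $t$, so it does not directly give $\cosh(a\tau)-1$ and is not literally equivalent.
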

\begin{proof}
In the coordinates $(X,Y) = (e^t x, e^u y)$, we have
\[
d \sigma_X \, d \sigma_Y = e^{(N-1)(t+u)} \, dt \, du \, d \mu_x \, d \mu_y, \qquad |X-Y|^{N+s} = e^{\frac{N+s}{2}(t+u)} [ 2(\cosh(t-u) - \langle x,y \rangle) ]^{\frac{N+s}{2}}.
\]
Thus, the measure and the denominator contribute a factor of $e^{\frac{N-2-s}{2}(t+u)} P_s (t - u, \langle x, y \rangle)$, and the first summand in the expression $Q_{s,M}[\phi]$ becomes
\[
\frac{1}{2} \int_{\bR} \int_{\bR} \iint_{\Sigma \times \Sigma} P_s (\tau, \langle x , y \rangle) [ e^{\frac{N-2-s}{2} \tau} \psi(t,x)^2 + e^{- \frac{N-2-s}{2} \tau} \psi(t+\tau,y)^2 - 2 \psi(t,x) \psi(t+\tau,y) ] \, d \mu_x \, d \mu_y \, d \tau \, dt.
\]
By interchanging $x$ and $y$, replacing $\tau$ by $- \tau$, and translating the $t$-variable, we see that the two diagonal terms have the same integral.
Consequently, the above expression equals
\begin{align*}
    & \frac{1}{2} \int_{\bR} \int_{\bR} \iint_{\Sigma \times \Sigma} P_s ( \tau, \langle x,y \rangle) [ \psi(t,x) - \psi(t+\tau,y) ]^2 \, d \mu_x \, d \mu_y \, d \tau \, dt \\
    & \quad + \int_{\bR} \int_{\Sigma} \psi(t,x)^2 \int_{\bR} \int_{\Sigma} P_s (\tau, \langle x, y \rangle) ( e^{\frac{N-2-s}{2} \tau} - 1) \, d \mu_y \, d \tau \, d \mu_x \, dt.
\end{align*}
Using the integral expressions for $A_s(z), B_s(z)$ from~\eqref{eqn:A-B-expansions}, we express the second term above as
\[
\int_{\bR} \int_{\Sigma} \psi(t,x)^2 \int_{\Sigma} W_s( \langle x, y \rangle) \, d \mu_y \, d \mu_x \, dt.
\]
Finally, the second term in the expression of $Q_{s,M}[\phi]$ transforms into
\[
\int_M \phi(X)^2 \int_M \frac{1 - \langle \nu_X, \nu_Y \rangle}{|X-Y|^{N+s}} \, d \sigma_Y \, d \sigma_X = \int_{\bR} \int_{\Sigma} \psi(t,x)^2 \int_{\Sigma} B_s( \langle x, y \rangle) (1 - \langle \nu_x, \nu_y \rangle) \, d \mu_y \, d \mu_x \, dt.
\]
This produces the desired expression for $Q_{s,M}[\phi]$.
Now, the estimates established above show that
\[
\int_{\Sigma} |W_s( \langle x,y \rangle) - B_s( \langle x,y \rangle) (1 - \langle \nu_x, \nu_y \rangle)| \, d \mu_y \leq C.
\]
This proves that $V_s \in L^{\infty}(\Sigma)$.
Next, the second term in the above expression for $Q_{s,M}[\phi]$ satisfies
\[
|  \psi(t,x) - \psi(t+\tau,y) | \leq C( |\tau| + d_{\Sigma}(x,y))
\]
near $\tau =0$ and $y=x$, because $\psi$ is smooth.
In the $N-1$ variables consisting of $\tau$ and local coordinates on $\Sigma$, the corresponding radial integrand is bounded by $C r^{N-2} r^2 r^{-(N+s)} \, dr = C r^{-s} \, dr$, hence integrable for $s \in (0,1)$.
For large $|\tau|$, the bound $P_s(\tau, \langle x, y \rangle) \leq C e^{- \frac{N+s}{2} |\tau|}$ and the fact that $\psi$ is compactly supported in the $t$-variable implies the desired integrability of the second term.
\end{proof}

For a function $g$ on the link $\Sigma$, symmetrizing the stability operator $Q_{s,M}[g]$ produces a form
\begin{align*}
q_s[g] &= \frac{1}{2} \iint_{\Sigma \times \Sigma} A_s ( \langle x, y \rangle) ( g(x) - g(y))^2 \, d \mu_x \, d \mu_y + \int_{\Sigma} V_s(x) g(x)^2 \, d \mu_x, \qquad \text{where} \\
V_s(x) &:= \int_{\Sigma} [ W_s( \langle x,y \rangle) - B_s ( \langle x,y \rangle) (1 - \langle \nu_x, \nu_y \rangle) ] \, d \mu_y \in L^{\infty}(\Sigma)
\end{align*}
as in Lemma~\ref{lemma:quadratic-transformation}.
By Corollary~\ref{cor:local-integrability}, the first integral in the definition of $q_s[g]$ is also finite, due to $A_s(\la x,y \rg) \leq C \, d_{\Sigma}(x,y)^{-(N+s-1)}$ and $( g(x) - g(y))^2 \leq \textup{Lip}(g)^2 \, d_{\Sigma}(x,y)^2$.
The second term is finite because $V_s \in L^{\infty}(\Sigma)$.
In terms of the expression~\eqref{eqn:A-B-expansions}, we may further write
\begin{equation}\label{eqn:mellin-transform}
q_s [g] = \frac{1}{2} \int_{\bR} \iint_{\Sigma \times \Sigma} P_s (\tau, \langle x, y \rangle) (g(x) - g(y))^2 \, d \mu_x \, d \mu_y \, d \tau + \int_{\Sigma} V_s(x) g(x)^2 \, d \mu_x.
\end{equation}
Thus, $g \mapsto q_s[g]$ is a well-defined operator on smooth functions on $\Sigma$. 
\begin{lemma}\label{lemma:extend-to-moduli}
    Suppose that $q_s[f] \geq 0$ for every smooth real-valued function $f$ on $\Sigma$.
    Then, for every smooth complex-valued function $g$ on $\Sigma$, we have $q_s[|g|] \geq 0$.
\end{lemma}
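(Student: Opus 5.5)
The plan is an approximation argument. The only obstruction is that $|g|$ need not be smooth on $\Sigma$ (it fails to be differentiable where $g$ vanishes). It is, however, Lipschitz, and the form $q_s$ is continuous along uniformly Lipschitz, uniformly convergent sequences. So I would approximate $|g|$ by smooth real functions, apply the hypothesis to each approximant, and pass to the limit.

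\textbf{Step 1: smooth approximants.} Write $g = u + iv$ with $u,v$ smooth and real. Then $|g|^2 = u^2 + v^2$ is smooth. For $\ve > 0$ set
\[
f_\ve := \sqrt{|g|^2 + \ve^2} - \ve ,
\]
which is a smooth real-valued function on $\Sigma$. By hypothesis, $q_s[f_\ve] \geq 0$. Moreover $0 \leq |g| - f_\ve \leq \ve$, so $f_\ve \to |g|$ uniformly on $\Sigma$.

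\textbf{Step 2: uniform Lipschitz bounds.} We have
\[
\nabla f_\ve = \frac{u \nabla u + v \nabla v}{\sqrt{u^2+v^2+\ve^2}},
\]
so Cauchy--Schwarz gives $|\nabla f_\ve| \leq (|\nabla u|^2 + |\nabla v|^2)^{1/2} \leq L$, where $L$ depends only on $g$. Hence
\[
(f_\ve(x) - f_\ve(y))^2 \leq L^2 \, d_\Sigma(x,y)^2
\]
uniformly in $\ve$. The same bound holds for $|g|$, since $\bigl||g(x)| - |g(y)|\bigr| \leq |g(x) - g(y)|$. In particular, $q_s[|g|]$ is a finite, well-defined quantity.

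\textbf{Step 3: passage to the limit.} In the first term of $q_s$, the integrands $A_s(\la x,y \rg)(f_\ve(x) - f_\ve(y))^2$ converge pointwise to $A_s(\la x,y \rg)(|g(x)| - |g(y)|)^2$. They are dominated by
\[
L^2 \, A_s(\la x,y \rg) \, d_\Sigma(x,y)^2 \leq C \, d_\Sigma(x,y)^{-(N+s-3)} .
\]
This dominating function is integrable on $\Sigma \times \Sigma$ by Corollary~\ref{cor:local-integrability}, exactly as in the finiteness discussion following Lemma~\ref{lemma:quadratic-transformation}. Dominated convergence therefore applies to the first term. For the potential term, $V_s \in L^\infty(\Sigma)$ by Lemma~\ref{lemma:quadratic-transformation}, $\Sigma$ has finite measure, and $f_\ve^2 \to |g|^2$ uniformly, so $\int_\Sigma V_s f_\ve^2 \to \int_\Sigma V_s |g|^2$. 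Combining the two terms,
\[
q_s[|g|] = \lim_{\ve \downarrow 0} q_s[f_\ve] \geq 0 .
\]

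The only point needing care is the dominated convergence in the singular double integral. This is handled entirely by the $\ve$-uniform Lipschitz bound from Step 2 together with the integrability of $A_s \, d_\Sigma^2$, so no step is a genuine obstacle.
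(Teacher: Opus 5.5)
Your argument is correct and is essentially the paper's proof. The paper uses $g_\ve = (|g|^2+\ve^2)^{1/2}$ and dominates the singular term via the pointwise bound $|g_\ve(x)-g_\ve(y)|\le\bigl||g(x)|-|g(y)|\bigr|\le|g(x)-g(y)|$, which comes from composing with the $1$-Lipschitz map $r\mapsto(r^2+\ve^2)^{1/2}$; it handles the potential term through the exact identity $\int_\Sigma V_s g_\ve^2\,d\mu-\int_\Sigma V_s|g|^2\,d\mu=\ve^2\int_\Sigma V_s\,d\mu$, so subtracting $\ve$ and deriving the Lipschitz bound from $\nabla f_\ve$ are only harmless cosmetic variants of these two steps.
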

We note that the expression $q_s [ |g|] $ is well-defined because $|g|$ is Lipschitz.
\begin{proof}
    For $\ve > 0$, we define the function $g_{\ve} := ( |g|^2 + \ve^2)^{\frac{1}{2}}$.
    The function $g_{\ve}$ is smooth and real-valued,, so $q_s[g_{\ve}] \geq 0$.
    Moreover, because $r \mapsto (r^2 + \ve^2)^{\frac{1}{2}}$ defines a $1$-Lipschitz map on $[0,\infty)$, we can bound the terms in the integrand by
    \[
    |g_{\ve}(x) - g_{\ve}(y)| \leq | |g(x)| - |g(y)| | \leq |g(x) - g(y)|.
    \]
    In addition, we have $g_{\ve} \to |g|$ uniformly on $\Sigma$, and hence
    \[
    A_s ( \langle x, y \rangle) ( g_{\ve}(x) - g_{\ve}(y))^2 \leq A_s ( \langle x,y \rangle) |g(x) - g(y)|^2.
    \]
    For $g \in C^2$, the right-hand side is integrable on $\Sigma \times \Sigma$, so dominated convergence implies
    \[
    \iint_{\Sigma \times \Sigma} A_s ( \langle x,y \rangle) (g_{\ve}(x) - g_{\ve}(y))^2 \, d \mu_x \, d \mu_y \to \iint_{\Sigma \times \Sigma} A_s ( \langle x, y \rangle) ( |g(x)| - |g(y)|)^2 \, d \mu_x \, d \mu_y.
    \]
    On the other hand, writing $g_{\ve}^2 = |g|^2 + \ve^2$, we can bound the potential term as
    \[
    \int_{\Sigma} V_s g_{\ve}^2 \, d \mu - \int_{\Sigma} V_s |g|^2 \, d\mu = \ve^2 \int_{\Sigma} V_s \, d\mu \to 0.
    \]
    Combining these properties, we deduce that $q_s [g_{\ve}] \to q_s [|g|]$.
    Finally, the fact that $q_s[g_{\ve}] \geq 0$ for every $\ve > 0$ shows that $q_s [ |g|] \geq 0$, as claimed.
\end{proof}
\begin{lemma}\label{lemma:fourier-decomposition}
    Given a function $\psi \in C_c^{\infty}( \bR \times \Sigma)$, consider the Fourier transform
    \[
    \hat{\psi}(\xi,x) = \int_{\bR} e^{- i t \xi} \, \psi(t,x) \, dt.
    \]
    Then, the function $\phi(e^t x) = e^{- \frac{N-2-s}{2} t} \psi(t,x)$ satisfies $Q_{s,M}[\phi] = (2 \pi)^{-1} \int_{\bR} \fq_{s,\xi} [ \hat{\psi}(\xi,\cdot)] \, d \xi$, where
    \[
    \fq_{s,\xi}[g] = \frac{1}{2} \int_{\bR} \iint_{\Sigma \times \Sigma} P_s (\tau,\langle x, y \rangle) |g(x) - e^{i \xi \tau}g(y)|^2 \, d \mu_x \, d \mu_y \, d \tau + \int_{\Sigma} V_s(x) |g(x)|^2 \, d \mu_x.
    \]
\end{lemma}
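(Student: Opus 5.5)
The plan is to start from the representation of Lemma~\ref{lemma:quadratic-transformation} and apply Plancherel in the $t$-variable to each of the two terms separately. The potential term is immediate. For fixed $x$, Plancherel in $t$ gives $\int_{\bR}\psi(t,x)^2\,dt=(2\pi)^{-1}\int_{\bR}|\hat\psi(\xi,x)|^2\,d\xi$. We then multiply by $V_s(x)\in L^\infty(\Sigma)$ and integrate over $\Sigma$. Fubini applies because $\psi$ is smooth and compactly supported, so $\hat\psi$ is Schwartz in $\xi$ uniformly in $x$. This produces $(2\pi)^{-1}\int_{\bR}\int_\Sigma V_s|\hat\psi(\xi,\cdot)|^2\,d\mu\,d\xi$.

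For the kinetic term, fix $(\tau,x,y)$ and set $F(t)=\psi(t,x)-\psi(t+\tau,y)$. Its Fourier transform is $\hat F(\xi)=\hat\psi(\xi,x)-e^{i\xi\tau}\hat\psi(\xi,y)$, since translation by $\tau$ multiplies the transform by $e^{i\xi\tau}$. Plancherel then gives
\[
\int_{\bR}[\psi(t,x)-\psi(t+\tau,y)]^2\,dt=\frac{1}{2\pi}\int_{\bR}|\hat\psi(\xi,x)-e^{i\xi\tau}\hat\psi(\xi,y)|^2\,d\xi .
\]
We multiply by $\frac12P_s(\tau,\langle x,y\rangle)\ge0$, integrate over $(\tau,x,y)$, and exchange the $\xi$-integral to the outside. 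Tonelli justifies the exchange because every integrand is nonnegative. The total is finite by Lemma~\ref{lemma:quadratic-transformation}, so the value on the $\xi$-side is finite as well. Adding the two pieces yields $Q_{s,M}[\phi]=(2\pi)^{-1}\int_{\bR}\fq_{s,\xi}[\hat\psi(\xi,\cdot)]\,d\xi$.

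The one point needing care is the order of integration in the kinetic term. The formula of Lemma~\ref{lemma:quadratic-transformation} has the $t$-integral outermost, but $P_s$ is singular near $\tau=0$, $x=y$. Since the integrand is nonnegative, Tonelli lets us integrate in $t$ first for each fixed $(\tau,x,y)$ off the diagonal, a null set. So no cancellation or principal value is involved, and the only input is the absolute finiteness already established. It is also worth noting that $\fq_{s,\xi}[g]$ is finite for each fixed $\xi$ on the smooth function $g=\hat\psi(\xi,\cdot)$. The reason is that $|g(x)-e^{i\xi\tau}g(y)|\le C(|\xi||\tau|+d_\Sigma(x,y))$, so the same radial $r^{-s}$ estimate as in Lemma~\ref{lemma:quadratic-transformation} applies near the diagonal, and Lemma~\ref{lemma:kernel-k-s-tau,z-bounds} controls large $|\tau|$. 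This is not needed for the identity itself, but it shows each $\fq_{s,\xi}$ is well defined.
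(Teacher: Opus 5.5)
Your proposal is correct and follows the paper's proof essentially step for step: Plancherel in $t$ for each fixed $(\tau,x,y)$, with the translation producing the factor $e^{i\xi\tau}$; Tonelli against the nonnegative kernel $P_s\,d\tau\,d\mu_x\,d\mu_y$; and Plancherel together with $V_s\in L^\infty(\Sigma)$ for the potential term. Your additional remarks on the order of integration and on the finiteness of each $\fq_{s,\xi}[\hat\psi(\xi,\cdot)]$ are accurate, though the identity itself does not need them.
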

\begin{proof}
    For every fixed triple $(\tau,x,y)$, Plancherel's theorem gives
    \[
    \int_{\bR} |\psi(t,x) - \psi(t+\tau,y)|^2 \, dt = \frac{1}{2 \pi} \int_{\bR} |\hat{\psi}(\xi,x) - e^{i \xi \tau} \hat{\psi}(\xi,y)|^2 \, d \xi.
    \]
    We work with the transformed expression for $Q_{s,M}[\phi]$ obtained in Lemma~\ref{lemma:quadratic-transformation}.
    Because the first term has a non-negative integrand, we can apply Tonelli's theorem to integrate this identity against the kernel $P_s(\tau, \langle x,y \rangle) \, d \tau \, d \mu_x \, d \mu_y$.
    Moreover, for the potential term, we have $V_s \in L^{\infty}(\Sigma)$, so Plancherel's theorem gives
    \[
    \int_{\bR} \int_{\Sigma} V_s(x) |\psi(t,x)|^2 \, d \mu_x \, dt = \frac{1}{2 \pi} \int_{\bR} \int_{\Sigma} V_s(x) |\hat{\psi}(\xi,x)|^2 \, d \mu_x \, d \xi.
    \]
    The desired decomposition follows from combining these identities.
\end{proof}

\begin{proposition}\label{prop:qsg}
An $s$-minimal cone $E = C(\Omega)$ is stable if and only if every smooth real-valued function $g$ on $\Sigma$ satisfies $q_s[g] \geq 0$.
\end{proposition}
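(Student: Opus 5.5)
We prove the two implications separately, both through the Fourier decomposition of Lemma~\ref{lemma:fourier-decomposition}.

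\textbf{Sufficiency.} Assume $q_s[g] \geq 0$ for every smooth real $g$. Take $\phi \in C_c^\infty(M\setminus\{0\})$ and write $\phi(e^t x) = e^{-\frac{N-2-s}{2}t}\psi(t,x)$ with $\psi \in C_c^\infty(\bR\times\Sigma)$. By Lemma~\ref{lemma:fourier-decomposition},
\[
Q_{s,M}[\phi] = (2\pi)^{-1}\int_{\bR} \fq_{s,\xi}[\hat\psi(\xi,\cdot)]\,d\xi,
\]
so it suffices to show $\fq_{s,\xi}[g] \geq 0$ for every smooth complex $g$ on $\Sigma$ and every $\xi$. By the reverse triangle inequality,
\[
|g(x) - e^{i\xi\tau}g(y)| \geq \bigl||g(x)|-|g(y)|\bigr|.
\]
Integrating this pointwise against the nonnegative kernel $P_s\,d\tau\,d\mu_x\,d\mu_y$, and using that the potential term depends only on $|g|^2$, the representation~\eqref{eqn:mellin-transform} gives
\[
\fq_{s,\xi}[g] \geq q_s[|g|].
\]
The right-hand side is nonnegative by Lemma~\ref{lemma:extend-to-moduli}. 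Hence $Q_{s,M}[\phi]\ge 0$, and the cone is stable.

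\textbf{Necessity.} Assume stability and fix a smooth real $g$ on $\Sigma$. Take $\psi_R(t,x) = R^{-1/2}\chi(t/R)\,g(x)$ with $\chi \in C_c^\infty(\bR)$ and $\int\chi^2 = 1$. The corresponding $\phi_R$ is smooth and compactly supported away from the origin, so $Q_{s,M}[\phi_R]\ge 0$.

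We compute this using Lemma~\ref{lemma:quadratic-transformation}. Expand
\[
\psi_R(t,x)-\psi_R(t+\tau,y) = R^{-1/2}\bigl[\chi(t/R)(g(x)-g(y)) + g(y)\bigl(\chi(t/R)-\chi((t+\tau)/R)\bigr)\bigr].
\]
\begin{itemize}
\item The pure $g$-difference term, after integrating in $t$, gives exactly the first term of~\eqref{eqn:mellin-transform}.
\item The potential term gives $\int V_s g^2$ exactly, since $\int \chi^2 = 1$.
\end{itemize}
It remains to show the error terms vanish as $R\to\infty$. The cross-difference term is bounded by
\[
C\|g\|_\infty^2\,R^{-1}\int\!\!\int P_s \,\min\bigl(\tau^2/R^2\cdot\|\chi'\|^2_{L^2}R,\;4R\bigr)\,d\tau.
\]
This uses $\int|\chi(t/R)-\chi((t+\tau)/R)|^2dt \le \min(\tau^2\|\chi'\|_2^2/R,\,4R)$. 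By Lemma~\ref{lemma:kernel-k-s-tau,z-bounds}, $\int_\Sigma P_s\,d\mu_y \le C|\tau|^{-(2+s)}$ near $0$ and decays exponentially at infinity. Hence
\[
\int \min(\tau^2/R^2,1)\,|\tau|^{-2-s}\,d\tau \to 0.
\]
Near $\tau = 0$ this is because $\tau^{-s}$ is integrable and the factor $R^{-2}$ kills it; at large $\tau$ the exponential decay makes the tail small. The mixed term is controlled by Cauchy--Schwarz between the two squares, so it also tends to $0$. Letting $R\to\infty$ yields $q_s[g] = \lim_R Q_{s,M}[\phi_R] \ge 0$.

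The main technical point is the integrability bookkeeping near $\tau=0$, $y=x$ in the error estimate, where $P_s$ has a nonintegrable singularity for fixed $\tau$. It is handled by integrating in $y$ first via Lemma~\ref{lemma:kernel-k-s-tau,z-bounds} and then using that the error is quadratic in $\tau$.
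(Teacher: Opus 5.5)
Your proposal is correct and follows the paper's proof. Sufficiency goes through $||z|-|w||\leq|z-e^{i\theta}w|$, Lemma~\ref{lemma:extend-to-moduli} and Lemma~\ref{lemma:fourier-decomposition}; necessity uses test functions $\chi(t)g(x)$ that are nearly constant in the log-radial variable, with the error (quadratic in $\tau$) controlled by Lemma~\ref{lemma:kernel-k-s-tau,z-bounds}. The only difference is cosmetic: you use dilated cutoffs $R^{-1/2}\chi(t/R)$ and Cauchy--Schwarz for the cross term, whereas the paper uses plateau cutoffs $\chi_R$ normalized by $I_R=\int\chi_R^2$ and an exact identity that reduces the error to $\int P_s\,(1-c_R(\tau))\,g(x)g(y)$.
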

\begin{proof}
We note that the function $\fq_{s,\xi}[g]$ of Lemma~\ref{lemma:fourier-decomposition} satisfies $\fq_{s,\xi} [g] \geq q_s [|g|]$ for every complex-value function $g$: for all $z, w \in \bC$ and $\theta \in \bR$, we have $||z| - |w|| \leq |z - e^{i \theta} w|$, hence
\[
| |g(x)| - |g(y)| |^2 \leq | g(x) - e^{i \xi \tau} g(y) |^2, \qquad \text{for every } \; x, y, \xi, \tau.
\]
Integrating this relation and using $\int_{\bR} P_s(\tau, \la x, y \rg ) \, d \tau = A_s( \la x,y \rangle)$, we obtain
\[
\frac{1}{2} \int_{\bR} \iint P_s(\tau, \la x, y \rg ) \, |g(x) - e^{i \xi \tau} g(y)|^2 \, d \mu_x \, d \mu_y \, d \tau \geq \frac{1}{2} \iint A_s (\la x,y \rg) \, ( |g(x)| - |g(y)|)^2 \, d \mu_x \, d \mu_y
\]
as integrals over $\Sigma = \partial \Omega$.
Moreover, the potential terms in the functions $q_{s,\xi}[g]$ and $q_s [ |g|]$, proving our assertion.
Consequently, if $q_s[f] \geq 0$ for every smooth real-valued function $f$, then Lemma~\ref{lemma:extend-to-moduli} shows that $q_s [|g|] \geq 0$ for every $g \in C^{\infty}(\Sigma)$, thus also 
\[
\fq_{s,\xi}[ \hat{\psi}(\xi,\cdot) ] \geq q_s [ |\hat{\psi}(\xi,\cdot)|] \geq 0, \qquad \text{for every } \; \psi \in C_c^{\infty} ( \bR \times \Sigma).
\]
Applying Lemma~\ref{lemma:fourier-decomposition}, we conclude that $Q_{s,M}[\phi] = (2 \pi)^{-1} \int_{\bR} \fq_{s,\xi} [ \hat{\psi}(\xi,\cdot)] \, d \xi \geq 0$ for every $\phi \in C_c^{\infty}(\Sigma)$, hence the cone $E = C(\Omega)$ is stable. 

Conversely, suppose that $Q_{s,M}[\phi] \geq 0$ for every $\phi \in C_c^{\infty}(\Sigma)$.
For each sufficiently large $R$, we can choose functions $\chi_R \in C_c^{\infty}(\mathbb{R})$ satisfying
\[
0 \leq \chi_R \leq 1, \qquad \chi_R = 1 \quad \text{on } \; [-R,R], \qquad \chi_R = 0 \quad \textup{outside } \; [-R-1,R+1],
\]
and $\int_{\mathbb{R}} |\chi'_R|^2 \, dt \leq C$, for a uniform constant $C$.
We define $\psi_R(t,x) = \chi_R(t) g(x)$ and $\phi_R(e^t x) = e^{- \frac{N-2-s}{2} t} \psi_R(t,x)$, so $\phi_R \in C_c^{\infty}( M \setminus \{ 0 \})$ and $Q_{s,M}[\phi_R] \geq 0$ for every finite $R$, by stability.
Write
\[
I_R := \int_{\mathbb{R}} \chi^2_R \, dt = 2R + O(1), \qquad c_R(\tau) =: I_R^{-1} \int_{\mathbb{R}} \chi_R(t) \chi_R(t+\tau) \, dt.
\]
Because translations preserve the $L^2$-norm, we can record the identity
\begin{equation}\label{eqn:record-tau}
   2 I_R ( 1 - c_R(\tau)) = \int_{\bR} |\chi_R(t+\tau) - \chi_R(t)|^2 \, dt.
\end{equation}
Moreover, the fundamental theorem of calculus and the Cauchy-Schwarz inequality give the bound
\[
    \| \chi_R(\cdot + \tau) - \chi_R \|_{L^2} = \Bigl\| \int_0^{\tau} \chi'_R(\cdot + s) \, ds \Bigr\|_{L^2} \leq \int_{\min \{ 0 , \tau \} }^{ \max \{ 0, \tau \} } \| \chi'_R(\cdot + s) \|_{L^2} \, ds = |\tau| \cdot \| \chi'_R \|_{L^2}.
\]
Therefore, $\| \chi_R( \cdot + \tau) - \chi_R \|_{L^2} \leq C |\tau|$, and combining these bounds implies that
\begin{equation}\label{eqn:cr-tau}
    0 \leq 1 - c_R(\tau) \leq (2 I_R)^{-1} \tau^2 \| \chi'_R \|^2_{L^2} \leq C R^{-1} \tau^2.
\end{equation}
Using the representation formula for $Q_{s,M}$ from Lemma~\ref{lemma:quadratic-transformation}, we obtain an integral inequality over $\Sigma$,
\begin{align*}
    \frac{Q_{s,M}[\phi_R]}{I_R} &= \frac{1}{2} \int_{\bR} \iint P_s(\tau,\la x,y \rg) [ g(x)^2 + g(y)^2 - 2 c_R(\tau) g(x) g(y)] \, d \mu_x \, d \mu_y \, d \tau + \int V_s(x) g(x)^2 \, d \mu_x.
\end{align*}
On the other hand, we recall that, with integrals taken over $\Sigma$,
\begin{align*}
    q_s[g] = \frac{1}{2} \int_{\bR} \iint P_s(\tau, \la x,y \rg) \, [ g(x)^2 + g(y)^2 - 2 g(x) g(y) ] \, d \mu_x \, d \mu_y \, d \tau + \int V_s(x) g(x)^2 \, d \mu_x.
\end{align*}
Subtracting the two relations and recalling the integral estimates for $P_s$, from Lemma~\ref{lemma:kernel-k-s-tau,z-bounds}, together with the bounds~\eqref{eqn:record-tau} and~\eqref{eqn:cr-tau}, we obtain
\begin{align*}
\bigl| I_R^{-1} Q_{s,M}[\phi_R] - q_s[g] \bigr| &= \left| \int_{\bR} \iint_{\Sigma \times \Sigma} P_s(\tau, \la x,y \rg) (1 - c_R(\tau)) g(x) g(y) \, d \mu_x \, d \mu_y \, d \tau \right| \\
&\leq C R^{-1} \|g\|^2_{L^{\infty}} \int_{\bR} \tau^2 \sup_{\Sigma} \int_{\Sigma} P_s ( \tau, \la x, y \rg ) \, d \mu_y \, d \tau \\
& \leq C_g R^{-1} \Bigl( \int_0^1 \tau^{-s} \, d \tau + \int_1^{\infty} \tau^2 e^{- \frac{N+s}{2} \tau} \, d \tau).
\end{align*}
The constant $C_g$ depends only on $N,s,\Sigma, g$, while the last line contains the sum of two finite terms, which are bounded in terms of $N,s$.
We therefore obtain
\[
|I_R^{-1} Q_{s,M}[\phi_R] - q_s[g]| \leq C_g R^{-1}, \qquad I_R^{-1} Q_{s,M}[\phi_R] \to q_s[g] \quad \text{as } \; R \to \infty.
\]
Since $I_R> 0$ and $Q_{s,M}[\phi_R] \geq 0$, by stability, passing to the limit as $R \to \infty$ shows that $q_s[g] = \lim_{R \to \infty} I_R^{-1} Q_{s,M}[\phi_R] \geq 0$.
This establishes the converse and completes the proof.
\end{proof}

\section{Compactness of stable \texorpdfstring{$s$}{s}-minimal cones for \texorpdfstring{$s\sim0$}{s near 0}}\label{sec:compactness}

Next, we will establish area estimates and a compactness result for stable $s$-minimal surfaces as the fractional parameter $s$ tends to zero.
Area estimates for stable $s$-minimal surfaces were also obtained by Cinti-Serra-Valdinoci in~\cite{cinti-serra-valdinoci}*{Theorem 1.1}, with a constant that depends on the parameter $s$ and may become infinite as $s \downarrow 0$.
In our situation, it is crucial to obtain a bound that remains bounded in this regime, so we need to estimate terms more carefully.
We follow~\cite{savin-valdinoci}*{Lemma 1} and~\cite{cinti-serra-valdinoci}*{\S 2} and perform a key reduction to remove the far-field term.

Let $K: \bR^N \to [0,\infty]$ be a non-negative even function and let $U \subset \bR^N$ be an open set.
We define the $K$-perimeter of the measurable set $E$ in $U$ by 
\[
\textup{Per}_K(E;U) := \iint_{(E \cap U) \times E^c} K ( x-y) \, dx \, dy + \iint_{(E \setminus U) \times (E^c \cap U)} K ( x-y) \, dx \, dy.
\]
For $K_s(z) = |z|^{-N-s}$, this definition recovers the fractional perimeter $\textup{Per}_s(E;U)$.
A set $E$ with $P_K(E;U) < \infty$ is \textbf{stable under rearrangements} in $U$ if for any given vector field $X = X(x,t) \in C_c^2(U \times (-1,1) ; \bR^N)$ and $\ve>0$, there is a $t_{\ve} > 0$ such that, with $\Phi_t$ the integral flow of $X$, we have
\[
\min \{ \textup{Per}_K ( \Phi_t(E) \cup E ; U) \, , \, \textup{Per}_K( \Phi_t(E) \cap E ; U) \} \geq \textup{Per}_K(E ; U) - \ve t^2
\]
for all $t \in (-t_{\ve},t_{\ve})$; see~\cite{cinti-serra-valdinoci}*{Definition 1.6} and~\cite{caselli}*{Definition A}.
Stability under rearrangements is hereditary, in the sense that if $E$ is stable in a set $U$, then it is also stable in every $U' \subseteq U$.
Moreover, this property is preserved under translation and rescaling.
\begin{lemma}\label{lemma:stable-under-flows}
Let $E$ be stable under rearrangements in $B_4$, with respect to a kernel $K$ with $K(z) \geq \kappa>0$ for $|z| \leq 2$.
Suppose that, for every $v \in \bS^{N-1}$, there exists a smooth flow $\Phi^v_t$, compactly supported in $B_4$, with the property that $\Phi^v_t(E) \cap B_1 = (E+tv) \cap B_1$, for small $|t|$, and
\[
\textup{Per}_K( \Phi^v_t(E) ; B_4) + \textup{Per}_K ( \Phi^v_{-t}(E) ; B_4)  - 2 \, \textup{Per}_K(E;B_4) \leq \Lambda t^2,
\]
for some $\Lambda \geq 0$ independent of $v$.
Then, $\mathbf{1}_E \in \textup{BV}(B_1)$, namely $E$ has finite perimeter in $B_1$, and
\[
\textup{Per}(E;B_1) \leq C_N ( 1 + \kappa^{- \frac{1}{2}} \Lambda^{\frac{1}{2}}).
\]
\end{lemma}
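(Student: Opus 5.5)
This follows the Savin--Valdinoci / Cinti--Serra--Valdinoci scheme. The mechanism is that stability under rearrangements converts the second-order estimate for translations into a bound on the interaction between $E\setminus(E+tv)$ and $(E+tv)\setminus E$. The lower bound $K\ge\kappa$ on $B_2$ then turns that interaction into a BV estimate.

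\textbf{Step 1: submodularity.} Fix $v$ and small $t>0$, and set $E_t:=\Phi^v_t(E)$. The $K$-perimeter satisfies the exact identity
\[
\textup{Per}_K(E_t\cup E;B_4)+\textup{Per}_K(E_t\cap E;B_4)=\textup{Per}_K(E_t;B_4)+\textup{Per}_K(E;B_4)-2\,\mathcal{I}_t .
\]
Here $\mathcal{I}_t$ is the interaction $\iint K(x-y)$ over pairs $(x,y)\in (E_t\setminus E)\times(E\setminus E_t)$, with at least one point in $B_4$. (Up to the exact convention of counting pairs, this is the standard computation of~\cite{cinti-serra-valdinoci}*{\S 2}.)

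Stability under rearrangements bounds each of the two left-hand terms below by $\textup{Per}_K(E;B_4)-\ve t^2$. Hence
\[
2\,\mathcal{I}_t\le \textup{Per}_K(E_t;B_4)-\textup{Per}_K(E;B_4)+2\ve t^2 .
\]
The same argument applies with $-t$. Adding the two inequalities and using the hypothesis gives
\[
\mathcal{I}_t+\mathcal{I}_{-t}\le \tfrac12\Lambda t^2+2\ve t^2 .
\]

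\textbf{Step 2: lower bound on the interaction.} Inside $B_1$ we have $E_t=E+tv$. Restricting $\mathcal{I}_t$ to pairs in $B_1\times B_1$, where $|x-y|\le 2$ and so $K\ge\kappa$, gives
\[
\mathcal{I}_t\ge \kappa\,|A_t|\,|B_t|,\qquad A_t:=((E+tv)\setminus E)\cap B_1,\quad B_t:=(E\setminus(E+tv))\cap B_1 .
\]
Since $\min(a,b)^2\le ab$, we obtain
\[
\min\{|A_t|,|B_t|\}\le t\,\kappa^{-1/2}(\Lambda/2+2\ve)^{1/2}.
\]

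\textbf{Step 3: the main obstacle.} We need a bound on the \emph{full} symmetric difference $|A_t|+|B_t|=|(E+tv)\triangle E|\cap B_1$, but Step 2 only controls the smaller of the two pieces. To handle this I would use that
\[
|A_t|-|B_t|=|(E+tv)\cap B_1|-|E\cap B_1| .
\]
This is a volume change controlled by the boundary layer $B_1\triangle(B_1+tv)$, so it is at most $C_N t$. Consequently
\[
|A_t|+|B_t|\le 2\min\{|A_t|,|B_t|\}+C_N t\le C_N t\,\bigl(1+\kappa^{-1/2}\Lambda^{1/2}\bigr)
\]
for all small $t$, after letting $\ve\downarrow0$ (note that $\ve$ fixes $t_\ve$, but the final estimate does not depend on $\ve$). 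This is precisely where the additive constant $1$ in the statement comes from.

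If the boundary-layer accounting needs care, I would first replace $B_1$ by $B_{1-|t|}$ and shift, which costs only $O(t)$ in volume.

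\textbf{Step 4: conclusion.} Dividing by $t$ and letting $t\to0$ gives
\[
\limsup_{t\to0}\,t^{-1}\,\|\mathbf{1}_E(\cdot-tv)-\mathbf{1}_E\|_{L^1(B_{1/2})}\le C_N\bigl(1+\kappa^{-1/2}\Lambda^{1/2}\bigr)
\]
uniformly in $v\in\bS^{N-1}$. The standard characterization of BV via difference quotients, applied on slightly smaller balls, shows $|D_v\mathbf{1}_E|(B_1)\le C$ for every $v$. Summing over the coordinate directions bounds $\textup{Per}(E;B_1)$. To obtain the estimate on all of $B_1$ rather than on an interior ball, one can either run the argument directly on $B_1$ (it only used that $E_t=E+tv$ there) or cover $B_1$ by rescaled balls using the translation and rescaling invariance.
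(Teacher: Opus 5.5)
Your proposal is correct, and it is essentially the argument the paper invokes by citing \cite{cinti-serra-valdinoci}*{Lemmas 2.4 and 2.5}: submodularity of $\textup{Per}_K$ together with stability gives $\mathcal{I}_t+\mathcal{I}_{-t}\le(\tfrac12\Lambda+2\ve)t^2$, the bound $K\ge\kappa$ on $B_1\times B_1$ gives $\min\{|A_t|,|B_t|\}\le\kappa^{-1/2}(\tfrac12\Lambda+2\ve)^{1/2}t$, and the volume identity $|A_t|-|B_t|=|E\cap(B_1-tv)|-|E\cap B_1|=O(t)$ upgrades this to the full symmetric difference, which is where the additive $1$ comes from. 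Two phrasing points: send $\ve\downarrow0$ only after taking $\limsup_{t\to0}$, as your Step 4 effectively does; and the duality bound $|\int\mathbf{1}_E\,\partial_v\varphi|\le\limsup_{t\to0}|t|^{-1}(|A_t|+|B_t|)$ for $\varphi\in C^1_c(B_1)$ with $|\varphi|\le1$ gives the estimate directly on $B_1$, so you do not need to shrink balls.
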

\begin{proof}
This result follows from combining~\cite{cinti-serra-valdinoci}*{Lemmas 2.4 and 2.5}.    
\end{proof}

\begin{lemma}\label{lemma:rearrangement-stable}
Consider an open set $U \subseteq \bR^N$.
If $E$ is a local $s$-perimeter minimizer in $U$, then $E$ is stable under rearrangements in $U$.
If $E$ is $s$-stationary in $U$, $\partial E \cap U$ is $C^2$, and $Q_{s,\partial E}[\phi] \geq 0$ for every $\phi \in C_c^{\infty}(U)$, then $E$ is stable under rearrangements in $U$.
\end{lemma}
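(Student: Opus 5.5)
The plan is to treat the two assertions separately. In each case the goal is to check the defining inequality of stability under rearrangements for an arbitrary $X\in C_c^2(U\times(-1,1);\bR^N)$ with flow $\Phi_t$. The basic tool is the submodularity of the nonlocal perimeter:
\[
\textup{Per}_s(F\cup G;U)+\textup{Per}_s(F\cap G;U)\le \textup{Per}_s(F;U)+\textup{Per}_s(G;U),
\]
for all measurable $F,G$ with finite $s$-perimeter in $U$. This follows pointwise from the inequality $\mathbf{1}_{F\cup G}(x)\mathbf{1}_{(F\cup G)^c}(y)+\mathbf{1}_{F\cap G}(x)\mathbf{1}_{(F\cap G)^c}(y)\le \mathbf{1}_F(x)\mathbf{1}_{F^c}(y)+\mathbf{1}_G(x)\mathbf{1}_{G^c}(y)$, integrated against the kernel with the interaction domains of $\textup{Per}_s(\cdot;U)$. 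I would record it first as a one-line observation.

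\emph{Minimizers.} Let $E$ be a local minimizer in $U$. Since $\Phi_t$ is compactly supported in $U$, the sets $\Phi_t(E)\cup E$ and $\Phi_t(E)\cap E$ coincide with $E$ outside a compact subset of $U$. They are therefore admissible competitors, so each has perimeter at least $\textup{Per}_s(E;U)$. The required inequality then holds with no error term at all ($\ve t^2$ is not even needed), for every $t$. The only care needed is finiteness of $\textup{Per}_s(\Phi_t(E);U)$, which follows from the Lipschitz change of variables. If "local" means minimality only in a compact $U'\Subset U$, I would restrict to flows supported there, using heredity.

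\emph{Stable stationary sets with $C^2$ boundary.} Here I would combine submodularity with a second-order expansion. Apply the inequality with $F=\Phi_t(E)$ and $G=E$:
\[
\textup{Per}_s(\Phi_t(E)\cup E;U)+\textup{Per}_s(\Phi_t(E)\cap E;U)\le \textup{Per}_s(\Phi_t(E);U)+\textup{Per}_s(E;U).
\]
Stationarity gives the first variation zero, and the second variation is $Q_{s,\partial E}$ applied to the normal component $\langle X,\nu\rangle$ plus tangential/acceleration terms. Together these yield $\textup{Per}_s(\Phi_t(E);U)\le \textup{Per}_s(E;U)+Ct^2$. This alone is not enough, since we need a \emph{lower} bound on each of the union and intersection separately.

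The key step is the following. The union $\Phi_t(E)\cup E$ is itself the image of $E$ under a one-sided normal deformation, namely the normal graph of $t\,(\langle X,\nu\rangle)_+$ to first order. So its perimeter should expand as $\textup{Per}_s(E;U)+\tfrac{t^2}{2}Q_{s,\partial E}[\phi_+]+o(t^2)$, with $\phi=\langle X(\cdot,0),\nu\rangle$, and similarly for the intersection with $\phi_-$. Stability gives $Q_{s,\partial E}[\phi_\pm]\ge0$; here $\phi_\pm$ is only Lipschitz, so I would approximate by $C_c^\infty$ functions, which is legitimate because $Q_s$ is continuous in $H^{(1+s)/2}$ and Lipschitz functions lie there. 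The first-order term vanishes by $H_s[E]=0$, and this yields the inequality with the $\ve t^2$ slack.

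I expect the main obstacle to be justifying this second-order expansion for the nonsmooth, Lipschitz normal graph $t\phi_+$ uniformly to $o(t^2)$. The $C^2$ regularity of $\partial E$ is used there, together with the fact that the error from approximating $\Phi_t(E)\cup E$ by the normal graph is $O(t^2)$ in $C^0$ but supported where $\phi$ is small. This is exactly the argument of \cite{cinti-serra-valdinoci}*{\S2} and \cite{caselli}*{Appendix}, which I would follow and cite for the technical expansion rather than reprove.
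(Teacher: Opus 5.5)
Your proposal is correct and follows essentially the paper's route. For the first claim, you apply minimality to the competitors $E\cup\Phi_t(E)$ and $E\cap\Phi_t(E)$. For the second, you use the one-sided expansions $\liminf_{t\to 0} t^{-2}\bigl[\textup{Per}_s(E\cup\Phi_t(E);U)-\textup{Per}_s(E;U)\bigr]=2\,Q_{s,\partial E}[h_+]$ (and the analogue with $h_-$ for the intersection), which the paper takes from \cite{cabre-cinti-serra}*{Remark 3.2}, together with approximating the Lipschitz functions $h_\pm$ by smooth ones in the $Q_{s,\partial E}$-norm. The submodularity inequality you record at the start plays no role in either argument and can simply be dropped.
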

\begin{proof}
The fact that local $s$-perimeter minimizers are stable under rearrangements follows from minimality, because $E \cap \Phi_t(E)$ and $E \cup \Phi_t(E)$ agree with $E$ outside the support of the flow, hence
\[
\textup{Per}_s(E;U) \leq \textup{Per}_s (E \cup \Phi_t(E) ; U) \qquad \text{and} \qquad \textup{Per}_s(E;U) \leq \textup{Per}_s(E \cap \Phi_t(E) ; U).
\]
For sets with $\partial E \cap U$ of class $C^2$, \cite{cabre-cinti-serra}*{Remark 3.2} proves that rearrangement and second variation stability are equivalent, using the identities for $h = \la X(\cdot, 0), \nu_E \rangle$,
\begin{align*}
\liminf_{t \to 0 } \frac{\textup{Per}_s(E \cup \Phi_t(E);U) - \textup{Per}_s(E;U)}{t^2} &= 2\, Q_{s, \partial E}[h_+], \\
\liminf_{t \to 0} \frac{\textup{Per}_s(E \cap \Phi_t(E);U) - \textup{Per}_s(E;U)}{t^2} &= 2\, Q_{s, \partial E}[h_-].
\end{align*}
Because the functions $h_{\pm}$ are Lipschitz, they can be approximated in the norm induced by the quadratic form $Q_{s,\partial E}$ by smooth functions, as discussed in Lemma~\ref{lemma:extend-to-moduli}.
The local singularity is integrable by Corollary~\ref{cor:local-integrability} because $|h_{\pm}(x) - h_{\pm}(y)|^2 \leq C |x-y|^2$.
For a $C^2$ boundary, the potential term in the expression of Lemma~\ref{lemma:quadratic-transformation} has the same integrability due to $|\nu_x - \nu_y| \leq C |x-y|$.
\end{proof}

The following Lemma will be the key ingredient that allows us to upgrade the arguments of~\cite{cinti-serra-valdinoci}*{\S 2} to obtain a perimeter bound with a uniformly bounded constant as $s \downarrow 0$.
\begin{lemma}\label{lemma:bv-translation}
    Let $U \Subset V \subseteq \bR^N$ be open sets and consider a vector $v \in \bR^N$ with the property that
    \[
    U + [0,v] := \{ x + tv : x \in U , \; 0 \leq t \leq 1 \} \Subset V.
    \]
    Then, for every function $u \in \textup{BV}_{\textup{loc}}(\bR^N)$, it holds that
    \[
    \int_U |u(x+v) - u(x)| \, dx \leq |v| \, |Du|(V).
    \]
    In particular, if $E$ is a Caccioppoli set, then $\mathbf{1}_E \in \textup{BV}_{\textup{loc}}(\bR^N)$ and
    \[
    \int_U |\mathbf{1}_E(x+v) - \mathbf{1}_E(x)| \, dx \leq |v| \, \textup{Per}(E;V).
    \]
\end{lemma}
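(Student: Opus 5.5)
The plan is the standard smooth-approximation argument, followed by lower semicontinuity of total variation. We first establish the estimate for smooth $u$ and then pass to general $u \in \textup{BV}_{\textup{loc}}(\bR^N)$ by mollification.

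\textbf{Smooth case.} For $u \in C^\infty(\bR^N)$ and $x \in U$, the fundamental theorem of calculus along the segment $[x,x+v]$ gives
\[
|u(x+v)-u(x)| = \Bigl| \int_0^1 \la \nabla u(x+tv), v \rg \, dt \Bigr| \leq |v| \int_0^1 |\nabla u(x+tv)| \, dt .
\]
We integrate over $x \in U$ and apply Tonelli. For each $t \in [0,1]$ the map $x \mapsto x+tv$ is measure preserving and sends $U$ into $U+tv \subseteq U+[0,v]$. Hence
\[
\int_U |u(x+v)-u(x)| \, dx \leq |v| \int_0^1 \int_{U+tv} |\nabla u| \, dy \, dt \leq |v| \int_{U+[0,v]} |\nabla u| \, dy .
\]

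\textbf{Approximation.} Set $K := \overline{U+[0,v]}$, which is compact in $V$, and fix $\delta>0$ with $K_\delta := K + B_\delta \Subset V$. Let $u_\ve = u * \eta_\ve$ for $\ve<\delta$. Then $u_\ve \to u$ in $L^1_{\textup{loc}}$, so, after passing to a subsequence, $u_\ve(x+v) - u_\ve(x) \to u(x+v)-u(x)$ a.e.\ on $U$. Both $U$ and $U+v$ lie in $K$, and $u \in L^1(K_\delta)$, so the translated differences also converge in $L^1(U)$ and the left-hand side passes to the limit directly.

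For the right-hand side we use the standard mollification bound
\[
\int_{K} |\nabla u_\ve| \, dy \leq |Du|(K+B_\ve) \leq |Du|(K_\delta),
\]
which holds because $|\nabla u_\ve| \leq |Du| * \eta_\ve$ pointwise. Combining with the smooth case, we get
\[
\int_U |u(x+v)-u(x)| \, dx \leq |v| \, |Du|(K_\delta) \leq |v| \, |Du|(V).
\]

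\textbf{Main obstacle.} The only delicate point is that the ambient set on the right must be $V$ rather than $U+[0,v]$. The set $U+[0,v]$ need not be open, and $|Du|$ may charge its boundary. We avoid this by passing through the compact neighbourhood $K_\delta \Subset V$ and using the monotonicity $|Du|(K_\delta) \leq |Du|(V)$; this is exactly why the hypothesis $U+[0,v] \Subset V$ is needed.

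\textbf{Caccioppoli case.} The statement for Caccioppoli sets follows by taking $u = \mathbf{1}_E$, since $|D\mathbf{1}_E|(V) = \textup{Per}(E;V)$ by definition.
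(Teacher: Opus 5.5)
Your proof is correct and follows the same route as the paper: you prove the estimate for smooth $u$ via the fundamental theorem of calculus along the segments $[x,x+v]$, then pass to $u \in \textup{BV}_{\textup{loc}}$ by approximation, making that step explicit through mollification and the pointwise bound $|\nabla u_\ve| \leq |Du| * \eta_\ve$ rather than citing strict approximation on an intermediate open set. One harmless slip in your remarks: $U+[0,v] = \bigcup_{t\in[0,1]}(U+tv)$ is a union of open translates and hence always open, so the only role of $K_\delta$ is to keep the mollified mass $|Du|(K+B_\ve)$ inside $V$.
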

\begin{proof}
    When $u \in C^{\infty}(\bR^N)$ is a smooth function, the fundamental theorem of calculus gives
    \begin{align*}
        \Bigl| u(x+v) - u(x) \Bigr| &= \Bigl| \int_0^1 \la \nabla u(x+tv) , v \rangle \, dt \Bigr| \leq |v| \int_0^1 |\nabla u(x+tv)| \, dt, \\
        \int_U |u(x+v) - u(x)| \, dx &\leq |v| \int_0^1 \int_U |\nabla u(x+tv)| \, dx \, dt = |v| \int_0^1 \int_{U+tv} |\nabla u(y)| \, dy \, dt \\
        &\leq |v| \int_V |\nabla u(y)| \, dy.
    \end{align*}
    For $u \in \textup{BV}_{\textup{loc}}(\bR^N)$, we can choose a smooth approximation $u_k \to u$ in $L^1_{\textup{loc}}(V)$ such that $\int_V |\nabla u_k| \to |Du|(V)$, by using a strict approximation on an intermediate open set $W$ with $(U + [0,v]) \Subset W \subseteq V$.
    The desired bound then follows upon applying the smooth estimate and passing to the limit of BV functions.
    Finally, if $E$ is a Caccioppoli set, then the function $\mathbf{1}_E \in \textup{BV}_{\textup{loc}}(\bR^N)$, and $|D \mathbf{1}_E|(V) = \textup{Per}(E;V)$.
    Thus, the above result proves the claim.
\end{proof}

We now prove the uniform perimeter estimate for stable $s$-minimal surfaces.
The key inequality~\eqref{eqn:per-symmetrization-bound} follows the argument of~\cite{cinti-serra-valdinoci}*{Lemma 2.1}, but we use Lemma~\ref{lemma:bv-translation} to bound terms more carefully.
We produce a volume-preserving deformation that improves their result, enabling us to replace the kernel $K^*$ used in the perimeter bound of~\cite{cinti-serra-valdinoci} by $P_{K^{\sharp}_s}$, where $K^{\sharp}_s(z) = |z|^{-N-s} \min \{ 1, |z|^{-1}\}$.
This removes the far-field effect and the $\frac{1}{s}$ degeneracy in the resulting perimeter bound.

\begin{proposition}\label{prop:bound-s}
For every $N \geq 2$ and $\bar{s} \in (0,1)$, there exists a constant $C(N, \bar{s}) > 0$ with the following property.
For $s \in (0, \bar{s})$, suppose that $E \subset \mathbb{R}^N$ is $s$-minimal and stable under rearrangements in $B_{8R}(x_0)$.
Then, $E$ satisfies
\[
\textup{Per} (E; B_R(x_0)) \leq C(N, \bar{s}) R^{N-1},
\]
where $\textup{Per}(E;U)$ denotes the classical perimeter of the Caccioppoli set $E$ in $U \subseteq \bR^N$.
\end{proposition}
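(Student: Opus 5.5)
By scaling and translation invariance of the $s$-perimeter, and since stability under rearrangements is preserved under these operations, we may assume $x_0=0$ and $R=1$. Then $E$ is stable in $B_8$. The plan is to apply Lemma~\ref{lemma:stable-under-flows}, rescaled so that the ball $B_4$ there sits inside $B_8$, with a kernel that does not degenerate as $s\downarrow 0$.

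The first choice is to replace $K_s(z)=|z|^{-N-s}$ by the truncated kernel $K^\sharp_s(z)=|z|^{-N-s}\min\{1,|z|^{-1}\}$. It satisfies $K^\sharp_s\geq 2^{-N-2}=:\kappa$ on $|z|\leq 2$, uniformly in $s\in(0,\bar s)$. I will reduce to $K^\sharp_s$ following \cite{savin-valdinoci}*{Lemma 1} and \cite{cinti-serra-valdinoci}*{\S 2}. The difference $K_s-K^\sharp_s=|z|^{-N-s}(1-|z|^{-1})_+$ is smooth and bounded near the origin. Hence the second variation of the difference energy, along a compactly supported flow in $B_4$, is controlled by interaction terms.

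The flow is $\Phi^v_t$, generated by $\eta(x)v$ with $\eta\in C_c^\infty(B_4)$ and $\eta=1$ on $B_2$, so that $\Phi^v_t(E)\cap B_1=(E+tv)\cap B_1$. The far-field contribution of $K_s-K^\sharp_s$ has size comparable to $\int_{|z|\geq1}|z|^{-N-s}\,dz\sim s^{-1}$, which is exactly the source of the degeneracy in \cite{cinti-serra-valdinoci}. To remove it, I will use that the flow is volume preserving to first order; one can choose a divergence-free correction so that $|\Phi^v_t(E)\cap B_4|=|E\cap B_4|$. Then the far interaction, taken against a kernel that is essentially constant at scale $\gg 1$, cancels at leading order. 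The remainder involves only $\int|\mathbf 1_{\Phi_t(E)}-\mathbf 1_E|$ against kernel differences, which decay like $|z|^{-N-s-1}$ and are integrable at infinity uniformly in $s$.

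The key estimate, the analogue of~\eqref{eqn:per-symmetrization-bound}, is
\[
\textup{Per}_{K^\sharp_s}(\Phi^v_t(E);B_4)+\textup{Per}_{K^\sharp_s}(\Phi^v_{-t}(E);B_4)-2\,\textup{Per}_{K^\sharp_s}(E;B_4)\leq C t^2\bigl(1+\textup{Per}(E;B_6)\bigr),
\]
with $C$ depending only on $N$ and $\bar s$. Here the terms $\int_U|\mathbf 1_E(x+tv)-\mathbf 1_E(x)|\,dx$ are bounded by Lemma~\ref{lemma:bv-translation}, giving $|t|\,\textup{Per}(E;V)$. Stability under rearrangements, through Lemma~\ref{lemma:rearrangement-stable} and the stationarity of $E$, lets us absorb the $K_s$ part: the combined second variation is nonnegative and the first variation vanishes. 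Running Lemma~\ref{lemma:stable-under-flows} with $\Lambda=C(1+\textup{Per}(E;B_6))$ gives
\[
\textup{Per}(E;B_1)\leq C+C\,\textup{Per}(E;B_6)^{1/2}.
\]
Apply this bound in every ball $B_r(y)$ with $r$ small and $B_{8r}(y)\subset B_8$, after rescaling; the rescaling is harmless because Lemma~\ref{lemma:stable-under-flows} is scale-invariant up to $r^{N-1}$ factors. A covering argument then gives a Simon-type absorption of the form $\textup{Per}(E;B_r(y))\leq \delta\,\textup{Per}(E;B_{6r}(y))+C_\delta r^{N-1}$. The standard absorption lemma closes this to $\textup{Per}(E;B_1)\leq C(N,\bar s)$; the a priori finiteness needed for the absorption comes from \cite{cinti-serra-valdinoci} at fixed $s$.

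The main obstacle is the far-field cancellation with constants uniform as $s\downarrow0$. The naive estimate loses a factor $\int_{|z|>1}|z|^{-N-s}\,dz\approx s^{-1}$. I would first try to write the second variation of the tail interaction explicitly as
\[
\iint(\mathbf 1_{F_t}-\mathbf 1_E)(x)\,G(x-y)\,(\ldots)\,dx\,dy
\]
and subtract its value at a fixed base point, exploiting $\int(\mathbf 1_{F_t}-\mathbf 1_E)=0$. This converts $G$ into $\nabla G=O(|z|^{-N-s-1})$, and it is exactly why the kernel $K^\sharp_s$, rather than $K^*$, appears.
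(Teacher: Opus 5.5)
\textbf{There is a gap in the central step, which you leave open.} Lemma~\ref{lemma:stable-under-flows} must be run with $K=K_s$, since $E$ is stable under rearrangements only for $\textup{Per}_s$. The lower bound $\kappa$ was never the problem, because $K_s\geq 2^{-2N}$ on $\{|z|\leq 2\}$ already. What the lemma needs is an $s$-uniform \emph{upper} bound on $\textup{Per}_s(\Phi^v_t(E);B_4)+\textup{Per}_s(\Phi^v_{-t}(E);B_4)-2\,\textup{Per}_s(E;B_4)$, so your estimate for $\textup{Per}_{K^\sharp_s}$ covers only part of it.

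Stationarity and stability cannot ``absorb'' the rest. Stability is the input the lemma consumes, not an upper bound on the second difference. It also does not transfer to $\textup{Per}_{K^\sharp_s}$: along the competitors $E\cup\Phi_t(E)$ and $E\cap\Phi_t(E)$, the tail energy $\textup{Per}_{K_s-K^\sharp_s}$ has a first variation that need not vanish and can be of size $s^{-1}$.

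Hence everything rests on the tail estimate you call the main obstacle, and your sketch of it only reaches first order. You mention a divergence-free correction but then use only $|\Phi^v_t(E)\cap B_4|=|E\cap B_4|$. Subtracting a base value and bounding the remainder by $\int|\mathbf 1_{\Phi^v_t(E)}-\mathbf 1_E|$ times $\sup_{B_4}|G-G(x_0)|$ gives $C|t|\,\textup{Per}(E;B_6)$, not $\Lambda t^2$. To get $t^2$ you would have to use that the tail energy is linear in $\mathbf 1_{\Phi^v_t(E)}$ on $B_4$, pull it back through the flow, and expand to second order. That requires control of the Jacobian.

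\textbf{The paper removes the obstacle at its source, with no splitting of the kernel and no use of stationarity.} It takes the field exactly divergence-free: $X_i=\sum_j\partial_jA_{ij}$ with the antisymmetric tensor $A_{ij}=\frac{1}{N-1}\varphi(x)(v_ix_j-v_jx_i)$. Then $X=v$ on $B_2$, $\det D\Phi^v_t\equiv1$, and $\Phi^v_t(B_4)=B_4$. Changing variables gives $\textup{Per}_s(\Phi^v_t(E);B_4)=\frac12\iint_{\hat{\cA}}|\mathbf 1_E(x)-\mathbf 1_E(y)|\,K_s(\Phi^v_t(x)-\Phi^v_t(y))\,dx\,dy$, so all $t$-dependence sits in the kernel.

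For $w_t=\Phi^v_t(x)-\Phi^v_t(y)$ one has $|\dot w_t|+|\ddot w_t|\leq C\min\{1,|x-y|\}$, and each derivative of $K_s$ gains a factor $|z|^{-1}$. Hence $|K_s(w_t)+K_s(w_{-t})-2K_s(x-y)|\leq C t^2 K^\sharp_s(x-y)$. This is~\eqref{eqn:per-symmetrization-bound} for $\textup{Per}_s$ itself, so Lemma~\ref{lemma:stable-under-flows} applies with $\kappa=2^{-2N}$ and $\Lambda=C\,\textup{Per}_{K^\sharp_s}(E;B_4)$. Thus $K^\sharp_s$ enters only as the majorant of the second $t$-derivative of the kernel.

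The remaining steps go as you describe. Lemma~\ref{lemma:bv-translation} gives $\textup{Per}_{K^\sharp_s}(E;B_4)\leq C(1+\textup{Per}(E;B_6))$, followed by rescaling, Young's inequality, and Simon's covering.
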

\begin{proof}
    We fix some $v \in \bS^{N-1}$ and work in the unit scale, with $x_0 = 0$.
    We choose a $\varphi \in C_c^{\infty}(B_4)$ with $\varphi = 1$ on $B_2$ and define the antisymmetric tensor $A_{ij}(x)$ and the vector field $X_i(x)$ by
    \[
    \textstyle{A_{ij}(x) = \frac{1}{N-1} \, \varphi(x) (v_i x_j - v_j x_i), \qquad X_i(x) = \sum_{j=1}^N \partial_j A_{ij}(x).}
    \]
    Because $A_{ij} = - A_{ji}$ is antisymmetric, $X$ is divergence-free, since $\textup{div} \, X = \sum_{i,j} \partial_i \partial_j A_{ij} = 0$.
    Moreover, in the region where $\varphi=1$, we obtain $X=v$ on $B_2$ and $X=0$ near $\bR^N \setminus B_4$, due to
    \[
    X_i = \tfrac{1}{N-1} \textstyle{ \sum_j \partial_j (v_i x_j - v_j x_i) } = \tfrac{1}{N-1} \sum_j (v_i - v_j \delta_{ij}) = \tfrac{1}{N-1}(N v_i - v_i) = v_i.
    \]
    Let $\Phi^v_t$ be the flow generated by the vector field $X$, so $\det D \Phi^v_t \equiv 1$ because $X$ is divergence-free.
    Since $\Phi^v_t$ is the identity in a neighborhood of $\partial B_4$, we have $\Phi^v_t(B_4) = B_4$.
    Also, for $|t|$ small, we can write $\Phi^v_t(x) = x + tv$ for $x \in B_1$ and $|t|<\ve$.
    This also gives $\Phi^v_t(E) \cap B_1 = (E+tv) \cap B_1$, because both the forward and inverse trajectories of points of $B_1$ remain in $B_2$.

    We consider the kernel $K_s( z) = |z|^{-(N+s)}$ and let $\hat{\cA} := \bR^{2N} \setminus (B^c_4 \times B^c_4)$.
    Then,
    \[
    \textup{Per}_s ( \Phi^v_t(E) ; B_4) = \frac{1}{2} \iint_{\hat{\cA}} |\mathbf{1}_E(x) - \mathbf{1}_E(y)|^2 K_s ( \Phi^v_t(x) - \Phi^v_t(y)) \, dx \, dy.
    \]
    For fixed $x,y$, we set $w_t := \Phi^v_t(x) - \Phi^v_t(y)$ and $f(t) := K_s(w_t)$.
    Then,
    \[
    \dot{w}_t = X(\Phi^v_t(x)) - X(\Phi^v_t(y)), \qquad \ddot{w}_t = D X (\Phi^v_t(x)) X(\Phi^v_t(x)) - D X (\Phi^v_t(y)) X(\Phi^v_t(y)).
    \]
    Consequently, the smoothness of $X$ implies, uniformly for small $t$, the bound $|\dot{w}_t| + |\ddot{w}_t| \leq C \min \{ 1, |z| \}$, where $z := x-y$.
    We now define the integral kernel
    \[
    K^{\sharp}_s(z) := |z|^{-N-s} \min \{ 1, |z|^{-1} \}, \qquad \text{with } \quad \int_{\bR^N} K^{\sharp}_s(z) \min \{ 1, |z| \} \, dz < \infty.
    \]
    This satisfies $|D^{\ell} K_s(\zeta)| \leq C(N, \bar{s}) \, |\zeta|^{-N-s-\ell}$ for $\ell \in \{1,2\}$, where $C(N, \bar{s})$ is independent of $s$.

    Now, the map $\Phi^v_t$ is uniformly bi-Lipschitz for small $t$, so we have $|w_t| \asymp |z|$ for $|t| < \ve$.
    Moreover,
    \begin{align*}
    & f''(t) = D^2 K_s(w_t) [ \dot{w}_t, \dot{w}_t] + \la D K_s(w_t) , \ddot{w}_t \rangle, \qquad |f''(t)| \leq C |z|^{-N-s} \min \{ 1, |z|^{-1} \}, \\
    & \implies |K_s(w_t) + K_s(w_{-t}) - 2 K_s(z)| \leq C(N, \bar{s}) \, t^2 |z|^{-N-s} \min \{ 1, |z|^{-1} \} = C(N, \bar{s})  \, t^2 K^{\sharp}_s(z).
    \end{align*}
    Having obtained this bound, we may now argue as in~\cite{savin-valdinoci}*{Lemma 1} and~\cite{cinti-serra-valdinoci}*{Lemmas 2.4-2.5} to prove the desired $s$-independent estimate.
    Indeed, using the invariance of the measure  integrating the above symmetric kernel estimate, with $K^{\sharp}_s(z) = |z|^{-N-s} \min \{ 1, |z|^{-1} \}$, gives
    \begin{equation}\label{eqn:per-symmetrization-bound}
    \textup{Per}_s (\Phi^v_t(E); B_4) + \textup{Per}_s ( \Phi^v_{-t}(E);B_4) - 2 \, \textup{Per}_s(E;B_4) \leq C(N, \bar{s}) \, t^2 \textup{Per}_{K^{\sharp}_s} (E;B_4).
    \end{equation}
    Here, we used the invariance of the measure and the relative interaction domain, due to $\det D \Phi^v_t = 1$ and $\Phi^v_t(B_4) = B_4$.
    We note that $\int_{\bR^N} K^{\sharp}_s(z) \min \{1, |z| \} \, dz < \infty$, so all the considerations therein are applicable.
    Moreover, for $|z| \leq 2$, the kernel $K_s(z)$ satisfies the dimensional lower bound $K_s(z) \geq 2^{-N-s} \geq 2^{-2N}$.
    We may therefore apply Lemma~\ref{lemma:stable-under-flows} with $\kappa = 2^{-2N}$ and $\Lambda = C(N, \bar{s}) \, \textup{Per}_{K^{\sharp}_s}(E;B_4)$ to obtain, for $C_N$ a dimensional constant, 
    \[
    \textup{Per}(E;B_1) \leq C_N ( 1 + \textup{Per}_{K^{\sharp}_s} (E;B_4)^{\frac{1}{2}} ).
    \]
    Next, we claim that $E$ has finite perimeter in the ball $B_6$.
    This property follows from~\cite{cabre-cinti-serra}*{Theorem 1.1}, as we only need any finite bound; for completeness, we present a self-contained argument.
    First, observe that $K^{\sharp}_s \leq K_s$, so $\textup{Per}_{K^{\sharp}_s}(E;B) \leq \textup{Per}_s(E;B) < \infty$ for every ball $B \Subset B_8$.
    We cover $\bar{B}_6$ by a finite collection of smaller balls $B_{\theta}(x_{\alpha})$, of fixed sufficiently small radius $\theta \in (0, \frac{1}{50})$, such that $B_{4 \theta} (x_{\alpha}) \Subset B_8$, with $x_{\alpha} \in \bar{B}_6$, and $B_1(x_{\alpha}) \subset B_7 \subset B_8$,
    We consider the rescaled set $\tilde{E}_{\alpha} := \theta^{-1}(E - x_{\alpha})$, which is stable under rearrangements in the ball $B_4$ and satisfies
    \[
    \textup{Per}_{K^{\sharp}_s}(\tilde{E}_{\alpha}; B_4) \leq \textup{Per}_s ( \tilde{E}_{\alpha} ; B_4) = \theta^{s-N} \textup{Per}_s ( E; B_{4 \theta}(x_{\alpha})) < \infty.
    \]
    Thus, our earlier argument applies to the set $\tilde{E}_{\alpha}$, producing a bound as in~\eqref{eqn:per-symmetrization-bound},
    uniformly in $v \in \bS^{N-1}$.
    Using Lemma~\ref{lemma:stable-under-flows} again, with $\kappa = 2^{-2N}$ and $\Lambda_{\alpha} = C(N,\bar{s}) \textup{Per}_{K_s^{\sharp}}(\tilde{E}_{\alpha};B_4) < \infty$ independent of $v$, we see that $\tilde{E}_{\alpha}$ has finite perimeter in $B_1$.
    Rescaling the classical perimeter, we obtain
    \[
    \textup{Per}(E; B_{\theta}(x_{\alpha})) = \theta^{N-1} \textup{Per} ( \tilde{E}_{\alpha}; B_1) < \infty, \qquad \text{for every } \; x_{\alpha} \in \bar{B}_6.
    \]
    After refining the above cover, we can arrange $\bar{B}_6 \subset \bigcup_{\alpha=1}^M B_{\theta/2}(x_{\alpha})$, with $\mathbf{1}_E \in \textup{BV}(B_{\theta}(x_{\alpha}))$ for each $\alpha$.
    Let $\{ \zeta_{\alpha} \}$ be a smooth partition of unity subordinate to this cover, with $\textup{spt} \, \zeta_{\alpha} \Subset B_{\theta}(x_{\alpha})$ and $\sum_{\alpha=1}^M \zeta_{\alpha}=1$ on a neighborhood of $\bar{B}_6$.
    Then, every $\varphi \in C_c^1(B_6; \mathbb{R}^N)$ with $|\varphi| \leq 1$ satisfies
    \[
    \textup{div} \, \varphi = \sum_{\alpha=1}^M \textup{div}(\zeta_{\alpha} \varphi), \qquad \Bigl| \int_{B_6} \mathbf{1}_E \textup{div} \, \varphi \, dx \Bigr| \leq \sum_{\alpha=1}^M \Bigl| \int \mathbf{1}_E \textup{div}(\zeta_{\alpha} \varphi) \, dx \Bigr| \leq \sum_{\alpha=1}^M |D \mathbf{1}_E| (B_{\theta}(x_{\alpha}) ). 
    \]
    Taking the supremum over all $\varphi \in C_c^1(B_6 ; \mathbb{R}^N)$, we conclude that $\mathbf{1}_E \in \textup{BV}(B_6)$ due to
    \[
    |D \mathbf{1}_E|(B_6) \leq \sum_{\alpha=1}^M |D \mathbf{1}_E| ( B_{\theta} (x_{\alpha})) < \infty, \qquad \implies \qquad \textup{Per}(E;B_6) < \infty
    \]
    as claimed.
    To bound the modified interaction by the classical perimeter, we claim that
    \[
    P_{K^{\sharp}_s}(E;B_4) \leq C(N, \bar{s}) (1 + \textup{Per}(E; B_6)).
    \]
    Indeed, for $|z|<1$, we can apply Lemma~\ref{lemma:bv-translation}, since $E$ has finite perimeter in $B_6$, to bound
    \[
    \int_{B_5} |\mathbf{1}_E(x+z) - \mathbf{1}_E(x)| \, dx \leq |z| \, \textup{Per}(E;B_6)
    \]
    with $U = B_5$ and $V = B_6$, since $B_5 + [0,z] \subseteq B_6$. 
    Moreover, setting $y = x+z$ in the integral expression $\iint_{\hat{\cA}} |\mathbf{1}_E(x) - \mathbf{1}_E(y)|^2 K^{\sharp}_s(x-y) \, dx \, dy$ and expanding $|\mathbf{1}_E(x) - \mathbf{1}_E(y)|^2 = |\mathbf{1}_E(x) - \mathbf{1}_E(y)|$ for characteristic functions, we obtain
    \[
    P_{K^{\sharp}_s}(E;B_4) = \frac{1}{2} \int_{\bR^N} K^{\sharp}_s(z) \int_{\bR^N} \mathbf{1}_{ \{ x \in B_4 \} \cup \{ x+z \in B_4 \} } |\mathbf{1}_E(x+z) - \mathbf{1}_E(x)| \, dx \, dz.
    \]
    For $|z|<1$, the condition $x \in B_4$ or $x+z \in B_4$ implies that $x \in B_5$.
    Thus,
    \[
    \int_{\bR^N} \mathbf{1}_{ \{ x \in B_4 \} \cup \{ x+z \in B_4 \}  } |\mathbf{1}_E(x+z) - \mathbf{1}_E(x)| \, dx \leq \int_{B_5} |\mathbf{1}_E(x+z) - \mathbf{1}_E(x)| \, dx \leq |z| \, \textup{Per}(E;B_6)
    \]
    by applying Lemma~\ref{lemma:bv-translation} again.
    We now observe that
    \begin{align*}
        \int_{|z|<1} |z| \, K^{\sharp}_s(z) \, dz &= |\bS^{N-1}| \int_0^1 r^{N-1} \cdot r \cdot  r^{-(N+s)} \, dr = \frac{1}{1-s} |\bS^{N-1}| \leq \frac{1}{1-\bar{s}} |\bS^{N-1}|, \\
        \int_{|z|>1} K^{\sharp}_s(z) \, dz &= |\bS^{N-1}| \int_1^{\infty} r^{N-1} \cdot r^{-N-s-1} \, dr = \frac{1}{1+s} |\bS^{N-1}| \leq |\bS^{N-1}|.
    \end{align*}
    We therefore obtain, in the regions $|z|<1$ and for $|z| \geq 1$,
    \begin{align*}
        C \, \textup{Per}(E;B_6) &\int_{|z|<1} |z| \cdot |z|^{-(N+s)} \, dz \leq \tfrac{C}{1- \bar{s}} \textup{Per}(E;B_6) \quad  & \text{for } \; |z| &< 1, \\
         |\mathbf{1}_E(x+z) - \mathbf{1}_E(x)| &\leq 1, \qquad | \{ x \in B_4 \} \cup \{ x+z \in B_4 \} | \leq 2 |B_4| \quad &\text{for } \; |z| &\geq 1.
    \end{align*}
    Thus, the far-field contribution is bounded by $C \int_{|z| \geq1}|z|^{-N-1-s} \, dz \leq C$.
    Hence, we conclude that
    \[
    \textup{Per}(E;B_1) \leq C ( 1 + \textup{Per}(E;B_6)^{\frac{1}{2}} ).
    \]
    Applying this result to the rescaled domain $E_{z,r} = r^{-1}(E-z)$, we therefore obtain
    \begin{align*}
        & r^{1-N} \textup{Per}(E;B_r(z)) \leq C + C [ r^{1-N} \textup{Per}(E; B_{6r}(z))]^{\frac{1}{2}} , \\
        & \implies \rho^{1-N} \textup{Per}(E; B_{\rho/6}(z)) \leq C + C [ \rho^{1-N} \textup{Per}(E;B_{\rho}(z))]^{\frac{1}{2}} \leq \delta \rho^{1-N} \textup{Per}(E;B_\rho(z)) + C_{\delta}.
    \end{align*}
    In the second step, we set $\rho = 6r$, absorbed constants, and then applied Young's inequality to produce a constant $C_{\delta}$ depending only on $N,\delta, \bar{s}$. 
    We may now employ Simon's covering~\cite{simon-covering-argument}, presented in general form in~\cite{cinti-serra-valdinoci}*{Lemma 3.1}, to see that the above inequality implies the desired perimeter estimate $\textup{Per}(E;B_R(x_0)) \leq C ( N, \bar{s}) R^{N-1}$, where $C$ depends only on $\bar{s}$ and on the dimension $N$, through the covering number.
    This completes the proof.
\end{proof}

Proposition~\ref{prop:bound-s} implies that stable $s$-minimal sets satisfy a uniform perimeter estimate as $s \downarrow 0$.
This situation differs sharply from the regime $s \uparrow 1$, where the nonlocal interaction produces a term $\frac{1}{1-s}$ and leads to a different behavior from classical minimal surfaces.
\begin{theorem}\label{thm:compactness-thm}
Consider a sequence of $s_j \downarrow 0$ and cones $E_j = C(\Omega_j) \subset \mathbb{R}^N$, with $\Omega_j \subset \bS^{N-1}$ and $\varnothing \neq \Sigma_j = \partial \Omega_j \subset \bS^{N-1}$.
Suppose that each $\Sigma_j$ is a closed embedded $C^2$ hypersurface and each $E_j$ is stationary and stable for the $s_j$-perimeter.
Then, after passing to a subsequence, there exist a Caccioppoli set $\Omega \subset \bS^{N-1}$ with $|\Omega| = \frac{1}{2} |\bS^{N-1}|$, a finite positive Radon measure $\ell$ on $\bS^{N-1}$, and an $\bR^N$-valued measure $\mathbf{q}$ on $\bS^{N-1}$ with the following convergence properties.
\begin{enumerate}[(i)]
    \item We have $\mathbf{1}_{\Omega_j} \to \mathbf{1}_{\Omega}$ in strict $\textup{BV}(\bS^{N-1})$ and $\partial[\![ \Omega_j ]\!] \to \partial [\![ \Omega]\!]$ in the flat topology.
    \item Write $\mu_j := \cH^{N-2} \mres \Sigma_j$ and let $\nu_j$ denote the outward spherical conormal to $\Sigma_j$, with $\nu_{\Omega}$ the outward conormal to $\partial^* \Omega$.
    Then, we have the convergence of measures $\mu_j \xrightharpoonup{*} |D \mathbf{1}_{\Omega}|$ and
    \[
    (x, \nu_j(x))_{\#} \mu_j \xrightharpoonup{*} (x, \nu_{\Omega}(x))_{\#} |D \mathbf{1}_{\Omega}|, \qquad D \mathbf{1}_{\Omega_j} = - \nu_j \mu_j \xrightharpoonup{*} D \mathbf{1}_{\Omega} = -\nu_{\Omega} \, |D \mathbf{1}_{\Omega}|
    \]
    where the $(x, \nu_j(x))_{\#} \mu_j$ are measures on the normal bundle of $\bS^{N-1}$.
    Moreover, it holds that
        \[
\iint_{\partial^* \Omega \times \partial^* \Omega} B_0 ( \langle x, y \rangle ) \, |\nu_{\Omega}(x) - \nu_{\Omega}(y)|^2 \, d |D \mathbf{1}_{\Omega}(x)| \, d |D \mathbf{1}_{\Omega} |(y) < \infty.
\]  
    \item As in Lemma~\ref{lemma:ps-identity}, we define
    \[
    p_j(x) := \int_{\Sigma_j} B_{s_j} ( \langle x, y \rangle )(1 - \langle \nu_j(x), \nu_j(y) \rangle) \, d\mu_j(y).
    \]
    Then, we have $p_j \mu_j \xrightharpoonup{*} \ell$ and $p_j \nu_j \mu_j \xrightharpoonup{*} \mathbf{q}$.
    Additionally, we have
    \[
    \iint_{\partial^* \Omega \times \partial^* \Omega} f(x) B_0(\la x,y \rg)\,  (1 - \la \nu(x), \nu(y) \rg) \, d |D \mathbf{1}_{\Omega}|(x) \, d |D \mathbf{1}_{\Omega}|(y) \leq \int_{\bS^{N-1}} f \, d \ell
    \]
    for every non-negative continuous function $f$.  
  \item We have $|\mathbf{q}| \leq \ell$, and for every smooth function $\Phi: \bS^{N-1} \to \bR^N$, it holds that
    \[
    \iint_{\partial^* \Omega \times \partial^* \Omega} B_0( \langle x,y \rangle) \, \langle \Phi(x) - \Phi(y) , \nu_{\Omega}(x) - \nu_{\Omega}(y) \rangle \, d  |D \mathbf{1}_{\Omega}(x)| \, d |D \mathbf{1}_{\Omega}(y)| =2 \int_{\bS^{N-1}} \langle \Phi , d \mathbf{q} \rangle.
    \]
    In addition, for every smooth function $\Psi : \bS^{N-1} \to \bigwedge^2 \bR^N$, we have
    \[
    \iint_{\partial^* \Omega \times \partial^* \Omega} B_0 ( \langle x, y \rangle ) \, \langle \Psi(x) - \Psi(y) , \nu_{\Omega}(x) \wedge \nu_{\Omega}(y) \rangle \,d  |D \mathbf{1}_{\Omega}(x)| \, d |D \mathbf{1}_{\Omega}(y)| = 0.
    \]
    \item The limiting stability form $q_0[g]$ satisfies $q_0[g] \geq 0$ for every smooth real-valued function $g \in C^{\infty}(\bS^{N-1})$.
    That is, for $\Sigma = \partial^* \Omega$, we have
    \begin{align*}
    \int_{\bS^{N-1}} g^2 \, d \ell &\leq \frac{1}{2} \iint_{\partial^* \Omega \times \partial^* \Omega} A_0( \langle x,y \rangle) (g(x) - g(y))^2 \, d  |D \mathbf{1}_{\Omega}(x)| \, d |D \mathbf{1}_{\Omega}(y)| \\
    & \quad + \iint_{\partial^* \Omega \times \partial^* \Omega} W_0( \la x,y \rg) g(x)^2 \, d  |D \mathbf{1}_{\Omega}(x)| \, d |D \mathbf{1}_{\Omega}(y)|.
    \end{align*}
    Moreover, each term in this expression is finite.
\end{enumerate}
\end{theorem}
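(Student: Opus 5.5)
The plan is to obtain uniform bounds on the links from Proposition~\ref{prop:bound-s}, extract limits by BV compactness, and then pass each identity of Section~\ref{sec:preliminaries} to the limit, using lower semicontinuity for the positive quadratic terms.

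\textbf{Uniform bounds and limit set.} Each $E_j$ is $s_j$-minimal and stable with $C^2$ link. By Lemma~\ref{lemma:rearrangement-stable} it is stable under rearrangements in every ball away from the origin, and by conical invariance in every ball. Proposition~\ref{prop:bound-s} with $\bar s=\tfrac12$ therefore gives $\textup{Per}(E_j;B_R(x_0))\le CR^{N-1}$ uniformly in $j$. Applied to balls $B_r(x)$ centred at points $x\in\Sigma_j$ at radius one, and integrating the cone over $t\in(1/2,2)$, this yields $\mu_j(B_r(x))\le \Lambda r^{N-2}$ with $\Lambda=\Lambda(N)$, and hence $\mu_j(\bS^{N-1})\le C$.

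BV compactness on $\bS^{N-1}$ then gives $\mathbf 1_{\Omega_j}\to\mathbf 1_\Omega$ in $L^1$ and $D\mathbf 1_{\Omega_j}\xrightharpoonup{*}D\mathbf 1_\Omega$. To show $|\Omega|=\tfrac12|\bS^{N-1}|$, I will apply Proposition~\ref{prop:angular-}. This requires a uniform bound on
\[
\iint B_{s_j}|\nu_x-\nu_y|^2=2\int p_j\,d\mu_j ,
\]
and I will derive it from stability: testing $q_{s_j}[1]\ge0$ gives
\[
\int p_j\,d\mu_j\le \iint W_{s_j}\,d\mu_j\,d\mu_j\le C\Lambda^2,
\]
by Lemma~\ref{lemma:ws-useful-bound} together with the upper density bound (Lemma~\ref{lemma:potential-upper-growth}). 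With this bound, Proposition~\ref{prop:angular-} gives $|2|\Omega_j|-|\bS^{N-1}||\le C s_j\to0$. A lower bound on $\cH^{N-2}(\Sigma_j)$ is needed in that proposition. I expect to obtain it from the relative isoperimetric inequality once volumes are near half, or to argue by contradiction: if $\cH^{N-2}(\Sigma_j)\to0$ then $\Omega_j$ converges to $\varnothing$ or to the whole sphere, which the identity in Step~3 of that proof (combined with the kernel comparison) excludes.

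\textbf{Strict convergence and the measure statements.} Strict convergence $\mu_j(\bS^{N-1})\to|D\mathbf 1_\Omega|(\bS^{N-1})$ is the main obstacle in (i) and (ii), since a priori mass may cancel in the limit (folding). My plan is to use the energy bound $\sup_j\iint B_{s_j}|\nu_j(x)-\nu_j(y)|^2\,d\mu_j\,d\mu_j<\infty$, together with $B_{s_j}\ge c|x-y|^{-(N-1+s_j)}$, as a fractional $H^{1/2}$-type control on the Gauss map that forbids oppositely oriented nearby sheets. Concretely, let $V$ be the varifold limit of $(x,\nu_j)_\#\mu_j$ on the normal bundle. The energy bound passes to $V$ by lower semicontinuity, with $B_{s_j}\to B_0$ locally uniformly off the diagonal. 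Sheets with opposite normals at distance $d$ contribute energy $\gtrsim d^{-(N-1)}\cdot(\text{area})^2$. This forces $V$ to carry a single normal $\nu_\Omega$ with multiplicity one, so $V=(x,\nu_\Omega)_\#|D\mathbf 1_\Omega|$ and the convergence is strict. Flat convergence of currents follows from $L^1$ convergence with bounded mass. The finiteness statement in (ii) follows from Fatou and the same lower semicontinuity.

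\textbf{Parts (iii)--(v).} Since $\int p_j\,d\mu_j\le C$, the measures $p_j\mu_j$ and $p_j\nu_j\mu_j$ have weak-$*$ limits $\ell$ and $\mathbf q$, and $|\mathbf q|\le\ell$ because $|\nu_j|=1$. The inequality in (iii) is lower semicontinuity of $\iint fB_{s_j}(1-\la\nu,\nu\rg)\,d\mu_j\,d\mu_j$ under the convergence in (ii), truncating the kernel near the diagonal.

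The identity in (iv) is the limit of \eqref{eqn:double-integral-B-eta}. Passing to the limit in the double integral needs uniform integrability near the diagonal. This follows from Cauchy--Schwarz: $|\Phi(x)-\Phi(y)|\le C|x-y|$, the $\nu$-energy is bounded, and $\iint_{|x-y|<\delta}B_{s_j}|x-y|^2\,d\mu_j\,d\mu_j\to0$ uniformly by the density bound. The wedge identity comes from the same computation with $\langle a,\nu\rangle$ replaced by antisymmetric combinations: rotational Jacobi fields give a vanishing pointwise PV integral of $B_s\,\nu_x\wedge\nu_y$, and this identity is then passed to the limit in the same way.

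For (v), I will test $q_{s_j}[g|_{\Sigma_j}]\ge0$ using Proposition~\ref{prop:qsg}, split $V_{s_j}$ into its $W$ part and its $p_j$ part, and note that $\int g^2p_j\,d\mu_j\to\int g^2\,d\ell$. On the other side, the $A_{s_j}$ and $W_{s_j}$ double integrals converge to their $s=0$ counterparts by the same uniform-integrability argument. Here $(g(x)-g(y))^2\le C|x-y|^2$ and $W_{s_j}\le C(1+|\log|x-y||)|x-y|^{-(N-3+s_j)}$ both give integrable singularities against measures of dimension $N-2$. Their convergence to the limiting integrals relies on the strict convergence from (ii), which is why the strictness step is the crux of the whole argument.
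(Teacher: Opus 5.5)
Your architecture matches the paper's: density bounds for $\mu_j$ from Proposition~\ref{prop:bound-s}, the test $q_{s_j}[1]\ge0$ feeding Proposition~\ref{prop:angular-}, limits of $(x,\nu_j)_\#\mu_j$ on the normal bundle, and cutoffs in $|x-y|$ with a uniform near-diagonal Cauchy--Schwarz bound to pass each identity to the limit. However, the step you correctly call the crux is asserted rather than proved.

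\textbf{No cancellation in the limit.} Your sheet picture describes approximants at distance $d>0$, and its scaling is off. Two parallel, oppositely oriented sheets of area $a$ at distance $d$ carry energy of order $a/d$; a bound $d^{-(N-1)}a^2$ holds only for patches of diameter $\lesssim d$. More importantly, the conclusion must be drawn for the limit $V$. There coalesced sheets have $d=0$, and a fibre $\eta_x$ may be any probability measure on unit conormals. You give no mechanism turning finite limiting energy into $\eta_x=\delta_{\nu_\Omega(x)}$. You also never connect $V$ to $D\mathbf 1_\Omega$, and that connection is what multiplicity one requires.

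The missing idea is the barycenter. Disintegrate $V=\eta_x\otimes\mu$ and set $b(x)=\int v\,d\eta_x(v)$. The divergence theorem on $\Omega_j$ and $L^1$ convergence give $-D\mathbf 1_\Omega=b\,\mu$, so $|D\mathbf 1_\Omega|=|b|\,\mu$. Since the fibres live on unit vectors,
\[
\iint|v-w|^2\,d\eta_x(v)\,d\eta_y(w)=2\bigl(1-\la b(x),b(y)\rg\bigr)\ge 2\bigl(1-|b(x)|\bigr)\bigl(1-|b(y)|\bigr).
\]
With $B_0\ge c_N|x-y|^{-(N-1)}$, the measure $\lambda=(1-|b|)\mu$ therefore has finite $(N-1)$-Riesz energy on $\bS^{N-1}$, and Lemma~\ref{lemma:radon-Sm} forces $\lambda=0$. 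Hence $|b|=1$ $\mu$-a.e. This gives $\eta_x=\delta_{b(x)}$ (since $\int|v-b|^2\,d\eta_x=1-|b|^2$), then $b=\nu_\Omega$, and $\mu=|D\mathbf 1_\Omega|$, which is strict convergence. Without this argument, (ii) is not established. Neither is the inequality in (iii) as stated, nor the identification of the limiting integrals in (iv)--(v) as integrals against $|D\mathbf 1_\Omega|$.

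\textbf{Smaller points.} No lower bound on $\cH^{N-2}(\Sigma_j)$ is needed, and your first remedy (relative isoperimetry once volumes are near half) is circular. Simply keep the factor $\mu_j(\bS^{N-1})$ in
\[
\iint W_{s_j}\,d\mu_j\,d\mu_j\le \mu_j(\bS^{N-1})\,\sup_x\int W_{s_j}(\la x,y\rg)\,d\mu_j(y)\le C\Lambda\,\cH^{N-2}(\Sigma_j).
\]
Then the energy in Proposition~\ref{prop:angular-} is at most $C\,\cH^{N-2}(\Sigma_j)$, and the prefactor $(\Lambda/\cH^{N-2}(\Sigma_j))^{1/2}$ cancels.

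The wedge identity does not come from rotational Jacobi fields; those produce an identity involving $\hat{\cB}_s$ and $\la Ay,\nu_y\rg$. It follows by wedging the translation identity \eqref{eqn:ps(x)-nux} with $\nu_x$.

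Finally, apply Proposition~\ref{prop:bound-s} only on small balls near the unit sphere, where $\partial E_j$ is $C^2$ and Lemma~\ref{lemma:rearrangement-stable} applies. It is not justified on balls containing the vertex.
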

\begin{proof}
    Working in polar coordinates on each conical domain $E_j$, we write $d \cH^{N-1}_{\partial E_j} = r^{N-2} \, dr \, d \mu_j$, so
    \[
    \cH^{N-1} ( \partial E_j \cap \{ a < |X| < b \}) = \tfrac{1}{N-1} ( b^{N-1}- a^{N-1}) \, \mu_j(\Sigma_j).
    \]
    We can cover a fixed annulus $\{ \frac{3}{4} < |X| < \frac{5}{4} \}$ by finitely many balls and apply Proposition~\ref{prop:bound-s} for each $E_j$ to obtain uniform bounds on the measures $\mu_j = \cH^{N-2} \mres \Sigma_j$ as
    \begin{equation}\label{eqn:mu-j-Sigma-j}
    \mu_j(\bS^{N-1}) = \cH^{N-2}(\Sigma_j) \leq C(N,\bar{s}), \qquad \mu_j(B_r(x)) \leq C(N,\bar{s}) r^{N-2}.
    \end{equation}
    For each fixed $x \in \Sigma_j$ and small $r \in (0,r_0)$, the radial slab 
    \[
    \{ \rho y : y \in \Sigma_j \cap B_{cr}(x) \, : \; 1 - cr < \rho < 1 + cr \}
    \]
    is contained in $B_{Cr}(x)$, and has $(N-1)$-measure bounded below by $cr \, \mu_j(\Sigma_j \cap B_{cr}(x))$.
    Applying Proposition~\ref{prop:bound-s}, we obtain $cr \, \mu_j(\Sigma_j \cap B_{cr}(x)) \leq C r^{N-1}$, and hence $\mu_j(\Sigma_j \cap B_r(x)) \leq \tilde{C} r^{N-2}$ after changing the constants.
    Also, after passing to a tail of the sequence, we can assume that $s_j \leq \frac{1}{4}$ in what follows.
    We now prove the properties $(i)$-$(iv)$ in steps.
    
\smallskip \noindent \textbf{Step 1: Property $(i)$.}
We record the uniform bound $\mu_j(B_r(x)) \leq C(N,\bar{s}) r^{N-2}$ by~\eqref{eqn:mu-j-Sigma-j}.
By Lemma~\ref{lemma:kernel-k-s-tau,z-bounds}, the angular kernels satisfy $B_s(\la x,y \rg) \asymp |x-y|^{-(N-1+s)}$, while Lemma~\ref{lemma:ws-useful-bound} shows that 
\[
W_s( \la x,y \rg) \leq C_N ( 1 + |\log |x-y||) \, |x-y|^{-(N-3+s)}.
\]
Thus, we can apply Lemma~\ref{lemma:potential-upper-growth} with $(m,\alpha,\beta_0,q) = (N-2, N-3+s,1-\bar{s},1)$ and $\bar{s} \leq \frac{1}{4}$ to bound
\begin{equation}\label{eqn:Ws-uniform-bound}
    \sup_j\sup_{x\in\bS^{N-1}} \int_{\Sigma_j\cap B_r(x)} W_{s_j}(\la x,y\rg)\,d\mu_j(y) \leq C r^{\frac34}(1+|\log r|) \leq C r^{\frac14}, \qquad \text{for} \quad 0<r\leq1.
\end{equation}
Testing the stability inequality with the constant function $g \equiv 1$, so $q_{s_j}[1] \geq 0$, we obtain
    \begin{align*}
    \iint_{\Sigma_j \times \Sigma_j} B_{s_j}(\la x,y \rg) \, |\nu_j(x) - \nu_j(y)|^2 \, d\mu_j(x) \, d\mu_j(y) &\leq 2 \, \iint_{\Sigma_j \times \Sigma_j} W_{s_j}(\la x,y \rg) \,d \mu_j(x) \, d \mu_j(y) \\
    &\leq 2 \, \cH^{N-2}(\Sigma_j) \sup_{x \in \bS^{N-1}}\int_{\Sigma_j} W_{s_j}(\la x, y \rg) \, d\mu_j(y),
    \end{align*}
so~\eqref{eqn:Ws-uniform-bound} implies that $\iint B_s |\nu_j(x) - \nu_j(y)|^2 \, d\mu_j \, d\mu_j \leq C \cH^{N-2}(\Sigma_j)$.
Then, Proposition~\ref{prop:angular-} yields
    \begin{equation}\label{eqn:omega-j-inequality}
        \bigl| 2|\Omega_j| - |\bS^{N-1}| \bigr| \leq C(N, \bar{s}) \Bigl( \frac{\Lambda}{\cH^{N-2}(\Sigma_j)} \Bigr)^{\frac{1}{2}} \cH^{N-2}(\Sigma_j)^{\frac{1}{2}} s_j \leq C(N, \bar{s}) \, s_j.
    \end{equation}
    because $\Lambda = C(N,\bar{s})$ here, and $\cH^{N-2}(\Sigma_j) \leq C(N,\bar{s})$ uniformly in $s_j < \frac{1}{4}$ due to~\eqref{eqn:mu-j-Sigma-j}.
    
    Moreover, plugging $\nu_j \in C^1(\Sigma_j)$ into the weak formulation~\eqref{eqn:double-integral-B-eta} of Lemma~\ref{lemma:ps-identity}, we obtain
    \begin{equation}\label{eqn:uniform-pj-bound}
    \int_{\Sigma_j} p_j \, d\mu_j = \frac{1}{2} \iint_{\Sigma_j \times \Sigma_j} B_{s_j} ( \langle x,y \rangle) \, |\nu_j(x) - \nu_j(y)|^2 \, d \mu_j(x) \, d \mu_j(y) \leq C.
    \end{equation}
    We denote by $\cN$ the spherical normal bundle
    \[ 
    \mathcal{N} := \{ (x, v ) \in \bS^{N-1} \times \bS^{N-1} : \la x, v \rg = 0 \}
    \]
    so the $\eta_j := (x, \nu_j(x))_{\#} \mu_j$ define measures on $\cN$ with $(\pi_x)_{\#} \eta_j = \mu_j$.
    We deduce from the above considerations that the measures $\ell_j := p_j \mu_j$ and $\mathbf{q}_j := p_j \nu_j \, \mu_j$ satisfy uniform bounds $\ell_j ( \bS^{N-1}) \leq C$, $|\mathbf{q}_j|(\bS^{N-1}) \leq C$, and $\eta_j(\cN) \leq C$.
    Therefore, we can extract subsequential limit measures
    \[
    \mu_j \xrightharpoonup{*} \mu, \qquad \eta_j \xrightharpoonup{*} \eta, \qquad \ell_j \xrightharpoonup{*} \ell, \qquad \mathbf{q}_j \xrightharpoonup{*} \mathbf{q}, \qquad \textup{spt} \, \eta \subseteq \cN,
    \]
    which satisfy $(\pi_x)_{\#} \eta = \mu$.
    Moreover, the bound~\eqref{eqn:mu-j-Sigma-j} implies that $\mu(B_r(x)) \leq C(N,\bar{s}) r^{N-2}$ for every $x \in \bS^{N-1}$ and $r>0$: for every $j$, either $B_r(x) \cap \Sigma_j = \varnothing$, in which case $\mu_j(B_r(x)) = 0$, or we can find some $z_j \in B_r(x) \cap \Sigma_j$.
    Then, $B_r(x) \subset B_{2r}(z_j)$ and a non-negative cutoff function gives
    \[
    \mu_j(B_r(x)) \leq \mu_j(B_{2r}(z_j)) \leq C r^{N-2} \implies \mu(B_r(x)) \leq \limsup_{j \to \infty} \mu_j(B_{2r}(x)) \leq C r^{N-2}
    \]
    for a constant $C = C(N,\bar{s})$.
    This property also shows that $\mu (\{ x\}) = 0$ for every $x$, and hence
    \begin{equation}\label{eqn:mu-otimes-mu-measure}
    (\mu \otimes \mu) ( \{ (x,y) : x=y\}) = 0, \qquad (\eta \otimes \eta) ( \{ ( (x,v) , (y,w) ) : x=y \} ) = 0.
    \end{equation}
    The uniform bounds~\eqref{eqn:mu-j-Sigma-j} and~\eqref{eqn:omega-j-inequality} imply that the sets $\Omega_j$ have uniformly bounded measure and uniformly bounded perimeter, so the BV compactness theorem~\cite{maggi}*{Proposition 12.15 and Theorem 12.26} allows us to extract a Caccioppoli set $\Omega \subset \bS^{N-1}$ and a subsequence of $s_j \downarrow 0$ with $\mathbf{1}_{\Omega_j} \to \mathbf{1}_{\Omega}$ in $L^1(\bS^{N-1})$.
    Because $|\mathbf{1}_{\Omega_j} - \mathbf{1}_{\Omega}| = \mathbf{1}_{\Omega_j \triangle \Omega}$ a.e., we obtain
    \begin{align*}
        \bigl| |\Omega_j| - |\Omega| \bigr| &= \left| \int_{\bS^{N-1}} (\mathbf{1}_{\Omega_j} - \mathbf{1}_{\Omega}) \, d \cH^{N-1} \right| \leq \int_{\bS^{N-1}} |\mathbf{1}_{\Omega_j} - \mathbf{1}_{\Omega}| \, d \cH^{N-1} = |\Omega_j \triangle \Omega| \\
        &= \| \mathbf{1}_{\Omega_j} - \mathbf{1}_{\Omega}\|_{L^1(\bS^{N-1})} \to 0.
    \end{align*}
    Combining this property with~\eqref{eqn:omega-j-inequality} gives $|\Omega| = \frac{1}{2} |\bS^{N-1}|$.
    Finally, the current $T_j := [\![ \Omega_j]\!] - [\![ \Omega]\!]$ is represented by the $\bZ$-valued multiplicity $\mathbf{1}_{\Omega_j} - \mathbf{1}_{\Omega}$, with mass $\mathbf{M}(T_j) = |\Omega_j \triangle \Omega| \to 0$ and $\partial [\![ \Omega_j]\!] - \partial [\![\Omega]\!] = \partial T_j$.
    The flat norm of $\partial T_j$ is bounded by $\mathbf{M}(T_j) \to 0$, so $\partial [\![\Omega_j]\!] \to \partial [\![\Omega]\!]$ in the flat topology. 
    This completes part $(i)$ except for the strict BV convergence, which we prove in Step 2.

    \smallskip \noindent \textbf{Step 2: Properties $(ii)$ and $(iii)$.}
    To study the limit measure $\ell$ in terms of $\eta$, let $f \geq 0$ be a continuous function on $\bS^{N-1}$ and let $\chi_{\delta}(x,y)$ be a smooth cutoff function satisfying
    \begin{equation}\label{eqn:chi-delta-cutoffs}
    0 \leq \chi_{\delta} \leq 1, \qquad \chi_{\delta} = 0 \quad \text{if } \; |x-y| \leq \delta, \qquad \chi_{\delta} = 1 \quad \text{if } \; |x-y| \geq 2 \delta.
    \end{equation}
    We can arrange a continuous family $\{ \chi_{\delta} \}_{0<\delta \leq 1}$ of such cutoff functions that is ordered decreasingly pointwise in $\delta>0$, with $\chi_{\delta}(x,y) \uparrow 1$ as $\delta \downarrow 0$ for every $x \neq y$.
    For fixed $\delta>0$, the kernels converge uniformly to $B_0$ on $\textup{spt} \, \chi_{\delta}$, as showed in Proposition~\ref{prop:angular-}.
    Moreover, $\eta_j \otimes \eta_j \xrightharpoonup{*} \eta \otimes \eta$.
    It follows that
    \begin{align*}
    &  \iint_{\cN \times \cN} \chi_{\delta}(x,y) f(x) \, B_0 ( \la x, y \rg ) (1 - \la v,w\rg) \, d \eta(x,v) \, d \eta(y,w) \\
    &= \lim_{j \to \infty} \iint_{\cN \times \cN} \chi_{\delta}(x,y) f(x) \, B_{s_j}(\la x,y \rg) (1 - \la v,w \rg) \, d \eta_{j}(x,v) \, d \eta_j(y,w) \\
    &\leq \lim_{j \to \infty} \int_{\bS^{N-1}} f \, d \ell_j = \int_{\bS^{N-1}} f \, d \ell.
    \end{align*}
    In the third step, we used the expression of the measure $\ell_j = p_j \, \mu_j$ and the equality of Lemma~\ref{lemma:ps-identity}.
    Because the functions $\{ \chi_{\delta} \}_{\delta>0}$ are ordered decreasingly in $\delta$, with $\chi_{\delta} \uparrow 1$ on $\{ x \neq y \}$, we can apply the monotone convergence theorem together with~\eqref{eqn:mu-otimes-mu-measure} to obtain
    \[
    \iint_{\cN \times \cN} f(x) B_0(\la x,y \rg) (1 - \la v,w \rg) \, d \eta(x,v) \, d \eta(y,w) \leq \int_{\bS^{N-1}} f \, d \ell.
    \]
    In particular, taking $f \equiv 1$ and using $|v-w|^2 = 2( 1 - \la v, w \rg)$ shows that
    \begin{equation}\label{eqn:B0-integral-finite}
    \iint_{\cN \times \cN} B_0(\la x,y \rg) \, |v-w|^2 \, d \eta(x,v) \, d \eta(y,w) \leq 2 \, \ell(\bS^{N-1}) < \infty.
    \end{equation}
    This establishes $(ii)$.
    Next, we disintegrate $\eta$ with respect to $\mu$ and write $d \eta(x,v) = d \eta_x(v) \,  d\mu(x)$, where $\eta_x$ is a probability measure for $\mu$-a.e.
    Let $b(x) := \int_{\bS^{N-1}} v \, d \eta_x(v)$ be the barycenter function.
For every smooth tangential vector field $V$ on $\bS^{N-1}$, the convergence $\eta_j= (x, \nu_j(x))_{\#} \mu_j \xrightharpoonup{*} \eta$ gives
\begin{align*}
    \int_{\bS^{N-1}} \la V(x), b(x) \rg \, d \mu(x) &= \int_{\cN} \la V(x), v \rg \, d\eta(x,v) = \lim_{j \to \infty} \int_{\Sigma_j} \la V , \nu_j \rg \,d \mu_j \\
    &= \lim_{j \to \infty} \int_{\Omega_j} \textup{div}_{\bS^{N-1}} V \, d S = \int_{\Omega} \textup{div}_{\bS^{N-1}} V \, dS
\end{align*}
by the divergence theorem and $\mathbf{1}_{\Omega_j} \to \mathbf{1}_{\Omega}$ in $L^1(\bS^{N-1})$.
Thus, the distributional derivative satisfies 
\begin{equation}\label{eqn:DS1bmu}
- D_{\bS^{N-1}} \mathbf{1}_{\Omega} = b \, \mu, \qquad \implies \qquad |D_{\bS^{N-1}} \mathbf{1}_{\Omega}| = |b| \, \mu \leq \mu
\end{equation}
due to $|b(x)| \leq \int |v| \, d \eta_x(v) = 1$.
Moreover, using $\mu = (\pi_x)_{\#} \eta$, we find
\begin{align*}
    \iint_{\cN \times \cN} B_0 (\la x,y \rg) \, |v-w|^2 \, d \eta(x,v) \, d \eta(y,w) = 2 \iint_{\bS^{N-1} \times \bS^{N-1}} B_0(\la x,y \rg) \, (1 - \la b(x), b(y) \rg) \, d\mu(x) \, d\mu(y)
\end{align*}
Define the non-negative Radon measure $\lambda := (1 - |b|) \mu$ on $\bS^{N-1}$.
The computation of Proposition~\ref{prop:angular-} and~\eqref{eqn:B0-N(z)} shows that $B_0(z) \geq c_N (1-z)^{- \frac{N-1}{2}}$, so $B_0(\la x,y\rg) \geq c_N |x-y|^{-(N-1)}$ due to $z = \la x,y \rg$ and $1-z = \frac{|x-y|^2}{2}$ for $x,y \in \bS^{N-1}$.
Bounding $(1-r_1)(1-r_2) \leq 1 - r_1 r_2$ for $r_1, r_2 \in [0,1]$, we obtain
\begin{align*}
& 1 - \la b(x), b(y) \rg \geq 1 - |b(x)| \cdot |b(y)| \geq (1 - |b(x)|) \, (1 - |b(y)|) \\
& \implies c_N \iint_{\bS^{N-1} \times \bS^{N-1}} |x-y|^{-(N-1)} \, d \lambda(x) \, d \lambda(y) \leq \int_{\bS^{N-1} \times \bS^{N-1}} B_0( \la x,y \rg) \, d \lambda(x) \, d \lambda(y) < \infty
\end{align*}
due to~\eqref{eqn:B0-integral-finite} and the above integral inequality.
By Lemma~\ref{lemma:radon-Sm}, the finiteness of this integral forces $\lambda=0$, so $|b(x)|=1$ $\mu$-a.e. and~\eqref{eqn:DS1bmu} shows that $|D_{\bS^{N-1}} \mathbf{1}_{\Omega}|=\mu$.
Since $\eta_x$ is a probability measure supported on $\bS^{N-1}$, we can use $b(x) = \int_{\bS^{N-1}} v \, d \eta_x(v)$ and expand
\begin{align*}
    \int |v-b(x)|^2 \, d \eta_x(v) = \int |v|^2 \, d \eta_x(v) - 2 \Bigl\langle b(x), \int v \, d \eta_x(v) \Bigr\rangle + |b(x)|^2 = 1 - |b(x)|^2 = 0. 
\end{align*}
Thus, $\eta_x = \delta_{b(x)}$ holds $\mu$-a.e., and the identity $- D_{\bS^{N-1}} \mathbf{1}_{\Omega} = b \, \mu$ implies that $b = \nu_{\Omega}$, $|D \mathbf{1}_{\Omega}|$-a.e.
Finally, for every continuous every field $\Phi$,
\[
\int \la \Phi(x), \nu_j \rg \, d \mu_j = \int_{\cN} \la \Phi(x), v \rg \, d \eta_j \to \int_{\cN} \la \Phi(x), v \rg \, d \eta = \int \la \Phi, \nu_{\Omega} \rg \, d\mu = - \int \la \Phi, d D \mathbf{1}_{\Omega} \rg
\]
as integrals over $\bS^{N-1}$.
Thus, $D \mathbf{1}_{\Omega_j} = - \nu_j \mu_j \xrightharpoonup{*} D \mathbf{1}_{\Omega}$, and $\eta(x) = (x, \nu_{\Omega}(x))_{\#} |D \mathbf{1}_{\Omega}|$ by the above argument and Reshetnyak's continuity theorem~\cite{reshetnyak}.
Then, substituting into~\eqref{eqn:B0-integral-finite} gives
\[
\iint_{\partial^* \Omega \times \partial^* \Omega} B_0(\la x,y \rg) \, |\nu_{\Omega}(x) - \nu_{\Omega}(y)|^2 \, d |D \mathbf{1}_{\Omega}(x)| \, d |D \mathbf{1}_{\Omega}(y)| < \infty
\]
so the integral is finite.
Since the sphere is compact, the convergence $\mu_j \xrightharpoonup{*} |D \mathbf{1}_{\Omega}|$ also implies the convergence of total masses, hence $\Omega_j \to \Omega$ in strict BV.
This completes the proof of $(ii)$.

\smallskip \noindent
\textbf{Step 3: Property $(iv)$.}
For every smooth function $\Phi: \bS^{N-1} \to \bR^N$ and each $j$, we have
\[
\frac{1}{2} \iint_{\Sigma_j \times \Sigma_j} B_{s_j} ( \la x,y \rg) \la \Phi(x) - \Phi(y) , \nu_j(x) - \nu_j(y) \rg \, d \mu_j(x) \, d \mu_j(y) = \int_{\bS^{N-1}} \la \Phi, d \mathbf{q}_j \rg
\]
due to Lemma~\ref{lemma:ps-identity}.
We first control the integrals uniformly near the diagonal.
Recall that $B_{s_j}(\la x,y \rg) \, |x-y|^2 \leq C(N, \bar{s}) \, |x-y|^{-(N-3+s_j)}$, so we use the uniform bound $\mu_j(B_r(x)) \leq C r^{N-2}$ and apply Lemma~\ref{lemma:potential-upper-growth} with $(m,\alpha,\beta_0,q) = (N-2,N-3+s_j, 1-\bar{s},0)$ to obtain
\begin{equation}\label{eqn:sup-j-sup-x}
    \sup_j \sup_{x \in \bS^{N-1}} \int_{\Sigma_j \cap B_r(x)} B_{s_j}( \la x,y\rg) \, |x-y|^2 \, d\mu_j(y) \leq C r^{1- \bar{s}}, \qquad 0 < r \leq 1.
\end{equation}
Moreover,~\eqref{eqn:uniform-pj-bound} implies that $\iint_{\Sigma_j \times \Sigma_j} B_{s_j} (\la x,y \rg) \, |\nu_j(x) - \nu_j(y)|^2 \, d\mu_j(x) \, d\mu_j(y) \leq C$.
Hence, the Cauchy-Schwarz inequality and the Lipschitz bound $|\Phi(x)-\Phi(y)|\leq \|D\Phi\|_{L^\infty}|x-y|$ yield
\begin{equation}\label{eqn:pass-to-send-r-to-zero}
\sup_j \left| \iint_{|x-y|<r} B_{s_j}(\la x,y \rg) \, \la \Phi(x) - \Phi(y), \nu_j(x) - \nu_j(y) \rg \, d \mu_j(x) \, d \mu_j(y) \right| \leq C(N, \bar{s}, \Phi) \, r^{\frac{1-\bar{s}}{2}}.
\end{equation}
We use the continuous family $\{ \chi_{\delta} \}_{0 < \delta \leq 1}$ satisfying~\eqref{eqn:chi-delta-cutoffs} from Step 2.
For fixed $\delta>0$, the kernels $\chi_{\delta}(x,y) B_{s_j}(\la x,y \rg)$ converge uniformly to $\chi_{\delta}(x,y) B_0( \la x,y \rg)$, while $\eta_j \otimes \eta_j \xrightharpoonup{*} \eta \otimes \eta$.
Since Step 2 gives $\eta = ( x, \nu_{\Omega}(x))_{\#} \mu$ for $\mu = |D \mathbf{1}_{\Omega}|$, we obtain
\begin{align*}
& \frac{1}{2} \iint \chi_{\delta}(x,y)B_{s_j}(\la x,y\rg) \la\Phi(x)-\Phi(y),v-w\rg \,d\eta_j(x,v)d\eta_j(y,w)\\
& \qquad \longrightarrow \frac{1}{2} \iint \chi_{\delta}(x,y)B_0(\la x,y\rg) \la\Phi(x)-\Phi(y),\nu_\Omega(x) - \nu_\Omega(y)\rg \,d\mu(x)d\mu(y).
\end{align*}
The limiting integral without the cutoff is absolutely convergent by Cauchy-Schwarz, because 
\[
\iint B_0(\la x,y \rg) \, |\nu_{\Omega}(x) - \nu_{\Omega}(y)|^2 \, d\mu(x) \, d\mu(y) < \infty, \qquad B_0( \la x,y \rg) |\Phi(x) - \Phi(y)|^2 \leq C(N,\bar{s},\Phi) \, |x-y|^{-(N-3)}
\]
produces a kernel that is integrable against $\mu \otimes \mu$, by Lemma~\ref{lemma:potential-upper-growth}, since $\mu(B_r(x)) \leq C r^{N-2}$.
Thus, if $I_j^\Phi$ and $I^\Phi$ denote the corresponding full integrals and $I_{j,\delta}^\Phi$, $I_{\delta}^\Phi$ their integrals with the cutoff $\chi_\delta$, then~\eqref{eqn:pass-to-send-r-to-zero} gives $\sup_j |I_j^\Phi-I_{j,\delta}^\Phi| \leq C(N,\bar{s},\Phi)\delta^{\frac{1-\bar{s}}{2}}$, while $I_{j,\delta}^\Phi \to I_\delta^\Phi$ for every fixed $\delta>0$ and $I_\delta^\Phi \to I^\Phi$ as $\delta\downarrow0$ by absolute convergence.
It follows that $I_j^\Phi \to I^\Phi$.

We now use the convergence of measures $\mathbf{q}_j \xrightharpoonup{*} \mathbf{q}$ and the property $\eta(x) = ( x, \nu_{\Omega}(x))_{\#} \mu$, with $\mu = |D \mathbf{1}_{\Omega}|$.
Combining the above properties and writing $\Sigma = \partial^* \Omega$, we obtain
\[
\frac{1}{2} \iint_{\Sigma \times \Sigma} B_0( \la x,y \rg) \, \la \Phi(x) - \Phi(y), \nu_{\Omega}(x) - \nu_{\Omega}(y) \rg \, d \mu(x) \, d \mu(y) = \int_{\bS^{N-1}} \la \Phi, d \mathbf{q} \rg.
\]
Recalling that $\mathbf{q}_j = p_j \nu_j \mu_j$, we find $|\mathbf{q}_j| =p_j \mu_j = \ell_j$, hence
\[
\left| \int_{\bS^{N-1}} \la \Phi, d \mathbf{q} \rg \right| = \lim_{j \to \infty} \left| \int_{\bS^{N-1}} \la \Phi, \nu_j \rg \, d \ell_j \right| \leq \lim_{j \to \infty} \int_{\bS^{N-1}} |\Phi| \, d \ell_j = \int_{\bS^{N-1}} |\Phi| \, d \ell
\]
for every continuous vector field $\Phi$.
Therefore, $|\mathbf{q}| \leq \ell$.

Finally, to derive the wedge product identity, we use the identity of Lemma~\ref{lemma:ps-identity} to write
\begin{align*}
    & \textup{PV} \int_{\Sigma_j} B_{s_j}(\la x,y \rg ) \, (\nu_j(x) - \nu_j(y)) \,d \mu_j(y) = p_j(x) \nu_j(x) \\
    & \implies \textup{PV} \int_{\Sigma_j} B_{s_j}(\la x,y \rg) \, \nu_j(x) \wedge \nu_j(y) \, d \mu_j(y) = 0
\end{align*}
after wedging with $\nu_j(x)$.
Testing with $\Psi \in C^{\infty}(\bS^{N-1} , \bigwedge^2 \bR^N)$ and symmetrizing gives
\[
\frac{1}{2} \iint_{\Sigma_j \times \Sigma_j} B_{s_j}(\la x,y \rg) \, \la \Psi(x) - \Psi(y), \nu_j(x) \wedge \nu_j(y) \rg \, d\mu_j(x) \, d \mu_j(y) = 0.
\]
Because $|\nu_j(x) \wedge \nu_j(y)| \leq |\nu_j(x) - \nu_j(y)|$, the near-diagonal contribution to the above integral from the region $|x-y|<r$ satisfies a bound as in~\eqref{eqn:pass-to-send-r-to-zero}, uniformly in $j$, by the same Cauchy-Schwarz estimate.
Thus, using the cutoffs $\chi_\delta$ and $\eta_j \otimes \eta_j \xrightharpoonup{*} \eta \otimes \eta$ exactly as above, we may first pass to the limit as $j \to \infty$ away from the diagonal and then send $\delta \downarrow 0$ to conclude that
\[
\iint_{\partial^*\Omega \times \partial^* \Omega} B_0 ( \la x,y \rg) \, \la \Psi(x) - \Psi(y) , \nu_{\Omega}(x) \wedge \nu_{\Omega}(y) \rg \, d \mu(x) \, d\mu(y) = 0.
\]
This completes the proof of the property $(iv)$.

\smallskip \noindent \textbf{Step 4: Property $(v)$.}
We finally pass to the limit in the stability inequality.
We again write $\mu = |D \mathbf{1}_{\Omega}|$ for brevity.
For $g \in C^{\infty}(\bS^{N-1};\bR)$ and each $j$, we define the expression
\[
\tilde{Q}_j[g] := \frac{1}{2} \iint_{\Sigma_j \times \Sigma_j} A_{s_j}(\la x,y \rg) \, [g(x) - g(y)]^2 \, d\mu_j(x) \, d\mu_j(y) + \iint_{\Sigma_j \times \Sigma_j} W_{s_j}(\la x,y \rg) \, g(x)^2 \, d \mu_j(x) \, d\mu_j(y).
\]
The inequality $q_s[g] \geq 0$ of Proposition~\ref{prop:qsg} amounts to $\tilde{Q}_j[g] \geq \int_{\bS^{N-1}} g^2 \, d \ell_j$ in our notation.
We claim that $\tilde{Q}_j[g] \to \tilde{Q}_0[g]$ as $s_j \downarrow 0$, where, as integrals over $\bS^{N-1} \times \bS^{N-1}$,
\[
\tilde{Q}_0[g] = \frac{1}{2} \iint A_0(\la x,y \rg) \, [g(x) - g(y)]^2 \, d\mu(x) \, d \mu(y) + \iint W_0(\la x,y \rg ) \, g(x)^2 \, d\mu(x) \, d \mu(y).
\]
Indeed, using $|g(x)-g(y)| \leq \|Dg\|_{L^\infty}|x-y|$ and the kernel bounds from Lemma~\ref{lemma:ws-useful-bound} and $A_s(\la x,y \rg) \leq C r^{-(N-1+s)}$, we can apply Lemma~\ref{lemma:potential-upper-growth} with $(m,\alpha,\beta_0,q) = (N-2,N-3+s_j, 1-\bar{s},1)$ to obtain
\[
\sup_j \iint_{|x-y|<r} A_{s_j}(\la x,y\rg)[g(x)-g(y)]^2\,d\mu_j(x)d\mu_j(y) + \sup_j\iint_{|x-y|<r} W_{s_j}(\la x,y\rg)g(x)^2\,d\mu_j(x)d\mu_j(y)
\to0
\]
as $r\downarrow0$.
For fixed $\delta>0$, after inserting the cutoff $\chi_\delta$ from~\eqref{eqn:chi-delta-cutoffs}, the kernels $A_{s_j}$ and $W_{s_j}$ converge uniformly to $A_0$ and $W_0$ away from the diagonal, while $\mu_j\otimes\mu_j\xrightharpoonup{*} \mu \otimes \mu$.
Hence, the cutoff forms converge as $j\to\infty$ as in Step 3.
The same potential estimate with $s=0$ shows that both terms in $\tilde Q_0[g]$ are finite and that their cutoff errors vanish as $\delta\downarrow0$.
Therefore, $\tilde{Q}_j[g] \to \tilde{Q}_0[g]$.
On the other hand, since $\ell_j \xrightharpoonup{*} \ell$, we also have $\int_{\bS^{N-1}} g^2 \, d \ell_j \to \int_{\bS^{N-1}} g^2 \, d \ell$.
Passing to the limit in the inequality $\tilde{Q}_j [g] \geq \int_{\bS^{N-1}} g^2 \, d \ell_j$, we therefore obtain
\[
\frac{1}{2} \iint A_0 (\la x,y \rg) \, [g(x) - g(y)]^2 \, d \mu(x) \, d\mu(y) + \iint W_0(\la x,y \rg) \, g(x)^2 \, d\mu(x) \, d \mu(y) \geq \int g^2 \, d \ell
\]
as integrals over $\bS^{N-1}$.
Since $\mu = |D \mathbf{1}_{\Omega}|$ and $\Omega$ is a Caccioppoli set, the integrals in the left-hand side can be taken over $\Sigma \times \Sigma$, where $\Sigma = \partial^*\Omega$.
This completes the proof of our Theorem.
\end{proof}

\begin{corollary}\label{cor:limit-Jacobi-inequality}
Consider a sequence of $s_j \downarrow 0$ and stable $s_j$-minimal cones $C(\Omega_j) \subset \bR^N$ with $C^2$ links, converging as in Theorem~\ref{thm:compactness-thm} to a Caccioppoli set $\Omega \subset \bS^{N-1}$ with measure-theoretic outward spherical conormal $\nu_{\Omega}$ and $\Sigma = \partial^* \Omega$.
Then, for every vector $a \in \bR^N$, it holds that
\begin{align*}
&\iint_{\Sigma \times \Sigma} B_0(\la x,y \rg) \, (1 - \la \nu_{\Omega}(x) , \nu_{\Omega}(y) \rg ) \, d |D \mathbf{1}_{\Omega}(x)| \, d |D \mathbf{1}_{\Omega}(y)| \\
&\leq \ell (\bS^{N-1}) \leq \iint_{\Sigma \times \Sigma} W_0( \la x,y \rg ) \, (1 - \la a, \nu_{\Omega}(x) \rg) (1 - \la a, \nu_{\Omega}(y) \rg) \, d |D \mathbf{1}_{\Omega}(x)| \, d |D \mathbf{1}_{\Omega}(y)|.
\end{align*}
\end{corollary}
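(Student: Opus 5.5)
The left inequality is immediate from Theorem~\ref{thm:compactness-thm}(iii) applied with $f \equiv 1$. For the right inequality, I plan to test the limiting stability inequality of Theorem~\ref{thm:compactness-thm}(v) with the translation Jacobi field. The natural test function $g(x) = \la a, \nu_\Omega(x)\rg$ is not a smooth function on $\bS^{N-1}$, so I will not pass through (v) directly. Instead I will apply the stability inequality $q_{s_j}[g_j]\ge 0$ of Proposition~\ref{prop:qsg} at level $j$, and only then pass to the limit.

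\textbf{Choice of test function.} At level $j$ take $g_j := 1 - \la a, \nu_j\rangle$, which is $C^1$ on $\Sigma_j$. First assume $|a| \le 1$, so that $g_j \geq 0$. Stability gives
\[
\int g_j^2\, d\ell_j \leq \tfrac12\iint A_{s_j}(g_j(x)-g_j(y))^2\,d\mu_j\,d\mu_j + \iint W_{s_j} g_j(x)^2\,d\mu_j\,d\mu_j .
\]
The point of this choice is an algebraic cancellation coming from the Jacobi field identity, Lemma~\ref{lemma:ps-identity} and its weak form~\eqref{eqn:double-integral-B-eta}, applied with $\eta$ constant or $\eta = \nu_j$. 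Write $h_j = \la a,\nu_j\rg$ and expand $g_j^2 = 1 - 2h_j + h_j^2$. The terms linear in $h_j$ are handled via $\int p_j h_j \, d\mu_j = \int \la a, d\mathbf q_j\rangle$. The quadratic term $\frac12 \iint A (h(x)-h(y))^2 + \int V h^2$ equals $q_s[h]$. For the Jacobi field $h_a$ on the cone this expression should reduce to $-\iint W\, h(x)h(y)$ in the symmetrized form: translations give $\mathcal J h_a = 0$ on $M$, and in the Mellin picture $h_a$ is homogeneous of degree $0$, corresponding to the $\xi$-frequency where $B_s$ replaces $A_s$. So I expect an identity of the form
\[
\tfrac12\iint B_s (h(x)-h(y))^2 = \int p_s h^2 ,
\]
obtained from~\eqref{eqn:ps(x)-nux} after dotting with $a$ and testing against $h$. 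Using $A_s = B_s - W_s$, this converts the $A$-Dirichlet term into $\int p_j h_j^2 \, d\mu_j + \frac12 \iint W (h(x)-h(y))^2$.

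\textbf{Combining terms.} Collecting everything, the $\int p_j h_j^2$ terms cancel against the left-hand side. What remains is
\[
\ell_j(\bS^{N-1}) - 2\int \la a, d\mathbf q_j\rg \le \iint W_{s_j}\bigl[g_j(x)^2 + \tfrac12(h_j(x)-h_j(y))^2\bigr]\,d\mu_j\,d\mu_j - (\text{cross terms}).
\]
Symmetrizing, $g(x)^2 + \tfrac12(h(x)-h(y))^2$ averages to $g(x)g(y)$ plus terms linear in $h$. I expect the remaining linear terms to combine with $\int\la a,d\mathbf q_j\rg$ to cancel exactly, producing
\[
\ell_j(\bS^{N-1}) \le \iint W_{s_j}\, g_j(x)g_j(y)\,d\mu_j\,d\mu_j .
\]
Checking this exact cancellation is the step I consider the main obstacle, since it requires the Jacobi identity with the correct kernel $B_s$ versus $A_s$. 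If it fails as stated, a fallback is to instead choose $a$ and average over $a$ replaced by $-a$, which kills the odd terms.

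\textbf{Passage to the limit.} Using Theorem~\ref{thm:compactness-thm}(ii), $(x,\nu_j)_\#\mu_j \to (x,\nu_\Omega)_\#|D\mathbf 1_\Omega|$, we pass to the limit in $\iint W_{s_j} g_j g_j$. The near-diagonal part is controlled by the uniform bound~\eqref{eqn:Ws-uniform-bound}, and away from the diagonal we use the cutoffs $\chi_\delta$ exactly as in Step 4 of that proof. Finally, $\ell_j(\bS^{N-1}) \to \ell(\bS^{N-1})$ by weak-$*$ convergence on the compact sphere. This gives the claim for $|a|\le1$, which is the relevant range; for general $a$ the same computation works without using the sign of $g$.
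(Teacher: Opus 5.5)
Your strategy matches the paper's. The left inequality is Theorem~\ref{thm:compactness-thm}(iii) with $f\equiv1$. The right one comes from testing stability at level $j$ with $g_j=1-h_j$, $h_j=\langle a,\nu_j\rangle$, using the Jacobi identities, and then passing to the limit with the cutoffs $\chi_\delta$ and \eqref{eqn:Ws-uniform-bound}.

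However, the algebraic step you flag as the main obstacle is not established, and as written it fails.
\begin{itemize}
\item \emph{Sign error.} Since $A_s=B_s-W_s$, your (correct) identity $\frac12\iint B_{s_j}(h_j(x)-h_j(y))^2=\int p_jh_j^2\,d\mu_j$ turns the $A$-Dirichlet term into $\int p_jh_j^2\,d\mu_j-\frac12\iint W_{s_j}(h_j(x)-h_j(y))^2$, with a minus sign. Correspondingly, $q_s[h_a]=+\iint W_s\,h_a(x)h_a(y)$, not $-$. With your plus sign, the $W$-integrand $g(x)^2+\frac12(g(x)-g(y))^2$ symmetrizes to $g(x)g(y)+(g(x)-g(y))^2$. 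The leftover is then the nonnegative quadratic term $\iint W_{s_j}(h_j(x)-h_j(y))^2$, not something linear in $h_j$, and nothing cancels it.
\item \emph{Missing fact.} The linear term $-2\int\langle a,d\mathbf q_j\rangle$ is not supposed to cancel against anything: it vanishes on its own. Indeed, \eqref{eqn:double-integral-B-eta} with the constant field $\eta\equiv a$ gives $\int_{\Sigma_j}p_jh_j\,d\mu_j=0$.
\item \emph{Fallback.} Averaging $a$ with $-a$ does not repair this. Even with correct signs it only yields $\ell_j(\bS^{N-1})\leq\iint W_{s_j}(1+h_j(x)h_j(y))$, which is a different inequality from the one claimed for the given $a$.
\end{itemize}

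With the sign corrected and $\int p_jh_j\,d\mu_j=0$ used in the first equality, everything cancels exactly:
\begin{align*}
\ell_j(\bS^{N-1})+\int p_jh_j^2\,d\mu_j=\int g_j^2\,d\ell_j
&\leq\int p_jh_j^2\,d\mu_j+\iint W_{s_j}\bigl[g_j(x)^2-\tfrac12\bigl(g_j(x)-g_j(y)\bigr)^2\bigr]\,d\mu_j\,d\mu_j\\
&=\int p_jh_j^2\,d\mu_j+\iint W_{s_j}\,g_j(x)\,g_j(y)\,d\mu_j\,d\mu_j .
\end{align*}
The last step is the symmetrization $\frac12\bigl(g(x)^2+g(y)^2\bigr)-\frac12\bigl(g(x)-g(y)\bigr)^2=g(x)g(y)$. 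This gives $\ell_j(\bS^{N-1})\leq\iint W_{s_j}\,g_j(x)g_j(y)\,d\mu_j\,d\mu_j$. That is precisely the paper's identity $q_{s_j}[1-h_j]=-\int p_j\,d\mu_j+\iint W_{s_j}(1-h_j(x))(1-h_j(y))$ combined with $q_{s_j}[1-h_j]\geq0$. From here your passage to the limit goes through as in the paper. No sign condition on $g_j$, and hence no restriction $|a|\leq1$, is needed; only the bound $|g_j|\leq1+|a|$ is used.
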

\begin{proof}
Let us write $\mu = |D \mathbf{1}_{\Omega}|$ and $\nu_x = \nu_{\Omega}(x)$ for brevity. 
Using $|\nu|=1$ a.e., we have $|\nu_x - \nu_y|^2 = 2(1 - \la \nu_x , \nu_y \rg)$.
Thus, the inequality~\eqref{eqn:B0-integral-finite}, with $\eta = (x, \nu_x)_{\#} \mu$, proves the first bound.
For the second bound, set $h_j=\la a,\nu_j\rg$, which is a Jacobi field on the cone $C(\Omega_j)$ by Lemma~\ref{lemma:ps-identity}. 
Applying the weak identity~\eqref{eqn:double-integral-B-eta} first with the constant vector field $\eta\equiv a$ and then with $\eta=h_j a$, we obtain
\[
\int_{\Sigma_j}p_jh_j\,d\mu_j=0, \qquad  \frac{1}{2} \iint_{\Sigma_j\times\Sigma_j}B_{s_j}(\la x,y\rg)[h_j(x)-h_j(y)]^2\,d\mu_j(x)d\mu_j(y) =\int_{\Sigma_j}p_jh_j^2\,d\mu_j.
\]
Using $B_{s_j}=A_{s_j}+W_{s_j}$ and the symmetry of $W_{s_j}$, these identities give
\[
q_{s_j}[1-h_j] = -\int_{\Sigma_j}p_j\,d\mu_j +  \iint_{\Sigma_j\times\Sigma_j} W_{s_j}(\la x,y\rg)(1-h_j(x))(1-h_j(y))\,d\mu_j(x)d\mu_j(y).
\]
Since the links $\Omega_j$ are of class $C^2$, the functions $h_j$ are $C^1$, so we can use them as test functions for the stability inequality after mollifying and taking limits.
    Then, the stability inequality $q_{s_j}[1 - h_j ] \geq 0$ shows that the above expression is non-negative for each $j$.
Since $|1-h_j|\leq1+|a|$, the same cutoff argument as in Step~3 of Theorem~\ref{thm:compactness-thm}, using~\eqref{eqn:Ws-uniform-bound} and
$\eta_j\otimes\eta_j\xrightharpoonup{*}\eta\otimes\eta$, yields
\[
\iint W_{s_j} \,(1-h_j(x))(1-h_j(y))\,d\mu_j(x)d\mu_j(y) \to \iint
W_0 \, (1-\la a,\nu_x\rg)(1-\la a,\nu_y\rg)\,d\mu(x)d\mu(y).
\]
Finally, $p_j\mu_j\xrightharpoonup{*}\ell$ implies $\int_{\Sigma_j}p_j\,d\mu_j\to\ell(\bS^{N-1})$, and passing to the limit completes the proof.
\end{proof}

\begin{corollary}\label{cor:identify-limit-q}
    In the setting of Theorem~\ref{thm:compactness-thm}, we let $\nu_{\Omega}$ denote the measure-theoretic outward spherical conormal of the Caccioppoli set $\Omega \subset \bS^{N-1}$ and consider the norm
    \[
    \| u \|^2_{B_0} := \int |u|^2 \, d |D \mathbf{1}_{\Omega}| + \iint B_0 ( \la x,y \rg)  \, |u(x) - u(y)|^2 \, d |D \mathbf{1}_{\Omega}(x)| \, d |D \mathbf{1}_{\Omega}(y)| .
    \]
    Suppose that there exist Lipschitz vector fields $a_k \in \textup{Lip}( \partial^* \Omega ; \bR^N)$ such that $a_k \to \nu_{\Omega}$ strongly in the norm $\| - \|_{B_0}$.
    Then, we have the measure convergence $p_j \nu_j \mu_j \xrightharpoonup{*} p_0 \nu_{\Omega} \, |D \mathbf{1}_{\Omega}|$, where
    \[
    p_0(x) := \int_{\partial^* \Omega} B_0(\la x,y \rg) \, (1 - \la \nu_{\Omega}(x), \nu_{\Omega}(y) \rg ) \, d\mu(y).
    \]
    In particular, this property holds whenever there are $c, r_0>0$ such that $\cH^{N-2} (\partial^* \Omega \cap B_r(x)) \geq c \, r^{N-2}$ for a dense set of $x \in \partial^* \Omega$ and $r \in (0,r_0)$.
\end{corollary}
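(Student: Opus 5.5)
The plan is to identify $\mathbf{q}$ by testing against vector fields $\Phi$ and using the identity of Theorem~\ref{thm:compactness-thm}(iv),
\[
2\int\la\Phi,d\mathbf{q}\rg=\iint B_0(\la x,y\rg)\la\Phi(x)-\Phi(y),\nu_\Omega(x)-\nu_\Omega(y)\rg\,d\mu(x)\,d\mu(y),
\]
with $\mu=|D\mathbf{1}_\Omega|$. We want to show that the right-hand side equals $2\int p_0\la\Phi,\nu_\Omega\rg\,d\mu$. This is the limiting analogue of the weak identity~\eqref{eqn:double-integral-B-eta}. Formally, one writes $\Phi=\la\Phi,\nu\rg\nu+\Phi^{T}$, but $\nu_\Omega$ is not smooth, so we approximate it. We first note that $p_0\in L^1(\mu)$ with $\int p_0\,d\mu\le\ell(\bS^{N-1})$ by Corollary~\ref{cor:limit-Jacobi-inequality}. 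Since $|p_0\nu_\Omega|\le p_0$, the measure $p_0\nu_\Omega\mu$ is finite.

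The key algebraic step is the following. For fixed smooth $\Phi$ and Lipschitz $a_k$, form the scalar function $f_k=\la\Phi,a_k\rg$ and compare $\Phi$ with $f_k a_k$. We apply (iv) to test fields that are Lipschitz, namely products $\Phi$ with Lipschitz functions. Extending (iv) from smooth to Lipschitz $\Phi$ should be routine: mollify and use the Cauchy--Schwarz bound together with $B_0|\Phi(x)-\Phi(y)|^2\le C|x-y|^{-(N-3)}$ and Lemma~\ref{lemma:potential-upper-growth}. We also use the wedge identity in (iv), which encodes $\textup{PV}\int B_0\,\nu_x\wedge\nu_y\,d\mu_y=0$, to kill tangential contributions. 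Concretely, I would verify the pointwise algebraic identity
\[
\la\Phi(x)-\Phi(y),\nu_x-\nu_y\rg=\tfrac12(\la\Phi(x),\nu_x\rg+\la\Phi(y),\nu_y\rg)|\nu_x-\nu_y|^2+\text{(terms of the form }\la\Psi(x)-\Psi(y),\nu_x\wedge\nu_y\rg)+\text{error},
\]
where $\Psi$ is built from $\Phi\wedge a_k$ and the error involves $|a_k-\nu|$. The symmetric first term integrates exactly to $2\int p_0\la\Phi,\nu\rg\,d\mu$ by Fubini. The wedge terms vanish by (iv), applied with $\Psi=\Phi\wedge a_k$; this is Lipschitz, so (iv) applies after the approximation above.

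The main obstacle is controlling the error terms uniformly. These are bilinear expressions like
\[
\iint B_0\,|(\nu-a_k)(x)-(\nu-a_k)(y)|\,|\nu_x-\nu_y|
\quad\text{and}\quad
\iint B_0\,|\nu_x-\nu_y|^2\,|a_k-\nu|(x).
\]
The first is bounded by Cauchy--Schwarz via $\|a_k-\nu\|_{B_0}\,\|\nu\|_{B_0}\to0$. The second is harder, since it has the weight $\int B_0|\nu_x-\nu_y|^2\,d\mu_y=2p_0(x)$ against $|a_k-\nu|$. For it I would truncate $a_k$ to $|a_k|\le2$, which keeps it Lipschitz and keeps the $B_0$-convergence. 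Then I would use dominated convergence along a subsequence with $a_k\to\nu$ $\mu$-a.e., since $p_0\in L^1(\mu)$. This gives the convergence $\mathbf{q}=p_0\nu_\Omega\mu$. The full sequence $p_j\nu_j\mu_j$ converges because every subsequential limit is this same measure.

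For the final claim, I would construct the $a_k$ from the density lower bound. Set $a_k(x)=\big(\int_{B_{r_k}(x)}\nu\,d\mu\big)/\mu(B_{r_k}(x))$, possibly smoothed with a Lipschitz radial weight so that it is Lipschitz on $\partial^*\Omega$. The lower density bound makes the averaging stable. Lebesgue differentiation for the Ahlfors-regular measure $\mu$ gives $a_k\to\nu$ in $L^2(\mu)$. The $B_0$-seminorm convergence would follow from a standard estimate: the averaging operator is bounded on the Gagliardo-type space with kernel $|x-y|^{-(N-1)}$ over an $(N-2)$-regular measure. Approximation by averages then converges in that space by density and uniform boundedness.
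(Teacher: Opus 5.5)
Your argument is essentially the paper's: you test the wedge identity of Theorem~\ref{thm:compactness-thm}(iv) with $\Psi=\Phi\wedge a_k$, pass to the limit using $\|a_k-\nu_\Omega\|_{B_0}\to0$, and conclude with $\langle\alpha-\beta,u-v\rangle=\langle\alpha\wedge u-\beta\wedge v,u\wedge v\rangle+(1-\langle u,v\rangle)(\langle\alpha,u\rangle+\langle\beta,v\rangle)$ and Fubini. The paper replaces $a_k$ by smooth fields (Kirszbraun plus mollification) rather than extending (iv) to Lipschitz fields, which amounts to the same thing; it also bounds the term $(\Phi(x)-\Phi(y))\wedge e_k(y)$ directly by Cauchy--Schwarz and $\sup_y\int B_0|x-y|^2\,d\mu_x<\infty$, so your truncation and a.e.\ convergence step is unnecessary. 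For the final claim the paper simply cites Besov trace/extension theory; your averaging construction is a valid substitute, provided you first upgrade the lower density bound from the dense set to every point of $\overline{\partial^*\Omega}$ (as the paper does) and prove convergence of the averages in $\|\cdot\|_{B_0}$ directly rather than ``by density'', which would presuppose the density of Lipschitz fields you are trying to establish.
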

\begin{proof}
For brevity, we will denote $\mu := |D \mathbf{1}_{\Omega}|$, $d \mu_x := d |D \mathbf{1}_{\Omega}|(x)$, and $\nu_x := \nu_{\Omega}(x)$, so $\textup{spt} \, \mu = \overline{\partial^* \Omega}$.
By Theorem~\ref{thm:compactness-thm}, it suffices to identify the limit measure $p_j \nu_j \mu_j \xrightharpoonup{*} \mathbf{q}$ as $\mathbf{q} = p_0 \nu \, |D \mathbf{1}_{\Omega}|$.
We first observe that the Lipschitz approximation of $\nu$ can be upgraded to an approximation by smooth ambient vector fields.
By Kirszbraun's theorem~\cite{maggi}*{Theorem 7.2}, each vector field $a_k$ extends to a Lipschitz vector field $\tilde{a}_k : \bR^N \to \bR^N$ with the same Lipschitz constant $L_k$.
Let $\rho_{\ve}$ be the standard mollifier, so $A_{k,\ve} := \rho_{\ve} \ast \tilde{a}_k$ is a smooth vector field on $\bR^N$.
Moreover, on $\partial^*\Omega$,
\begin{equation}\label{eqn:lipschitz-approximation-bounds}
\| A_{k,\ve} - a_k \|_{L^{\infty}} \leq C L_k \ve, \qquad \textup{Lip}(A_{k,\ve} - a_k) \leq 2 L_k.
\end{equation}
By Step 3 of Theorem~\ref{thm:compactness-thm}, we have
\[
\sup_{x \in \textup{spt} \, \mu} \int B_0(\la x,y \rg) \, |x-y|^2 \, d\mu_y< \infty.
\]
Therefore, using the properties~\eqref{eqn:lipschitz-approximation-bounds}, we obtain $|(A_{k,\ve} - a_k)(x) - (A_{k,\ve}- a_k)(y)| \leq 2 L_k |x-y|$ and
\[
\iint B_0( \la x,y \rg) \, |(A_{k,\ve} - a_k)(x) - (A_{k,\ve}- a_k)(y)|^2 \, d\mu_x \, d\mu_y \to 0 \quad \text{as } \; \ve \downarrow 0
\]
by the dominated convergence theorem, because $A_{k,\ve} - a_k \to 0$ uniformly as $\ve \downarrow 0$.
The $L^2$ term in the norm $\|u \|^2_{B_0}$ converges to zero as well, for the same reason, thus $\| A_{k,\ve} - a_k \|_{B_0} \to 0$ as $\ve \downarrow 0$.
We can therefore choose $\ve_k$ so that $\|A_{k, \ve_k} - a_k \|_{B_0} \leq \| a_k - \nu \|_{B_0}$, and write $A_k := A_{k, \ve_k}$.
Then, the triangle inequality implies that
\[
\|A_k - \nu \|_{B_0} \leq \|A_k - a_k \|_{B_0} + \| a_k - \nu \|_{B_0} \leq 2 \, \| a_k - \nu \|_{B_0} \to 0
\]
producing the desired smooth approximation of $\nu_{\Omega}$.

We now fix a smooth vector field $\Phi :\bS^{N-1} \to \bR^N$.
For each $k$, taking $\Psi_k := \Phi \wedge A_k$ defines a smooth $\bigwedge^2 \bR^N$-valued field, so the limiting wedge identity from part $(iii)$ of Theorem~\ref{thm:compactness-thm} implies
\[
\iint B_0(\la x,y \rg) \ \la \Phi(x) \wedge A_k(x) - \Phi(y) \wedge A_k(y), \nu_x \wedge \nu_y \rg \, d \mu_x \, d\mu_y =0 .
\]
For each $k$, the vector field $e_k := A_k - \nu$ satisfies
\begin{align*}
    |\Phi(x) \wedge e_k(x) - \Phi(y) \wedge e_k(y)| &\leq \| \Phi\|_{L^{\infty}} |e_k(x) - e_k(y)| + \textup{Lip}(\Phi) \, |x-y| \, |e_k(y)|.
\end{align*}
We may therefore estimate, as $k \to \infty$,
\begin{align*}
    & C_{\Phi}^{-1} \iint B_0 \, |\Phi(x) \wedge e_k(x) - \Phi(y) \wedge e_k(y)|^2 \, d \mu_x \, d\mu_y \\
    &\leq \iint B_0 \, |e_k(x) - e_k(y)|^2 \, d\mu_x \, d\mu_y + \int |e_k(y)|^2 \Bigl( \int B_0 (\la x,y \rg) \, |x-y|^2 \, d\mu_x \Bigr) \, d\mu_y \leq C_{\Phi} \|e_k \|^2_{B_0} \to 0.
\end{align*}
On the other hand, $|\nu_x \wedge \nu_y| \leq |\nu_x - \nu_y|$, so Cauchy-Schwarz and the finiteness of
\[
\iint_{\partial^* \Omega \times \partial^* \Omega} B_0( \la x,y \rg) \, |\nu_x - \nu_y|^2 \, d\mu_x \, d\mu_y < \infty,
\]
by part $(ii)$ of Theorem~\ref{thm:compactness-thm}, implies that the wedge functional $\Phi \mapsto \Phi \wedge \nu$ is continuous with respect to the $B_0$-seminorm.
Consequently,
\[
\iint B_0 (\la x,y \rg) \, \la \Phi(x) \wedge \nu_x - \Phi(y) \wedge \nu_y, \nu_x \wedge \nu_y \rg \, d \mu_x \, d\mu_y = 0.
\]
We recall that, for arbitrary vectors $\alpha, \beta \in \bR^N$ and unit vectors $u,v \in \bS^{N-1}$, it holds that
\begin{align*}
    &\la \alpha \wedge u, u \wedge v \rg = \la \alpha, u \rg \la u,v \rg - \la \alpha, v \rg, \qquad \la \beta \wedge v, u \wedge v \rg = \la \beta , u \rg - \la \beta, v \rg \la u, v \rg, \\
    & \implies \la \alpha \wedge u - \beta \wedge v, u \wedge v \rg = \la \alpha - \beta, u -v \rg - (1 - \la u,v\rg) \, ( \la \alpha, u \rg + \la \beta, v \rg).
\end{align*}
Using the above vanishing and this formula with $(\alpha, \beta, u,v) = (\Phi(x), \Phi(y), \nu_x, \nu_y \rg$, we find
\begin{align*}
\iint B_0 \, \la \Phi(x) - \Phi(y), \nu_x - \nu_y \rg \, d\mu_x \, d\mu_y &= \iint B_0 \, (1 - \la \nu_x, \nu_y \rg) \, ( \la \Phi(x), \nu_x \rg + \la \Phi(y), \nu_y \rg) \, d\mu_x \, d\mu_y \\
&= 2 \iint \la \Phi(x), \nu_x \rg \left[ \int B_0 (\la x,y \rg) \, (1 - \la \nu_x, \nu_y \rg) \, d\mu_y \right] \, d \mu_x \\
&= 2 \int p_0 \la \Phi, \nu \rg \, d\mu
\end{align*}
In the last step, we used the fact that the kernel and the factor $1 - \la \nu_x, \nu_y \rg$ are symmetric in $x,y$, so the two terms on the right-hand side are equal.
We then used the definition of $p_0$ above, where
\[
\int p_0 \, d\mu = \frac{1}{2} \iint B_0( \la x,y \rg) \, |\nu_x - \nu_y|^2 \, d\mu_x \, d\mu_y < \infty, \qquad \implies \qquad p_0 \in L^1(\mu).
\]
Finally, the property $(iii)$ of Theorem~\ref{thm:compactness-thm} shows that the left-hand side of this expression equals $2 \int \la \Phi, d \mathbf{q}\rg$ for every smooth vector field.
Combining these properties, we obtain $\mathbf{q} = p_0 \nu \, \mu$.

Regarding the second claim, it is clear that having $\mu(B_r(x)) = \cH^{N-2} (\partial^* \Omega \cap B_r(x)) \geq c\, r^{N-2}$ for a dense set of $x\in \partial^* \Omega$ and $r \in (0,r_0)$ implies the same bound for every given $(x,r) \in \overline{\partial^* \Omega} \times (0,r_0)$, possibly with a smaller constant.
Indeed, we can find some $(y,s)$ in the admissible set with $y \in B_{\frac{r}{4}}(x)$ and $s \in ( \frac{r}{3}, \frac{r}{2})$, so $B_s(y) \subset B_r(x)$.
Consequently, 
\[
\mu(B_r(x)) \geq \mu(B_s(y)) \geq c s^{N-2} \geq c \, 3^{2-N} \, r^{N-2}
\]
uniformly on $\overline{\partial^* \Omega}$.
In addition the uniform perimeter bound of Proposition~\ref{prop:bound-s} and its subsequent application to Theorem~\ref{thm:compactness-thm} imply that $\mu(B_r(x)) \leq C r^{N-2}$ for a constant $C = C(N, \bar{s})$.
Also, $B_0(\la x,y \rg) \asymp |x-y|^{1-N}$ as in Proposition~\ref{prop:angular-} and Theorem~\ref{thm:compactness-thm}, so the norm $\|u\|^2_{B_0}$ is equivalent to
\[
\| u\|^2_{L^2(\mu)} + \iint \frac{|u(x) - u(y)|^2}{|x-y|^{N-1}} \, d\mu_x \, d\mu_y,
\]
namely the Besov $B^{1/2}_{2,2}$-norm on $\overline{\partial^* \Omega} = \textup{spt} \, \mu$.
By standard trace and extension theory for such function spaces~\cite{besov}*{Ch. VII, Theorem 1}, the space of Lipschitz functions on $\overline{\partial^* \Omega} = \textup{spt} \, \mu$ is dense in the norm $\| - \|_{B_0}$.
Since $\nu$ has finite component-wise $\|-\|_{B_0}$-norm, by Theorem~\ref{thm:compactness-thm} $(ii)$, we can apply this density result obtain a sequence of Lipchitz vector fields $a_k \in \textup{Lip}( \partial^* \Omega; \bR^N)$ with $\| a_k - \nu \|_{B_0} \to 0$ and conclude by the first part.
This completes the proof.
\end{proof}

\section{Classification of stable links}\label{section:stable-links}

In this section, we establish pinching theorems for the links of stable $s$-minimal cones as $s \downarrow 0$.
Using these results, we will prove Theorem~\ref{thm:main}.
\begin{lemma}\label{lemma:A-B-properties}
    For every $z \in [-1,1)$, the function $B_s(z)$ is increasing in $z$, and $B_s(z) \geq \frac{1}{N(N-1)}$.
    Moreover, the function $\frac{A_0(z)}{B_0(z)}$ is increasing in $z$, with $\frac{A_0(z)}{B_0(z)} \geq \kappa_N := \frac{\Gamma( \frac{N}{2})^2}{\Gamma(N-1)}$.
\end{lemma}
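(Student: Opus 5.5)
Three claims need proof: $B_s$ is increasing, the lower bound $B_s \geq \frac{1}{N(N-1)}$, and the monotonicity and lower bound for $A_0/B_0$.

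\textbf{Monotonicity of $B_s$.} This is immediate from the definition~\eqref{eqn:spherical-kernels}. For each fixed $\rho>0$ the integrand $\rho^{N-2}(1+\rho^2-2\rho z)^{-\frac{N+s}{2}}$ is strictly increasing in $z$, so $B_s(z)\geq B_s(-1)$ for every $z\in[-1,1)$.

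\textbf{Lower bound on $B_s$.} At $z=-1$ we have $B_s(-1)=\int_0^\infty \rho^{N-2}(1+\rho)^{-(N+s)}\,d\rho$. The substitution $t=\rho/(1+\rho)$ turns this into the Beta integral $\int_0^1 t^{N-2}(1-t)^{s}\,dt$. Two routes then give the bound.
\begin{itemize}
\item Use $s<1$, so that $(1-t)^s\geq 1-t$. Then $B_s(-1)\geq \int_0^1 t^{N-2}(1-t)\,dt=\frac{1}{(N-1)N}$, which is exactly the stated constant.
\item Alternatively, compute $B_s(-1)=\mathrm{B}(N-1,s+1)$ and bound it below by its value at $s=1$.
\end{itemize}

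\textbf{Reduction for $A_0/B_0$.} The plan is to express $A_0$ and $B_0$ as angular integrals, as in~\eqref{eqn:B0-N(z)}. With the substitution $\rho=\frac{\sin\theta}{\sin(\theta+\alpha)}$, $z=\cos\alpha$, we have $1+\rho^2-2\rho z=\frac{\sin^2\alpha}{\sin^2(\theta+\alpha)}$ and $d\rho=\frac{\sin\alpha}{\sin^2(\theta+\alpha)}\,d\theta$. This gives
\[
A_0(\cos\alpha)=(\sin\alpha)^{1-N}\int_0^{\pi-\alpha}(\sin\theta)^{\frac{N-2}{2}}(\sin(\theta+\alpha))^{\frac{N-2}{2}}\,d\theta,
\qquad
B_0(\cos\alpha)=(\sin\alpha)^{1-N}\int_0^{\pi-\alpha}(\sin\theta)^{N-2}\,d\theta.
\]
The prefactors cancel in the ratio, leaving
\[
\frac{A_0}{B_0}(\cos\alpha)=\frac{F(\alpha)}{G(\alpha)},\qquad F(\alpha)=\int_0^{\beta}(\sin\theta\,\sin(\beta-\theta))^{\frac{N-2}{2}}\,d\theta,\qquad G(\alpha)=\int_0^{\beta}\sin^{N-2}\theta\,d\theta,
\]
where $\beta:=\pi-\alpha$ and we used $\sin(\theta+\alpha)=\sin(\beta-\theta)$. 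Increasing in $z$ means decreasing in $\alpha$, i.e.\ increasing in $\beta\in(0,\pi]$.

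\textbf{Lower bound for $A_0/B_0$.} Granted monotonicity, the infimum is attained as $\beta\downarrow 0$, i.e.\ $z\to-1$. Rescale $\theta=\beta u$ and use $\sin x\sim x$ for small $x$:
\begin{itemize}
\item $F\sim \beta^{N-1}\int_0^1 (u(1-u))^{\frac{N-2}{2}}\,du=\beta^{N-1}\,\frac{\Gamma(\frac N2)^2}{\Gamma(N)}$;
\item $G\sim \beta^{N-1}/(N-1)$.
\end{itemize}
The limit is therefore $(N-1)\Gamma(\frac N2)^2/\Gamma(N)=\Gamma(\frac N2)^2/\Gamma(N-1)=\kappa_N$, as claimed.

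\textbf{Main obstacle: monotonicity of $F/G$ in $\beta$.} This is the step I expect to be hardest. Differentiating the quotient requires $F'G\geq FG'$.
\begin{itemize}
\item $G'(\beta)=\sin^{N-2}\beta$.
\item $F(\beta)=\int_0^{\beta}(\sin\theta\,\sin(\beta-\theta))^{m}\,d\theta$ with $m=\frac{N-2}{2}$. The boundary term of $F'$ vanishes, so $F'(\beta)=m\int_0^\beta(\sin\theta\sin(\beta-\theta))^{m-1}\sin\theta\cos(\beta-\theta)\,d\theta$.
\end{itemize}
My first attempt is to symmetrize the integrands about $\theta=\beta/2$ and use $2\sin\theta\sin(\beta-\theta)=\cos(2\theta-\beta)-\cos\beta$. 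This writes $F$ as $\int_0^\beta(\cos(2\theta-\beta)-\cos\beta)^m\,d\theta$ up to a constant, so $F$ becomes an integral of a function of $c=\cos\beta$ and the variable $\phi=2\theta-\beta$. Then one can try a Chebyshev or Gr\"onwall-type comparison, showing that $\frac{F'}{F}\geq\frac{G'}{G}$ reduces to an inequality between averages of monotone functions.

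If that fails, the fallback is to go back to the $\rho$-representation. There, $A_0/B_0$ is a weighted average of $\rho^{-\frac{N-2}{2}}$ on $(0,1)$, after symmetrizing $\rho\mapsto\rho^{-1}$ so that $B_0$ and $A_0$ have weights $\rho^{N-2}+1$ and $2\rho^{\frac{N-2}{2}}$. The probability weight $(1+\rho^2-2\rho z)^{-N/2}(1+\rho^{N-2})\,d\rho/B_0$ has a likelihood ratio that is monotone in $\rho$ as $z$ varies: it concentrates near $\rho=1$ as $z\uparrow1$. Monotonicity would then follow from the total positivity of the kernel $(1+\rho^2-2\rho z)^{-N/2}$ in the pair $(\rho,z)$ on $\rho\in(0,1)$, combined with the fact that $2\rho^{\frac{N-2}{2}}/(1+\rho^{N-2})$ is increasing on $(0,1)$.
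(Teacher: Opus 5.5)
Your arguments for the monotonicity of $B_s$, for the bound $B_s(-1)=\int_0^1 t^{N-2}(1-t)^s\,dt\ge\frac{1}{N(N-1)}$, and for the minimum value $\kappa_N$ at $z=-1$ are correct and match the paper. For the monotonicity of $A_0/B_0$, your angular route is never completed. Your fallback, however, is exactly the paper's proof. The paper symmetrizes $\rho\mapsto\rho^{-1}$ and writes $A_0/B_0=\int_0^1\frac{2\rho^{(N-2)/2}}{1+\rho^{N-2}}\,d\mu_z$. On $(0,1)$ the averaged function is this ratio, not $\rho^{-\frac{N-2}{2}}$ as one of your sentences says; that description is only valid on $(0,\infty)$ before symmetrizing. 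The paper then applies Chebyshev's covariance inequality to $\frac{d}{dz}\frac{A_0}{B_0}=\textup{Cov}_{\mu_z}\bigl(\frac{2\rho^{(N-2)/2}}{1+\rho^{N-2}},\frac{N\rho}{1+\rho^2-2\rho z}\bigr)$, which is the infinitesimal form of your likelihood-ratio/TP$_2$ claim. To make your version complete, record two facts: $\partial_\rho\frac{N\rho}{1+\rho^2-2\rho z}=\frac{N(1-\rho^2)}{(1+\rho^2-2\rho z)^2}\ge0$ on $(0,1)$, which is precisely the TP$_2$ property of the kernel, and $x\mapsto\frac{2x}{1+x^2}$ is increasing on $[0,1]$ with $x=\rho^{(N-2)/2}$. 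Then make this the main argument rather than a fallback.
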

\begin{proof}
    The function $B_s(z)$ is increasing in $z$, with
    \[
    B'_s(z) = (N+s) \int_0^{\infty} (1+\rho^2 - 2 \rho z)^{- \frac{N+s+2}{2}} \rho^{N-1} \, d \rho >0.
    \]
    Therefore, $B_s(z) \geq B_s(-1)$, for which we compute
    \[
    B_s(-1) = \int_0^{\infty} \frac{\rho^{N-2}}{(1+\rho)^{N+s}} = \frac{\Gamma(N-1)\Gamma(1+s)}{\Gamma(N+s)} \geq \frac{\Gamma(N-1)}{\Gamma(N+1)} = \frac{1}{N(N-1)}.
    \]
    To compare $A_0(z), B_0(z)$, we split the integrals at $\rho=1$ and define a measure $\mu_z$ by
    \begin{align*}
    d \mu_z(\rho) &= B_0(z)^{-1} ( 1 + \rho^{N-2}) (1 + \rho^2 - 2 \rho z)^{- \frac{N}{2}} \, d \rho, \\
    \frac{A_0(z)}{B_0(z)} &= 2 \int_0^1 \frac{\rho^{\frac{N-2}{2}}}{1 + \rho^{N-2}} \, d \mu_z(\rho), \qquad \frac{d}{dz} \frac{A_0(z)}{B_0(z)} = \textup{Cov}_{\mu_z} \Bigl( \frac{2 \rho^{\frac{N-2}{2}}}{1 + \rho^{N-2}}, \frac{N \rho}{1 + \rho^2 - 2 \rho z} \Bigr).
    \end{align*}
    Both functions in this covariance are increasing, since $( \frac{x}{1+x^2})' = \frac{1-x^2}{(1+x^2)^2} \geq 0$ for $|x|\leq1$.
    Therefore, Chebyshev's covariance inequality implies that this covariance is non-negative, hence the map $z \mapsto \frac{A_0(z)}{B_0(z)}$ is increasing.
    To compute the minimum at $z=-1$, we evaluate the Beta functions
    \[
    A_0(-1) = \int_0^{\infty} \frac{\rho^{\frac{N-2}{2}}}{(1+\rho)^N} \, d \rho =\frac{\Gamma( \frac{N}{2})^2}{\Gamma(N)}, \qquad B_0(-1) = \int_0^{\infty} \frac{\rho^{N-2}}{(1+\rho)^N} \, d \rho = \frac{\Gamma(N-1)\Gamma(1)}{\Gamma(N)} = \frac{1}{N-1}.
    \]
    The two quantities have ratio $\kappa_N = \frac{\Gamma(\frac{N}{2})^2 (N-1)}{\Gamma(N)} = \frac{\Gamma(\frac{N}{2})^2}{\Gamma(N-1)}$, proving our claim.
\end{proof}

\begin{theorem}\label{thm:uniform-pniching}
For every $N \geq 3$, $\Lambda >1$, and $\bar{s} \in (0,1)$, there exists some $\ve = \ve(N,\Lambda, \bar{s})$ with the following property.
For $s \in (0, \bar{s}]$, let $E = C (\Omega)$ be a stable $s$-minimal cone in $\bR^N$ with $C^2$ link $\Sigma = \partial \Omega \subset \bS^{N-1}$, satisfying $\cH^{N-2}(\Sigma \cap B_r(x)) \leq \Lambda r^{N-2}$ for every $x \in \bS^{N-1}$ and $r>0$, and $\cH^{N-2}(\Sigma) \geq \Lambda^{-1}$.
Moreover, suppose that
\[
\inf_{a \in \bS^{N-1}} \int_{\Sigma} |\nu - a|^2 \, d \mu < \ve
\]
where $\nu$ is the outward conormal to $\Sigma$.
Then, $\Sigma$ is an equator and $E$ is a half-space. 
\end{theorem}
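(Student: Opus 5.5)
The plan is to test the stability inequality at the fixed parameter $s$ with the function $g = 1 - h_a$, where $h_a = \la a, \nu \rg$ and $a$ is a suitably chosen unit vector. This mirrors the proof of Corollary~\ref{cor:limit-Jacobi-inequality}, but without passing to the limit.

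\textbf{Step 1 (the basic inequality).} Since $h_a$ is a Jacobi field, the weak identity~\eqref{eqn:double-integral-B-eta}, applied with $\eta \equiv a$ and with $\eta = h_a a$, gives $\int_\Sigma p_s h_a \, d\mu = 0$ and $\tfrac12 \iint B_s (h_a(x)-h_a(y))^2 = \int p_s h_a^2$. Using $B_s = A_s + W_s$, exactly as in that corollary, we get
\[
q_s[1-h_a] = -\int_\Sigma p_s \, d\mu + \iint_{\Sigma\times\Sigma} W_s(\la x,y\rg)\, f(x) f(y) \, d\mu_x \, d\mu_y, \qquad f := 1 - \la a,\nu\rg = \tfrac12 |\nu - a|^2 .
\]
Proposition~\ref{prop:qsg}, after mollifying the $C^1$ function $f$, then gives
\[
\tfrac12 \iint B_s |\nu_x - \nu_y|^2 = \int p_s \, d\mu \leq \iint W_s\, f(x) f(y).
\]

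\textbf{Step 2 (upper bound for the right-hand side).} Set $\delta := \int_\Sigma |\nu - a|^2 \, d\mu$, so that $\int f \, d\mu = \delta/2$ and $0 \le f \le 2$. Fix $r \in (0,1)$ and split the double integral.
\begin{itemize}
\item On $\{|x-y| < r\}$, bound $f(y) \le 2$. Then Lemma~\ref{lemma:ws-useful-bound} and Lemma~\ref{lemma:potential-upper-growth} with $(m,\alpha,\beta_0,q) = (N-2, N-3+s, 1-\bar s, 1)$, using the upper density bound $\Lambda$, give $\sup_x \int_{B_r(x)} W_s \, d\mu_y \le C(N,\Lambda,\bar s)\, r^{1-\bar s}(1+|\log r|) =: \omega(r)$. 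This piece is at most $2\,\omega(r)\,\delta$.
\item On $\{|x-y| \ge r\}$, Lemma~\ref{lemma:ws-useful-bound} gives $W_s \le C_N (1+|\log r|)\, r^{-(N-2)}$ uniformly in $s \le \bar s$. This piece is at most $C(r)\,\delta^2$.
\end{itemize}
Altogether, $\iint W_s f f \le 2\,\omega(r)\,\delta + C(r)\,\delta^2$.

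\textbf{Step 3 (coercivity lower bound).} Lemma~\ref{lemma:A-B-properties} gives $B_s \ge \tfrac{1}{N(N-1)}$. Writing $H := \cH^{N-2}(\Sigma)$ and $\bar\nu := H^{-1}\int \nu \, d\mu$, this yields
\[
\tfrac12 \iint B_s |\nu_x - \nu_y|^2 \ge \tfrac{1}{N(N-1)}\, H \int |\nu - \bar\nu|^2 \, d\mu .
\]
Now choose $a := \bar\nu / |\bar\nu|$; note $|\bar\nu| > 0$ once $\ve$ is small, since $|\bar\nu|$ is close to $1$. Since $|\nu| = 1$,
\[
H^{-1}\int |\nu - \bar\nu|^2 \, d\mu = 1 - |\bar\nu|^2 \ge 1 - |\bar\nu| = |a - \bar\nu| .
\]
Hence $\delta \le 2\int |\nu-\bar\nu|^2 + 2H |\bar\nu - a|^2 \le C \int |\nu-\bar\nu|^2$, where the constant uses $H \le C\Lambda$. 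Moreover, $\int|\nu-\bar\nu|^2 \le \int|\nu-b|^2$ for every $b$, so $\delta \le C \inf_b \int |\nu - b|^2 < C\ve$. Combining with $H \ge \Lambda^{-1}$ gives $\delta \le C(N,\Lambda) \int p_s \, d\mu$.

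\textbf{Step 4 (absorption).} Chaining Steps 1–3,
\[
\delta \le C_1\,\omega(r)\,\delta + C_1\, C(r)\,\delta^2 .
\]
First fix $r$ so that $C_1\,\omega(r) \le \tfrac14$. Then take $\ve$ small enough that $C_1 C(r)\, C\ve \le \tfrac14$. This forces $\delta = 0$, i.e. $\nu \equiv a$ on $\Sigma$.

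\textbf{Step 5 (conclusion).} Because $\nu_x \in T_x\bS^{N-1}$ and $\Sigma$ is $C^2$, the tangential gradient of $x \mapsto \la a, x\rg$ along $\Sigma$ is the projection of $a$ onto $T_x\Sigma$, which vanishes since $a = \nu_x \perp T_x\Sigma$. So $\la a,x\rg$ is locally constant on $\Sigma$, each component of $\Sigma$ lies in a level set $\{\la a,x\rg = c\} \cap \bS^{N-1}$, and being a closed hypersurface it equals that whole small sphere. On it the outward conormal is $\pm(a - c\,x)/\sqrt{1-c^2}$, which equals the constant $a$ only if $c = 0$, so every component is the equator $a^\perp\cap\bS^{N-1}$. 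By embeddedness $\Sigma$ is that equator, $\Omega$ is a hemisphere, and $E$ is a half-space.

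The main obstacle is Step 2. The near-diagonal bound must be uniform in $s \le \bar s$ and must not lose a factor of $\delta$, which the choice $f(y) \le 2$ combined with the $\Lambda$-density potential estimate achieves. The second concern is the justification of testing with the merely $C^1$ function $1-h_a$, which I would handle by mollification as in Corollary~\ref{cor:limit-Jacobi-inequality}.
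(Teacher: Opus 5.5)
Your argument is correct, but it takes a different route from the paper's proof of this theorem.

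\textbf{The paper's route.} The paper does not test with $1-h_a$ at fixed $s$. Instead it tests stability with the Lipschitz negative part $h_-=\max\{-\langle a,\nu\rangle,0\}$, and pairs this with the Jacobi identity for $h_a$ tested against $h_-$. This gives
$\iint_{\{h>0\}\times\{h<0\}} B_s\,h_+(x)h_-(y)\le\iint_{\{h<0\}^2} W_s\,h_-(x)h_-(y)$.
Since $\mu(\{h<0\})\le\tfrac12 D^2$, the small-set potential estimate bounds the right side by $CD^{2\gamma}\int h_-$. The bound $B_s\ge\frac{1}{N(N-1)}$ bounds the left side below by $N^{-2}\int h_+\int h_-$. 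Together these force $h_-\equiv0$, since otherwise $\cH^{N-2}(\Sigma)\lesssim\ve+\ve^{\gamma}$, contradicting $\cH^{N-2}(\Sigma)\ge\Lambda^{-1}$. Rigidity then comes from $\int p_s h=0$ with $p_s,h\ge0$: some $x_0$ has $p_s(x_0)=0$, which forces $\nu\equiv\nu(x_0)$.

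\textbf{Your route.} Yours is a single quadratic absorption. You use the test function of Corollary~\ref{cor:limit-Jacobi-inequality} at fixed $s$, a variance lower bound for $\int p_s$ via the same bound on $B_s$, and a near/far splitting of $\iint W_s ff$. This yields $\delta\le C_1\omega(r)\delta+C(r)\delta^2$ and hence $\delta=0$ directly.

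\textbf{What each buys.} Your approach avoids the Lipschitz truncation, since $1-h_a$ is $C^1$ (indeed smooth by bootstrap). It produces a quantitative gap inequality rather than a dichotomy. It is also exactly the fixed-$s$ counterpart of Step~4 of Proposition~\ref{prop:stable-links-s=0}. It does, however, require normalizing $a=\bar\nu/|\bar\nu|$ so that $\delta$ is controlled by the variance. The paper's sign argument works for any $a$ with $\int|\nu-a|^2<\ve$. It uses only first-order smallness of $\{h<0\}$, and obtains rigidity from positivity of $p_s$ rather than from absorption.

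\textbf{Two minor simplifications.}
\begin{enumerate}
\item In Step~3, $\delta=2H(1-|\bar\nu|)\le 2H(1-|\bar\nu|^2)=2\int|\nu-\bar\nu|^2$ directly, so no upper bound on $H$ is needed there.
\item In Step~5, $\langle a,x\rangle=\langle \nu_x,x\rangle=0$ gives $\Sigma\subset a^{\perp}$ immediately, without the detour through small spheres.
\end{enumerate}
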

\begin{proof}
Let $a \in \bS^{N-1}$ be such that $D := \| \nu - a \|_{L^2(\Sigma)}$ satisfies $D^2<\ve$.
By the bound $\mu(B_r(x)) \leq \Lambda r^{N-2}$ and the bound on $W_s$ from Lemma~\ref{lemma:ws-useful-bound}, we can apply Lemma~\ref{lemma:potential-upper-growth} with $(m,\alpha,\beta_0,q) = (N-2, N-3+s, 1-\bar{s}, 1)$ and absorb the logarithm into a smaller power to obtain
\begin{equation}\label{eqn:general-bound-with-constant}
\sup_{\substack{F\subset\Sigma\\ \mu(F)\leq t}}\sup_{x\in\Sigma}
\int_F W_s(\la x,y\rg)\,d\mu_y
\leq C(N,\bar s,\Lambda)t^\gamma,
\qquad \gamma:=\frac{1-\bar s}{2(N-2)},
\end{equation}
for $0<t\leq1$.
We write $h(x) := \la a, \nu(x) \rg$ and define $h_{\pm}$ by $h_+ = \max \{ h,0\}$ and $h_- = \max \{ -h,0 \}$, which are Lipschitz because $\Omega$ is of class $C^2$.
As in Lemma~\ref{lemma:extend-to-moduli}, we have $|h_-(x) - h_-(y)| \leq |h(x) - h(y)| \leq C|x-y|$, so the functions $h_{\pm}$ are admissible in the stability inequality, and $q_s [h_{\pm}] \geq 0$.
We take the scalar product of the identity~\eqref{eqn:ps(x)-nux} from Lemma~\ref{lemma:ps-identity} with $a$, test against $h_-$, and compare the resulting identity with $q_s[h_-]\geq0$, using $h = h_+ - h_-$ and $h_+ h_- = 0$.
This gives
\begin{equation}\label{eqn:h-minus-comparison}
\iint_{\{h>0\}\times\{h<0\}}
B_s(\la x,y\rg)h_+(x)h_-(y)\,d\mu_xd\mu_y
\leq
\iint_{\{h<0\}\times\{h<0\}}
W_s(\la x,y\rg)h_-(x)h_-(y)\,d\mu_xd\mu_y .
\end{equation}
Because $|a| = |\nu| = 1$, we have $|\nu - a|^2 = 2 (1 - \la a,\nu \rg) = 2 (1-h)$, hence
\begin{equation}\label{eqn:sigma-integral}
\int_{\Sigma} h_+ \, d\mu \geq \int_{\Sigma} h \, d\mu = \cH^{N-2}(\Sigma) - \tfrac{1}{2} D^2, \qquad \text{and} \qquad \mu ( \{ h < 0 \} ) \leq \tfrac{1}{2} D^2
\end{equation}
because $|\nu-a|^2>2$ on the set $F = \{ h < 0 \}$.
Also, using $0 \leq h_- \leq 1$, we can estimate
\begin{align*}
    & \iint_{ \{ h<0 \} \times \{ h<0 \} } W_s(\la x,y \rg) h_-(x) h_-(y) \, d\mu_x \, d \mu_y \\
    &\leq \Bigl( \sup_{ \substack{ F \subset \Sigma \\ \mu(F) < D^2} } \int_F W_s (\la x,y \rg) \, d\mu_y \Bigr) \int_{ \{ h <0 \} } h_-(x) \, d\mu_x \leq C(N,\bar{s}, \Lambda) D^{2 \gamma}  \int_{ \{ h < 0 \} } h_-(x) \, d\mu_x
\end{align*}
using the bound~\eqref{eqn:general-bound-with-constant} with $F = \{ h<0 \}$ and $\mu(F) \leq \frac{1}{2} D^2$.
Next, Lemma~\ref{lemma:A-B-properties} gives $B_s \geq \frac{1}{N(N-1)}$, so
\[
\iint_{\{h>0\}\times\{h<0\}} B_s h_+(x)h_-(y)\,d\mu_xd\mu_y \geq N^{-2} \left(\int_{\{h>0\}}h_+\,d\mu\right) \left(\int_{\{h<0\}}h_-\,d\mu\right).
\]
Thus, either $h_-=0$ $\mu$-a.e., or after cancelling
$\int_{\{h<0\}}h_-\,d\mu>0$ in~\eqref{eqn:h-minus-comparison}, we must have 
\[
\cH^{N-2}(\Sigma) - \tfrac{1}{2} D^2 \leq \int_{ \{ h > 0 \} } h_+ \, d\mu \leq C(N, \bar{s}, \Lambda) \, D^{2 \gamma}, \qquad \implies \qquad \cH^{N-2}(\Sigma) \leq \tfrac{1}{2} \ve + C \ve^{\gamma}
\]
due to $D^2<\ve$.
Since $\cH^{N-2}(\Sigma) \geq \Lambda^{-1}$, this is impossible for $\ve = \ve(N, \bar{s}, \Lambda)$ sufficiently small.
Hence, $h_- = 0$, so $\la a, \nu \rg \geq 0$ everywhere by continuity.
On the other hand, testing the identity~\eqref{eqn:ps(x)} of Lemma~\ref{lemma:ps-identity} with the constant vector field $a$ and integrating over $\Sigma$ gives
\[
\int_{\Sigma} B_s (\la x,y \rg) \, [h(x) - h(y)] \, d\mu_y = p_s(x) h(x), \qquad \implies \qquad \int_{\Sigma} p_s(x) \la a, \nu_x \rg \, d\mu_x = 0.
\]
Moreover, the property~\eqref{eqn:sigma-integral} shows that $h_+ = \la a, \nu \rg > 0$ on a set of positive $\mu$-measure, so using $p_s, h_0 \geq 0$, we can find some $x_0 \in \Sigma$ with $p_s(x_0) = 0$ and $\la a, \nu_{x_0} \rg > 0$.
We recall that
\[
p_s(x_0) = \int_{\Sigma} B_s( \la x_0,y \rg) \, (1 - \la \nu(x_0), \nu(y) \rg ) \, d\mu_y
\]
where both factors in the integrand are non-negative and $B_s>0$.
Therefore, $\nu(y) =\nu(x_0)$ for $\mu$-a.e. $y$, and since $\Omega$ is of class $C^2$, this forces $\nu \equiv b$ for a fixed vector $b \in \bS^{N-1}$.
Moreover, $b$ is tangent to the sphere at every $x \in \Sigma$, so $\la x, b \rg = 0$ implies that $\Sigma \subset \bS^{N-1} \cap b^{\perp}$ is contained in an $(N-2)$-dimensional equator.
Also, $T_x \Sigma = \{ x,b \}^{\perp} = T_x ( \bS^{N-1} \cap b^{\perp})$ implies that every connected component of $\Sigma$ is open and closed in the equator, so $\Sigma = \bS^{N-1} \cap b^{\perp}$ and $E$ is a half-space.
\end{proof}
\begin{corollary}\label{cor:near-isoperimetric}
    For every $N \geq 3$, there exist $s_N \in (0,1)$ and $\delta_N>0$ with the following property.
    Let $E = C (\Omega)$ be a stable $s$-minimal cone in $\bR^N$ with $C^2$ link $\Sigma = \partial \Omega \subset \bS^{N-1}$, for $s \in (0, s_N)$.
    If $\cH^{N-2}(\Sigma) \leq |\bS^{N-2}| + \delta_N$, then $E$ is a half-space.
\end{corollary}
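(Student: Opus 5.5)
We argue by contradiction and compactness, reducing to Theorem~\ref{thm:uniform-pniching}. Suppose the statement fails. Then there are $s_j \downarrow 0$, $\delta_j \downarrow 0$, and stable non-flat $s_j$-minimal cones $E_j = C(\Omega_j)$ with $C^2$ links $\Sigma_j$ satisfying $\cH^{N-2}(\Sigma_j) \leq |\bS^{N-2}| + \delta_j$.

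\textbf{Uniform hypotheses.} By Proposition~\ref{prop:bound-s}, applied as at the start of the proof of Theorem~\ref{thm:compactness-thm}, we get $\mu_j(B_r(x)) \leq \Lambda r^{N-2}$ with $\Lambda = \Lambda(N)$ once $s_j \leq \tfrac14$. Each $\Sigma_j$ is nonempty, since otherwise $\Omega_j$ would be empty or the whole sphere. By~\eqref{eqn:omega-j-inequality}, $|\Omega_j| \to \tfrac12|\bS^{N-1}|$, so the spherical isoperimetric inequality gives $\cH^{N-2}(\Sigma_j) \geq \tfrac12 |\bS^{N-2}|$ for large $j$. Hence the lower area hypothesis $\cH^{N-2}(\Sigma_j) \geq \Lambda^{-1}$ of Theorem~\ref{thm:uniform-pniching} also holds uniformly.

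\textbf{Limit and rigidity.} Pass to a limit via Theorem~\ref{thm:compactness-thm}. We obtain $\Omega$ with $|\Omega| = \tfrac12 |\bS^{N-1}|$, strict BV convergence, and $\mu_j \xrightharpoonup{*} |D\mathbf{1}_\Omega|$. Lower semicontinuity gives $P(\Omega) \leq \liminf \cH^{N-2}(\Sigma_j) \leq |\bS^{N-2}|$. The spherical isoperimetric inequality gives the reverse bound $P(\Omega) \geq |\bS^{N-2}|$ for half-volume sets, with equality only for hemispheres. Hence $\Omega$ is a hemisphere $\{\langle x, a\rangle < 0\}$ (up to null sets), and $\nu_\Omega \equiv a$ on the equator $\partial^*\Omega$.

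\textbf{Upgrading to $L^2$ convergence of normals.} Part~(ii) of Theorem~\ref{thm:compactness-thm} gives $(x,\nu_j)_{\#}\mu_j \xrightharpoonup{*} (x,\nu_\Omega)_{\#}|D\mathbf{1}_\Omega| = (x,a)_{\#}|D\mathbf{1}_\Omega|$. Test this against the continuous function $(x,v) \mapsto |v-a|^2$ on the compact normal bundle $\cN$. This yields
\[
\int_{\Sigma_j} |\nu_j - a|^2 \, d\mu_j \to \int |a-a|^2 \, d|D\mathbf{1}_\Omega| = 0.
\]
Alternatively, expand $|\nu_j - a|^2 = 2 - 2\langle \nu_j, a\rangle$ and use $\mu_j(\bS^{N-1}) \to P(\Omega)$ together with $D\mathbf{1}_{\Omega_j} \xrightharpoonup{*} D\mathbf{1}_\Omega$. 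Either way, for large $j$ the infimum in Theorem~\ref{thm:uniform-pniching} is smaller than $\ve(N,\Lambda,\tfrac14)$. That theorem then forces $E_j$ to be a half-space, contradicting the choice of $E_j$.

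\textbf{Main obstacle.} The delicate point is the rigidity step. The limit $\Omega$ is only a Caccioppoli set, and its perimeter must be compared with the isoperimetric profile at exactly half volume. This needs the volume identity from Proposition~\ref{prop:angular-}, i.e.~\eqref{eqn:omega-j-inequality}, together with the equality case of the spherical isoperimetric inequality for finite-perimeter sets (the Lévy–Gromov rigidity statement). Everything else is a direct application of the earlier results.
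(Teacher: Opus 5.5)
Your proposal is correct and follows essentially the same route as the paper: you argue by contradiction, apply Theorem~\ref{thm:compactness-thm}, use lower semicontinuity together with the equality case of the spherical isoperimetric inequality to force a hemisphere, deduce $\int_{\Sigma_j}|\nu_j-a|^2\,d\mu_j\to0$, and conclude with Theorem~\ref{thm:uniform-pniching}. Your explicit check of the uniform hypotheses $\mu_j(B_r(x))\le\Lambda r^{N-2}$ and $\cH^{N-2}(\Sigma_j)\ge\Lambda^{-1}$ fills in a detail that the paper only asserts.
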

\begin{proof}
Suppose, for contradiction, that this property fails, so there exist $s_j \downarrow 0$ and $C^2$ spherical domains $\Omega_j$ with non-equatorial $C^2$ links $\Sigma_j = \partial \Omega_j$ such that $\cH^{N-2}(\Sigma_j) \to |\bS^{N-2}|$.
By Theorem~\ref{thm:compactness-thm}, we can extract a subsequential limit $\mathbf{1}_{\Omega_j} \to \mathbf{1}_{\Omega}$ in $L^1(\bS^{N-1})$ for a domain $\Omega$ with $|\Omega| = \frac{1}{2} |\bS^{N-1}|$.
Since each $\Omega_j$ has $C^2$ boundary, the lower semicontinuity of the perimeter under $L^1$ convergence~\cite{maggi}*{Proposition 12.15} implies that
\[
\textup{Per}_{\bS^{N-1}} (\Omega) \leq \liminf_{j \to \infty} \textup{Per}_{\bS^{N-1}}(\Omega_j) = \liminf_{j \to \infty} \cH^{N-2}(\Sigma_j) \leq |\bS^{N-2}|. 
\]
By the spherical isoperimetric inequality, originally due to Schmidt and proved in~\cite{duzaar-fusco} for Caccioppoli sets, we also have $\textup{Per}_{\bS^{N-1}}(\Omega) \geq |\bS^{N-2}|$ with equality precisely for a hemisphere.
Thus, $\Omega$ is a hemisphere with constant outward conormal $a$ and $- D_{\bS^{N-1}} \mathbf{1}_{\Omega} = a \, \cH^{N-2} \mres \partial^* \Omega$.
By Theorem~\ref{thm:compactness-thm}, the Gauss-Green measures satisfy $\nu_j \mu_j \xrightharpoonup{*} a \, \cH^{N-2} \mres \partial^* \Omega$, and since $\cH^{N-2}(\Sigma_j) \to |\bS^{N-2}|$, we obtain
\[
\Bigl\la a, \int_{\Sigma_j} \nu_j \, d \mu_j \Bigr\rg \to |\bS^{N-2}|,\qquad \implies \qquad \int_{\Sigma_j} |\nu_j - a|^2 = 2 \, \cH^{N-2} (\Sigma_j) - 2 \Bigl\la a, \int_{\Sigma_j} \nu_j \, d \mu_j \Bigr\rg \to 0.
\]
Since $\int_{\Sigma_j} |\nu_j - a|^2 \to 0$ and the cones $C(E_j)$ satisfy the properties of Theorem~\ref{thm:uniform-pniching}, we deduce that they are equators for all $j$ sufficiently large.
This produces a contradiction.
\end{proof}

Combining the results~\ref{lemma:A-B-properties} - \ref{cor:near-isoperimetric}, we obtain a conditional flatness result for stable $s$-minimal cones.
\begin{proposition}\label{prop:dimension-descent}
For $N \geq 3$, suppose that every $s=0$ limit link $\Omega \subset \bS^{N-1}$ of stable $s$-minimal cones in $\bR^N$ is a hemisphere up to null sets.
Then, there exists an $s_* > 0$ such that for all $s \in (0,s_*)$, every stable $s$-minimal cone in $\bR^N$ with $C^2$ boundary away from the origin is a half-space. 
\end{proposition}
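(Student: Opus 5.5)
We argue by contradiction. If the conclusion fails, then there are $s_j \downarrow 0$ and stable $s_j$-minimal cones $E_j = C(\Omega_j) \subset \bR^N$ with $C^2$ links $\Sigma_j = \partial \Omega_j$ such that no $E_j$ is a half-space. Each $\Sigma_j$ is nonempty: otherwise $\Omega_j$ would be empty or all of $\bS^{N-1}$, which is excluded by the volume estimate~\eqref{eqn:omega-j-inequality} from Step 1 of Theorem~\ref{thm:compactness-thm} (or simply excluded as trivial). We then apply Theorem~\ref{thm:compactness-thm} and pass to a subsequence, obtaining $\mathbf{1}_{\Omega_j} \to \mathbf{1}_{\Omega}$ in strict $\textup{BV}(\bS^{N-1})$. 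The limit $\Omega$ is an $s=0$ limit link of stable cones, so by hypothesis it is a hemisphere up to null sets, say $\Omega = \{ x \in \bS^{N-1} : \la x, a \rg < 0\}$ for some $a \in \bS^{N-1}$.

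Strict BV convergence from Theorem~\ref{thm:compactness-thm}(i)--(ii) gives $\cH^{N-2}(\Sigma_j) = |D\mathbf{1}_{\Omega_j}|(\bS^{N-1}) \to |D\mathbf{1}_{\Omega}|(\bS^{N-1}) = |\bS^{N-2}|$. For large $j$ this means $\cH^{N-2}(\Sigma_j) \leq |\bS^{N-2}| + \delta_N$ and $s_j < s_N$. Corollary~\ref{cor:near-isoperimetric} then forces $E_j$ to be a half-space, a contradiction.

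As an alternative that avoids relying on the corollary's constants, we can argue directly with Theorem~\ref{thm:uniform-pniching}. The Gauss--Green measures satisfy $\nu_j\mu_j \xrightharpoonup{*} a\,|D\mathbf{1}_\Omega|$, so
\[
\int_{\Sigma_j} |\nu_j - a|^2 \, d\mu_j = 2\cH^{N-2}(\Sigma_j) - 2\Bigl\la a, \int_{\Sigma_j}\nu_j\,d\mu_j\Bigr\rg \to 2|\bS^{N-2}| - 2|\bS^{N-2}| = 0 .
\]
Theorem~\ref{thm:uniform-pniching} requires uniform hypotheses, which we check next.
\begin{itemize}
\item The density bound $\cH^{N-2}(\Sigma_j \cap B_r(x)) \leq \Lambda r^{N-2}$ holds with $\Lambda = C(N,\bar s)$ by~\eqref{eqn:mu-j-Sigma-j}, which comes from Proposition~\ref{prop:bound-s}.
\item The lower bound $\cH^{N-2}(\Sigma_j) \geq \Lambda^{-1}$ holds for large $j$ because $\cH^{N-2}(\Sigma_j) \to |\bS^{N-2}| > 0$.
\end{itemize}
Hence for large $j$ the deviation $\int_{\Sigma_j}|\nu_j - a|^2\,d\mu_j$ falls below $\ve(N,\Lambda,\bar s)$, and Theorem~\ref{thm:uniform-pniching} shows that $E_j$ is a half-space, again a contradiction.

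The only delicate point is convergence of the total masses $\cH^{N-2}(\Sigma_j)$. Lower semicontinuity alone gives only $\liminf_j \cH^{N-2}(\Sigma_j) \geq |\bS^{N-2}|$, and the upper bound is exactly the nontrivial content. It is supplied by the strict BV convergence $\mu_j \xrightharpoonup{*} |D\mathbf{1}_\Omega|$ of Theorem~\ref{thm:compactness-thm}(ii). That in turn rests on the energy bound~\eqref{eqn:B0-integral-finite} and Lemma~\ref{lemma:radon-Sm}, which exclude cancellation of conormals in the limit. Once this is granted, the argument is a direct contradiction-compactness reduction to the pinching results.
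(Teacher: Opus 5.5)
Your proof is correct, and your second argument is exactly the paper's: extract a hemispherical limit via Theorem~\ref{thm:compactness-thm}, use $\mu_j(\bS^{N-1}) \to |\bS^{N-2}|$ and $\nu_j\mu_j \xrightharpoonup{*} a\,|D\mathbf{1}_{\Omega}|$ to get $\int_{\Sigma_j}|\nu_j - a|^2\,d\mu_j \to 0$, and conclude with Theorem~\ref{thm:uniform-pniching}. Your first route through Corollary~\ref{cor:near-isoperimetric} is the same argument repackaged, since that corollary is proved by this same pinching step. One minor correction: the volume bound~\eqref{eqn:omega-j-inequality} cannot rule out empty links, because it is derived assuming $\Sigma_j \neq \varnothing$ (Proposition~\ref{prop:angular-} needs a point $x \in \Sigma$ and divides by $\cH^{N-2}(\Sigma)$), so the trivial cones $\varnothing$ and $\bR^N$ must simply be excluded by convention, as the paper tacitly does.
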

\begin{proof}
If the result failed, there would exist a sequence of $s_j \downarrow 0$ and non-flat stable $s_j$-minimal cones $E_j = C(\Omega_j) \subset \bR^N$ with $C^2$ links $\Sigma_j = \partial \Omega_j$. 
Then, we can apply Theorem~\ref{thm:compactness-thm} to extract a subsequential limit $\mathbf{1}_{\Omega_j} \to \mathbf{1}_{\Omega}$ in strict $\textup{BV}(\bS^{N-1})$, and every such limit $\Omega$ is, up to null sets, a hemisphere whose equator $\Sigma = \partial^* \Omega = \bS^{N-1} \cap a^{\perp}$ has constant outward conormal $\nu_{\Omega} = a$.
The property $(ii)$ of Theorem~\ref{thm:compactness-thm} implies that the measures $\mu_j := \cH^{N-2} \mres \Sigma_j$ satisfy
\begin{align*}
& \mu_j \xrightharpoonup{*} |D \mathbf{1}_{\Omega}|, \qquad (x, \nu_j(x))_{\#} \mu_j \xrightharpoonup{*} (x, \nu_{\Omega}(x))_{\#} |D \mathbf{1}_{\Omega}| = (x,a)_{\#} ( \cH^{N-2} \mres (\bS^{N-1} \cap a^{\perp})),
\end{align*}
and $\nu_j \mu_j \xrightharpoonup{*} - D \mathbf{1}_{\Omega} = a \, \cH^{N-2} \mres \Sigma$.
Combining with $\mu_j(\bS^{N-1}) \to |D \mathbf{1}_{\Omega}|(\bS^{N-1}) = |\bS^{N-2}|$, we obtain
\begin{align*}
    \int_{\Sigma_j} |\nu_j - a|^2 \, d\mu_j &= 2 \mu_j (\bS^{N-1}) - 2 \Bigl\la a, \int_{\Sigma_j} \nu_j \, d \mu_j \Bigr\rg \to 2 \mu(\bS^{N-1}) - 2|\bS^{N-2}| = 0.
\end{align*}
Thus, for all sufficiently large $j$, the domains $\Omega_j$ satisfy the conditions of Theorem~\ref{thm:uniform-pniching}, so they are hemispheres and the cones $C(\Omega_j)$ are flat for sufficiently large $j$.
This is a contradiction.  
\end{proof}

\subsection{The limit link in three dimensions}

To prove Theorem~\ref{thm:main}, we show that the conditions of Proposition~\ref{prop:dimension-descent} hold for $N=3$.
First, we establish a precise bound for the quantity $\iint_{\Sigma \times \Sigma} W_0 \la \nu_x, \nu_y \rg$ in the stability inequality $(v)$ of Theorem~\ref{thm:compactness-thm} when $\Omega$ is a half-volume Caccioppoli set.
\begin{lemma}\label{lemma:domain-expansion}
    For $N \geq 3$, let $\Omega \subset \bS^{N-1}$ be a Caccioppoli set with $\Sigma := \partial^* \Omega$ and define
    \[
    W_0(z) := \int_0^1 (1 - \rho^{\frac{N-2}{2}})^2  (1 +\rho^2 - 2 \rho z)^{- \frac{N}{2}} \, d \rho.
    \]
    Suppose that $|\Omega| = \frac{1}{2} |\bS^{N-1}|$ and $\iint_{\Sigma \times \Sigma} W_0( \la x,y \rg) \, d |D\mathbf{1}_{\Omega}|(x) \, d | D \mathbf{1}_{\Omega} | (y) < \infty$.
    Then,
    \[
    0 \leq \iint_{\Sigma \times \Sigma} W_0 (\la x,y \rg) \, \la \nu_x, \nu_y \rg \, d |D\mathbf{1}_{\Omega}|(x) \, d | D \mathbf{1}_{\Omega} | (y) \leq w_N \, |\bS^{N-1}|^2
    \]
    where $w_N = \frac{1}{4} N^3$ for $N \geq 4$ and $w_3 = \frac{3}{16 \pi}$.
    Moreover, we have
    \[
    c_N \cH^{N-2}(\Sigma)^2 \leq \iint_{\Sigma \times \Sigma} W_0(\la x,y \rg) \, d\mu_x \, d\mu_y, \qquad \text{where } \; c_N := \int_0^1 (1-\rho^2)^{-1} \bigl( 1 - \rho^{\frac{N-2}{2}} \bigr)^2 \, d \rho.
    \]
\end{lemma}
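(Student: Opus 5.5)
The plan is to use the series expansion from the proof of Lemma~\ref{eqn:ws-properties}. For each $\rho\in(0,1)$,
\[
(1+\rho^2-2\rho z)^{-N/2}=(1+\rho^2)^{-N/2}\sum_{k\ge0}\tfrac{1}{k!}\bigl(\tfrac N2\bigr)_k\Bigl(\tfrac{2\rho}{1+\rho^2}\Bigr)^k z^k
\]
has non-negative coefficients. Integrating against $(1-\rho^{\frac{N-2}{2}})^2\,d\rho$ on $[0,1-\delta]$ gives a truncation $W_{0,\delta}(z)=\sum_k c_k(\delta) z^k$ with $c_k(\delta)\ge0$. Since $\la x,y\rg^k\la\nu_x,\nu_y\rg=\la x^{\otimes k}\otimes\nu_x,\, y^{\otimes k}\otimes \nu_y\rg$, each term contributes
\[
c_k(\delta)\,\Bigl|\int_\Sigma x^{\otimes k}\otimes\nu_x\,d\mu_x\Bigr|^2\ \ge 0 .
\]
The finiteness hypothesis $\iint W_0\,d\mu\,d\mu<\infty$ lets us pass $\delta\downarrow0$ by dominated convergence, since $|W_{0,\delta}\la\nu_x,\nu_y\rg|\le W_0$. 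This gives the lower bound $0\le\iint W_0\la\nu_x,\nu_y\rg$.

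For the upper bound, I would compute the moment tensors using the divergence theorem on $\bS^{N-1}$, which is where $|\Omega|=\tfrac12|\bS^{N-1}|$ enters. For a vector $e$ and a tensor component, $-D_{\bS}\mathbf 1_\Omega=\nu\,\mu$ gives
\[
\int_\Sigma \la x^{\otimes k}\otimes \nu_x, T\otimes e\rg\,d\mu=\int_\Omega \textup{div}_{\bS}\bigl(\la x^{\otimes k},T\rg\, e^{\top}\bigr)\,dS .
\]
Here $e^\top=e-\la e,x\rg x$, so $\textup{div}_{\bS}(f e^\top)=\la\nabla_{\bS} f,e\rg-(N-1)f\la e,x\rg$. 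For $k=0$ the vector $\int_\Sigma\nu\,d\mu=-(N-1)\int_\Omega x\,dS$ is bounded by $(N-1)|\Omega|$. In general the $k$-th moment is bounded by $C(k+N)\,|\bS^{N-1}|$. Rather than summing these tensor bounds, a cleaner route is to realize the whole quantity as a single volume integral. Write $\la\nu_x,\nu_y\rg$ through divergence in both variables:
\[
\iint W_0(\la x,y\rg)\la\nu_x,\nu_y\rg\,d\mu_x\,d\mu_y=\iint_{\Omega\times\Omega} L\,W_0(\la x,y\rg)\,dS_x\,dS_y ,
\]
where $L=\sum_i \textup{div}_x(e_i^\top)\,\textup{div}_y(e_i^\top)$ acting on functions of $z=\la x,y\rg$ is an explicit second-order ODE operator, something like $(1-z^2)W_0''-(N-1)zW_0'+\dots$, possibly with a boundary/diagonal correction from the singularity. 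The half-volume condition then replaces $\Omega\times\Omega$ by a bounded expression, for instance via $\mathbf 1_\Omega=\tfrac12+\tfrac12(\mathbf 1_\Omega-\mathbf 1_{\Omega^c})$ and the vanishing of the mean of $L W_0$ in one variable. The required bound becomes $\iint|LW_0|\le 4w_N|\bS^{N-1}|^2$. I expect this step to be the main obstacle. The needed facts are: $LW_0$ must be integrable near $z=1$, which holds because $W_0$ is only logarithmically singular for $N=3$ (Lemma~\ref{lemma:ws-useful-bound}) but needs checking in general; the constant must be computed, $\tfrac{3}{16\pi}$ for $N=3$ presumably from an explicit formula; and the diagonal must be handled by truncating $W_0$ away from $z=1$ and justifying limits as in Step 1 of Proposition~\ref{prop:angular-}. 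For $N\ge4$ a crude bound such as $\sup|LW_0|\lesssim N^3$ on the relevant range should suffice, together with $|\Omega|^2\le|\bS^{N-1}|^2/4$.

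For the last inequality I would use only the $k=0$ structure and a lower bound on the kernel, namely $W_0(z)\ge W_0(-1)$, since $W_0$ is increasing in $z$ by the positive series. Computing $W_0(-1)=\int_0^1(1-\rho^{\frac{N-2}{2}})^2(1+\rho)^{-N}\,d\rho$ does not obviously match $c_N$, so instead I would bound pointwise: $1+\rho^2-2\rho z\le(1+\rho)^2$ and $(1+\rho)^{N}\le\dots$. The natural match with $c_N=\int(1-\rho^2)^{-1}(1-\rho^{\frac{N-2}{2}})^2$ suggests averaging in $y$. One would show $\int_\Sigma W_0(\la x,y\rg)\,d\mu_y\gtrsim$ something via the Poisson-kernel identity $\int_{\bS^{N-1}}(1+\rho^2-2\rho\la x,y\rg)^{-N/2}(1-\rho^2)\,dS_y=|\bS^{N-1}|$, though on $\Sigma$ rather than the sphere this needs care. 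My first attempt would be this: restrict to $\Sigma\times\Sigma$, apply Cauchy--Schwarz/Jensen to the $\rho$-integrand kernel $(1+\rho^2-2\rho z)^{-N/2}$, and use its positive-definiteness together with its constant-mode coefficient to extract $\mu(\Sigma)^2$ times $(1-\rho^2)^{-1}$-type weights.
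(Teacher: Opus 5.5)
Your lower bound is correct and takes a different route from the paper. You expand the truncation $W_{0,\delta}$ in powers of $z$ with non-negative coefficients, write each term as $c_k(\delta)\bigl|\int_\Sigma x^{\otimes k}\otimes\nu_x\,d\mu_x\bigr|^2$, and let $\delta\downarrow0$ by dominated convergence; this is a clean substitute for the paper's spectral argument.

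Your sketch of the measure bound, once made precise, is exactly the paper's Step 2. The Poisson expansion $P_\rho(\la x,y\rg)=\sum_{k,\alpha}\rho^kY_{k\alpha}(x)Y_{k\alpha}(y)$ gives $\iint(1+\rho^2-2\rho\la x,y\rg)^{-N/2}\,d\mu_x\,d\mu_y\ge(1-\rho^2)^{-1}\mu(\bS^{N-1})^2$ after discarding all non-constant modes. Integrating against $(1-\rho^{\frac{N-2}{2}})^2\,d\rho$ (Tonelli) then yields $c_N\cH^{N-2}(\Sigma)^2$. No Cauchy--Schwarz is needed, and the fact that $\mu$ lives on $\Sigma$ causes no difficulty, since $\mu$ is just a finite measure on $\bS^{N-1}$.

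The genuine gap is the upper bound, which is the heart of the lemma. Your identity $\iint W_0\la\nu_x,\nu_y\rg\,d\mu\,d\mu=\iint_{\Omega\times\Omega}LW_0$ only holds with $LW_0$ understood as a distribution. Near the diagonal, $W_0$ is to leading order the Newtonian kernel of the $(N-1)$-dimensional sphere (for $N=3$, $W_0\approx\frac{1}{4}\log\frac{1}{|x-y|}$). Acting on zonal kernels, $L$ is to leading order $-\Delta$. Hence $LW_0=c\,\delta_{\{x=y\}}+K$ with $c=\frac{(N-2)^2}{8}|\bS^{N-1}|$, and in particular $\sup|LW_0|\lesssim N^3$ is false. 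Moreover, $LW_0$ annihilates constants, so $\int K(x,y)\,dS_y=-c$. Any estimate using only $|\mathbf{1}_\Omega-\frac{1}{2}|=\frac{1}{2}$ and the size of the kernel therefore gives at best $\frac{c}{4}|\bS^{N-1}|+\frac{1}{4}\iint|K|\ge\frac{(N-2)^2}{16}|\bS^{N-1}|^2$. For $N=3$ this is $\pi^2>3\pi=w_3|\bS^2|^2$. So your reduction to $\iint|LW_0|\le 4w_N|\bS^{N-1}|^2$ is false for $N=3$ and unproved for $N\ge4$. Your abandoned moment-tensor route fails for a related reason: the monomials $z^k$ are not orthogonal on the sphere, and $\sum_k c_k=W_0(1)=\infty$.

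The missing idea is spectral. The form $\cT(f,g)=\sum_i\iint W_0(\la x,y\rg)\,D_if(x)\,D_ig(y)\,dS_x\,dS_y$ is $O(N)$-invariant, hence diagonal on spherical harmonics. Write $W_0=|\bS^{N-1}|\int_0^1(1-\rho^2)^{-1}(1-\rho^{\frac{N-2}{2}})^2P_\rho\,d\rho$ and split $D_iY_k$ into harmonics of degrees $k-1$ and $k+1$. This gives explicit eigenvalues $|\bS^{N-1}|\lambda_k$ with $0\le\lambda_k<N^3$. After truncating to $W_{0,\delta}$ and using strict BV approximation, the integral equals $|\bS^{N-1}|\sum_{k,\alpha}\lambda_k|c_{k\alpha}|^2$, where $\sum|c_{k\alpha}|^2=\frac{1}{4}|\bS^{N-1}|$ for $u=\mathbf{1}_\Omega-\frac{1}{2}$; this settles $N\ge4$.

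For $N=3$ even $\sup_k\lambda_k=\lambda_1$ is too large, since $4\pi^2\lambda_1>3\pi$. One must use the explicit ordering $\lambda_1>\lambda_2>\lambda_k$ for $k\ge3$. This is combined with the first-moment bound $\sum_\alpha|c_{1\alpha}|^2\le\frac{3\pi}{4}$, which follows from $|\int_\Omega x\,dS|\le\pi$ when $|\Omega|=2\pi$. Together these reach $\pi^2(3\lambda_1+\lambda_2)<3\pi$.
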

\begin{proof}
We first perform a general computation for all $N \geq 3$; this includes the general measure bound above.
We then specialize to more precise bounds for $N=3$.

\smallskip \noindent \textbf{Step 1: The general computation.}
We consider the function $u := \mathbf{1}_{\Omega} - \frac{1}{2}$, which is bounded and has zero spherical mean by construction.
Thus, $u \in L^{\infty}(\bS^{N-1}) \subset L^2(\bS^{N-1})$ admits an expansion into $L^2$-orthonormal spherical harmonics, given by $u = \sum_{k \geq 1} \sum_{\alpha} c_{k \alpha} Y_{k\alpha}$.
We write $\mu := |D \mathbf{1}_{\Omega}|$ for brevity, so $D_{\bS^{N-1}} u = D_{\bS^{N-1}} \mathbf{1}_{\Omega} = - \nu \, \mu$ as vector-valued measures.
Since $|\mathbf{1}_{\Omega} - \frac{1}{2}| = \frac{1}{2}$ a.e., we have
\begin{equation}\label{eqn:parseval-bound}
\sum_{k \geq 1, \alpha} |c_{k \alpha}|^2 = \bigl\| \mathbf{1}_{\Omega} - \tfrac{1}{2} \bigr\|^2_{L^2(\bS^{N-1})} = \tfrac{1}{4}  \, |\bS^{N-1}|.
\end{equation}
We denote by $P_{\rho}(z)$ the spherical Poisson kernel, given as in~\cite{axler-bourdon-ramey}*{Ch. 5} by
\[
P_{\rho}(z) := |\bS^{N-1}|^{-1} (1-\rho^2) (1 + \rho^2 - 2 \rho z)^{- \frac{N}{2}}.
\]
This has the property that for every degree-$k$ spherical harmonic $Y_k$ has Poisson extension
\begin{equation}\label{eqn:poisson-extension-Yk}
\int_{\bS^{N-1}} P_{\rho}(\la x,y \rg) \, Y_k(y) \, dS_y = \rho^k Y_k(z)
\end{equation}
given by multiplication with $\rho^k$.
Then, the integral kernel $W_0(z)$ is expressible in the form
\[
W_0(z) = |\bS^{N-1}| \int_0^1 (1-\rho^2)^{-1} \bigl(1 - \rho^{\frac{N-2}{2}} \bigr)^2 P_{\rho}(z) \, d \rho.
\]
Let $Y = H_k|_{\bS^{N-1}}$ denote a normalized homogeneous degree-$k$ harmonic.
The polynomial $x_i H_k - \frac{|x|^2}{2k+N-2} \partial_i H_k$ is harmonic and homogeneous of degree $k+1$, so
\[
k x_i Y = kx_i H_k - \tfrac{k}{2k+N-2} |x|^2 \partial_i H_k + \tfrac{k}{2k+N-2} \partial_i H_k
\]
on the sphere.
This implies the identity
\begin{equation}\label{eqn:Di-Y-identity}
    D_i Y = \partial_i H_k - k x_i Y = \frac{k+N-2}{2k+N-2} \partial_i H_k - k \Bigl( x_i H_k - \frac{|x|^2}{2k+N-2} \partial_i H_k \Bigr).
\end{equation}
The two summands are harmonic, of degrees $(k-1)$ and $(k+1)$, respectively, and satisfy
\[
\sum_i \| \partial_i H_k \|^2_{L^2} = \int_{\bS^{N-1}} |\nabla_{\bR^N} H_k|^2 \, dS = k (2k+N-2)
\]
due to $|\nabla_{\bR^N} H_k|^2 = |\nabla_{\bS^{N-1}} Y|^2 + k^2 Y^2$, with $k(k+N-2)$ the eigenvalue of the spherical Laplacian.
Consequently, the degree-$(k-1)$ part has total $L^2$-norm $\frac{k(k+N-2)^2}{2k+N-2}$, while the $(k+1)$-part has total $L^2$-norm $\frac{k^2(k+N-2)}{2k+N-2}$. 
As in Lemma~\ref{eqn:ws-properties}, we denote by $W_{0,\delta}(z)$ the truncated kernel
\begin{equation}\label{eqn:W-zero-delta}
W_{0,\delta}(z) := |\bS^{N-1}| \int_0^{1-\delta} (1-\rho^2)^{-1} (1 - \rho^{\frac{N-2}{2}})^2 P_{\rho}(z) \, d \rho.
\end{equation}
For every $\delta>0$, the truncated kernel $W_{0,\delta}(z)$ is smooth.
Moreover, we can cover $\bS^{N-1}$ by finitely many charts and apply the smooth strict approximation of BV functions~\cite{ambrosio-fusco-pallara}*{Theorem 3.9} (see also~\cite{maggi}*{Theorem 13.8}) to obtain a sequence of smooth functions $u_j \in C^{\infty}$, with $|u_j| \leq \frac{1}{2}$, such that $u_j \to u \in L^1(\bS^{N-1})$ and $D_{\bS^{N-1}} u_j \xrightharpoonup{*} D_{\bS^{N-1}} u$ as vector-valued measures, while 
\[
\sup_j |D_{\bS^{N-1}} u_j| (\bS^{N-1}) < \infty, \qquad \int_{\bS^{N-1}} |\nabla u_j| \, dS \to |D_{\bS^{N-1}} u| (\bS^{N-1}).
\]
Therefore, $D_i u_j \, dS \xrightharpoonup{*} D_i u$ as measures with totally bounded variation, hence also
\[
(D_i u_j \, dS) \otimes (D_i u_j \, dS) \xrightharpoonup{*} (D_i u) \otimes (D_i u).
\]
Since $|u_j| \leq \frac{1}{2}$ and $u_j \to u$ in $L^1$, we also have $u_j \to u$ in $L^2$.
Then, for $\delta>0$, we can integrate by parts twice in terms of the truncated kernel $W_{0,\delta}$ and the approximation functions $u_j$ to obtain
\begin{align*}
     \sum_{i=1}^N \iint W_{0,\delta}(\la x,y \rg) \, D_i u_j(x) \, D_i u_j(y) \, dS_x \, dS_y
    & \to \sum_{i=1}^N \iint W_{0,\delta}(\la x,y \rg) \, d(D_i u)(x) \, d (D_i u)(y) \\
    &= \iint_{\Sigma \times \Sigma} W_{0,\delta}( \la x,y \rg) \, \la \nu_x, \nu_y \rg \, d\mu_x \, d \mu_y.
\end{align*}
The kernel $W_{0,\delta}$ induces a bilinear form on smooth functions, given by
\[
\cT_{W,\delta} : (f,g) \mapsto \sum_{i=1}^N \iint W_{0,\delta}(\la x,y \rg) \, D_i f(x) \, D_i g(y) \, dS_x \, dS_y = \sum_{i=1}^N \la D_i f, W_{0,\delta} \ast D_i g \rg
\]
which is $O(N)$-invariant.
Therefore, the spaces of spherical harmonics of each degree are mutually orthogonal under the form $\cT_{W,\delta}$, and its restriction to each degree-$k$ space is a scalar multiple of the ordinary $L^2$ product.
The computations~\eqref{eqn:poisson-extension-Yk} - \eqref{eqn:Di-Y-identity} show that evaluating the form $\cT_{W,\delta}$ on a normalized degree-$k$ harmonic produces
\[
\lambda_k(\delta) = \frac{k(k+N-2)}{2k+N-2} \left( (k+N-2) \int_0^{1-\delta} \rho^{k-1} \frac{(1- \rho^{\frac{N-2}{2})})^2}{1-\rho^2} \, d \rho + k \int_0^{1-\delta} \rho^{k+1} \frac{(1 - \rho^{\frac{N-2}{2}})^2}{1-\rho^2} \, d \rho \right).
\]
Consequently, for each $\delta>0$ and smooth function $u_j$, computing $\cT_{W,\delta}(u_j,u_j)$ yields the identity
\begin{equation}\label{eqn:W0,delta-multiplier}
\sum_{i=1}^N \iint W_{0,\delta}(\la x,y \rg) \, D_i u_j(x) \, D_i u_j(y) \, dS_x \, dS_y = |\bS^{N-1}| \sum_{k,\alpha} \lambda_k(\delta) \, |c^{(j)}_{k \alpha}|^2.
\end{equation}
Here, $c^{(j)}_{k,\alpha}$ are the coefficients in the expansion $u_j = \sum_{k \geq 1} \sum_{\alpha} c^{(j)}_{k\alpha} Y_{k\alpha}$ into spherical harmonics.
Also, for every fixed $k$, the expression $\lambda_k(\delta)$ involves a positive integrand, so it increases, as $\delta \downarrow 0$, to
\begin{equation}\label{eqn:tau-k}
\lambda_k = \frac{k(k+N-2)}{2k+N-2} \left( (k+N-2) \int_0^1 \rho^{k-1} \frac{(1- \rho^{\frac{N-2}{2})})^2}{1-\rho^2} \, d \rho + k \int_0^1 \rho^{k+1} \frac{(1 - \rho^{\frac{N-2}{2}})^2}{1-\rho^2} \, d \rho \right).
\end{equation}
Moreover, $\sup_{k \geq 1}\lambda_k < N^3$
because $1 - \rho^{\frac{N-2}{2}} \leq N(1-\rho)$ for $\rho \in [0,1]$, hence 
\[
\int_0^1 \rho^{\ell-1} (1-\rho^2)^{-1} (1-\rho^a)^2 \, d \rho \leq N^2 \int_0^1 \rho^{\ell} (1 - \rho) \, d \rho < N^2 (\ell+1)^{-2}.
\]
Substituting these estimates into the definition of $\lambda_k$, we obtain
\[
\lambda_k \leq N^2 \frac{k(k+N-2)}{2k+N-2} \Bigl(\frac{k+N-2}{k^2} + \frac{k}{(k+2)^2} \Bigr) < N^2 ( 2 + k^{-1}(N-2)) < N^3
\]
as claimed.
Moreover, because $u_k \to u$ in $L^2(\bS^{N-1})$, Parseval's formula gives
\[
\sum_{k,\alpha} |c^{(j)}_{k \alpha} - c_{k \alpha}|^2 \to 0, \qquad \implies \qquad \sum_{k,\alpha} \lambda_k (\delta) \, |c^{(j)}_{k \alpha}|^2 \to \sum_{k,\alpha} \lambda_k(\delta) \, |c_{k \alpha}|^2
\]
due to $\sup_k \lambda_k(\delta) \leq \sup_k \lambda_k < N^3$.
Combining the above equalities, we may therefore pass to the limit as $j \to \infty$ in~\eqref{eqn:W0,delta-multiplier} to obtain
\begin{align*}
&\iint_{\Sigma \times \Sigma} W_{0,\delta}(\la x,y \rg) \, \la \nu_x, \nu_y \rg \, d\mu_x \, d \mu_y = |\bS^{N-1}| \sum_{k \geq 1, \alpha} \lambda_k(\delta) \, |c_{k,\alpha}|^2.
\end{align*}
Next, we observe that 
\[
0 \leq |W_{0,\delta} (\la x,y \rg) \, \la \nu_x, \nu_y \rg| \leq W_{0,\delta}(\la x,y \rg) \leq  W_0(\la x,y \rg)
\]
while $\iint_{\Sigma \times \Sigma} W_0( \la x,y \rg ) \, d \mu_x \, d\mu_y < \infty$, as in Lemma~\ref{eqn:ws-properties}.
Thus, the left-hand side of the above integration converges as $\delta \downarrow 0$ by the dominated convergence theorem.
Also, $\lambda_k(\delta) \uparrow \lambda_k$ as $\delta \downarrow 0$ and $\sup_k \lambda_k<N^3$.
Thus, we can apply the monotone convergence theorem and send $\delta \downarrow 0$ to obtain
\begin{equation}\label{eqn:dominated-convergence-Omega}
    \iint_{\Sigma \times \Sigma} W_0 (\la x,y \rg ) \, \la \nu_x, \nu_y \rg \, d \mu_x \, d \mu_y = |\bS^{N-1}| \sum_{k \geq 1, \alpha} \lambda_k |c_{k\alpha}|^2.
\end{equation}
For $N \geq 4$, we use the bound $\lambda_k < N^3$ and recall that $\sum_{k,\alpha} |c_{k,\alpha}|^2 = \frac{1}{4} |\bS^{N-1}|$ due to~\eqref{eqn:parseval-bound}.
Combining these computations proves the claimed upper bound $w_N |\bS^{N-1}|^2$ for the integral, where $w_N = \frac{1}{4} N^3$.

\smallskip \noindent \textbf{Step 2: The measure bound}.
In terms of the spherical harmonics $Y_{k \alpha}$, the spherical Poisson kernel has the expansion $P_{\rho}(\la x,y \rg) = \sum_{k \geq 0} \sum_{\alpha} \rho^k Y_{k \alpha}(x) Y_{k \alpha}(y)$, cf.~\cite{axler-bourdon-ramey}*{Ch. 5}.
Thus, for every finite Radon measure $\sigma$ on $\bS^{N-1}$, we use the Cauchy-Schwarz inequality to bound
\[
\iint P_{\rho}(\la x,y \rg) \,d\sigma_x \, d\sigma_y = \sum_{k \geq 0} \sum_{\alpha} \rho^k \Bigl| \int Y_{k \alpha} \, d \sigma \Bigr|^2 \geq |\bS^{N-1}|^{-1} \, \sigma(\bS^{N-1})^2
\]
because the normalized degree-zero harmonic on the sphere is $|\bS^{N-1}|^{- \frac{1}{2}}$.
Applying these bounds to the truncated kernel $W_{0,\delta}(z)$, expressed in terms of the spherical Poisson kernel in~\eqref{eqn:W-zero-delta}, we obtain
\[
\iint W_{0,\delta}(\la x,y \rg) \, d \mu_x \, d\mu_y \geq \cH^{N-2}(\Sigma)^2 \int_0^{1 - \delta} (1-\rho^2)^{-1} \bigl(1 - \rho^{\frac{N-2}{2}} \bigr)^2 \, d \rho.
\]
Finally, the assumption that $\iint W_0(\la x,y \rg) \, d \mu_x \, d\mu_y< \infty$, as above, allows to apply the monotone convergence theorem and send $\delta \downarrow 0$, since both sides are strictly decreasing in $\delta$.
We arrive at
\[
\iint W_0(\la x,y \rg) \, d\mu_x \, d\mu_y \geq c_N \cH^{N-2}(\Sigma)^2, \qquad c_N = \int_0^1 (1-\rho^2)^{-1} \bigl( 1 - \rho^{\frac{N-2}{2}} \bigr)^2 \, d \rho.
\]
as desired.
This proves our second claim.

\smallskip \noindent \textbf{Step 3: $N=3$}.
We now estimate the terms $\lambda_k$ more carefully.
We evaluate the terms of~\eqref{eqn:tau-k} as
\[
\lambda_1 = \pi - 4 \log 2 - \tfrac{1}{9}, \qquad \lambda_2 = \tfrac{448}{25} - 3 \pi - 12 \log 2 .
\]
For the remaining degrees, we write $\rho = y^2$ and use the bound
\begin{align*}
&\frac{1-y}{(1+y)(1+y^2)} \leq \frac{- \log y}{4y}, \qquad \text{due to } \; - \log y \geq \frac{2(1-y)}{1+y} \quad \text{and} \quad 1+y^2 \geq 2y, \\
& \implies \int_0^1 \rho^{\ell} \frac{(1 - \sqrt{\rho})^2}{1-\rho^2} \, d \rho = 2 \int_0^1 \frac{y^{2 \ell+1}(1-y)}{(1+y)(1+y^2)} \, dy \leq \frac{1}{2} \int_0^1 y^{2 \ell}(- \log y) = \frac{1}{2(2 \ell+1)^2}.
\end{align*}
For $k \geq 3$, we may therefore bound
\[
\lambda_k \leq \frac{k(k+1)}{2 (2k+1)} \Bigl( \frac{k+1}{(2k-1)^2} + \frac{k}{(2k+3)^2} \Bigr) \leq \frac{38}{225} < \lambda_2 < \lambda_1.
\]
An orthonormal basis of the degree-$1$ spherical harmonics on $\bS^2$ is $Y_i(x) = \sqrt{\frac{3}{4 \pi}} x_i$ for $i=1,2,3$, so
\[
\int_{\bS^2} x \, dS = 0, \qquad \implies \qquad c_{1i} = \sqrt{\frac{3}{4 \pi}} \int_{\bS^2} u(x) x_i \, dS = \sqrt{\frac{3}{4 \pi}} \int_{\Omega} x_i \, dS
\]
For $a \in \bR^3$, form the tangential vector field $X_a(x) = a - \la a,x \rg$.
Since $\textup{div}_{\bS^2} X_a = - 2 \la a,x \rg$ and $\la X_a, \nu \rg = \la a, \nu \rg$ on $\Sigma$, Green's theorem~\cite{maggi}*{Proposition~19.22 and Lemma~22.11} implies
\[
\Bigl \la a, \int_{\Sigma} \nu \, d \mu \Bigr\rg = \int_{\Sigma} \la X_a , \nu \rg \, d \mu = \int_{\Omega} \textup{div}_{\bS^2} X_a \, dS = - 2 \, \Bigl \la a, \int_{\Omega} x \, dS \Bigr \rg.
\]
Therefore, $\int_{\Omega} x \, dS = - \frac{1}{2} \int_{\Sigma} \nu \, d \mu$, and since $|\Omega| = \frac{1}{2} |\bS^2| = 2 \pi$, we take $a = \frac{\int_{\Omega} x \, dS}{|\int_{\Omega} x \, d S |}$ to bound
\begin{align*}
& \left| \int_{\Sigma} \nu \, d\mu \right| = 2 \, \left| \int_{\Omega} x \, dS \right| \leq 2 \int_{ \{ \la a, x \rg > 0 \} } \la a, x \rg \, dS = 2 \pi, \quad \implies \quad \sum_{\alpha} |c_{1 \alpha}|^2 = \frac{3}{4 \pi} \left| \int_{\Omega} x \, dS \right|^2 \leq \frac{3 \pi}{4}.
\end{align*}
Combining this with the property $\sum_{k,\alpha} |c_{k \alpha}|^2 = \pi$ and $\lambda_k < \lambda_2 < \lambda_1$, we conclude that
\begin{align*}
\sum_{k \geq 1, \alpha} \lambda_k |c_{k \alpha}|^2 \leq \lambda_2 \Bigl( \pi - \sum_{\alpha} |c_{1\alpha}|^2 \Bigr) + \lambda_1 \sum_{\alpha} |c_{1\alpha}|^2 \leq \tfrac{\pi}{4} \lambda_2 + \tfrac{3 \pi}{4} \lambda_1.
\end{align*}
Notice that $3 \lambda_1 + \lambda_2 = \frac{1319}{75} - 24 \log 2 < \frac{3}{\pi}$ and $|\bS^2| = 4 \pi$.
Using~\eqref{eqn:dominated-convergence-Omega}, we therefore obtain
\begin{align*}
\iint_{\Sigma \times \Sigma} W_0 ( \la x,y \rg) \, \la \nu_x, \nu_y \rg \, d\mu_x \, d\mu_y &\leq \pi^2 (3 \lambda_1 + \lambda_2) < 3 \pi.
\end{align*}
Since $3 \pi = \frac{3}{16 \pi} |\bS^2|^2$, this completes the proof of our result for $N=3$.
\end{proof}

Finally, we rule out stable $s=0$ links in dimension $N=3$.
Recall the following result.
\begin{lemma}\label{lemma:batcave}
    Let $h \geq 0$ be a measurable function on $(\Sigma, \mu)$, where $\mu$ is a non-atomic Radon measure.
    Then, for every function $f$ satisfying $0 \leq f \leq 1$ and $\int_{\Sigma} f \, d \mu = m$, it holds that
    \[
    \int_{\Sigma} hf \, d\mu \leq \sup_{ \substack{ F \subset \Sigma \\ \mu(F) = m } } \int_F h \, d\mu.
    \]
\end{lemma}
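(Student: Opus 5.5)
This is the bathtub principle, and I would prove it by a level-set (layer-cake) comparison. The plan is to exhibit a set $F$ with $\mu(F)=m$ that takes the largest values of $h$, and then show that replacing $f$ by $\mathbf{1}_F$ can only increase $\int hf\,d\mu$. Assume $\int hf\,d\mu$ is finite, or else handle infinite values by monotone truncation $h\wedge n$. Also note that $m\le\mu(\Sigma)$ is automatic, because $0\le f\le 1$.

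\textbf{Step 1: construct $F$.} Define the threshold
\[
t^*:=\inf\{t\ge 0:\ \mu(\{h>t\})\le m\}.
\]
Then $\mu(\{h>t^*\})\le m\le\mu(\{h\ge t^*\})$, where the second inequality uses continuity from above of the finite measure on the sets $\{h>t\}$, for $t\uparrow t^*$ (with the natural modification if $t^*=0$). Since $\mu$ is non-atomic, Sierpiński's theorem on the range of non-atomic measures gives a measurable $G\subset\{h=t^*\}$ with $\mu(G)=m-\mu(\{h>t^*\})$. Set $F:=\{h>t^*\}\cup G$, so that $\mu(F)=m$. This is the only place the non-atomic hypothesis is used, and making this step rigorous is the main obstacle. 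The measure $\mu$ is finite in our applications, since $\mu=|D\mathbf{1}_\Omega|$ or $\mu_j$; if $\mu$ is only $\sigma$-finite, I would first restrict to a finite-measure piece containing $\{h>0\}$ up to the relevant mass.

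\textbf{Step 2: compare.} Consider
\[
\int_\Sigma h(\mathbf{1}_F-f)\,d\mu=\int_\Sigma (h-t^*)(\mathbf{1}_F-f)\,d\mu + t^*\int_\Sigma(\mathbf{1}_F-f)\,d\mu.
\]
The second term vanishes, because $\int\mathbf{1}_F\,d\mu=\int f\,d\mu=m$. For the first term, the integrand is pointwise non-negative:
\begin{itemize}
\item on $F$, either $h>t^*$ and $1-f\ge0$, or $h=t^*$ and the integrand is zero;
\item off $F$, we have $h\le t^*$ and $-f\le0$.
\end{itemize}
If $t^*=\infty$ cannot occur, or $h$ is unbounded, I would justify splitting the integral by working with $h\wedge n$ and passing to the limit by monotone convergence. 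Hence $\int hf\,d\mu\le\int_F h\,d\mu$, which is bounded by the supremum in the statement.
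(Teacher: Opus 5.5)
Your proposal is correct and follows essentially the same route as the paper. Both use the same threshold $t^*$, the same set $F=\{h>t^*\}\cup G$ obtained from non-atomicity, and the same pointwise sign argument for $(h-t^*)(\mathbf{1}_F-f)$ combined with $\int(\mathbf{1}_F-f)\,d\mu=0$. Your additional remarks on finiteness and truncation fill in details the paper leaves implicit. In particular, for finite $\mu$ the threshold can only be infinite when $m=0$, which is trivial.
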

\begin{proof}
Let $t_* := \inf \{ s \geq 0 : \mu ( \{ h > s \}) \leq m \}$.
By the continuity of the measure along monotone level sets, this $t_*$ satisfies
\[
\mu ( \{ h > t_* \}) \leq m \leq \mu ( \{ h \geq t_* \}), \qquad \implies \qquad 0 \leq m - \mu ( \{ h > t_* \}) \leq \mu ( \{ h = t_* \}).
\]
Since $\mu$ is non-atomic, we can find a measurable set $E \subset \{ h = t_* \}$ with $\mu(E) = m - \mu ( \{ h > t_* \})$, so the set $F := \{ h> t_* \} \cup E$ has measure $m$ and $\{ h > t_* \} \subset F \subset \{ h \geq t_* \}$.
By construction, $(h-t)( f - \mathbf{1}_F) \leq 0$ pointwise and $\int_{\Sigma} (f - \mathbf{1}_F) = 0$.
Therefore,
\[
\int_{\Sigma} hf \, d \mu - \int_F h \, d\mu = \int_{\Sigma} (h-t) (f -\mathbf{1}_F) \, d\mu \leq 0
\]
so the left-hand side is bounded by the supremum of $\int_F h \, d \mu$ over sets $F$ with $\mu(F) = m$.
\end{proof}

The computations of Lemma~\ref{lemma:domain-expansion} and Proposition~\ref{prop:stable-links-s=0} imply a uniform bound on the volume of limiting stable links: for every $N \geq 4$, there exist $a_N, b_N$ such that every Caccioppoli set $\Omega \subset \bS^{N-1}$ arising as the spherical limit of stable $s$-minimal cones in $\bR^N$ as $s \downarrow 0$, with $\Sigma = \partial^* \Omega$, satisfies
\[
\cH^{N-2}(\Sigma)^2 \leq a_N \Bigl( \int_{\Sigma} \nu \, d \mu_{\Sigma} \Bigr)^2 + b_N.
\]

\begin{proposition}\label{prop:stable-links-s=0}
For $N=3$, the only possible Caccioppoli set $\Omega \subset \bS^2$ arising as the spherical limit of stable $s$-minimal cones in $\bR^3$ is a hemisphere up to a null set.
\end{proposition}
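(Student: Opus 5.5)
The plan is to combine the limiting Jacobi-field inequality of Corollary~\ref{cor:limit-Jacobi-inequality} with the explicit spectral bound of Lemma~\ref{lemma:domain-expansion}. The goal is a quantitative upper bound on the $B_0$-energy of the conormal that is too small to allow any non-equatorial limit link. Write $\mu=|D\mathbf{1}_\Omega|$, $\nu_x=\nu_\Omega(x)$, $L=\cH^1(\Sigma)$ and $G=\int_\Sigma \nu\,d\mu$. By Theorem~\ref{thm:compactness-thm} we know $|\Omega|=2\pi$, $\mu(B_r(x))\le Cr$, and $\iint B_0\,|\nu_x-\nu_y|^2<\infty$, so every integral below is finite.

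\textbf{Step 1 (averaged Jacobi inequality).} For each $a\in\bS^2$, Corollary~\ref{cor:limit-Jacobi-inequality} gives
\[
\iint B_0\,(1-\la\nu_x,\nu_y\rg)\,d\mu_x\,d\mu_y\le\iint W_0\,(1-\la a,\nu_x\rg)(1-\la a,\nu_y\rg)\,d\mu_x\,d\mu_y .
\]
I will average this over $a\in\bS^2$ with respect to the normalized measure. Since $\int a\,da=0$ and $\int a\otimes a\,da=\tfrac13 I$, the right-hand side becomes $\iint W_0\,(1+\tfrac13\la\nu_x,\nu_y\rg)$. Next split $B_0=A_0+W_0$ on the left and cancel the common term $\iint W_0$. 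This leaves
\[
\iint A_0\,(1-\la\nu_x,\nu_y\rg)\le \tfrac43\iint W_0\,\la\nu_x,\nu_y\rg \le \tfrac43\cdot 3\pi=4\pi,
\]
where the last step uses Lemma~\ref{lemma:domain-expansion} with $w_3=\tfrac{3}{16\pi}$. I would also check whether a non-averaged choice, $a$ parallel to $G$ with an optimized length, gives a sharper constant, and use whichever is better.

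\textbf{Step 2 (kernel comparison).} By Lemma~\ref{lemma:A-B-properties}, $A_0\ge\kappa_3B_0$ with $\kappa_3=\pi/4$. For $N=3$ we have $B_0(z)=(1-z)^{-1}=2|x-y|^{-2}$. Hence
\[
\iint \frac{|\nu_x-\nu_y|^2}{|x-y|^2}\,d\mu_x\,d\mu_y\le \frac{16}{\pi}\cdot\frac{4\pi}{4}\cdot\frac{1}{1}= 16
\]
up to the bookkeeping of the constants. Using only $|x-y|\le2$, this already yields $L^2-|G|^2\le C_0$ with an explicit $C_0$. Here $|G|=2|\int_\Omega x\,dS|\le 2\pi$, as computed in Step 3 of Lemma~\ref{lemma:domain-expansion}, and $L\ge 2\pi$ by the spherical isoperimetric inequality. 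Equality $|G|=2\pi$ holds only for a hemisphere. The remaining task is to upgrade this crude consequence into rigidity.

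\textbf{Step 3 (rigidity, the main obstacle).} The plan is to exploit the scale invariance of the energy $\mathcal E:=\iint |\nu_x-\nu_y|^2|x-y|^{-2}\,d\mu\,d\mu$. This is the $\dot H^{1/2}$-energy of the conormal along the rectifiable curve $\Sigma$. If $\Omega$ is not a hemisphere, then $\Sigma$ must turn somewhere: either $\Sigma$ has a component that is not a great circle, or it has several components whose conormals disagree. In either case I expect a definite lower bound on $\mathcal E$ that exceeds the bound from Step 2.

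I would argue by contradiction and compactness. Consider, among limit links, minimizing sequences for the defect $2\pi-|G|$. Along such a sequence, the bound from Step 2 together with a blow-up near points of turning would produce a definite amount of $\mathcal E$. For a single arc-length parametrized closed curve, comparing chord length with arc length ($|x-y|\le|s-t|$) gives $\mathcal E\ge\iint|\nu(s)-\nu(t)|^2|s-t|^{-2}\,ds\,dt$. A Fourier expansion in $s$ then bounds this from below by a multiple of $\sum|k|\,|\hat\nu_k|^2$. For a non-great circle, the nonzero modes of $\nu$ carry mass comparable to $L^2-|G|^2$.

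Feeding this back should give an inequality of the form $c(L^2-|G|^2)\le\mathcal E\le C$ with a constant that beats the Step 2 bound unless $L^2=|G|^2$. The case $L^2=|G|^2$ forces $\nu$ to be constant $\mu$-a.e., and hence $\Omega$ is a hemisphere. I expect that matching these explicit constants, and in particular handling multiple components and the merely rectifiable structure of $\partial^*\Omega$, will be the delicate point. If the constants do not close, my fallback is to use Corollary~\ref{cor:identify-limit-q}, which identifies $\mathbf q=p_0\nu\mu$, together with the constant-$g$ stability inequality (v). The idea there is to run the argument of Theorem~\ref{thm:uniform-pniching} directly at $s=0$, showing that $p_0$ vanishes at some point and therefore that $\nu$ is constant.
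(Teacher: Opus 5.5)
There is a genuine gap, and it sits in Step~3: as planned, your argument cannot exclude links that are small perturbations of an equator.

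The bounds from Steps~1--2 are absolute constants: $\iint A_0(1-\langle\nu_x,\nu_y\rangle)\le 4\pi$, hence $\mathcal E\le 16$. But $\mathcal E$ vanishes on the equator and is small for smooth nearby curves. Near-equatorial curves therefore satisfy every inequality you have derived, and the ``definite lower bound on $\mathcal E$'' you expect for non-hemispheres does not exist. An inequality $c(L^2-|G|^2)\le\mathcal E\le C$ only yields $L^2-|G|^2\le C/c$, never $L^2=|G|^2$.

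Averaging over $a$ is exactly what throws the relevant information away. The right-hand side becomes $\iint W_0(1+\tfrac13\langle\nu_x,\nu_y\rangle)$, which is of size $\sim L^2$ no matter how close $\Omega$ is to a hemisphere; incidentally, $a=0$ already gives the better constant $3\pi$.

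The fallback does not apply as stated either. Theorem~\ref{thm:uniform-pniching} requires $\inf_a\int|\nu-a|^2\,d\mu$ to be small, but you only know that $2(L-|G|)$ is bounded. Corollary~\ref{cor:identify-limit-q} needs Lipschitz approximability of $\nu$ in the $B_0$-norm (e.g.\ a lower density bound on $\partial^*\Omega$), which you have not established.

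The missing idea is to choose the test vector so that the right-hand side of the Jacobi inequality is \emph{superlinearly} small in the deficit. The paper takes $a=V/|V|$ with $V=\int_\Sigma\nu\,d\mu$; here $|V|>0$ follows from $L^2-|V|^2\le 24$ and $L\ge 2\pi$. Then $g=1-\langle a,\nu\rangle\in[0,2]$ has $\int g\,d\mu=L\delta$ with $\delta=1-|V|/L$. The left-hand side is at least $\frac12(L^2-|V|^2)=\frac12L^2\delta(2-\delta)$ because $B_0\ge\frac12$, and this is linear in $\delta$.

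To show $\iint W_0\,g(x)g(y)\lesssim(L\delta)^2\log(1/\delta)$, you need three ingredients absent from your plan:
\begin{enumerate}[(i)]
\item the pointwise bound $W_0(\langle x,y\rangle)\le\frac14\textup{arcsinh}(2/|x-y|)$, i.e.\ $W_0$ has only a logarithmic singularity;
\item the explicit upper density bound $\mu(\Sigma\cap B_r(p))\le 12r$, obtained by restricting $\iint B_0(1-\langle\nu_x,\nu_y\rangle)\le 12$ to $B_r(p)\times B_r(p)$, where $B_0=2|x-y|^{-2}\ge\frac12r^{-2}$, and bounding $|\int_{\Sigma\cap B_r(p)}\nu\,d\mu|\le 2\pi r+2\pi r^2$ by Green's theorem;
\item the rearrangement Lemma~\ref{lemma:batcave}.
\end{enumerate}
Together these give $\int_F W_0\,d\mu\le\frac14t[1+\log(1+48/t)]$ whenever $\mu(F)=t$. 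Combined with the a priori bound $\delta<\frac38$, this produces $2(2-\delta)\le\delta[1+\log(1+16/\delta)]$, which fails on $(0,\frac38]$. Hence $\delta=0$ and $\nu$ is constant.

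In effect this is a quantitative $s=0$ version of the pinching argument you mention as a fallback. It uses $1-\langle a,\nu\rangle$ as the test function and explicit constants in place of an abstract $\varepsilon$.
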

\begin{proof}
For brevity, let $\mu := |D \mathbf{1}_{\Omega}|$ and $\Sigma = \partial^* \Omega$, so $\cH^1(\Sigma) = |D \mathbf{1}_{\Omega}|(\bS^2)$.
We also write $L := \cH^1(\Sigma)$ and proceed in a number of steps that combine the bounds of Corollary~\ref{cor:limit-Jacobi-inequality} and Lemma~\ref{lemma:domain-expansion}.

\smallskip \noindent \textbf{Step 1: Bounds on the kernel $W_0$.}
The limiting kernel $W_0$ is given by
\[
W_0(z) = \int_0^1 (1 - \sqrt{\rho})^2 (1 + \rho^2 - 2 \rho z)^{- \frac{3}{2}} \, d \rho.
\]
First, we observe the inequalities
\begin{equation}\label{eqn:W0(z)-N=3}
0 \leq W_0(z) \leq \frac{1}{4} \textup{arcsinh} \Bigl( \frac{\sqrt{2}}{\sqrt{1-z}} \Bigr), \qquad 0 \leq W_0( \la x,y \rg) \leq \frac{1}{4} \textup{arcsinh} \, \Bigl( \frac{2}{|x-y|} \Bigr)
\end{equation}
Indeed, the change of variables $u = \frac{1 - \rho}{\sqrt{\rho}}$ shows that
\begin{align*}
W_0(z) &= \int_0^{\infty} \frac{u^2}{2 \sqrt{1 + \frac{u^2}{4}} \bigl( 1 + \sqrt{1 + \frac{u^2}{4}} \bigr) (u^2 + 2(1-z))^{\frac{3}{2}} } \, du \\
&\leq 2 \int_0^{\infty} \frac{u^2}{(u^2+8) (u^2 + 2(1-z))^{\frac{3}{2}}} \, du, \qquad \text{due to } \; \frac{1}{2 \sqrt{1 + \frac{u^2}{4}} \bigl( 1 + \sqrt{1 + \frac{u^2}{4}} \bigr)} \leq \frac{2}{u^2+8}.
\end{align*}
Let $q := \sqrt{1 - \frac{1}{4}(1-z)} \in [ \frac{1}{\sqrt{2}}, 1)$, so writing $u = \sqrt{2(1-z)} \, \sinh t$ and $v = \tanh t$ produces
\[
\frac{1}{4} \int_0^1 \frac{v^2}{1-q^2v^2} \, dv = \frac{1}{4} \frac{\textup{arctanh} \, q - q}{q^3}, \qquad \text{where} \quad q \in \bigl[ \tfrac{1}{\sqrt{2}}, 1 \bigr).
\]
We claim that the function $H(q) := \textup{arcsinh} \Bigl( \frac{1}{\sqrt{2(1-q^2)}} \Bigr) - \frac{\textup{arctanh} \, q - q}{q^3}$ is non-negative.
Indeed,
\begin{align*}
    q H'(q) &= \frac{q^2}{(1-q^2) \sqrt{3-2q^2}} - \frac{1}{1-q^2} + \frac{3 ( \textup{arctanh} \, q - q)}{q^3} \\
    &\geq 1 + \frac{3q^2}{5} + \frac{3 q^4}{7} - \frac{q^3+3}{\sqrt{3-2q^2} \, ( \sqrt{3-2q^2}+q^2)} \geq 1 + \frac{3q^2}{5} + \frac{3q^4}{7} - \frac{2}{2-q^2}
\end{align*}
because $\textup{arctanh} \, q - q \geq \frac{q^3}{3} + \frac{q^5}{5} + \frac{q^7}{7}$ and $\sqrt{3-2x} \geq \frac{1}{2}(3-x)$ for $x = q^2 \in [ \frac{1}{2}, 1]$.
Finally, we collect 
\[
1 + \frac{3x}{5} + \frac{3x^2}{7} - \frac{2}{2-x} = \frac{x(7+9x - 15x^2)}{35(2-x)} \geq 0 \qquad \text{for } \; x \in \bigl[ \tfrac{1}{2}, 1 \bigr].
\]
Therefore, $H'(q) \geq 0$, and $H( \frac{1}{\sqrt{2}}) = 2 - (2 \sqrt{2} - 1) \, \textup{arcsinh}(1) > 3 - 2 \sqrt{2} > 0$ shows that $H>0$ for $q \in [ \frac{1}{\sqrt{2}}, 1]$.
Recalling that $1-q^2 = \frac{1-z}{4}$, where $z = \la x,y \rg$ has $|x-y|^2 = 2(1-z)$, we obtain
\[
W_0(z) \leq \frac{1}{4} \frac{\textup{arctanh}(q) - q}{q^3} \leq \frac{1}{4} \textup{arcsinh} \Bigl( \frac{1}{\sqrt{2(1-q^2)}} \Bigr) = \frac{1}{4} \textup{arcsinh} \Bigl( \frac{\sqrt{2}}{\sqrt{1-z}} \Bigr).
\]

\noindent \textbf{Step 2: Bounds on length and flux.}
As in the argument of Corollary~\ref{cor:limit-Jacobi-inequality}, we recall that the limiting measures in every dimension $N$ satisfy
\begin{equation}\label{eqn:first-bound-B0}
\iint_{\Sigma \times \Sigma} B_0(\la x,y \rg) \, (1 - \la \nu_x, \nu_y \rg) \, d\mu_x \, d \mu_y \leq \ell( \bS^{N-1}) \leq \iint_{\Sigma \times \Sigma} W_0(\la x,y \rg) \, d\mu_x \, d\mu_y
\end{equation}
as a consequence of the limiting stability inequality for the limit link $\Sigma = \partial^* \Omega$.
The bounds on $A_0, B_0$ from Lemma~\ref{lemma:A-B-properties}, imply that $A_0(z) \geq \frac{\Gamma(\frac{N}{2})^2}{\Gamma(N-1)} B_0(z)$ and $B_0(z) \geq B_0(-1) = \frac{1}{N-1}$, so $A_0(z) \geq \frac{\Gamma( \frac{N}{2})^2}{\Gamma(N)}$.
For $N=3$, this makes $A_0 (z) \geq \frac{\pi}{4} B_0(z)$, hence
\[
\iint_{\Sigma \times \Sigma} A_0(\la x,y \rg) \, (1 - \la \nu_x, \nu_y \rg) \, d \mu_x \, d\mu_y \geq \frac{\pi}{4} \iint_{\Sigma \times \Sigma} B_0(\la x,y \rg) \, (1 - \la \nu_x, \nu_y \rg) \, d\mu_x \, d\mu_y.
\]
Next, we decompose $B_0 = A_0 + W_0$, whereby
\allowdisplaybreaks{
\begin{align*}
    & \iint_{\Sigma \times \Sigma} W_0(\la x,y \rg) \, d\mu_x \, d\mu_y \\
    &= \iint_{\Sigma \times \Sigma} W_0(\la x,y\rg) \, \la \nu_x, \nu_y \rg) \, d\mu_x \, d\mu_y +  \iint_{\Sigma \times \Sigma} (B_0 - A_0)(\la x,y \rg) \, (1 - \la \nu_x, \nu_y \rg ) \, d \mu_x \, d\mu_y \\
    &\leq \iint_{\Sigma \times \Sigma} W_0(\la x,y\rg) \, \la \nu_x, \nu_y \rg) \, d\mu_x \, d\mu_y + \Bigl( 1 - \frac{\pi}{4} \Bigr) \iint_{\Sigma \times \Sigma} B_0( \la x,y \rg) \, (1 - \la \nu_x, \nu_y \rg) \, d \mu_x \, d \mu_y .
\end{align*}}
Combined with the inequality~\eqref{eqn:first-bound-B0} and the bound of Lemma~\ref{lemma:domain-expansion}, this shows that
\[
\iint_{\Sigma \times \Sigma} W_0(\la x,y\rg) \, d\mu_x \, d\mu_y \leq \frac{4}{\pi} \iint_{\Sigma \times \Sigma} W_0(\la x,y \rg) \, \la \nu_x, \nu_y \rg \, d\mu_x \, d\mu_y \leq \frac{4}{\pi} \cdot 3 \pi = 12.
\]
On the other hand, the second bound of Lemma~\ref{lemma:domain-expansion} implies that
\[
c_3 L^2 \leq \iint_{\Sigma \times \Sigma} W_0(\la x,y \rg) \, d\mu_x \, d\mu_y, \qquad c_3 := \int_0^1 \frac{(1 - \sqrt{\rho})^2}{1 - \rho^2} \, d \rho  = \frac{1}{2} ( \pi - 4 \log 2).
\]
Therefore, $L^2 \leq \frac{24}{\pi - 4 \log 2} < 66$, so $L < \frac{41}{5}$.
Since $|\Omega| = \frac{1}{2} |\bS^{N-1}| = 2 \pi$, the spherical isoperimetric inequality of~\cite{duzaar-fusco} shows that $L = \cH^1( \partial^* \Omega) \geq 2 \pi$.
In addition, since $B_0(\la x,y \rg) \geq B_0(-1) = \frac{1}{N-1}$ by Lemma~\ref{lemma:A-B-properties}, we have $B_0 \geq \frac{1}{2}$ for $N=3$.
Then, for $V := \int_{\Sigma} \nu \, d\mu$ the flux vector, we find
\begin{equation}\label{eqn:B0-lower-bound-n-1}
 \iint B_0( \la x,y \rg) \, (1 - \la \nu_x, \nu_y \rg) \, d \mu_x \, d \mu_y \geq \frac{1}{2} \iint (1 - \la \nu_x, \nu_y \rg) \, d\mu_x \, d\mu_y = \frac{L^2 - |V|^2}{2}.   
\end{equation}
This quantity is also bounded from above by $12$, due to~\eqref{eqn:first-bound-B0}.
Hence, $L^2 - |V|^2 \leq 24$.
Since $L \geq 2 \pi$, this yields $|V|^2 \geq ( 2 \pi)^2 - 24 > 15 > ( \frac{18}{5} )^2$.
In particular, $|V| \neq 0$.
Next, we prove
\begin{equation}\label{eqn:14-r}
    \mu(\Sigma \cap B_r(p)) \leq 12 r \quad \text{for every } \; p \in \bS^2, \quad 0 < r \leq 2. 
\end{equation}
For $r \geq \frac{7}{10}$, this property is clear, since $\mu(\Sigma \cap B_r(p)) \leq \mu(\Sigma) = \cH^1(\Sigma) < \frac{41}{5} < 12 \cdot\frac{7}{10}$.
For $r \in (0, \frac{7}{10}]$, we recall that $B_0(z) = \frac{1}{1-z}$, so $\la x,y \rg = 1 - \frac{|x-y|^2}{2}$ gives $B_0(\la x,y \rg) = \frac{2}{|x-y|^2}$ and
\[
x,y \in \Sigma \cap B_r(p) \implies |x-y| \leq 2r, \quad \implies \quad B_0(\la x,y \rg) \geq \tfrac{1}{2} r^{-2} \quad \text{on } \; (\Sigma \cap B_r(p)) \times (\Sigma \cap B_r(p)).
\]
Then, the left-hand side of~\eqref{eqn:first-bound-B0} is bounded from below by
\begin{align*}
\iint_{\Sigma \times \Sigma} B_0(\la x,y \rg) \, (1 -\la \nu_x, \nu_y \rg) \, d \mu_x \, d\mu_y &\geq \frac{1}{2r^2} \iint_{(\Sigma \cap B_r(p)) \times (\Sigma \cap B_r(p))} (1 - \la \nu_x, \nu_y \rg) \, d\mu_x \, d \mu_y \\
&= \frac{1}{2r^2} \left( \mu(\Sigma \cap B_r(p))^2 - \left| \int_{\Sigma \cap B_r(p)} \nu \, d \mu \right|^2 \right).
\end{align*}
On the other hand, applying Green's theorem~\cite{maggi}*{Proposition~19.22 and Lemma~22.11} to $\Omega \cap B_r(z)$ with the tangential vector field $X_a := a - \la a,x \rg x$ as above shows that
\[
\left| \int_{\Sigma \cap B_r(p)} \nu \, d \mu \right| \leq 2 \pi r + 2 \pi r^2
\]
for a.e. $r \in (0,1]$.
Indeed, the new boundary lying on $\partial B_r(z)$ has length at most $2 \pi r$, while the divergence term over the spherical cap is at most $2 \pi r^2$, due to $|X_a| \leq 1$ and $|\textup{div}_{\bS^2} X_a| \leq 2$.
The estimate for every $r$ follows upon approximating by above.

Combining these computations in the inequality~\eqref{eqn:first-bound-B0} and recalling the above upper bound $\iint W_0(\la x,y \rg) \, d\mu_x \, d\mu_y \leq 12$, we deduce that, for every $r \leq \frac{7}{10}$,
\[
\mu(\Sigma \cap B_r(p))^2 \leq \left| \int_{\Sigma \cap B_r(p)} \nu \, d \mu \right|^2 + 24 r^2 \leq (2r)^2 \, [ \pi^2(1+r)^2 + 6 ] \leq (2r)^2 \, \bigl[ ( \tfrac{17}{10})^2\pi^2 + 6 \bigr] \leq (12 r)^2.
\]
Here, we used $(\frac{17}{10})^2 \pi^2 + 6 < 35 < 6^2$.
This proves the claimed inequality~\eqref{eqn:14-r}.

\smallskip \noindent \textbf{Step 3: An integral estimate.}
We claim that for every $x \in \Sigma$ and every measurable set $F \subset \Sigma$ with $\mu(F) = t$, it holds that
\begin{equation}\label{eqn:mu-bound-W0}
    \sup_{x \in \Sigma} \int_F W_0(\la x,y \rg) \, d \mu_y \leq \tfrac{1}{4} \, t \, [ 1 + \log (1 + 48 t^{-1}) \, ].
\end{equation}
Indeed, let us fix $x \in \Sigma$ and write $m(r) := \mu(F \cap B_r(x)) \leq \min \{ t, 12 r\}$.
Recall that $W_0( \la x,y \rg) \leq \frac{1}{4} \textup{arcsinh} \bigl( \frac{2}{|x-y|} \bigr)$ because of the bound~\eqref{eqn:W0(z)-N=3}.
Because the function $r \mapsto \textup{arcsinh} \, \frac{2}{r}$ is positive and strictly decreasing, arguing as in Lemma~\ref{lemma:batcave} shows that the integral $\int_F W_0(\la x,y \rg) \, d \mu_y$ is maximized by the radial distribution $d m_*(r) = 12 \, \mathbf{1}_{(0,t/12)}(r) \, dr$.
We therefore obtain
\begin{align*}
\int_F W_0(\la x,y \rg) \, d \mu_y &\leq \int_F \frac{1}{4} \textup{arcsinh} \Bigl( \frac{2}{|x-y|} \Bigr) \, d \mu_y \leq 3 \int_0^{\frac{t}{12}}\textup{arcsinh}  \, \frac{2}{r} \, dr \\
&= \frac{t}{4} \textup{arcsinh} \, \frac{24}{t} + 6 \, \textup{arcsinh} \, \frac{t}{24}.
\end{align*}
Writing $r = \frac{t}{24}$, for $r \geq 0$ we observe the inequalities $r \, \textup{arcsinh} \, \tfrac{1}{r} \leq r \log (1 + 2r^{-1})$ and $\textup{arcsinh} \, r \leq r$.
The inequality~\eqref{eqn:mu-bound-W0} follows from combining these steps.

\smallskip \noindent \textbf{Step 4: Conclusion of the proof.}
Since $V := \int_{\Sigma} \nu \, d \mu$ has $|V| \neq 0$, by Step 2, we can form the unit vector $a = \frac{V}{|V|}$ and write $g(x) := 1 - \la a, \nu_x \rg \in [0,2]$.
Then, $\int_{\Sigma} g \, d \mu = \cH^1(\Sigma) - |V| = L \delta$ where $\delta := 1 - \frac{|V|}{L}$.
Then, we can apply Lemma~\ref{lemma:batcave} for the function $f = \frac{1}{2} g$ to obtain, for every $x \in \Sigma$,
\[
\int_{\Sigma} W_0(\la x,y \rg) \, g(y) \, d\mu_y \leq 2 \sup_{ \mu(F) = \frac{1}{2} L \delta } \int_F W_0(\la x,y \rg) \, d \mu_y \leq \tfrac{1}{4} L \delta \bigl[ 1 + \log (1 + 96 L^{-1} \delta^{-1}) \bigr].
\]
In the second step, we used the bound~\eqref{eqn:mu-bound-W0}.
Therefore, using the bound of Corollary~\ref{cor:limit-Jacobi-inequality}, we have
\begin{align*}
    \iint_{\Sigma \times \Sigma} B_0(\la x,y \rg) \, (1 - \la \nu_x, \nu_y \rg) \, d\mu_x \, d \mu_y &\leq \iint_{\Sigma \times\Sigma} W_0(\la x,y \rg) \, g(x) \, g(y) \, d\mu_x \, d\mu_y \\
    &\leq \tfrac{1}{4} ( L \delta)^2 [ 1 + \log (1 + 96 L^{-1} \delta^{-1}) ].
\end{align*}
The inequality $L^2 - |V|^2 \leq 24$ becomes $L^2 \delta(2-\delta) \leq 24$, and using $L \geq 2 \pi$, we obtain $\delta(2-\delta) \leq \frac{6}{\pi^2} < \frac{39}{64}$.
By the monotonicity of $\delta \mapsto \delta(2-\delta)$ for $\delta \in [0,1]$, this forces $\delta < \frac{3}{8}$.
Now, if $\delta>0$,
\begin{align*}
& \tfrac{1}{2} L^2 \delta(2-\delta) \leq \iint B_0(\la x,y \rg) \, (1 - \la \nu_x, \nu_y \rg) \, d \mu_x \, d \mu_y \leq \tfrac{1}{4} ( L \delta)^2 [ 1 + \log (1 + 96 L^{-1} \delta^{-1})] \\
& \implies 2(2-\delta) \leq \delta \, [ 1 + \log (1 + 96 L^{-1} \delta^{-1}) ] \leq \delta \, [ 1 + \log (1 + 16 \delta^{-1})]
\end{align*}
due to $L \geq 2 \pi > 6$.
The left-hand side is strictly decreasing, while the right-hand side is strictly increasing for $\delta \in [0,1]$.
However, for $\delta = \frac{3}{8}$, we have
\[
2 (2 - \tfrac{3}{8}) = \tfrac{13}{4} > 3 > 2> \tfrac{3}{8} [ 1 + 2 \log 7] > \tfrac{3}{8} [ 1 + \log (1+ 16 \cdot \tfrac{8}{3}) ],
\]
hence the above inequality fails for all $\delta \in (0, \frac{3}{8}]$.
Finally, if $\delta=0$, then $\int_{\Sigma} g \, d \mu = 0$ and $g = 1 - \la a, \nu_x \rg \geq 0$ implies that $\la a, \nu_x \rg = 1$, so $\nu_x = a$ for $\mu$-a.e. $x$.
Because $\nu_x \in T_x \bS^2$, this implies $\la x,a\rg =0$ for $\mu$-a.e. $x$, so $\partial^* \Omega$ is contained, up to an $\cH^1$-null set, in the equator $\bS^2 \cap a^{\perp}$. 
Also, $D \mathbf{1}_{\Omega}=0$ in each hemisphere and $|\Omega|=2 \pi$, so $\mathbf{1}_{\Omega}$ is constant a.e., and the rigidity of the spherical isoperimetric inequality~\cite{duzaar-fusco} implies that $\Omega$ is a hemisphere up to a null set.
\end{proof}

We record a standard consequence of dimension reduction for $s$-perimeter minimizing cones~\cite{caffarelli-roquejoffre-savin}*{Theorem 10.3}.
Using the improvement of flatness~\cite{caffarelli-roquejoffre-savin}*{Theorem 6.1}, whereby regular points are $C^{1,\alpha}$, and the bootstrap arguments of~\cites{barrios-figalli-valdinoci , figalli-valdinoci }, this implies the following reduction.
\begin{lemma}\label{lemma:first-singular-dimension}
Given $s \in (0,1)$, if locally $s$-perimeter minimizing cones in $\bR^n$ are flat, then locally $s$-perimeter minimizing cones in $\bR^{n+1}$ is flat or have an isolated singularity and $C^{\infty}$ link.  
\end{lemma}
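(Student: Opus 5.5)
The plan is to run the Federer-type dimension reduction argument of \cite{caffarelli-roquejoffre-savin}*{Theorem 10.3}, restricted to a single step. Let $E = C(\Omega) \subset \bR^{n+1}$ be a locally $s$-perimeter minimizing cone, and suppose that $\partial E$ has a singular point $x_0 \neq 0$. We first blow up $E$ at $x_0$. By the monotonicity formula and the compactness of minimizers from \cite{caffarelli-roquejoffre-savin}, a subsequence of $r^{-1}(E - x_0)$ converges in $L^1_{\textup{loc}}$ to a minimizing cone $E'$. Since $x_0$ is singular, the improvement of flatness theorem \cite{caffarelli-roquejoffre-savin}*{Theorem 6.1} ensures that $E'$ is not a half-space; otherwise $x_0$ would be a $C^{1,\alpha}$ point. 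Because $E$ is already a cone with vertex $0$ and $x_0 \neq 0$, the blow-up $E'$ is invariant under translations in the direction $x_0$. Hence $E' = F \times \bR$, where $F \subset \bR^n$ is a cone.

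Next we show that $F$ is locally $s$-perimeter minimizing in $\bR^n$. This is the standard reduction of \cite{caffarelli-roquejoffre-savin}*{\S 10}: the $s$-perimeter of a cylinder, computed in long slabs, reduces to the $s$-perimeter of its cross-section up to a positive constant, because integrating $|z|^{-(n+1+s)}$ in the extra variable gives $c\,|z'|^{-(n+s)}$. Competitors for $F$ therefore lift to competitors for $E'$. Since $E'$ is not flat, $F$ is not flat either, which contradicts the hypothesis. So $\partial E \setminus \{0\}$ consists entirely of regular points. At such points $\partial E$ is $C^{1,\alpha}$ by improvement of flatness, and the bootstrap arguments of \cites{barrios-figalli-valdinoci , figalli-valdinoci } upgrade this to $C^\infty$. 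Consequently the link $\Sigma = \partial E \cap \bS^n$ is $C^\infty$, and the origin is either a regular point, in which case the cone is a half-space, or an isolated singularity.

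The main obstacle I anticipate is the passage to the limit in the blow-up and the regularity at points of $\partial E$ near $x_0$. The singular set must be closed, and $L^1_{\textup{loc}}$ limits of minimizers must again be minimizers whose boundaries converge in the Hausdorff sense. Both facts come from the uniform density estimates and the compactness theorem in \cite{caffarelli-roquejoffre-savin}, so I would cite them rather than reprove them. The only point to check explicitly is the splitting: $E' = F \times \bR$ follows from the identity $E - x_0 = \lambda(E - x_0) + (\lambda - 1)x_0$, which passes to the blow-up limit.
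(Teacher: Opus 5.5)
Your proposal is correct and follows essentially the same route as the paper. The paper simply invokes the Caffarelli--Roquejoffre--Savin dimension reduction (their Theorem 10.3), improvement of flatness, and the Barrios--Figalli--Valdinoci / Figalli--Valdinoci bootstrap; you have unpacked the one-step reduction that it cites as a black box, namely blowing up at a singular point $x_0 \neq 0$, splitting off the line $\bR x_0$, and passing minimality to the cross-section.
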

\iffalse
\begin{proof}
    The result is immediate if all $s$-minimizing cones in $\bR^{n+1}$ are flat, so suppose that $(n+1)$ is the first singular dimension.
    By the dimension reduction property of $s$-perimeter minimizing cones, every non-flat cone that is locally minimizing for the $s$-perimeter in the first singular dimension has an isolated singularity and regular boundary away from the vertex.
    Improvement of flatness~\cite{caffarelli-roquejoffre-savin}*{Theorem 6.1} implies that regular points are $C^{1,\alpha}$, and the bootstrap arguments of~\cites{barrios-figalli-valdinoci , figalli-valdinoci } shows that the boundary is smooth.
\end{proof}
\fi

\begin{proof}[Proof of Theorem~\ref{thm:main}]
For stable cones with $C^2$ links, Proposition~\ref{prop:stable-links-s=0} shows that the only $s=0$ limit links of stable $s$-minimal cones in $\bR^3$ are hemispheres.
Thus, Proposition~\ref{prop:dimension-descent} implies that stable cones with $C^2$ links in $\bR^3$ are flat for $s \in (0,s_*)$.
For locally $s$-minimizing cones, the works~\cites{savin-valdinoci , savin-valdinoci-2 , cinti-serra-valdinoci} imply flatness in $\bR^2$.
Therefore, Lemma~\ref{lemma:first-singular-dimension} shows that any singular minimizers in $\bR^3$ would have an isolated singularity and a smooth link.
However, for $s \in (0,s_*)$, the first part of our argument shows that such cones must be flat.
This completes the proof.
\end{proof}

\appendix

\section{Estimates for integral kernels}\label{app:kernel-estimates}

We collect here some auxiliary estimates for the integral kernels $A_s, B_s, W_s$ introduced in~\eqref{eqn:spherical-kernels}, in Section~\ref{sec:preliminaries}.
These estimates are important in the proof of Theorem~\ref{thm:main}, notably for our compactness Theorem~\ref{thm:compactness-thm}.
We will repeatedly use the following simple consequence of an upper density bound.

\begin{lemma}\label{lemma:potential-upper-growth}
We consider some $0 < \beta_0 < m$ and let $\mu$ be a finite Radon measure on $\bS^{N-1}$ satisfying $\mu(B_r(x)) \leq \Lambda r^m$ for every $x\in\bS^{N-1}$ and $r\in(0,2]$. If $0\leq\alpha \leq m - \beta_0$ and $q\geq0$, then
\begin{equation}\label{eqn:potential-upper-growth}
\sup_{x\in\bS^{N-1}}\int_{B_R(x)} \frac{(1+|\log|x-y||)^q}{|x-y|^\alpha} \, d\mu(y) \leq C(m,\beta_0,q) \, \Lambda \, R^{m-\alpha}(1+|\log R|)^q
\end{equation}
for every $R\in(0,1]$. 
Moreover, if $0 \leq h \leq 1$ is measurable and $t := \int h \, d\mu \leq \Lambda$, then
\begin{equation}\label{eqn:potential-small-set}
\sup_{x\in\bS^{N-1}} \int_{\bS^{N-1}} \frac{(1+|\log|x-y||)^q}{|x-y|^\alpha} \, h(y) \, d\mu(y) \leq C(m,\beta_0,q)\Lambda^{\frac{\alpha}{m}}t^{1- \frac{\alpha}{m}}
\bigl(1+\log(\Lambda/t)\bigr)^q.
\end{equation}
In particular, this holds for a measurable set $F \subset \bS^{N-1}$ with $h=\mathbf{1}_F$ and $t=\mu(F) \leq \Lambda$.

Additionally, if $K(x,y)$ is a measurable kernel with $0 \leq K(x,y) \leq K_0 \frac{(1 + |\log |x-y||)^q}{|x-y|^{\alpha}}$, then
\begin{equation}\label{eqn:K-double-bound}
    \begin{split}
\sup_x \int_{B_R(x)} K(x,y) \, d\mu(y) &\leq C(m,\beta_0,q) K_0 \Lambda R^{m-\alpha} (1 + |\log R|)^q, \\
\sup_x \int K(x,y) h(y) \, d\mu(y) &\leq C(m,\beta_0,q) K_0 \Lambda^{\frac{\alpha}{m}} t^{1 -\frac{\alpha}{m}} (1 + \log (\Lambda/t) )^q.
    \end{split}
\end{equation}
\end{lemma}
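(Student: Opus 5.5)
The plan is a dyadic layer-cake decomposition around $x$, using only the upper density bound $\mu(B_r(x))\le\Lambda r^m$. Write $\phi(r):=(1+|\log r|)^q r^{-\alpha}$. On $(0,1]$ this function is non-increasing, since $\alpha\ge0$ and $|\log r|$ decreases as $r$ increases there.

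\textbf{First bound \eqref{eqn:potential-upper-growth}.} Fix $x$ and split $B_R(x)$ into the annuli $A_k:=B_{2^{-k}R}(x)\setminus B_{2^{-k-1}R}(x)$, $k\ge0$. The point $\{x\}$ itself carries no mass, because $\mu(B_r(x))\le\Lambda r^m\to0$. On $A_k$ we have
\[
\phi(|x-y|)\le \phi(2^{-k-1}R)\le C\,2^{k\alpha}R^{-\alpha}\bigl(1+|\log R|+(k+1)\log 2\bigr)^q ,
\]
while $\mu(A_k)\le\Lambda 2^{-km}R^m$. Summing over $k$ gives
\[
\Lambda R^{m-\alpha}\sum_{k\ge0}2^{-k(m-\alpha)}\bigl(1+|\log R|+k\bigr)^q .
\]
The condition $m-\alpha\ge\beta_0>0$ makes this series converge. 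Using $(a+b)^q\le C_q(a^q+b^q)$, it is bounded by $C(m,\beta_0,q)(1+|\log R|)^q$, with the constant uniform in $\alpha$ through $\beta_0$.

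\textbf{Second bound \eqref{eqn:potential-small-set}.} Choose the radius $\rho:=\min\{(t/\Lambda)^{1/m},1\}$, which lies in $(0,1]$ because $t\le\Lambda$. Split the integral into $B_\rho(x)$ and its complement.
\begin{itemize}
\item Inside $B_\rho(x)$, drop $h\le1$ and apply the first bound: the contribution is at most $C\Lambda\rho^{m-\alpha}(1+|\log\rho|)^q$. Since $\rho^m=t/\Lambda$ and $|\log\rho|=\tfrac1m\log(\Lambda/t)$, this equals $C\Lambda^{\alpha/m}t^{1-\alpha/m}(1+\log(\Lambda/t))^q$.
\item Outside $B_\rho(x)$ but inside $B_1(x)$, the integrand satisfies $\phi(|x-y|)\le\phi(\rho)$, so the contribution is at most $\phi(\rho)\int h\,d\mu=t\,\rho^{-\alpha}(1+|\log\rho|)^q$. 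This is exactly the same quantity.
\item On $|x-y|\ge1$ (note $|x-y|\le2$ on the sphere), $\phi$ is bounded by a constant depending only on $q$. The contribution is therefore at most $Ct$, and $Ct\le C\Lambda^{\alpha/m}t^{1-\alpha/m}$ because $t\le\Lambda$.
\end{itemize}
For $h=\mathbf 1_F$ the statement is the special case $t=\mu(F)$.

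\textbf{Kernel version \eqref{eqn:K-double-bound}.} This follows immediately by monotonicity of the integral after multiplying the previous two bounds by $K_0$.

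\textbf{Main obstacle.} The only delicate point is keeping the constants uniform in $\alpha\in[0,m-\beta_0]$, which matters because $\alpha=N-3+s$ varies with $s$. This is handled by the geometric series bound $\sum_k 2^{-k\beta_0}(1+k)^q\le C(\beta_0,q)$. A second, minor point is the monotonicity of $\phi$ near $r=1$, where $|\log r|$ has a kink; it is settled by the observation at the start that $\phi$ is non-increasing on $(0,1]$.
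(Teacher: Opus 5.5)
Your proposal is correct and follows the paper's proof: the paper gets the first bound by integrating by parts against $M(r)=\mu(B_r(x))$ and substituting $r=Re^{-u}$, and your dyadic-annulus sum is the discrete version of that step; the second bound uses the same split at radius $(t/\Lambda)^{1/m}$, with the complement handled by monotonicity of $r^{-\alpha}(1+|\log r|)^q$ on $(0,1]$ and boundedness on $[1,2]$. The only slip is that $\rho=(t/\Lambda)^{1/m}$ is not in $(0,1]$ when $t=0$; that case is trivial, since then $h=0$ $\mu$-a.e. and, as $\alpha<m$, the right-hand side tends to $0$.
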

\begin{proof}
    We fix a point $x$ and argue as in~\cite{maggi}*{Theorem 15.5}: let $M(r) := \mu(B_r(x)) \leq \Lambda r^m$, so $\mu( \{ x\} ) = M(0) =  0$.
    For $r \in (0,1]$, the function $f(r) := r^{- \alpha} (1 + |\log r|)^q$ is decreasing and
    \[
    - f'(r) = r^{-\alpha-1} [ \alpha(1-\log r)^q + q(1 - \log r)^{q-1}] \leq (m+q) r^{-\alpha-1}( 1 + |\log r|)^q.
    \]
    Also, $f(r) M(r) \leq \Lambda r^{m-\alpha} ( 1 + |\log r|)^q \to 0$ as $r \downarrow 0$ for $m-\alpha>0$.
    Thus, integration by parts gives
    \[
    \int_{B_R(x)} f( |x-y|) \, d\mu(y) \leq \Lambda R^{m-\alpha} ( 1 + |\log R|)^q + C \Lambda \int_0^R r^{m-\alpha-1} (1 + |\log r|)^q \, dr.
    \]
    Since $m-\alpha \geq \beta_0$, we can set $r = R e^{-u}$ to bound the last integral by
    \begin{align*}
        \int_0^R r^{m-\alpha-1} (1 + |\log r|)^q \, dr &= R^{m-\alpha} \int_0^{\infty} e^{- (m-\alpha) u} ( 1 + |\log R| + u)^q \, du \\
        &\leq R^{m-\alpha} (1 + |\log R|)^q \int_0^{\infty} e^{- \beta_0 u}(1+u)^q \, du \\
        &\leq C(\beta_0,q) R^{m-\alpha}(1 + |\log R|)^q.
    \end{align*}
    This proves~\eqref{eqn:potential-upper-growth}.
    For~\eqref{eqn:potential-small-set}, the assertion is immediate if $t=0$. Otherwise, let $R=(\frac{t}{\Lambda})^{\frac{1}{m}}\leq1$, so
    \[
    f(r) \leq C(\beta_0,q) R^{-\alpha} (1 + |\log R|)^q \qquad \text{for every } \; r \in [R,2],
    \]
    since $R \leq 1$.
    Indeed, for $r \in [R,1]$, this follows from the monotonicity $f(r) \leq f(R)$, while for $r \in [1,2]$, we have $f(r) \leq (1+ \log 2)^q \leq C(q) R^{-\alpha}(1 + |\log R|)^q$.
Hence, using $0\leq h\leq1$ and~\eqref{eqn:potential-upper-growth},
\begin{align*}
\int f(|x-y|)h(y)\,d\mu(y) &\leq \int_{B_R(x)}f(|x-y|)\,d\mu(y) +C tR^{-\alpha}(1+|\log R|)^q\\
&\leq C \Lambda R^{m-\alpha} (1 + |\log R|)^q + C t R^{-\alpha} (1 + |\log R|)^q.
\end{align*}
Finally, we can substitute $\Lambda R^m = t$ to bound both terms by $C(m,\beta_0,q) \Lambda^{\frac{\alpha}{m}} t^{1 - \frac{\alpha}{m}} ( 1 + |\log R|)^q$.
The kernel bound~\eqref{eqn:K-double-bound} follows from these considerations.
\end{proof}

\begin{corollary}\label{cor:local-integrability}
    Let $\Sigma \subset \bS^{N-1}$ be a compact $C^1$ $m$-dimensional submanifold and let $\mu := \cH^m \mres \Sigma$.
    If $0 \leq \alpha < m$ and $q \geq 0$, then every kernel $K$ has
    \[
    |K(x,y)| \leq C \, |x-y|^{- \alpha} ( 1 + |\log |x-y||)^q, \qquad \implies \qquad K \in L^1(\Sigma \times \Sigma).
    \]
    In particular, we have $\sup_{x \in \Sigma} \int_{\Sigma} |x-y|^{-\alpha} (1+ |\log |x-y||)^q \, d\mu(y) < \infty$.
\end{corollary}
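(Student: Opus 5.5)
This follows from the first estimate~\eqref{eqn:potential-upper-growth} of Lemma~\ref{lemma:potential-upper-growth}, applied with $\mu = \cH^m \mres \Sigma$. The only thing to supply is the upper density bound $\mu(B_r(x)) \leq \Lambda r^m$ for all $x \in \bS^{N-1}$ and $r \in (0,2]$, with some finite $\Lambda$ depending on $\Sigma$.

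\textbf{Density bound.} Since $\Sigma$ is a compact $C^1$ embedded submanifold, we cover it by finitely many charts. On each chart, $\Sigma$ is a $C^1$ graph over a ball of radius $r_0$ in its tangent plane, with gradient bounded by $1$. The parameter $r_0>0$ comes from uniform continuity of the tangent planes on the compact set $\Sigma$, and it also lets us take the charts of a fixed size.
\begin{itemize}
\item For $x \in \Sigma$ and $r \le r_0$, the piece $\Sigma \cap B_r(x)$ lies in the graph over an $m$-ball of radius $r$. Hence $\mu(B_r(x)) \le \sqrt{2}^{\,m}\, \omega_m r^m$.
\item For a general center $x \in \bS^{N-1}$ with $B_r(x) \cap \Sigma \neq \varnothing$, pick $z \in \Sigma \cap B_r(x)$. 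Then $B_r(x) \subset B_{2r}(z)$, which reduces this case to the previous one whenever $2r \le r_0$.
\item For $r \ge r_0/2$, use the trivial bound $\mu(B_r(x)) \le \cH^m(\Sigma) \le \cH^m(\Sigma)\,(2/r_0)^m\, r^m$.
\end{itemize}
Together these give a finite $\Lambda = \Lambda(\Sigma)$. This step is the only one needing care; the one subtle point is uniformity of the graph radius, which compactness provides.

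\textbf{Integrability.} Since $0 \le \alpha < m$, set $\beta_0 := m - \alpha > 0$. Lemma~\ref{lemma:potential-upper-growth} with $R=1$ gives
\[
\sup_{x} \int_{B_1(x)} |x-y|^{-\alpha}(1+|\log|x-y||)^q \, d\mu(y) \le C\Lambda .
\]
On the complement $\{|x-y| \ge 1\}$ we have $|x-y| \le 2$, so the integrand is bounded by $(1+\log 2)^q$. That region therefore contributes at most $(1+\log 2)^q\, \mu(\Sigma)$.

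This yields the stated uniform bound on $\sup_{x\in\Sigma}\int_\Sigma |x-y|^{-\alpha}(1+|\log|x-y||)^q\,d\mu(y)$. Integrating in $x$ over $\Sigma$ with respect to the finite measure $\mu$, and applying Tonelli to $|K|$, shows $K \in L^1(\Sigma\times\Sigma)$.
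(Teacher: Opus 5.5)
Your proof is correct and follows the paper's argument. The paper asserts the upper density bound $\mu(B_r(x)) \leq C_\Sigma r^m$ from compactness and $C^1$ regularity and then invokes Lemma~\ref{lemma:potential-upper-growth}; you additionally spell out the graph/area-formula argument, the extension to centers off $\Sigma$, and the far region $\{|x-y| \geq 1\}$. One cosmetic point: when $\alpha = 0$, your choice $\beta_0 = m - \alpha$ violates the lemma's stated hypothesis $\beta_0 < m$, so take $\beta_0 = \frac{m-\alpha}{2}$ instead (the lemma's proof never uses the strict inequality anyway).
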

\begin{proof}
    Since $\Sigma$ is compact and $C^1$, there is a constant $C_{\Sigma}$ such that $\mu(B_r(x)) \leq C_{\Sigma} r^m$ uniformly in $x \in \Sigma$ and $r \in (0,2]$.
    The conclusion follows from Lemma~\ref{lemma:potential-upper-growth}.
\end{proof}

\begin{lemma}\label{lemma:radon-Sm}
Let $M$ be a compact Riemannian $m$-manifold with Riemannian distance $d(x,y)$.
Suppose that a non-negative Radon measure on $M$ on $\lambda$ satisfies
\[
\iint_{M \times M} d(x,y)^{-m} \, d \lambda(x) \, d \lambda(y) < \infty.
\]
Then, $\lambda=0$ as a measure. 
\end{lemma}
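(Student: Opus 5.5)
The plan is to show that a nonzero $\lambda$ with finite $d^{-m}$-energy cannot exist, using the fact that $d(x,y)^{-m}$ is exactly the critical, non-integrable singularity in dimension $m$. Suppose $\lambda\neq 0$. Since $\lambda$ is finite on the compact manifold $M$, the energy $I(\lambda)=\iint d(x,y)^{-m}\,d\lambda\,d\lambda$ is finite. Two immediate consequences follow. First, $\lambda$ has no atoms, since an atom at $x_0$ would contribute $d(x_0,x_0)^{-m}\lambda(\{x_0\})^2=\infty$. Second, the potential $U(x):=\int d(x,y)^{-m}\,d\lambda(y)$ is finite for $\lambda$-a.e. $x$.

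The key step is to compare $\lambda$ with the Riemannian volume $\mathrm{vol}$ through lower density. By the Lebesgue–Besicovitch differentiation theorem on $M$, which holds for Radon measures on Riemannian manifolds and can be reduced to charts, $\lambda$ splits into an absolutely continuous part and a singular part. If the singular part is nonzero, then at $\lambda^s$-a.e. $x$ we have $\lambda(B_r(x))/\mathrm{vol}(B_r(x))\to\infty$. If the singular part vanishes, then $\lambda=f\,\mathrm{vol}$ with $f\neq0$, and at a Lebesgue point where $f(x)>0$ we have $\lambda(B_r(x))\ge c f(x) r^m$ for small $r$. In either case there is a set $G$ of positive $\lambda$-measure and, for each $x\in G$, a constant $c_x>0$ with $\lambda(B_r(x))\ge c_x r^m$ for all $r<r_x$. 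Here we use that $\mathrm{vol}(B_r(x))\asymp r^m$ uniformly, by compactness of $M$.

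Next we show that the potential diverges on $G$. For $x\in G$, we decompose into dyadic annuli $A_k=B_{2^{-k}}(x)\setminus B_{2^{-k-1}}(x)$. Since $\lambda$ has no atoms, $\lambda(B_{2^{-k}}(x))\to 0$. On $A_k$ we have $d^{-m}\ge 2^{km}$. The layer-cake formula gives
\[
U(x)=\int_0^\infty \lambda\bigl(\{y: d(x,y)^{-m}>t\}\bigr)\,dt=\int_0^\infty \lambda\bigl(B_{t^{-1/m}}(x)\bigr)\,dt .
\]
Substituting $t=r^{-m}$ turns this into $m\int_0^\infty \lambda(B_r(x))\,r^{-m-1}\,dr\ge m c_x\int_0^{r_x} r^{-1}\,dr=\infty$. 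So $U=\infty$ on $G$, which contradicts $U<\infty$ $\lambda$-a.e., because $\lambda(G)>0$. Therefore $\lambda=0$.

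The main obstacle is the lower density step. We need that every nonzero Radon measure charges a set of points where its lower $m$-density with respect to $r^m$ is positive. The cleanest justification is the dichotomy above: either the density is infinite $\lambda^s$-a.e., or $f>0$ at Lebesgue points. The alternative is a direct argument: if the lower $m$-density vanished $\lambda$-a.e., a Vitali covering comparison with $\mathrm{vol}$ (as in the density theorems of Maggi) would force $\lambda\le \varepsilon\,\mathrm{vol}$ for every $\varepsilon$, hence $\lambda=0$. This differentiation step is the part I would write out carefully, working in finitely many charts where $d$ is bi-Lipschitz to the Euclidean distance. The rest is the layer-cake computation.
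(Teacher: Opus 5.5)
Your proof is correct, but it takes a local route where the paper's is global. You get a pointwise lower bound $\lambda(B_r(x)) \ge c_x r^m$ on a set of positive $\lambda$-measure from the Lebesgue--Besicovitch dichotomy. Your reduction to Euclidean balls in charts is fine, since $d$ is locally bi-Lipschitz to the Euclidean distance, so positive lower density transfers. You then show that the potential $\int d(x,y)^{-m}\,d\lambda(y)$ diverges there.

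The paper never looks at individual points. It partitions $M$ into $N_r \le C_M r^{-m}$ Borel sets $E_j$ of diameter at most $2r$, built from a maximal $r$-separated set. Cauchy--Schwarz then gives the averaged bound $(\lambda\otimes\lambda)(\{d<3r\}) = \int_M \lambda(B_{3r}(x))\,d\lambda(x) \ge \sum_j \lambda(E_j)^2 \ge N_r^{-1}\lambda(M)^2 \gtrsim \lambda(M)^2 r^m$. The paper then uses your layer-cake identity integrated in $x$, namely $\iint d^{-m}\,d\lambda\,d\lambda = D^{-m}\lambda(M)^2 + m\int_0^D r^{-m-1}F(r)\,dr$ with $F(r) = (\lambda\otimes\lambda)(\{d<r\})$.

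\textbf{What the paper's route buys.} It is more elementary: it needs no Radon--Nikodym decomposition and no Vitali or Besicovitch covering relative to $\lambda$, only the covering-number bound $N_r \lesssim r^{-m}$. It would therefore work verbatim on any compact metric space with that property. It is also quantitative, for example $\iint \min\{d^{-m},\epsilon^{-m}\}\,d\lambda\,d\lambda \gtrsim \lambda(M)^2\log(1/\epsilon)$ for small $\epsilon$. Your constants $c_x, r_x$ are not uniform, so your argument does not give such a bound.

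\textbf{What your route buys.} It uses heavier machinery but gives a stronger pointwise conclusion. Your dichotomy applies at $\lambda^s$-a.e.\ point and at $\lambda^{ac}$-a.e.\ point (where the density $f$ is positive at a Lebesgue point). Hence $\int d(x,y)^{-m}\,d\lambda(y) = \infty$ for $\lambda$-a.e.\ $x$, not just on a set of positive measure.

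The remarks about atoms and dyadic annuli in your third paragraph are never used and can be dropped.
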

\begin{proof}
First, there exists a small $\rho_0>0$ and constants $c_M, C_M> 0$ such that $c_M r^m \leq \textup{Vol}(B_r(x)) \leq C_M r^m$ for every $x \in M$ and $\rho \in (0,\rho_0)$.
It is standard that for every $r \in (0,\rho_0)$, there exists a finite Borel partition $M = \bigsqcup_{i=1}^{N_r} E_i$ with $N_r \leq C_M r^{-m}$ and $\textup{diam}(E_i) \leq 2r$.
For example, since $M$ is compact, hence totally bounded, it admits a maximal $r$-separated set $S_r = \{ x_1, \dots, x_{N_r} \} \subset M$, so the balls $B_{\frac{r}{2}}(x_i)$ are pairwise disjoint.
By~\cite{burago-ivanov}*{Exercise 1.6.4(2)}, $S_r$ is an $r$-net, hence $M = \bigcup_{i=1}^{N_r} B_r(x_i)$
and $N_r c_M r^m \leq \sum_i \textup{Vol}(B_{\frac{r}{2}}(x_i)) \leq \textup{Vol}(M)$.
In particular, $N_r \leq C_M r^{-m}$ and the sets $E_k := B_r(x_k) \setminus \bigcup_{j<k} B_r(x_j)$ form a Borel partition with the required properties.
This property can also be obtained upon covering $M$ by coordinate charts and applying the Besicovitch covering theorem~\cite{maggi}*{Theorem 5.1 and \S 5.1}.
We now compute
\begin{align*}
& F(\rho) := ( \lambda \otimes \lambda) ( \{ (x,y) \in M \times M : d(x,y) < \rho \} ) = \int_M \lambda (B_{\rho}(x)) \, d \lambda(x), \\
& \implies F(2r) \geq \sum_{j=1}^{N_r} \lambda(E_j)^2 \geq N_r^{-1} \Bigl( \sum_{j=1}^{N_r} \lambda(E_j) \Bigr)^2 \geq c_M \lambda(M)^2 r^m
\end{align*}
due to $\textup{diam}(E_j) \leq 2r$, the Cauchy-Schwarz inequality, and $N_r \leq C_M r^{-m}$.
After changing constants, this gives $F(r) \geq c_M \lambda(M)^2 r^m$ for every small $r \in (0,\rho_1)$.
We let $D := \textup{diam}(M)$ and write
\[
s^{-m} = D^{-m} + m \int_0^D \mathbf{1}_{ \{ s<r \} } r^{-m-1} \, dr, \qquad \text{for } \; s \in (0,D].
\]
This identity also holds in the extended sense when $s=0$.
Hence, by Tonelli's theorem,
\begin{align*}
    \iint_{M \times M} d(x,y)^{-m} \, d \lambda(x) \, d \lambda(y) &= D^{-m} \lambda(M)^2 + m \int_0^D r^{-m-1} F(r) \, dr \geq m c_M \lambda(M)^2 \int_0^{\rho_1} \frac{dr}{r}
\end{align*}
upon combining the above identity with $r^{-m} F(r) \geq c_M \lambda(M)^2$ for $r<\rho_1$.
The last integral diverges unless $\lambda(M) = 0$.
Hence, the finiteness assumption forces $\lambda=0$.
This completes the proof.
\end{proof}

\bibliography{ref}

\end{document}